\newcommand{\arxiv}[1]{\href{http://arxiv.org/abs/#1}{\tt
    arXiv:\nolinkurl{#1}}}
\newtheorem{theorem}{\bf{Theorem}}[subsection]
\newtheorem{lemma}[theorem]{Lemma}
\newtheorem{corollary}[theorem]{Corollary}
\newtheorem{proposition}[theorem]{Proposition}
\newtheorem{claim}[theorem]{Claim}
\newtheorem{conjecture}[theorem]{Conjecture}
\newtheorem{definition}[theorem]{Definition}
\newtheorem*{theorem-A}{Theorem A}
\newtheorem*{theorem-B}{Theorem B}
\newtheorem*{theorem-C}{Theorem C}
\newtheorem*{theorem-D}{Theorem D}
\newtheorem*{conjecture-A}{Conjecture A}
\newtheorem*{conjecture-B}{Conjecture B}
\newtheorem*{conjecture-C}{Conjecture C}
\newtheorem*{conjecture-D}{Conjecture D}
\theoremstyle{remark} 
\newtheorem{remark}[theorem]{Remark}
\newtheorem{example}[theorem]{Example}
\def\A{\mathrm{A}}
\def\B{\mathrm{B}}
\def\C{\mathrm{C}}
\def\D{\mathrm{D}}
\def\E{\mathrm{E}}
\def\F{\mathrm{F}}
\def\G{\mathrm{G}}
\def\K{\mathrm{K}}
\def\L{\mathrm{L}}
\def\M{\mathrm{M}}
\def\P{\mathrm{P}}
\def\SS{\mathrm{S}}
\def\U{\mathrm{U}}
\def\Z{\mathrm{Z}}
\def\bbA{\mathbb{A}}
\def\bbC{\mathbb{C}}
\def\bbE{\mathbb{E}}
\def\bbG{\mathbb{G}}
\def\bbH{\mathbb{H}}
\def\bbN{\mathbb{N}}
\def\bbP{\mathbb{P}}
\def\bbQ{\mathbb{Q}}
\def\bbZ{\mathbb{Z}}
\def\frakg{\mathfrak{G}}
\def\frakL{\mathfrak{L}}
\def\calA{\mathcal{A}}
\def\calB{\mathcal{B}}
\def\calC{\mathcal{C}}
\def\calD{\mathcal{D}}
\def\calE{\mathcal{E}}
\def\calF{\mathcal{F}}
\def\calG{\mathcal{G}}
\def\calH{\mathcal{H}}
\def\calI{\mathcal{I}}
\def\calL{\mathcal{L}}
\def\calM{\mathcal{M}}
\def\calN{\mathcal{N}}
\def\calO{\mathcal{O}}
\def\calP{\mathcal{P}}
\def\calQ{\mathcal{Q}}
\def\calR{\mathcal{R}}
\def\calS{\mathcal{S}}
\def\calT{\mathcal{T}}
\def\calU{\mathcal{U}}
\def\calX{\mathcal{X}}
\def\calY{\mathcal{Y}}
\def\calZ{\mathcal{Z}}
\def\frakb{\mathfrak{b}}
\def\frakg{\mathfrak{g}}
\def\frakl{\mathfrak{l}}
\def\frakn{\mathfrak{n}}
\def\frakp{\mathfrak{p}}
\def\bfa{\mathbf{a}}
\def\bfl{\mathbf{l}}
\def\bfr{\mathbf{r}}
\def\bfA{\mathbf{A}}
\def\bfB{\mathbf{B}}
\def\bfC{\mathbf{C}}
\def\bfF{\mathbf{F}}
\def\bfG{\mathbf{G}}
\def\bfL{\mathbf{L}}
\def\bfQ{\mathbf{Q}}
\def\bfR{\mathbf{R}}
\def\bfS{\mathbf{S}}
\def\bfT{\mathbf{T}}
\def\bfU{\mathbf{U}}
\def\k{{\operatorname{k}\nolimits}}
\def\For{\operatorname{For}\nolimits}
\def\ind{\operatorname{ind}\nolimits}
\def\Isom{\operatorname{Isom}\nolimits}
\def\mod{\operatorname{mod}\nolimits}
\def\proj{{\operatorname{proj}\nolimits}}
\def\tilt{{\operatorname{tilt}\nolimits}}
\def\fl{{\operatorname{fl}\nolimits}}
\def\KP{\operatorname{KP}\nolimits}
\def\dgmod{\operatorname{dgmod}\nolimits}
\def\Coh{{\calC oh}}
\def\PCoh{{\calP\calC oh}}
\def\QCoh{\operatorname{\calQ\calC oh}\nolimits}
\def\Fr{{\operatorname{Fr}\nolimits}}
\def\nrk{{\operatorname{\rho}\nolimits}}
\def\D{{\operatorname{D}\nolimits}}
\def\b{{\operatorname{b}\nolimits}}
\def\e{{\operatorname{e}\nolimits}}
\def\mix{{\mu}}
\def\dg{{\operatorname{dg}\nolimits}}
\def\bbd{{\operatorname {m}}}
\def\perf{{\operatorname{perf}\nolimits}}
\def\re{{\operatorname{re}\nolimits}}
\def\op{{{\operatorname{op}\nolimits}}}
\def\Ad{{{\operatorname{Ad}\nolimits}}}
\def\-{{\operatorname{-}\!}}
\def\hgt{{\operatorname{ht}\nolimits}}
\def\Hom{\operatorname{Hom}\nolimits}
\def\bbHom{\operatorname{\bbH om}\nolimits}
\def\HHom{{\underline{{\Hom}}}}
\def\Par{\operatorname{C}\nolimits}
\def\Pure{\operatorname{C_\bbd}\nolimits}
\def\RHom{\operatorname{RHom}\nolimits}
\def\bbEnd{\operatorname{\bbE nd}\nolimits}
\def\End{\operatorname{End}\nolimits}
\def\coker{\operatorname{coker}\nolimits}
\def\Res{\operatorname{Res}\nolimits}
\def\Ind{\operatorname{Ind}\nolimits}
\def\Ext{\operatorname{Ext}\nolimits}
\def\bbExt{\operatorname{\bbE xt}\nolimits}
\def\Inv{\operatorname{Inv}\nolimits}
\def\Add{\operatorname{Add}\nolimits}
\def\Spec{\operatorname{Spec}\nolimits}
\def\Gr{{\operatorname{Gr}\nolimits}}
\def\Fl{{\operatorname{Fl}\nolimits}}
\def\bfGr{{\operatorname{\bfG\bfr}\nolimits}}
\def\Fr{\mathrm{Fr}}
\def\res{\mathrm{res}}
\numberwithin{itemcounter}{subsection}
\numberwithin{equation}{section}
\title{Coherent categorification of quantum loop algebras : the $SL(2)$ case}
\author{P. Shan$^1$} 
\address{\scriptsize{$^1$~Yau Mathematical Sciences Center, Tsinghua University, 100084, Beijing, China.}}
\author{M. Varagnolo$^2$} 
\address{\scriptsize{$^2$~D\'epartement de Math\'ematiques, Universit\'e Cergy-Pontoise, 95302 Cergy-Pontoise, France,
UMR8088 (CNRS), ANR-18-CE40-0024.}}
\author{E. Vasserot$^3$} 
\address{\scriptsize{$^3$~Universit\'e de Paris, 75013 Paris, France, UMR7586 (CNRS), ANR-18-CE40-0024,
Institut Universitaire de France (IUF).
}}
\begin{document}
\maketitle

\begin{abstract}
We construct an equivalence of graded Abelian categories from a category of representations of the 
quiver-Hecke algebra of type $A_1^{(1)}$ to the category of equivariant perverse coherent sheaves on the nilpotent 
cone of type $A$. We prove that this equivalence is weakly monoidal. This gives a representation-theoretic categorification
of the preprojective K-theoretic Hall algebra considered by Schiffmann-Vasserot.
Using this categorification, we compare the
monoidal categorification of the quantum open unipotent cells of type $A_1^{(1)}$ given by Kang-Kashiwara-Kim-Oh-Park in terms of quiver-Hecke
algebras with the one given by Cautis-Williams in terms of equivariant perverse coherent 
sheaves on the affine Grassmannians. \end{abstract}

\tableofcontents

\section{Introduction}

\subsection{Main results of the paper}
Let $\bfQ=(I,\Omega)$ be a quiver of Kac-Moody type.
%Let $\k$ be a field which is algebraically closed of characteristic zero.
For each dimension vector $\beta$, let $X_\beta$ be the variety of all $\beta$-dimensional 
representations of the path algebra of $\bfQ$ and $\calX_\beta$ be the corresponding moduli stack.
Lusztig introduced a graded additive subcategory $\C(\calX_\beta)$ of the bounded constructible derived category 
$\D^\b(\calX_\beta)$ whose split Grothendieck group is isomorphic 
to the $\beta$-weight subspace of the quantum unipotent enveloping algebra $\bfU_q(\frakn)$ of type $\bfQ$.
Then, Khovanov-Lauda and Rouquier showed that $\bfU_q(\frakn)$ is the 
Grothendieck group of the monoidal category of all projective graded
modules over the quiver-Hecke algebra $\bfR$ of $\bfQ$.
According to Rouquier and Varagnolo-Vasserot, 
this isomorphism lifts to an equivalence of additive graded monoidal categories
between $\bigoplus_\beta\C(\calX_\beta)$, equipped with Lusztig's geometric induction 
bifunctor and the grading given by the cohomological shift,
and the category $\calC^\proj$
of all finitely generated graded projective $\bfR$-modules, equipped with the algebraic induction bifunctor $\circ$.
Further, the quantum unipotent coordinate algebra $\bfA_q(\frakn)$ of $\bfQ$ is isomorphic to the Grothendieck group
of the monoidal category of all finite dimensional graded modules over $\bfR$, in such a way that
the irreducible self-dual graded $\bfR$-modules are identified with the dual canonical basis elements in $\bfA_q(\frakn)$. 

%\smallskip

Now, let us assume that the quiver $\bfQ$ is of type $A_1^{(1)}$. 
The graded category $\calC$ of all finitely generated graded $\bfR$-modules is affine properly stratified.
The simple self-dual modules are labelled by Kostant partitions of a dimension vector $\beta=n\alpha_1+m\alpha_0$ 
of $\bfQ$, for some non-negative integers $n$, $m$.
There is a monoidal structure on $\calC$ given by the algebraic induction bifunctor $\circ$.
Let $\calD$ be the full graded subcategory of $\calC$ consisting of all objects whose composition factors are
graded shifts of the simple self-dual modules labelled by the 
Kostant partitions supported on the set of all real positive roots of $\bfQ$ of the form
$\alpha_0+n\delta$ with $n\in\bbN$. 
It is a graded monoidal subcategory of $\calC$ which is
polynomial highest weight.

\smallskip

For any integer $r\geqslant 0$, the Lie algebra $\frakg\frakl_r$ carries the adjoint action of $GL_r$ and a $\bbG_m$-action by dilation. Write $GL_r^c=GL_r\times \bbG_m$ and consider the quotient stack $[\frakg\frakl_r/GL_r^c]$.
Let $\D^\b\Coh([\frakg\frakl_r/GL_r^c])$ be the bounded derived category of all
$GL_r^c$-equivariant finitely generated graded $\SS(\frakg\frakl_r^*\langle-2\rangle)$-modules, 
where $\langle\bullet\rangle$ is the grading shift functor.
Let $\D^\b\Coh([\frakg/G^c])_{\Lambda^+}$ be the graded triangulated subcategory of the direct sum
$$\D^\b\Coh([\frakg/G^c])=\bigoplus_{r\in\bbN}\D^\b\Coh([\frakg\frakl_r/GL_r^c])$$
generated by all objects of the form $(V\otimes\calO_{\frakg\frakl_r})\langle a\rangle$ where 
$V$ is a polynomial representation of $GL_r$ and $a$ is an arbitrary integer.
There is a graded triangulated
monoidal structure on $\D^\b\Coh([\frakg/G^c])_{\Lambda^+}$ given by the convolution bifunctor
$\circ$.

\smallskip

In this paper, we study relations between (variants of) the category $\calD$ and $G^c$-equivariant coherent sheaves on $\frakg$. In particular,  we propose the following conjecture, see Conjecture \ref{conj:1} and Remark \ref{rem:rem-conj} below.

\smallskip

\begin{conjecture-A} There is an equivalence of triangulated graded monoidal categories
$$\E:(\D^\b(\calD)\,,\,\circ)\to(\D^\b\Coh([\frakg/G^c])_{\Lambda^+}\,,\,\circ^\op).$$
\end{conjecture-A}

\smallskip

Our main result is a proof of a slightly modified version of this conjecture.
To explain this, let $\D^\b\Coh([\calN/G^c])_{\Lambda^+}$ denote the triangulated subcategory of 
$\D^\b\Coh([\frakg/G^c])_{\Lambda^+}$ consisting of all complexes of coherent sheaves supported
on the nilpotent cone of $\frakg\frakl_r$ for some $r\geqslant 0$. 
This triangulated category is equipped with the perverse t-structure
whose heart is denoted by $\PCoh([\calN/G^c])_{\Lambda^+}$.
The convolution  yields a graded Abelian monoidal structure on $\PCoh([\calN/G^c])_{\Lambda^+}$.
Our main theorem, Theorem \ref{thm:2}, is the following analogue of Conjecture A.

\smallskip

\begin{theorem-B} There is a graded Abelian and Artinian monoidal subcategory $\calD^\sharp$ of $\calD$ 
containing all simple objects, with an equivalence of graded Abelian categories
$$\E^\sharp:\calD^\sharp\to\PCoh([\calN/G^c])_{\Lambda^+}.$$
\end{theorem-B}

\smallskip

Moreover, both categories $\calD^\sharp$ and $\PCoh([\frakg/G^c])_{\Lambda^+}$ are
graded stratified, and we prove that the equivalence $\E^\sharp$ respects these structures.
In particular it takes proper standard modules to proper standard ones.
Note that the proper standard modules are monomials in the simple ones, this statement can be viewed as a weak form of 
the monoidality of the functor $\E^\sharp$, see Remark \ref{rem:rem-conj}. 
To keep this paper in a reasonable length, we do not prove here that $\E^\sharp$ is a monoidal equivalence
$$(\calD^\sharp\,,\,\circ)\to(\PCoh([\calN/G^c])_{\Lambda^+}\,,\,\circ^\op).$$
This is stated as Conjecture \ref{conj:monoidal}. We will prove it in a sequel paper \cite{RSVV}.

\smallskip

The proof of Theorem B consists of constructing
a chain of graded triangulated equivalences 
\begin{align*}
\xymatrix{
\D^\perf(\calD^{\,\sharp})\ar[r]^-{\A^\sharp}&
\D^\b_\mix(\Gr^+_{\Lambda^+},S)\ar[r]^-{\B^\sharp}&\D^\b_\mix(\Gr^-_{\Lambda^+},S)\ar[r]^-{\C^\sharp}&\D^\perf\Coh([\calN/G^c])_{\Lambda^+},
}
\end{align*}
and then checking the t-exactness. Here, the two categories in the middle are some mixed categories on think/thin affine Grassmannians, the functor $\A^\sharp$
is given by the composition of some localization of quiver-Hecke algebras and the tilting equivalence
between the module category of the Kronecker quiver and coherent sheaves on $\bbP^1$.
The functor $\B^\sharp$ is a Radon transform. 
The functor $\C^\sharp$ is the derived geometric Satake equivalence. 
Both $\A^\sharp$ and $\B^\sharp$ use mixed geometry.

\smallskip

One of our motivations comes from the recent work of Cautis-Williams \cite{CW18}.
To explain the link, 
let $\bfU_q(\frakn)_{\Lambda^+}$ be the subalgebra of the quantum unipotent enveloping algebra 
$\bfU_q(\frakn)$ generated by the root vectors whose weights are of the form 
$\alpha_0+n\delta$ for some integer $n\geqslant 0$.
The theorem of Khovanov-Lauda and Rouquier implies that the split Grothendieck group
$\K_0(\calD^\proj)$ is isomorphic to $\bfU_q(\frakn)_{\Lambda^+}$.
There is a perfect pairing between $\K_0(\calD^\proj)$ and $\G_0(\calD^\sharp)$.
Hence, there is a ring 
isomorphism between $\G_0(\calD^\sharp)$ and the quantum unipotent coordinate algebra
$\bfA_q(\frakn)_{\Lambda^+}$ of $\bfU_q(\frakn)_{\Lambda^+}$.
Now, given a positive integer $N$, let  $\bfA_q(\frakn^w)$ be the quantum open unipotent cell of type $\bfQ$ 
associated with the element $w=(s_0s_1)^N$ in the Weyl group of $\bfQ$.
This quantum open unipotent cell is a localization of the quantum unipotent coordinate subalgebra
$\bfA_q(\frakn(w))$ of $\bfA_q(\frakn)_{\Lambda^+}$.
It admits a quantum cluster algebra structure.
Cautis-Williams proved that the category of $GL_N(\calO)\ltimes\bbG_m$-equivariant perverse coherent sheaves 
$\PCoh([\Gr/G_N^c(\calO)])$ on the affine Grassmanian of $GL_N$,
with the monoidal structure given by the convolution product, is a monoidal categorification of $\bfA_q(\frakn^w)$.

\smallskip

In \cite{KKKO18}, \cite{KKOP19} a localization $\widetilde\calC_w^\fl$ of a graded monoidal Serre subcategory
of $\calC$ is introduced. It is proved there that $\widetilde\calC_w^\fl$ 
is also a monoidal categorification $\bfA_q(\frakn^w)$. It is natural to compare it with the monoidal categorification
of Cautis-Williams.
To do that we introduce a localization $\widetilde\calD^\sharp_w$ of a Serre subcategory 
$\calD^\sharp_w$ of $\calD^\sharp$ by mimicking
the construction in \cite{KKOP19}.
The equivalence $\E^\sharp$ yields a faithful graded exact functor 
$$\widetilde\Psi_w:\widetilde\calD^\sharp_w\to\PCoh([\Gr/G_N^c(\calO)])$$
which induces an isomorphism of Grothendieck groups
$\G_0(\widetilde\calD^\sharp_w)=\G_0(\PCoh([\Gr/G_N^c(\calO)])).$
Our  Conjecture \ref{conj:D} is the following.

\begin{conjecture-C} The functor $\widetilde\Psi_w$ is a graded monoidal equivalence of categories.
\end{conjecture-C}

\smallskip

\subsection{Background and perspectives}
Let $\Pi_\bfQ$ be the preprojective algebra of the quiver $\bfQ$.
A geometric construction of affine quantum groups is given by the K-theoretic Hall algebra of the category of 
$\Pi_\bfQ$-modules considered by Schiffmann-Vasserot. 
There, the category of
constructible sheaves on the moduli stack of representations of $\bfQ$ is replaced by the category
of coherent sheaves on the 
derived moduli stack of representations of the preprojective algebra.
Our goal is to compare this category of coherent sheaves with a 
module category of the quiver-Hecke algebra of affine type $\bfQ^{(1)}$ when $\bfQ$ is of finite type.

\smallskip

The stack $\calX_\beta$ is the quotient of the variety $X_\beta$ by a linear group $G_\beta$.
The group $G_\beta^c=G_\beta\times\bbG_m$ acts on $T^*X_\beta$ so that  $\bbG_m$ has weight 1.
The $G_\beta$-action  is Hamiltonian.
The moment map $\mu_\beta:T^*X_\beta\to \frakg_\beta$ is $G_\beta^c$-equivariant,
with $\bbG_m$ of weight $2$ on $\frakg_\beta$. 
The cotangent dg-stack of $\calX_\beta$ is the derived fiber product 
$$T^*\calX_\beta^c=[T^*X_\beta\times^R_{\frakg_\beta}\{0\}\,/\,G_\beta^c].$$
The truncation of this dg-stack is the moduli stack $\Lambda_\beta^c=[\mu_\beta^{-1}(0)\,/\,G_\beta^c]$
of $\beta$-dimensional representations of  $\Pi_\bfQ$.
Let $\text{dg}\!\QCoh(T^*\calX_\beta)$ be the category of all $G_\beta^c$-equivariant sheaves of dg-modules over 
$T^*X_\beta\times^R_{\frakg_\beta}\{0\}$ whose cohomology is a quasi-coherent sheaf over $\Lambda_\beta^c$. 
Let $\D(\text{dg}\!\QCoh(T^*\calX_\beta^c))$ be its derived category. 
Since the dg-stack $T^*\calX_\beta^c$ is affine, 
this graded triangulated category is the derived category of $G_\beta^c$-equivariant modules
over a $G_\beta^c$-equivariant dg-algebra which can be described as follows.

\smallskip

Recall that a $G_\beta^c$-equivariant dg-algebra is a $G_\beta$-equivariant  
$\bbZ^2$-graded  algebra with a differential of bi-degree
$(1,0)$ satisfying the Leibniz rule. For each bi-degree $(i,j)$, we call $i$ the cohomological degree and
$j$ the internal degree. The corresponding grading shift functors are denoted by 
$[\bullet]$ and $\langle\bullet\rangle$ respectively.
The internal degree is the weight of the $\bbG_m$-action.
For any $\bbZ^2$-graded vector space $V$, let
$\SS(V)$ %and $\bigwedge(V)$ 
be the graded-symmetric %and graded-exterior 
algebra of $V$, 
i.e., the quotient of the tensor algebra by the relations $x\otimes y-(-1)^{|x|\,|y|}y\otimes x$.
Here $|x|$ is the cohomological degree an homogeneous element $x$.
The graded triangulated category $\D(\text{dg}\!\QCoh(T^*\calX_\beta^c))$ is equivalent to 
the derived category of all $G_\beta^c$-equivariant dg-modules 
over a $G_\beta^c$-equivariant graded-commutative non-positively graded dg-algebra $\bfC_\beta$ 
whose underlying $G_\beta$-equivariant $\bbZ^2$-graded algebra is
$\SS(T^*X_\beta\langle 1\rangle\oplus\frakg_\beta^*[1]\langle 2\rangle).$ 
To describe the differential on $\bfC_\beta$, we consider the $\bbZ$-graded Lie superalgebra 
$$\frakL_\beta=T^*X_\beta[-1]\langle -1\rangle\oplus \frakg_\beta[-2]\langle -2\rangle,$$ 
whose bracket is the extension by 0 of the map
\begin{align}\label{diff}\SS^2(T^*X_\beta)\to\frakg_\beta\quad,\quad
a\otimes b\mapsto\sum_{h\in\Omega}([a_h\,,\,b_{h^\op}]+[b_h\,,\,a_{h^\op}]).\end{align}
%Here, the map $(\bullet)^*:\frakg_\beta\to\frakg_\beta^*$ is the dual relative to the canonical pairing.
%Let $\bfU_\beta$ be the enveloping algebra of $\frakL_\beta$.
Then $\bfC_\beta$ is the Chevalley-Eilenberg complex which computes the extension group
$\Ext^\bullet_{\frakL_\beta}(\k\,,\,\k)$. As a 
$\bbZ^2$-graded algebra we have 
$$\bfC_\beta=\SS(\frakL^*_\beta[-1]) =\SS(T^*X_\beta\langle 1\rangle\oplus \frakg_\beta^*[1]\langle 2\rangle).$$ 
The differential is the unique derivation which vanishes on the subspace
$T^*X_\beta$ and is given on $\frakg_\beta^*$ by the map
$\frakg_\beta\to \SS^2(T^*X_\beta)$ dual to \eqref{diff}.
Let  $\D(\text{dg}\Coh(T^*\calX_\beta^c))$ 
denote the derived category  $\D(\dgmod(\bfC_\beta\rtimes G^c_\beta))$
of all $G_\beta^c$-equivariant  
dg-modules of $\bfC_\beta$ whose cohomology is finitely generated over the graded algebra $H^\bullet(\bfC_\beta)$.
Note that $H^\bullet(\bfC_\beta)$ is a finitely generated nilpotent extension of $H^0(\bfC_\beta)$, and that the latter
is isomorphic to the function ring over the variety $\mu_\beta^{-1}(0)$.
%The grading on the triangulated category $\DCoh(T^*\calX_\beta)$ is given by the $\bbG_m$-action.

\smallskip

Now, fix a triple of dimension vectors $\alpha$, $\beta$, $\gamma$ such that $\beta=\alpha+\gamma$.
%Write $G=G_\gamma$ and $T^*X_G=T^*X_\gamma$.
Fix a parabolic subgroup $P$ of $G_\beta$ with a Levi subgroup $L$ isomorphic to $G_\alpha\times G_\gamma$. 
A $\bbC$-point of $T^*X_\beta$ is the same as a representation of the double quiver
$\bar\bfQ=(I\,,\,\bar\Omega)$ such that $\bar\Omega=\Omega\sqcup\Omega^\op$.
Let $T^*X_P$ be the subspace of all representations in $T^*X_\beta$ which preserve a fixed flag of vector spaces of type
$(\alpha,\gamma)$ whose stabilizer in $G$ is $P$. 
One defines as above a $P^c$-equivariant graded-commutative dg-algebra structure 
on the symmetric graded-commutative algebra 
$$\bfC_P=\SS(T^*X_P\langle 1\rangle\oplus\frakp^*[1]\langle2\rangle).$$ 
The obvious projection and inclusion
$$T^* X_L\oplus\frakl^*\to T^*X_P\oplus\frakp^*\quad,\quad 
T^*X_G\oplus\frakg^*\to T^* X_P\oplus\frakp^*$$ 
yield $P^c$-equivariant dg-algebra homomorphisms
$q:\bfC_L\to\bfC_P$ and $p:\bfC_G\to\bfC_P.$ Composing the extension of scalars relative to $q$,
the restriction of scalars relative to $p$, the restriction from $L^c$ to $P^c$ and the induction from $P^c$ to $G^c$,
we get a triangulated functor
$$R_{L\subset P}^{G}:\D(\text{dg}\Coh(T^*\calX_\alpha^c))\times
\D(\dg\Coh(T^*\calX_\gamma^c))\to\D(\text{dg}\Coh(T^*\calX_\beta^c))$$
which yields a triangulated graded monoidal structure $\circ$ on the category
$$\D(\dg\Coh(T^*\calX^c))=\bigoplus_\beta\D(\dg\Coh(T^*\calX_\beta^c)).$$

\smallskip

Let $\G_0(\dg\Coh(T^*\calX^c))$ be the Grothendieck group of the graded triangulated category 
$\D(\dg\Coh(T^*\calX^c))$.
It coincides with the Grothendieck group of the graded Abelian category of coherent sheaves on the stack
$\bigsqcup_\beta\Lambda_\beta.$
We equip it with the multiplication given by the monoidal structure $\circ$.
This ring is  the K-theoretic Hall algebra of the category of $\Pi_\bfQ$-modules.
It was considered by Schiffmann-Vasserot in \cite{SV13}, \cite{SV12} for a one vertex quiver $\bfQ$.
Then it was generalized to several other settings, see \cite{SV} and the references there.
Let $\bfU_q(L\frakn)$ be the quantum enveloping algebra of the loop algebra of $\frakn$.
The following theorem holds. The proof will be given elsewhere.

\smallskip

\begin{theorem-D}
Assume that the quiver $\bfQ$ is of finite type.
Then, there is a $\bbZ[q,q^{-1}]$-algebra isomorphism $\G_0(\dg\Coh(T^*\calX^c))=\bfU_q(L\frakn)$.
\end{theorem-D}

\smallskip

The conjecture A is a lift of the isomorphism in Theorem D to a monoidal graded triangulated equivalence between
$\D(\text{dg}\Coh(T^*\calX^c))$ and the derived category of a module 
category of the quiver-Hecke algebra of affine type $\bfQ^{(1)}$
for $\bfQ$ of type $A_1$. Let us explain this in more details.
The general case will be considered elsewhere.

\smallskip

From now on, let us assume that the quiver $\bfQ=A_1$. The dimension vector $\beta$ is simply
a non-negative integer $r$.
The group $G_\beta$ is $GL_r$, the graded Lie superalgebra $\frakL_\beta$ is 
$\frakg\frakl_r^*[-2]\langle-2\rangle$ with the zero bracket,
and the dg-algebra $\bfC_\beta$ is the exterior algebra 
$\SS(\frakg\frakl_r[1]\langle 2\rangle)$ with the zero differential. 
In particular $\D(\dg\Coh(T^*\calX_\beta^c))$ is the triangulated category 
$\D(\mod(\SS(\frakg\frakl_r[1]\langle 2\rangle\rtimes GL_r^c))$ of all 
$GL_r^c$-equivariant finitely generated dg-modules over $\SS(\frakg\frakl_r[1]\langle2\rangle)$.
The derived category $\D^\b\Coh([\frakg\frakl_r/GL_r^c])$ is 
$\D(\mod(\SS(\frakg\frakl_r^*\langle -2\rangle)\rtimes GL_r^c))$, which is the same as the triangulated category 
$\D(\mod(\SS(\frakg\frakl_r^*[-2]\langle -2\rangle)\rtimes GL_r^c))$.
So the Koszul duality yields an equivalence 
$$\D(\text{dg}\Coh(T^*\calX_\beta^c))\to\D^\b\Coh([\frakg\frakl_r/GL_r^c]).$$
Thus, we must compare the triangulated category $\D^\b\Coh([\frakg\frakl_r/GL_r^c])$
with a module category of the quiver-Hecke algebra $\bfR$ of affine type $\bfQ^{(1)}$.
Let $\bfU_q(\frakn^{(1)})_{\Lambda^+}$ be the current subalgebra of $\bfU_q(L\frakn)$.
Theorem D  implies that the Grothendieck group of the triangulated category
$\D^\b\Coh([\frakg/G^c])_{\Lambda^+}$ is $\bfU_q(\frakn^{(1)})_{\Lambda^+}$.
On the other hand, the split Grothendieck group
$\K_0(\calD^\proj)$ is isomorphic to $\bfU_q(\frakn^{(1)})_{\Lambda^+}$.
Thus, the Grothendieck group of the triangulated category $\K^\b(\calD^\proj)$ is also 
 isomorphic to $\bfU_q(\frakn^{(1)})_{\Lambda^+}$. Since the category $\calD$ has finite homological dimension, we 
 deduce that the Grothendieck group of  the graded triangulated $\D^\b(\calD)$
 is also isomorphic to $\bfU_q(\frakn^{(1)})_{\Lambda^+}$.
 Thus it is natural to compare the graded triangulated
 categories $\D^\b(\calD)$ and $\D^\b\Coh([\frakg/G^c])_{\Lambda^+}$.
 This is precisely the Conjecture A above.

\smallskip

\subsection{Relation to previous works}
In \cite{B16}, Bezrukavnikov proved that the realization of the affine Hecke algebra of a reductive group, 
both 
as the Grothendieck group
of a monoidal category of equivariant coherent sheaves on the Steinberg variety 
\emph{\`a la Ginzburg-Kazhdan-Lusztig}
and as the Grothendieck group of a monoidal category of equivariant perverse sheaves on the affine 
flag manifold of the Langlands dual group \emph{\`a la Kazhdan-Lusztig}, 
lifts to a graded monoidal triangulated equivalence between corresponding 
derived categories.
Here, we prove that the realization of a positive piece of the 
quantum enveloping algebra of affine type $A_1^{(1)}$,
both 
as the preprojective K-theoretic Hall algebra of the category of $\Pi_\bfQ$-modules of type $A_1$ 
\emph{\`a la Schiffmann-Vasserot}
and as the Grothendieck group of a category of
semisimple complexes on a quiver moduli stack of affine type $A_1^{(1)}$ \emph{\`a la Lusztig}
(or rather its algebraic equivalent description via quiver-Hecke algebras of type $\bfQ^{(1)}$)
 lifts to a graded monoidal triangulated equivalence between the corresponding 
derived categories.

\smallskip

According to \cite{L09}, the Langlands correspondence for $\bbP^1$ 
can be viewed as the composition of a chain of equivalences
\begin{align*}
\xymatrix{
\D^\b(\calB un)\ar[r]^-{\F}&
\D^\b(\calG r^+)\ar[r]^-{\B}&\D^\b(\calG r^-)\ar[r]^-{\C}&\D^b(\mod(\SS(\frakg^*[-2]\langle-2\rangle)\rtimes G^c))
\ar[d]^-{\D}\\
&&\D^\b(Coh(Loc\,_G))&\ar@{=}[l]\D^b(\mod(\SS(\frakg[1]\langle2\rangle)\rtimes G^c))
}
\end{align*}
where $\D$ is the Koszul duality mentioned above and  $\F$ is the tilting equivalence in \S\ref{sec:tilting}.
Thus, our equivalence can be viewed as a mixed non-equivariant version of the Langlands correspondence for $\bbP^1$
and for the linear group.

\smallskip

\subsection{Convention}
We'll say that a square of functors
%$$\xymatrix{\calA\ar[r]^A\ar[d]_-B&\calB\ar[d]^-C\\\calC\ar[r]^-D&\calD}$$
is commutative if it commutes up to a natural isomorphism. % $CA\Rightarrow DB$.
For any algebraic group $G$ or ring $\bfR$, we denote by $Z(G)$, $Z(\bfR)$ the centers of $G$ and $\bfR$.
For any commutative ring $\k$, let $H^\bullet_G$ be the cohomology of the classifying 
space of $G$ with $\k$-coefficients.
Unless specified otherwise, all modules are left modules.
We refer to Appendix \ref{sec:ASMG} for all conventions related to mixed geometry.

\smallskip

\subsection{Acknowledgements} Initial stages of this work were partly based on discussions with R. Rouquier.
 We would like to thank him for all these discussions.
 We are also grateful to S. Riche and O. Schiffmann for answering various questions.

\medskip

\section{Quiver-Hecke algebras}

Let $\k$ be a field which is algebraically closed of characteristic zero.
Let $F$ be any field. 

\subsection{Graded categories}

A \emph{graded $\k$-linear category} $\calC$ is a $\k$-linear 
additive category with a compatible system
of self-equivalences $\langle a\rangle$, with $a\in\bbZ$, called grading shift functors.
A \emph{graded functor} of graded $\k$-linear additive categories is a $\k$-linear functor $E$ 
with a compatible system
of natural equivalences $E\,\langle a\rangle\to \langle a\rangle\, E$.
All the additive categories we'll consider here are $\k$-linear.
To simplify, we'll omit the word \emph{$k$-linear} everywhere.
A functor of additive categories
is assumed to be $k$-linear.
If the category $\calC$ is triangulated, as well as the graded shift functors, then we'll say that $\calC$ is a
\emph{graded triangulated category}.
A monoidal structure on an additive category $\calC$ is the datum of a bifunctor $\otimes:\calC\times\calC\to\calC$,
an associativity constraint $\bfa$ and an object $\bf1$, called the unit, with an isomorphism
$\varepsilon : \bf1\otimes\bf1\to \bf1$ satisfying the pentagon axiom and such that
the functors $\bf1\otimes\bullet$ and $\bullet\otimes\bf1$ are fully faithful,
see, e.g., \cite[App.~A]{KKK18}.
If all functors involved in this definition are graded, we'll say that 
$(\calC,\otimes,\bfa,\bf1,\varepsilon)$ is a \emph{graded monoidal category}.
If the category $\calC$ is Abelian and the bifunctor $\otimes$ is exact, we says that the monoidal category is exact.
If the category $\calC$ is triangulated and the bifunctor $\otimes$ is triangulated, 
we says that the monoidal category is triangulated.
If $\otimes$, $\bf1$, $\bfa$ or $\varepsilon$ are obvious from the context, 
we may omit them and we write simply $(\calC,\otimes)$ or $\calC$.
A monoidal functor of monoidal categories is a functor $E$ with a natural isomorphism of bifunctors
$E(\bullet\otimes\bullet)\to E(\bullet)\otimes E(\bullet)$ and an isomorphism $E(\bf1)\to\bf1$ satisfying some well-known
axioms.

\smallskip

For any objects $A,B\in\calC$, the graded-homomorphisms space is the graded vector space
\begin{align*}
\bbHom_{\calC}(A,B)=\bigoplus_{a\in\bbZ}\Hom_{\calC}(A,B\langle a\rangle)\langle -a\rangle.
\end{align*}
%If no confusion is possible, we abbreviate
%$\Hom_{\calC}(A,B)^\bullet=\Hom_{\calC}(A,B)^{\langle\bullet\rangle}$.
A projective object $P$ of a graded $\k$-linear Abelian category $\calC$ is called a 
\emph{projective graded-generator}
if any object of $\calC$ is a quotient of a finite sum of $P\langle a\rangle$'s with $a\in\bbZ$. 
We'll also say that $\calC$ is \emph{graded-generated} by $P$. Then, the functor 
$M\mapsto\bbHom_\calC(P\,,\,M)$
is an equivalence from $\calC$ to the graded category of all finitely generated graded modules over
the graded $\k$-algebra $\bfR^\op$ opposite to $\bfR=\bbEnd_\calC(P)$, 
see, e.g., \cite[prop.~E.4]{AJS}.

\smallskip

Let $\calC^\fl,$ $\calC^\proj$ be the full graded subcategories of $\calC$ consisting of all 
finite length objects and all projective objects.
Let $\G_0(\calC^\fl)$ and $\K_0(\calC^\proj)$ be the Grothendieck group and the split Grothendieck group 
of the Abelian category $\calC^\fl$ and the additive category $\calC^\proj$.
For each object $M$, let $\lgroup M\rgroup$ be its class in the Grothendieck group.
The ring $\calA=\bbZ[q,q^{-1}]$ acts on $\G_0(\calC^\fl)$, $\K_0(\calC^\proj)$ with
$q=\langle 1\rangle.$

\smallskip

\begin{example}\hfill
\begin{itemize}[leftmargin=8mm]
\item[$\mathrm{(a)}$] 
Given a graded Noetherian $\k$-algebra $\bfR$, let $\calC=\bfR\-\mod$ be the category of 
all finitely generated graded $\bfR$-modules.
Both are equipped with the grading shift functors $\langle a\rangle$ such that
$(M\langle 1\rangle)_a=M_{a+1}$ for each integer $a$. We'll abbreviate
$\D^\b(\bfR)=\D^\b(\bfR\-\mod)$ and $\K^\b(\bfR)=\K^\b(\bfR\-\mod).$

\item[$\mathrm{(b)}$] 
Let $\K^\b(\calC)$ be the bounded homotopy category of a graded additive category $\calC$ with
grading shift functor $\langle \bullet\rangle$.
The category $\K^\b(\calC)$ is graded triangulated with the grading shift functor
given by $\langle \bullet\rangle$.
Let $[\bullet]$ be the cohomological shift functor.
Recall that
\begin{align*}
\bbHom_{\K^\b(\calC)}(A,B)=\bigoplus_{a\in\bbZ}\Hom_{\K^\b(\calC)}(A,B\langle a\rangle)\langle -a\rangle.
\end{align*}
We write
\begin{align*}
\Hom^\bullet_{\K^\b(\calC)}(A,B)=\bigoplus_{a\in\bbZ}\Hom^a_{\K^\b(\calC)}(A,B)[-a]
\quad,\quad
\Hom^a_{\K^\b(\calC)}(A,B)=\Hom_{\K^\b(\calC)}(A,B[a]).
\end{align*}

\item[$\mathrm{(c)}$] 
Let $\D^\b(\calC)$ be the bounded derived category of a graded Abelian category $\calC$
with grading shift functor $\langle \bullet\rangle$.
The category $\D^\b(\calC)$ is graded triangulated with the grading shift functor
given by $\langle \bullet\rangle$. 
We define
$\Hom^\bullet_{\D^\b(\calC)}(A,B)$ and
$\Hom^a_{\D^\b(\calC)}(A,B)$ as in (b). Let $\calC=\bfR\-\mod$ be as in (a).
A complex of graded $\bfR$-modules is \emph{perfect} if it is quasi-isomorphic
to a bounded complex of finitely generated projective graded $\bfR$-modules.
The \emph{category of perfect complexes} is the full graded triangulated subcategory of
$\D^\b(\calC)$ given by
$\D^\perf(\calC)=\K^\b(\calC^\proj)$. 
\end{itemize}

\end{example}

\smallskip

\subsection{Polynomial highest weight and affine properly stratified categories}\label{sec:HW}

Let $\calC$ be a graded $\k$-linear Abelian category with a finite set $\{L(\pi)\,;\,\pi\in\KP\}$ of simple objects which is complete and irredundant up to isomorphisms
and grading shifts. 
We'll assume that $\calC=\bfR\-\mod$, where $\bfR$ is a 
\emph{Schurian Noetherian Laurentian} graded algebra as in \cite[\S 2.1]{K15c}.
Hence, each simple object $L(\pi)$ admits a projective cover $P(\pi)\to L(\pi)$ with kernel $M(\pi)$, 
and for any object $M\in\calC$ the composition multiplicity of $L(\pi)$ in $M$ is the formal series
in $\bbZ((q))$ given by
$[M\,:\,L(\pi)]=\dim\bbHom_\calC(P(\pi)\,,\,M).$

\smallskip

We assume that the set $\KP$ is equipped with a partial \emph{preorder} $\leqslant$, and with a map
$\rho : \KP\to\Pi$ to a partial ordered set $\Pi$ such that we have
\begin{align}\label{order}
\pi\leqslant\pi'\iff\rho(\pi)\leqslant\rho(\pi').
\end{align}
For each $\pi\in \KP$, we define
the \emph{standard} and \emph{proper standard} objects $\Delta(\pi)$ and $\bar\Delta(\pi)$ such that
\iffalse%%%%%%%%%%%%%%%%%%%%%%
\begin{align*}
\Delta(\pi)=P(\pi)\,/\,K(\pi)=(f_{\leqslant\pi})^*(P(\pi))\quad,\quad \bar\Delta(\pi)=P(\pi)\,/\,\bar K(\pi),
\end{align*} 
where $K(\pi)\subset \bar K(\pi)\subset M(\pi)$ are defined by
\begin{align*}
K(\pi)=o_{\leqslant\pi}(M(\pi))=o_{\leqslant\pi}(P(\pi))\quad,\quad \bar K(\pi)=o_{<\pi}(M(\pi)).
\end{align*} 
\fi%%%%%%%%%%%%%%%%%%%%%%%%%
$\Delta(\pi)$ is the largest quotient of $P(\pi)$ such that all its composition factors $L(\sigma)$ satisfy 
$\sigma\leqslant\pi$, and $\bar\Delta(\pi)$ is the largest quotient of $P(\pi)$ which has $L(\pi)$ with multiplicity 1 
and such that all its other composition factors $L(\sigma)$ satisfy $\sigma<\pi$.
Let $K(\pi)$ be the kernel of the surjection $P(\pi)\to\Delta(\pi)$.

\smallskip

We say that an object $M\in\calC$ has a \emph{$\Delta$-filtration} if it has a separated filtration
whose subquotients are isomorphic to $\Delta(\pi)\langle a\rangle$ for some $\pi\in\KP$, $a\in\bbZ$.
Let $\calC^\Delta$ be the full graded additive subcategory of $\calC$ consisting of all 
$\Delta$-filtered objects.
For all $\xi\in\Pi$ we write
$\Delta(\xi)=\bigoplus_{\rho(\pi)=\xi}\Delta(\pi)$.

\smallskip

\begin{definition}[\cite{K15b}] The category $\calC$ is
\hfill
\begin{itemize}[leftmargin=8mm]
\item[$\mathrm{(a)}$]
 \emph{affine properly stratified} if,  for each $\pi\in\KP$,  $\xi\in\Pi$,
\begin{itemize}[leftmargin=8mm]
\item[$\mathrm{(1)}$] 
$K(\pi)$ has a $\Delta$-filtration with subquotients of the form $\Delta(\sigma)\langle a\rangle$ for $\sigma>\pi$ and $a\in\bbZ$,
\item[$\mathrm{(2)}$]  $\Delta(\xi)$ is finitely generated and flat as
a module over the algebra $\bbEnd_\calC(\Delta(\xi))^\op$,
\item[$\mathrm{(3)}$]  $\bbEnd_\calC(\Delta(\xi))^\op$ is a finitely generated commutative graded $\k$-algebra,
\end{itemize}

\item[$\mathrm{(b)}$]
 \emph{polynomial highest weight} if the map $\rho$ is bijective and,  for each $\pi\in\KP$,
\begin{itemize}[leftmargin=8mm]
\item[$\mathrm{(1)}$] 
$K(\pi)$ has a $\Delta$-filtration with subquotients of the form 
$\Delta(\sigma)\langle a\rangle$ for $\sigma>\pi$ and $a\in\bbZ$,
\item[$\mathrm{(2)}$]  $\Delta(\pi)$ is finitely generated and free as
a module over the algebra $\bbEnd_\calC(\Delta(\pi))^\op$,
\item[$\mathrm{(3)}$] $\bbEnd_\calC(\Delta(\pi))^\op$ 
is a graded polynomial $\k$-algebra.
\end{itemize}
\end{itemize}
\end{definition}

\smallskip

Since $\calC$ is an affine properly stratified category with a finite number
of isomorphism classes of simple objects, it
is graded equivalent to $R\-\mod$ for some \emph{affine properly stratified} algebra $R$ by \cite[cor.~6.8]{K15b}.
Let $I(\pi)$ denote the injective hull of $L(\pi)$ in the category of all graded $R$-modules
(not necessarily finitely generated).
We define $\nabla(\pi)$ to be the largest submodule of $I(\pi)$ such that all its composition factors $L(\sigma)$ satisfy 
$\sigma\leqslant\pi$, and $\bar\nabla(\pi)$ to be the largest submodule of $I(\pi)$ which has $L(\pi)$ with multiplicity 1 
and such that
all its other composition factors $L(\sigma)$ satisfy $\sigma<\pi$.
The objects $\nabla(\pi)$ and $\bar\nabla(\pi)$ are called the \emph{costandard} 
and the \emph{proper costandard} objects. Note that $\nabla(\pi)$ does not necessarily belong to $\calC$ in general.

\smallskip

For each subset $\Gamma\subset\KP$, let  $\calC_\Gamma\subset\calC$ be the full subcategory
of all objects whose composition factors are isomorphic to graded shifts of simple objects $L(\sigma)$ with 
$\sigma\in\Gamma$. Let 
$$(f_\Gamma)_*:\calC_\Gamma\to\calC\quad,\quad (f_\Gamma)^*:\calC\to\calC_\Gamma$$
be the obvious full embedding and its left adjoint.
The functor $(f_\Gamma)^*$ sends projectives to projectives because  $(f_\Gamma)_*$ is exact.
For each object $M$ we have
$(f_\Gamma)^*(M)=M\,/\,o_\Gamma(M)$, where $o_\Gamma(M)$ is the minimal subobject $N\subset M$ such that
$M/N\in\calC_\Gamma$.
We have the following exact functors between derived categories
$$(f_\Gamma)_*:\D^\b(\calC_\Gamma)\to\D^\b(\calC)
\quad,\quad
L(f_\Gamma)^*:\D^-(\calC)\to\D^-(\calC_\Gamma).$$
We say that $\Gamma$ is an \emph{order ideal} (resp. \emph{order coideal})
if for each $\sigma\in\Gamma$, $\pi\in\KP$ we have
$$\sigma\geqslant\pi\Longrightarrow\pi\in\Gamma\quad(\text{resp.}\ \sigma\leqslant\pi\Longrightarrow\pi\in\Gamma).$$
If $\Gamma$ is a finite order ideal, then the following hold \cite[lem.~7.18\,,\,prop.~7.20]{K15b}
\hfill
\begin{itemize}[leftmargin=8mm]
\item[$\mathrm{(a)}$] $(f_\Gamma)_*:\D^\b(\calC_\Gamma)\to\D^\b(\calC)$ is fully faithful,
\item[$\mathrm{(b)}$] $L(f_\Gamma)^*M=(f_\Gamma)^*M$ if $M\in\calC^\Delta$,
\item[$\mathrm{(c)}$] $(f_\Gamma)^*P(\pi)$ is a projective cover of
$L(\pi)$ in $\calC_\Gamma$ if $\pi\in\Gamma$, 
\item[$\mathrm{(d)}$]
$\Delta(\pi)=(f_\Gamma)_*(f_\Gamma)^*P(\pi)$ if $\Gamma=\{\leqslant\!\pi\}$.

\end{itemize}

\smallskip

\subsection{Kostant partitions}\label{sec:roots}
Let $\bfQ$ be a finite quiver associated with a Kac-Moody algebra of affine type.
Let $I$ be the set of vertices and $\Omega$ the set of arrows.
Let $\Phi$ be the root system of $\bfQ$.
We identify $I$ with a fixed set of simple roots $\{\alpha_i\,;\,i\in I\}$ in the obvious way.
Let $\{\Lambda_i\,;\,i\in I\}$ be the set of fundamental weights.
Let $\Phi_+\subset\Phi$ the subset of positive roots
and  $Q_+=\bigoplus_{i\in I}\bbN\alpha_i$.
Let $\hgt(\beta)$ be the height of an element $\beta$ of $Q_+$.
Fix $0\in I$ such that $\{\alpha_i\,;\,i\in I\,,\,i\neq 0\}$
is the set of simple roots of a root system $\Delta$ of finite type.
Let $\delta\in\Phi_+$ be the null root.
The set of real positive roots is $\Phi_+^\re=\Phi_{+-}\sqcup\Phi_{++}$, where
$$\Phi_{+-}=\{\beta+n\delta\,;\,\beta\in\Delta_+\,,\,n\in\bbN\}
\quad,\quad
\Phi_{++}=\{-\beta+n\delta\,;\,\beta\in\Delta_+\,,\,n\in\bbZ_{>0}\}.$$

\smallskip

We fix a total convex preorder on the set $\Phi_+$ such that for each $i\in I\setminus\{0\}$ we have
\begin{itemize}[leftmargin=8mm]
\item[$\mathrm{(a)}$] 
$\alpha_i>\alpha_i+\delta>\alpha_i+2\delta>\cdots>\bbZ_{>0}\,\delta>\cdots>-\alpha_i+2\delta>-\alpha_i+\delta,$
\item[$\mathrm{(b)}$] 
$m\delta>n\delta$ for all $m,n>0,$
\item[$\mathrm{(c)}$] each root in 
$\Phi_+^\re\,\setminus\,\{\alpha_i+n\delta\,,\,-\alpha_i+(n+1)\delta\,;\,n\in\bbN\}$ 
is either $>\alpha_i$ or $<-\alpha_i+\delta$.
\end{itemize}

\smallskip

For any subset $X\subset Q_+$ let $X^\beta$ be the set of tuples 
of elements of $X$ with sum $\beta$.
A \emph{Kostant partition} of $\beta$ is a decreasing tuple 
$\pi=(\pi_1\geqslant\pi_2\geqslant\cdots\geqslant\pi_r)$ in $\Phi_+^\beta$.
We may also write
$\pi=\big((\beta_l)^{p_l},\dots,(\beta_1)^{p_1},(\beta_0)^{p_0}\big)$, meaning that
$l$ is a positive integer and
$\beta_l>\dots>\beta_1>\beta_0$ are positive roots counted with multiplicities $p_l,\dots,p_1,p_0$ respectively.
A Kostant partition $\pi$ is a \emph{root partition} if the multiplicity of each decomposable affine positive root 
$n\delta$, with $n>1$, is zero.
Let $\Pi_\beta\subset\K\P\!_\beta$ be the sets of all root partitions and Kostant partitions of $\beta$.
Both sets are finite.
There is a unique map $\rho:\K\P\!_\beta\to\Pi_\beta$ such that the real positive roots have the same 
multiplicities in
$\pi$ and $\rho(\pi)$. We can view a Kostant partition $\pi$ as a pair $(\rho(\pi)\,,\,\mu)$ 
where $\mu$ is a partition of the multiplicity
of $\delta$ in $\rho(\pi)$, then the map $\rho$ is just the projection on the first factor.
The set $\Pi_\beta$ of root partitions is equipped with the \emph{bilexicographic} partial order, such that
\begin{align}\label{bilex}
\pi\leqslant\pi'\Longleftrightarrow \pi\leqslant_l\pi'\ \text{and}\  \pi\geqslant_r\pi',
\end{align} where
$\leqslant_l$ and $\leqslant_r$ are the left and right lexicographic orders.
We equip $\K\P\!_\beta$ with the partial preorder defined as in 
\eqref{order} using the map $\rho:\K\P\!_\beta\to\Pi_\beta$.
Let $S_p$ be the symmetric group. For any Kostant partition
$\pi=((\beta_l)^{p_l},\dots,(\beta_1)^{p_1},(\beta_0)^{p_0})$
we define the tuple
$\bar\pi=(p_l\beta_l,\dots,p_1\beta_1,p_0\beta_0)$
in $Q_+^\beta$.
Set
\begin{align}\label{norm}
S_\pi=S_{p_l}\times\cdots\times S_{p_1}\times S_{p_0}
\quad,\quad
n_\pi=\sum_{k=1}^lkp_k
\quad,\quad
r_\pi=\sum_{k=0}^lp_k.
%\quad,\quad s_\pi=\sum_{k=1}^lp_k(p_k-1)/2.
\end{align}

\smallskip

\subsection{Quiver-Hecke algebras and their module categories}\label{sec:KLR}
Let $\beta$ be any element in $Q_+$. Let $\bfR_\beta$ be the quiver-Hecke algebra over $\k$
associated with the category of $\beta$-dimensional representations of the quiver $\bfQ$. 
We choose the parameters of $\bfR_\beta$ to be given by the family of polynomials 
$Q_{ij}(u,v)=(-1)^{h_{ij}}(u-v)$ where
$h_{ij}=\sharp\{i\to j\in\Omega\}.$
Let $b=\hgt(\beta)$ be the height of  $\beta$. 
The quiver-Hecke algebra $\bfR_\beta$ is the associative, unital, graded $\k$-algebra generated by a complete set
of orthogonal idempotents $\{\e_\nu\,;\,\nu\in I^\beta\}$ and some
elements $x_i$, $r_k$ with $i\in[1,b]$, $k\in[1,b-1]$, satisfying the following relations
\smallskip
\begin{itemize}[leftmargin=8mm]
\item[$\mathrm{(a)}$] $x_ix_j=x_jx_i\quad,\quad x_i\,\e_\nu=\e_\nu \,x_i\quad,\quad
r_k\,\e_\nu=\e_{s_k\nu}\,r_k\quad,\quad r_k^2\,\e_\nu=Q_{\nu_k,\nu_{k+1}}(x_k,x_{k+1})\,\e_\nu,$
\item[$\mathrm{(b)}$] $(r_kx_l-x_{s_k(l)}r_k)\,\e_\nu=
\begin{cases}
-\e_\nu&\text{if} \ l=k$, $\nu_k=\nu_{k+1},\\\e_\nu&\text{if} \ l=k+1$, $\nu_k=\nu_{k+1},\\0&\text{otherwise},
\end{cases}$\\
\item[$\mathrm{(c)}$] $(r_{k+1}r_kr_{k+1}-r_kr_{k+1}r_k)\,\e_\nu=
\begin{cases}
{Q_{\nu_k,\nu_{k+1}}(x_k,x_{k+1})-Q_{\nu_{k},\nu_{k+1}}(x_{k+2},x_{k+1})\over x_k-x_{k+2}}\,\e_\nu&\text{if} \ \nu_k=\nu_{k+2},\\0&\text{otherwise}.
\end{cases}$
\end{itemize}
There is an antiautomorphism  of $\bfR_\beta$ which fixes all the generators.

\smallskip

Let $\calC_\beta=\bfR_\beta\-\mod$ be the graded Abelian category
of all finitely generated graded $\bfR_\beta$-modules.
For any tuple $\pi=(\pi_1,\dots, \pi_r)$ in $Q_+^\beta$
we set
$$\bfR_\pi=\bfR_{\pi_1}\otimes\dots\otimes \bfR_{\pi_{r-1}}\otimes \bfR_{\pi_r}
\quad,\quad
\calC_\pi=\bfR_\pi\-\mod.$$
There is an obvious inclusion $\bfR_\pi\subset\bfR_{\beta}$.
It yields an adjoint pair of exact induction/restriction functors 
\begin{align}\label{indres1}\Ind_\pi:\calC_\pi\to\calC_\beta
\quad,\quad
\Res_\pi:\calC_\beta\to \calC_\pi.
\end{align}
If $r=2$ we abbreviate $\circ=\Ind_\pi$.
By \cite[cor.~6.24]{K15c}, the category $\calC_\beta$ is affine properly stratified.
The sets of proper standard objects $\{\bar\Delta(\pi)\,;\,\pi\in\K\P\!_\beta\}$ and standard objects
$\{\Delta(\pi)\,;\,\pi\in\K\P\!_\beta\}$
are defined in \cite[(6.5), (6.6)]{K15c}. 
The map $\rho:\K\P\!_\beta\to\Pi_\beta$ is as in \S\ref{sec:roots}.
Let $L(\pi)$ be the top of the module $\bar\Delta(\pi)$ and $P(\pi)$ its projective cover.
For a future use, we fix an idempotent $\e(\pi)$ in $\bfR_\beta$ such that
\begin{align}\label{idempotent}P(\pi)=\bfR_\beta\, \e(\pi).\end{align}
Assume temporarily that $\beta\in\Phi^\re_+$ is real positive root.
Then, by \cite[\S8,9]{Mc18}, the graded module $L(\beta)$ is the unique self-dual irreducible module in $\calC_\beta$ 
which is \emph{cuspidal}, which means that for each $\alpha,\gamma\in Q_+$ such that $\beta=\alpha+\gamma$
we have $\Res_{\alpha,\gamma}(L(\beta))=0$ unless $\alpha\prec\beta\prec\gamma$.
The partial order is such that
$\alpha\prec\beta$ if and only if $\alpha$ is a sum of positive roots $<\beta$ and
$\beta\prec\gamma$ if and only if $\gamma$ is a sum of positive roots $>\beta$.
For each graded module $M$ we write
\begin{align*}M\langle p\rangle\,!=
\bigoplus_{s=1}^tM\langle k_s\rangle\ \text{where}\ 
\prod_{k=0}^{p-1}(1+q^2+\cdots+q^{2k})=q^{k_1}+q^{k_2}+\cdots+q^{k_t}.
\end{align*}
For any integer $p>0$ and $M\in\calC_\beta$,
we write $M^{\circ p}=\Ind_{(\beta^p)}\big(M^{\otimes p}\big)$.
Then, we have
$$L(\beta^p)=L(\beta)^{\circ p}\langle p(p-1)/2\rangle.$$
Further, there is an indecomposable summand $\Delta(\beta)^{(p)}$ of  $\Delta(\beta)^{\circ p}$ such that
\begin{align}\label{Delta0}
\Delta(\beta)^{\circ p}=\Delta(\beta)^{(p)}\langle p\rangle\,!.
\end{align}
By \cite[(6.6)]{K15c}, \cite[\S10,\S 24]{Mc18}, the standard and proper standard modules in $\calC_\beta$ are
\begin{align}\label{Delta}
\begin{split}
\Delta(\pi)=
\Delta(\beta_l)^{(p_l)}\circ\dots\circ\Delta(\beta_0)^{(p_0)}
\quad,\quad
\bar\Delta(\pi)=
L(\beta_l^{\,p_l})\circ\dots\circ 
L(\beta_0^{\, p_0}).
\end{split}
\end{align}

\smallskip

Now, let $\beta$ be any element in $Q_+$.
Let
$\Gamma\!_\beta\subset\K\P\!_\beta$ be the set of all Kostant partitions supported
on the set $\Phi_{++}$ and let $\Gamma\subset\K\P\!_\beta$ be any order ideal. We abbreviate
$$\calD_\Gamma=(\calC_\beta)_\Gamma\quad,\quad\calD_\beta=\calD_{\Gamma\!_\beta}.$$
Let $(f_\Gamma)^*$ be the left adjoint functor of the obvious exact full embedding
$(f_\Gamma)_*:\calD_\Gamma\to\calC_\beta.$
For any nested order ideal $\Gamma\!_1\subset\Gamma$ contained in $\Gamma\!_\beta$ we define similarly the functor
$(h_{\Gamma\!_1,\Gamma})_*:\calD_{\Gamma\!_1}\to\calD_\Gamma$
and its left adjoint $(h_{\Gamma\!_1,\Gamma})^*$.
We abbreviate $h_\Gamma=h_{\Gamma,\Gamma\!_\beta}$.
Let $Q(\pi)$ be the projective cover in $\calD_\beta$ of the simple graded module $L(\pi)$.
We have 
$Q(\pi)=(f_{\Gamma\!_\beta})^*P(\pi).$

\smallskip

\begin{proposition}\label{prop:Cinfty}
Assume that $\Gamma\!_1\subset\Gamma\subset\Gamma\!_\beta$ are order ideals.
\hfill
\begin{itemize}[leftmargin=8mm]
\item[$\mathrm{(a)}$] $\calD_\Gamma$ is polynomial highest weight with
finite global dimension.
\item[$\mathrm{(b)}$]
$(f_\Gamma)_*:\D^\b(\calD_\Gamma)\to\D^\perf(\calC_\beta)$ and 
$(h_{\Gamma\!_1,\Gamma})_*:\D^\b(\calD_{\Gamma\!_1})\to\D^\b(\calD_\Gamma)$ 
are fully faithful  functors.
\end{itemize}
\end{proposition}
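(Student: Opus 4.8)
The plan is to bootstrap everything from the two facts already on the table: that $\calC_\beta=\bfR_\beta\-\mod$ is affine properly stratified (\cite{K15c}), and that for a finite order ideal the localization functors $(f_\Gamma)^*,(f_\Gamma)_*$ enjoy the recollement properties recalled in \S\ref{sec:HW}. The whole leverage is that $\Gamma\subseteq\Gamma_\beta$ consists of Kostant partitions supported on $\Phi_{++}\subseteq\Phi_+^\re$, hence involving no imaginary root. For part (a) I would first note that every element of $\Gamma_\beta$, being supported on real roots, is already a root partition, so $\rho:\KP_\beta\to\Pi_\beta$ restricts to the identity on $\Gamma$; in particular $\rho|_\Gamma$ is a bijection onto $\rho(\Gamma)=\Gamma$, the bijectivity demanded of a polynomial highest weight category. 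Next I would identify the distinguished objects of $\calD_\Gamma$ with those of $\calC_\beta$: for $\pi\in\Gamma$ the module $\Delta_{\calC_\beta}(\pi)$ already lies in $\calD_\Gamma$ (its composition factors $L(\sigma)$ have $\sigma\leqslant\pi$, all in the order ideal $\Gamma$), it is a quotient of $Q(\pi)=(f_\Gamma)^*P(\pi)$, and one checks it equals $\Delta_{\calD_\Gamma}(\pi)$. Applying $(f_\Gamma)^*$ — exact on $\calC^\Delta$ by the property $L(f_\Gamma)^*M=(f_\Gamma)^*M$ recalled above — to $0\to K(\pi)\to P(\pi)\to\Delta(\pi)\to0$ identifies $K_{\calD_\Gamma}(\pi)$ with $(f_\Gamma)^*K(\pi)$, and transporting the $\Delta$-filtration of $K(\pi)$ through $(f_\Gamma)^*$ (using $(f_\Gamma)^*\Delta(\sigma)=0$ when $\sigma\notin\Gamma$, since $\Delta(\sigma)$ has simple top $L(\sigma)$ and $(f_\Gamma)^*$ is a left adjoint, and $(f_\Gamma)^*\Delta(\sigma)=\Delta_{\calD_\Gamma}(\sigma)$ when $\sigma\in\Gamma$) yields the filtration condition (1).

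For conditions (2) and (3) I would use that $(f_\Gamma)_*$ is fully faithful, so $\bbEnd_{\calD_\Gamma}(\Delta_{\calD_\Gamma}(\pi))=\bbEnd_{\calC_\beta}(\Delta(\pi))$, and then \eqref{Delta}, writing $\Delta(\pi)=\Delta(\beta_l)^{(p_l)}\circ\cdots\circ\Delta(\beta_0)^{(p_0)}$ with the $\beta_i$ pairwise distinct real roots. For a real root $\gamma$, the algebra $\bbEnd(\Delta(\gamma)^{(p)})$ is a graded polynomial ring of symmetric functions in $p$ variables over which $\Delta(\gamma)^{(p)}$ is free, by \cite[\S10,\S24]{Mc18} together with \eqref{Delta0}; and the off-diagonal pieces of the Mackey filtration of $\Res\,\Ind$ vanish by cuspidality of the $L(\beta_i)$ and convexity of the order, so $\bbEnd_{\calC_\beta}(\Delta(\pi))=\bigotimes_i\bbEnd(\Delta(\beta_i)^{(p_i)})$ is again a graded polynomial algebra, with $\Delta(\pi)$ finitely generated and free over it (freeness being preserved under $\circ=\bfR_\beta\otimes_{\bfR_{\bar\pi}}(-)$ since $\bfR_\beta$ is free of finite rank over $\bfR_{\bar\pi}$). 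This makes $\calD_\Gamma$ polynomial highest weight. Finite global dimension then follows by a dévissage over the finite poset $\Pi_\beta$: standard modules have finite projective dimension by descending induction via (1) (the maximal ones being projective), the proper standard modules too via the finite Koszul resolution over their polynomial endomorphism rings, and the simple modules by ascending induction on the composition series of the finite-dimensional proper standard modules; alternatively one cites \cite{K15b}.

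For part (b), full faithfulness of $(f_\Gamma)_*:\D^\b(\calD_\Gamma)\to\D^\b(\calC_\beta)$ is \cite[lem.~7.18]{K15b} for the finite order ideal $\Gamma$, so the only new point is that the essential image lies in $\D^\perf(\calC_\beta)$. Here I would use the elementary observation that in an affine properly stratified category with finitely many simples every standard module is a perfect complex: $\Delta(\sigma_{\max})=P(\sigma_{\max})$ is projective, and descending induction on $\Pi_\beta$, using that $K(\sigma)$ is filtered by strictly higher standards, shows every $\Delta_{\calC_\beta}(\sigma)$ has finite projective dimension. By part (a) the projective generators $Q(\pi)$ of $\calD_\Gamma$ are $\Delta$-filtered, $(f_\Gamma)_*$ is exact with $(f_\Gamma)_*\Delta_{\calD_\Gamma}(\sigma)=\Delta_{\calC_\beta}(\sigma)$, so each $(f_\Gamma)_*Q(\pi)$ is an iterated extension of perfect complexes, hence perfect; since $\calD_\Gamma$ has finite global dimension, $\D^\b(\calD_\Gamma)$ is generated as a triangulated category by the $Q(\pi)\langle a\rangle$, so $(f_\Gamma)_*$ factors through $\D^\perf(\calC_\beta)$. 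The statement for $(h_{\Gamma_1,\Gamma})_*$ is then \cite[lem.~7.18]{K15b} once more, applied inside the affine properly stratified category $\calD_\Gamma$ (which it is, being polynomial highest weight) to the finite order ideal $\Gamma_1\subseteq\Gamma$, using $(\calD_\Gamma)_{\Gamma_1}=\calD_{\Gamma_1}$.

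The hard part will be conditions (2) and (3) of the polynomial highest weight structure: that is where one genuinely needs the explicit representation theory of real-root standard modules from \cite{Mc18} and the vanishing of the off-diagonal Mackey terms. Everything else — the bijectivity of $\rho$, the filtration condition (1), finite global dimension, and both full-faithfulness statements together with the $\D^\perf$ refinement — is a formal consequence of the recollement apparatus of \cite{K15b}, the affine properly stratified structure of \cite{K15c}, and the finiteness of the poset $\KP_\beta$.
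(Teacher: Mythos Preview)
Your proposal is correct and follows essentially the same route as the paper: reduce to computing $\bbEnd_{\calC_\beta}(\Delta(\pi))$ via adjunction and the Mackey/cuspidality argument (the paper cites \cite[lem.~8.6]{Mc18} for this, obtaining $\k[z_1,\dots,z_{r_\pi}]^{S_\pi}$), then invoke the recollement machinery of \cite{K15b} for finite global dimension, full faithfulness, and the $\D^\perf$ refinement. The only minor difference is that the paper uses \cite[thm.~6.14]{K15c} to reduce the polynomial highest weight check to condition~(3) alone, whereas you verify freeness (condition~(2)) directly; otherwise the arguments coincide.
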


\begin{proof}
Since the category $\calC_\beta$ is affine properly stratified relatively to the map $\rho$, by \cite[prop.~5.16]{K15b}
the category $\calD_\Gamma$ is affine properly stratified relatively to restriction
$\rho|_{\Gamma}$ of $\rho$ to the subset $\Gamma$, with the sets of
proper standard and standard modules given by
$\{\bar\Delta(\pi)\,;\,\pi\in\Gamma\}$ and
$\{\Delta(\pi)\,;\,\pi\in\Gamma\}.$
The map $\rho|_\Gamma$ is the identity of $\Gamma$.
By \cite[thm.~6.14]{K15c}, to prove that the category $\calD_\Gamma$ 
is polynomial highest weight, it is enough to check that
$\bbEnd_{\calC_\beta}(\Delta(\pi))$ is a graded polynomial ring for each $\pi\in\Gamma$.
Let $\alpha\in \Phi^\re_+$ be any positive real root.
%The simple graded module $L(\alpha)$ is the unique self-dual cuspidal irreducible 
%graded $\bfR_\alpha$-modules.
%The standard object  $\Delta(\alpha)$ is the projective cover of $L(\alpha)$ in the category of  
%cuspidal graded $\bfR_\alpha$-modules, see  \cite[\S 6]{K15c}, \cite[\S 9,16]{Mc18}.
There is a central element of degree 2
\begin{align}\label{CENT3}
z_\alpha=x_1+\cdots+x_{\hgt(\alpha)}\in Z(\bfR_\alpha)
\end{align} 
such that the standard module
$\Delta(\alpha)$ is a free module over $\k[z_\alpha]$ and the action yields an isomorphism of graded rings, 
see \cite[\S 6]{K15c},  \cite[\S 15]{Mc18}
\begin{align}\label{isom0}\k[z_\alpha]=\bbEnd_{\calC_\alpha}(\Delta(\alpha))
\end{align}
By \cite[thm.~24.1]{Mc18}, the functoriality of induction and \eqref{isom0} yield a graded ring isomorphism
\begin{align}\label{ISOM0}
\bbEnd_{\calC_\beta}(\Delta(\alpha)^{(p)})=\k[z_1,z_2,\dots,z_p]^{S_p}.
\end{align}
Thus, by \eqref{Delta}, a short computation yields
\begin{align*}
\bbEnd_{\calC_\beta}(\Delta(\pi))
&\simeq
\bbHom_{\calC_{\bar\pi}}\big(\Delta(\beta_l)^{(p_l)}\otimes\dots\otimes\Delta(\beta_1)^{(p_1)}\otimes\Delta(\beta_0)^{(p_0)}\,,\,
\Res_{\bar\pi}(\Delta(\pi))\big)\\
&\simeq
\bbEnd_{\calC_{\bar\pi}}\big(\Delta(\beta_l)^{(p_l)}\otimes\dots\otimes\Delta(\beta_1)^{(p_1)}\otimes\Delta(\beta_0)^{(p_0)}\big)\\
&\simeq
\k[z_1,z_2,\dots,z_{r_\pi}]^{S_\pi}.
\end{align*}
The first isomorphism is the adjunction, the second  is \cite[lem.~8.6]{Mc18},
the third  is \eqref{ISOM0}.
The global dimension of $\calD_\Gamma$ is finite by \cite[cor.~5.25]{K15b}, because the set
$\K\P\!_\beta$ is finite.
This proves part (a). 

\smallskip

Since $\calD_\Gamma$ has a finite global dimension and any object of $\calD_\Gamma^\proj$
has a finite $\Delta$-filtration, to prove that $(f_\Gamma)_*$ maps into $\D^\perf(\calC_\beta)$ it is enough to 
observe that any standard module in $\calD_\Gamma$ has a finite projective dimension in $\calC_\beta$.
By \cite[lem.~5.17]{K15b}, this projective dimension is less than $\sharp\K\P\!_\beta$, which is finite.
Thus, part (b) follows from  \cite[prop.~7.20]{K15b}.
\end{proof}

\smallskip

\subsection{Quiver-Hecke algebras and the moduli stack of representations of quivers}
\label{sec:CK}

Let  $\ell$ be a prime number and $\k=\overline\bbQ_\ell$. 
Let $F$ be an algebraically closed field of characteristic prime to $\ell$.
See \S 2 below for a reminder on Artin stacks and constructible sheaves on stacks.

\smallskip

Let $\beta$ be any element in $Q_+$. 
Let $F\bfQ$ be the path algebra of the quiver $\bfQ$ over $F$.
Let $X_\beta$ be the variety of all $F\bfQ$-modules
in an $I$-graded $\beta$-dimensional $F$-vector space $V$.
It is isomorphic to the affine space $\bbA^{d_\beta}_{F}$ for some $d_\beta\in\bbN$.
Let $G_\beta$ be the group of all $I$-graded $F$-linear automorphisms of $V$.
The algebraic $F$-group $G_\beta$ acts on $X_\beta$ in the obvious way.
The moduli stack  of $\beta$-dimensional representations of $F\bfQ$ is the quotient stack 
$\calX_{\beta}=[X_{\beta}\,/\,G_\beta]$ over $F$. 
It is a smooth stack of finite type.

\smallskip

For each tuple $\nu=(\nu_1,\dots,\nu_r)$ in $Q_+^\beta$ we fix a flag of $I$-graded vector $F$-spaces
$V=V_0\supset\cdots\supset V_{r-1}\supset V_r=0$ in $V$
such that $\dim(V_{k-1}/V_k)=\nu_k$ for each $k=1,2,\dots,r$.
Let $X_{V_\bullet}\subset X_\beta$ be the set
of all representations of $F\bfQ$ in $V$ which preserve the flag $V_\bullet$.
The stabilizer of the flag $V_\bullet$ in $G_\beta$ is a parabolic subgroup $P_{V_\bullet}$ 
whose action preserves the subset $X_{V_\bullet}$ of $X_\beta$.
Let $\widetilde\calX_\nu$ be the quotient stack.
Set
$\calX_\nu=\calX_{\nu_1}\times\dots\times \calX_{\nu_{r-1}}\times \calX_{\nu_r}.$
The stacks $\calX_\nu$, $\widetilde\calX_\nu$ fit into an induction diagram
\begin{align}\label{ind-diag}
\xymatrix{\calX_\nu&\ar[l]_-{q_\nu}\widetilde\calX_\nu\ar[r]^-{p_\nu}&\calX_{\beta}}.
\end{align}
The map $p_\nu$ is representable and projective.
The map $q_\nu$ is a vector bundle stack whose fiber over the object $(M_1,\dots,M_{r-1},M_r)$
is isomorphic to the quotient stack, relatively to the trivial action,
\begin{align}\label{fiber}
\big[\bigoplus_{k<h}\Ext^1_{F\bfQ}(M_k,M_h)\,\big/\,\bigoplus_{k<h}\Hom_{F\bfQ}(M_k,M_h)\big].
\end{align}
See, e.g., \cite[prop.~6.2]{B12} for a proof of this fact.
We have the adjoint pair of functors  \cite[\S 9]{L}
\begin{align}\label{indres2}
\ind_\nu:\D^\b(\calX_\nu) \to \D^\b(\calX_\beta)
\quad,\quad
\res_\nu:\D^\b(\calX_\beta) \to \D^\b(\calX_\nu)
\end{align}
which is given by
\begin{align}\label{ind2}\ind_\nu(\calE)=(p_\nu)_!(q_\nu)^*(\calE)[\dim q_\nu]
\quad,\quad
\forall\calE\in\D^\b(\calX_\nu).\end{align}
Since the induction is a shift of a proper direct image and a smooth inverse image, 
it commutes with the Verdier duality functor $D$
%, i.e., we have
%\begin{align*}
%\ind_\nu(\calE)=(p_\nu)_*(q_\nu)^!(\calE)[-\dim q_\nu]
%\quad,\quad
%\forall\calE\in\D^\b(\calX_\nu).
%\end{align*}

\smallskip

Let $\calL_\beta$ be the sum of the  complexes 
$\ind_\nu(\k_{\calX_\nu})$ where $\nu$ runs over the set $I^\beta$. 
It is self-dual, semisimple, and  
it decomposes in the following way 
$$\calL_\beta=\bigoplus_{\pi\in\K\P\!_\beta}\calL(\pi)
\quad,\quad
\calL(\pi)=V(\pi)\otimes IC(\pi).$$
The $IC(\pi)$'s are self-dual irreducible perverse sheaves which are pairwise non isomorphic. 
The multiplicities $V(\pi)$'s are nonzero complexes of vector spaces with 0 differential. 
We define the graded additive category $\Par(\calX_\beta)$
as the strictly full additive subcategory of $\D^\b(\calX_\beta)$ generated by 
the direct summands of $\calL_\beta$ and all their cohomological shifts.
The grading shift functors $\langle\bullet\rangle$ on $\Par(\calX_\beta)$ are  the cohomological shift
functors $[\bullet]$.
The Verdier duality yields an antiautoequivalence $D$ of $\Par(\calX_\beta)$.
Set
$$\Par(\calX_\nu)=\Par(\calX_{\nu_1})\times \dots\times \Par(\calX_{\nu_{r-1}})\times \Par(\calX_{\nu_r}).$$
The functor $\ind_\nu$ maps the subcategory 
$\Par(\calX_\nu)$ of $\D^\b(\calX_\nu)$ to $\Par(\calX_\beta)$.
It equips the graded addtitive category 
$\Par(\calX)=\bigoplus_{\beta\in Q_+}\Par(\calX_\beta)$ with the structure of a graded monoidal category
$(\Par(\calX)\,,\,\circledast)$.

\smallskip

By \cite{VV11}, there is a graded $\k$-algebra isomorphism
\begin{align}\label{VV}\bfR_\beta=\End^\bullet_{\D^\b(\calX_\beta)}(\calL_\beta)^\op.\end{align}
%The antiautomorphism $\theta$ of $\bfR_\beta$ is given by the contravariant functoriality of the duality $D$.
Under the Yoneda composition, for each complex $\calE$ we view 
$\Hom^\bullet_{\D^\b(\calX_\beta)}(\calL_\beta,\calE)$
as a graded $\bfR_\beta$-module.
This yields a graded functor of graded additive categories
\begin{align}\label{PHI0}\Phi_\beta^\bullet:\D^\b(\calX_\beta)\to\calC_\beta\quad,\quad 
\calE\mapsto\Hom^\bullet_{\D^\b(\calX_\beta)}(\calL_\beta,\calE)\end{align}
such that for each $\nu\in I^\beta$ we have
$$\Phi_\beta^\bullet(\ind_\nu(\k_{\calX_\nu}))=\bfR_\beta\,\e_{\nu^\op}
\quad,\quad\nu^\op=(\nu_r,\nu_{r-1},\dots,\nu_1).$$
The labelling of the simple summands $IC(\pi)$ of $\calL_\beta$ is chosen such that we have
\begin{align}\label{PPP} \Phi_\beta^\bullet(IC(\pi))=P(\pi)\quad,\quad\forall\pi\in\KP_\beta.\end{align}
By Remark \ref{rem:mixte2}, the functor $\Phi_\beta^\bullet$ 
gives an equivalence of graded additive categories
\begin{align}\label{PHI}
\Par(\calX_\beta)\to\calC_\beta^\proj\quad,\quad
IC(\pi)\mapsto P(\pi)\quad,\quad\forall\pi\in\K\P\!_\beta.
\end{align}
A functor of additive categories yields a triangulated functor of the corresponding homotopy categories.
Thus \eqref{PHI} yields an equivalence of graded triangulated categories
$$\Phi^\bullet:\K^\b(\Par(\calX_\beta))\to\D^\perf(\calC_\beta)$$
such that
$\Phi_\beta^\bullet(\ind_\nu(\k_{\calX_\nu}))
=\bfR_{\nu_r}\circ\bfR_{\nu_{r-1}}\circ\cdots\circ\bfR_{\nu_1}.$
More generally, the following is easy to prove.

\smallskip

\begin{proposition}\label{prop:ind}
The graded functor $\Phi_\beta^\bullet$ extends to a graded functor of
graded monoidal categories
$(\Par(\calX_\beta),\circledast)\to(\calC_\beta^\proj,\circ^\op)$, i.e.,
there is an isomorphism of functors
$\Phi_\beta^\bullet\,\ind_\nu=
\Phi_{\nu_r}^\bullet\circ\Phi_{\nu_{r-1}}^\bullet\circ\cdots\circ\Phi_{\nu_1}^\bullet.$
\qed
\end{proposition}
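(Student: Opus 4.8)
The statement asserts that $\Phi_\beta^\bullet$ intertwines the geometric induction $\circledast$ on $\Par(\calX)$ with the (opposite) algebraic induction $\circ^\op$ on $\calC^\proj$. The plan is to exploit the fact that, under the equivalences \eqref{VV} and \eqref{PHI}, everything reduces to a statement about $\Ext$-groups between the semisimple complexes $\calL_\beta$, which are computed purely geometrically via the induction diagram \eqref{ind-diag}. First I would recall that by \eqref{PHI}, $\Phi_\beta^\bullet$ identifies $\ind_\nu(\k_{\calX_\nu})$ with the projective $\bfR_\beta\,\e_{\nu^\op}$, and that an arbitrary object of $\Par(\calX_\nu)$ is a summand of a sum of shifts of objects $\ind_{\nu^1}(\k)\boxtimes\cdots\boxtimes\ind_{\nu^r}(\k)$; hence by additivity it suffices to check the isomorphism of functors $\Phi_\beta^\bullet\circ\ind_\nu \simeq \Phi_{\nu_r}^\bullet\circ\cdots\circ\Phi_{\nu_1}^\bullet$ on generating objects, and then to check that this isomorphism is natural.

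\textbf{Key steps.} (1) Reduce, by transitivity of the induction diagrams, to the case $r=2$, i.e. to showing $\Phi_\beta^\bullet\circ\ind_{(\gamma,\alpha)} \simeq \Phi_\gamma^\bullet\circ^\op\Phi_\alpha^\bullet$ where $\beta=\gamma+\alpha$; the transitivity of $\circ$ on the algebraic side is standard, and on the geometric side it follows from the composition of induction diagrams, where the relevant fibered products of parabolic quotient stacks are identified using \eqref{fiber}. (2) For $\calE_1\in\Par(\calX_\gamma)$, $\calE_2\in\Par(\calX_\alpha)$, unwind the definition \eqref{ind2}: $\ind_{(\gamma,\alpha)}(\calE_1\boxtimes\calE_2) = (p_\nu)_!(q_\nu)^*(\calE_1\boxtimes\calE_2)[\dim q_\nu]$. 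Apply $\Hom^\bullet_{\D^\b(\calX_\beta)}(\calL_\beta,-)$ and use base change / adjunction $(p_\nu)_! \dashv p_\nu^!$ together with the self-duality of $\calL_\beta$ (so that $p_\nu^!\calL_\beta$ can be rewritten via $p_\nu^*\calL_\beta$ up to a shift, since $p_\nu$ is proper and the shift is built into $\ind$) to convert this into $\Hom^\bullet$ computed upstairs on $\widetilde\calX_\nu$. (3) Use that $q_\nu$ is a vector bundle stack to descend the $\Hom^\bullet$ computation from $\widetilde\calX_\nu$ to $\calX_\nu = \calX_\gamma\times\calX_\alpha$, where $\Hom^\bullet(\calL_\gamma\boxtimes\calL_\alpha,\calE_1\boxtimes\calE_2) = \Hom^\bullet(\calL_\gamma,\calE_1)\otimes\Hom^\bullet(\calL_\alpha,\calE_2)$ by a Künneth formula. (4) Finally identify the resulting $\bfR_\beta$-module structure on $\Phi_\gamma^\bullet(\calE_1)\otimes\Phi_\alpha^\bullet(\calE_2)$ with the one coming from $\Ind_\pi$, i.e. with $\bfR_\beta\otimes_{\bfR_{\gamma,\alpha}}(-)$; this is where the isomorphism \eqref{VV} is used in an essential way, since the $\bfR_\beta$-action on $\Hom^\bullet(\calL_\beta,\ind_\nu(-))$ is by Yoneda composition with endomorphisms of $\calL_\beta$, and one must match these with the standard generators $x_i, r_k, \e_\nu$ acting on the induced module. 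Checking naturality in $\calE_1,\calE_2$ is then a diagram chase, since every isomorphism used (base change, projection formula, Künneth) is natural.

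\textbf{Main obstacle.} The genuinely delicate point is step (4): verifying that the geometrically-constructed $\bfR_\beta$-module $\Phi_\beta^\bullet(\ind_\nu(\calE))$ is isomorphic \emph{as an $\bfR_\beta$-module} — not merely as a graded vector space — to the algebraically-induced module $\Ind_\pi(\Phi_{\nu_r}^\bullet(\calE_r)\otimes\cdots\otimes\Phi_{\nu_1}^\bullet(\calE_1))$. The underlying vector space identification is a formal consequence of steps (1)--(3), but tracking the action requires knowing precisely how the VV isomorphism \eqref{VV} realizes the KLR generators as operators on $\calL_\beta$ and how these restrict along the inclusion $\bfR_\pi\subset\bfR_\beta$; one wants that the geometric induction morphism $\bfR_\beta\,\e_{\nu^\op}\otimes_{\bfR_\pi}(-) \to \Phi_\beta^\bullet\ind_\nu(-)$ induced by the unit of the $(q_\nu,p_\nu)$-adjunction is an isomorphism of $\bfR_\beta$-modules. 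Since the paper calls this "easy to prove", I expect that in the actual proof one observes it suffices to check the statement on the generating projectives $\calE_i = \ind_{\nu^i}(\k)$, where both sides are explicit projectives $\bfR\cdot\e$ and the isomorphism is the obvious one coming from concatenation of $I$-sequences, $\e_{\mu}\otimes\e_{\mu'} \mapsto \e_{\mu\mu'}$, reducing everything to the already-known compatibility $\Phi_\beta^\bullet(\ind_\nu(\k_{\calX_\nu})) = \bfR_{\nu_r}\circ\cdots\circ\bfR_{\nu_1}$ recorded just before the proposition, plus the transitivity in step (1).
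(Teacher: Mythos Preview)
The paper does not actually prove this proposition: it is stated with a terminal \qed{} and prefaced by ``the following is easy to prove''. So there is no argument in the paper to compare your proposal against.

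Your outline is correct and is the standard way to establish this compatibility (it is essentially the argument behind \cite[\S 2.6]{VV11} and its variants). The reduction to $r=2$ by transitivity, the adjunction/base-change manipulation to pass through $\widetilde\calX_\nu$, and the K\"unneth identification on $\calX_\nu$ are all sound. You are also right that the substantive content is step~(4), and your resolution is the expected one: since $\Phi_\beta^\bullet$ is already an equivalence $\Par(\calX_\beta)\to\calC_\beta^\proj$ by \eqref{PHI}, and every object of $\Par(\calX_\nu)$ is a summand of a sum of shifts of $\ind_{\mu^1}(\k)\boxtimes\cdots\boxtimes\ind_{\mu^r}(\k)$, it suffices to check the isomorphism on these generators, where both sides are the explicit projectives $\bfR_\beta\,\e_{(\mu^r\cdots\mu^1)^\op}$ and the map is concatenation. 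Naturality then follows because morphisms between such projectives are given by right multiplication by elements of $\bfR_\beta$, and the geometric description of these elements in \eqref{VV} is manifestly compatible with the induction diagram. This is presumably what the paper means by ``easy''.
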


% The normalization for the Lusztig induction implies that in R_\beta
%  the generators r_1,...,r_b correspond to \nu_b,..,\nu_1 respectively.

\smallskip

\subsection{Relation with quantum groups} \label{sec:CB}

Let $\bfU_q(\frakn)$ be the Lusztig $\calA$-form of
the positive half of the quantized enveloping algebra of type $\bfQ$.
Let $\bfU_q(\frakn)_\beta$ be the $\beta$-weight subspace of $\bfU_q(\frakn)$ and
$\bfB_\beta$ its canonical basis.
Let $\bfA_q(\frakn)_\beta$ be the dual $\calA$-module of $\bfU_q(\frakn)_\beta$ and
$\bfB_\beta^*$ its dual canonical basis.
We view $\bfA_q(\frakn)_\beta$ as an $\calA$-submodule of 
$\bfU_q(\frakn)_\beta\otimes_{\calA}\bbQ(q)$ via the Lusztig pairing  on $\bfU_q(\frakn)_\beta$.
Set $\bfA_q(\frakn)=\bigoplus_{\beta\in Q_+}\bfA_q(\frakn)_\beta.$
By \cite{VV11}, for each $\beta\in Q_+$ there is an $\calA$-linear isomorphism 
$g:\K_0(\calC^\proj_\beta)\to \bfU_q(\frakn)_\beta$
which takes
$\{\lgroup P(\pi)\rgroup\,;\,\pi\in\K\P\!_\beta\}$ to the canonical basis $\bfB_\beta.$ 
The transpose yields an isomorphism 
$\G_0(\calC_\beta^\fl)\to\bfA_q(\frakn)_\beta$
which takes
$\{\lgroup L(\pi)\rgroup\,;\,\pi\in\K\P\!_\beta\}$ to the dual canonical basis $\bfB^*_\beta$.
Since any module in $\calC_\beta$ has a (maybe infinite) composition series, this map extends to
a $\bbZ((q^{-1}))$-linear map 
\begin{align*}
\widehat g:\G_0(\calC_\beta)\to\bfA_q(\frakn)_\beta\otimes_\calA\bbZ((q^{-1})).\end{align*}
If $\beta$ is a real positive root let $E(\beta)$ be the root vector of weight $\beta$.
The dual root vector is $E(\beta)^*=(1-q^{-2})E(\beta)$.
By \cite[thm.~18.2]{Mc18}, we have
\begin{align*}
\widehat g\lgroup \Delta(\beta)\rgroup=E(\beta)
\quad,\quad 
\widehat g\lgroup L(\beta)\rgroup=E(\beta)^*
.\end{align*}
Next, there is a canonical $\bbZ[q,q^{-1}]$-algebra isomorphism 
$f:\K_0(\Par(\calX))\to \bfU_q(\frakn)$
such that 
$\{f\lgroup IC(\pi)\rgroup\,;\,\pi\in\K\P\!_\beta\}=\bfB_\beta$. 
Consider the following diagram
\begin{align*}
\begin{split}
\xymatrix{
\K_0(\Par(\calX_\beta))\ar[r]^-{\Phi_\beta^\bullet}\ar[rd]_f&
\K_0(\calC^\proj_\beta)\ar[d]^-g\ar@{^{(}->}[r]&\ar[d]^-{\widehat{g}}\G_0(\calC_\beta)\\
&\bfU_q(\frakn)_\beta\ar@{^{(}->}[r]&\bfA_q(\frakn)_\beta\otimes_\calA\bbZ((q^{-1}))
}
\end{split}
\end{align*}
The right square  commutes by definition.
The left triangle commutes by Proposition \ref{prop:ind}, because
the quantum divided powers $E(\alpha_i)^{(p)}$ with $i=0,1$ and $p>0$ generate $\bfU_q(\frakn)$ and
we have
$$f\lgroup\k_{X_{p\alpha_i}}\rgroup=E(\alpha_i)^{(p)}
\quad,\quad
g\lgroup P(\alpha_i^p)\rgroup=E(\alpha_i)^{(p)}
\quad,\quad
\Phi^\bullet_{p\alpha_i}\lgroup\k_{X_{p\alpha_i}}\rgroup=\lgroup P(\alpha_i^p)\rgroup
.$$

\medskip

\section{The Kronecker quiver $\bfQ$}

From now on, let $\bfQ$ be the Kronecker quiver. 
We have $I=\{0,1\}$ and $\Omega=\{x,y\}$ with both arrows oriented from 1 to 0.
We have $\delta=\alpha_0+\alpha_1$ and
$$\Phi_{+-}=\{\gamma_n\,;\,n\in\bbN\}\quad,\quad
\Phi_{++}=\{\beta_n\,;\,n\in\bbN\}\quad,\quad
\gamma_n=\alpha_1+n\delta\quad,\quad\beta_n=\alpha_0+n\delta.$$
We fix the total convex preorder on the set $\Phi_+$ such that we have
$$\gamma_0>\gamma_1>\gamma_2>\cdots>\bbZ_{>0}\,\delta>\cdots>\beta_2>\beta_1>\beta_0
\quad,\quad
m\delta>n\delta
\quad,\quad
\forall m,n>0.$$
Fix an element $\beta$ in the set
$Q_{++}=\{n\alpha_1+m\alpha_0\in Q_+\,;\,n,m,m-n\geqslant 0\}.$ We write 
\begin{align}\label{beta}
\beta=n\alpha_1+m\alpha_0=r\alpha_0+n\delta\quad,\quad r=m-n. 
\end{align}
We have
$d_\beta=2nm$ and $G_\beta=GL_{n,F}\times GL_{m,F}.$
The set $\KP\!_\beta$ is equipped with the partial order $\leqslant$ defined in \eqref{order}, \eqref{bilex}.
Let $\Gamma\!_\beta$ be the order ideal consisting of all Kostant partitions of $\beta$ supported on the set $\Phi_{++}$.
An element of $\Gamma\!_\beta$ is a decreasing sequence $\pi$ in $\Phi_{++}$ of the form
$\pi=((\beta_l)^{p_l},\dots,(\beta_1)^{p_1},(\beta_0)^{p_0})$ such that
$r_\pi=r$ and $n_\pi=n$.

\smallskip

\subsection{The degeneration order for $\bfQ$}
The indecomposable $F\bfQ$-modules are partitioned into  the preprojective, preinjective 
and regular modules.
For each $k\in\bbN$, there is a unique indecomposable preprojective $\calP_k$ with dimension vector $\beta_k$ 
%the maps $x,$ $y$ are injective with distinct images. There is also a 
and an unique indecomposable preinjective $\calI_k$ with dimension vector $\gamma_k$.
%the maps $x,$ $y$ are surjective with distinct kernels. 
The regular indecomposables have dimension 
$(k+1)\delta$, and any such representation, denoted by $\calR_{k,z}$, is labelled by a point $z\in\bbP^1(F)$.
It is well-known that for any $k\leqslant h$, we have
%$$\Hom_{F\bfQ}(I_i\,,\,I_{j+1})=0,\quad\Ext^1_{F\bfQ}(I_j\,,\,I_i)=0\quad\text{for\ all\ } i\leqslant j.$$
\begin{align}\label{preprojective}
\Hom_{F\bfQ}(\calP_{k}\,,\,\calP_h)\simeq\k^{h-k+1}\quad,\quad
\Hom_{F\bfQ}(\calP_{h+1}\,,\,\calP_k)=0\quad,\quad\Ext^1_{F\bfQ}(\calP_k\,,\,\calP_h)=0.
\end{align}
Any representation $M\in X_{\beta}(F)$ is isomorphic to a representation of the form
\begin{align}\label{M}
M=\bigoplus_{k\geqslant 0}(\calI_k)^{\oplus\, i_{M,k}}\oplus
\bigoplus_{k\geqslant 0}\bigoplus_{z\in\bbP^1(F)}(\calR_{k,z})^{\oplus\, r_{M,k,z}}
\oplus\bigoplus_{k\geqslant 0}(\calP_k)^{\oplus\, p_{M,k}}\quad,\quad
p_{M,k}\,,\,r_{M,k,z}\,,\,i_{M,k}\in\bbN.
\end{align}
Set $r_{M,k}=\sum_{z\in\bbP^1(F)}r_{M,k,z}$ for each $k$. We define the partitions
\begin{align*}
p_M&=(p_{M,\geqslant 0}\,,\,p_{M,\geqslant 1}\,,\,\dots)\quad,\quad
i_M=(i_{M,\geqslant 0}\,,\,i_{M,\geqslant 1}\,,\,\dots),\\
r_M&=(r_{M,\geqslant 0}\,,\,r_{M,\geqslant 1}\,,\,\dots)\quad,\quad
r_{M,z}=(r_{M,\geqslant 0,z}\,,\,r_{M,\geqslant 1,z}\,,\,\dots).
\end{align*}
The \emph{rank} of $M$ is the integer $\nrk_M=n-i_{M,\geqslant 0}=m-p_{M,\geqslant 0},$
and its \emph{type} is the Kostant partition 
$$\pi=\big((\gamma_0)^{i_{M,0}},(\gamma_1)^{i_{M,1}},\dots,\delta^{\,r_{M,0}},(2\delta)^{r_{M,1}},\dots,
(\beta_1)^{p_{M,1}},(\beta_0)^{p_{M,0}}\bigr).$$
Note that
$\pi\in\Gamma\!_\beta$ if and only if $i_M=r_M=\emptyset.$
For each partitions $\lambda=(\lambda_a)$, $\mu=(\mu_a)$ and integer $s$ put
$$\lambda+s=(\lambda_1+s,\lambda_2+s,\dots)\quad,\quad
n_\lambda=\sum_{a\geqslant 1}\lambda_a\quad,\quad
\lambda\trianglelefteq\mu\Longleftrightarrow
\sum_{a=1}^b\lambda_a\leqslant\sum_{a=1}^b\mu_a\,,\,\forall b.$$
Let $Y_M\subset X_\beta$ be the $G_\beta$-orbit of $M$.
Necessary and sufficient conditions for the inclusion of orbits closures are given by Pokrzywa's theorem, 
see, e.g., \cite[thm.~3.1]{EEK99} for a more recent formulation.

\begin{proposition}\label{prop:degen}
For each  $M,N\in X_{\beta}(F)$, we have
\begin{align*}
Y_N\subset\bar Y_M\iff \begin{cases}
p_N+\nrk_N\trianglelefteq p_M+\nrk_M,\\
r_{N,z}+i_{N,\geqslant 0}\trianglerighteq r_{M,z}+i_{M,\geqslant 0},\quad\forall z\in\bbP^1(F),\\
i_N+\nrk_N\trianglelefteq i_M+\nrk_M.
\end{cases}
\end{align*}
\qed
\end{proposition}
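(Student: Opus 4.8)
The plan is to derive the statement from Pokrzywa's closure-hierarchy theorem for matrix pencils, in the formulation of \cite[thm.~3.1]{EEK99}, by translating the invariants used there into the combinatorial data $p_M$, $i_M$, $(r_{M,z})_{z\in\bbP^1(F)}$, $\nrk_M$ attached to $M$ above. There is no geometry to do beyond invoking that theorem; the content of the proof is the translation.

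First I would set up the classical dictionary identifying $M\in X_\beta(F)$ -- a pair of linear maps $V_1\to V_0$, i.e., an $m\times n$ pencil -- with its Kronecker canonical form. Under this identification the indecomposable summands occurring in \eqref{M} are the familiar pencil blocks: the preprojectives $\calP_k$ are the singular column blocks $L_k$ (right minimal indices), the preinjectives $\calI_k$ are the transposed blocks $L_k^{\mathrm{T}}$ (left minimal indices), and $\calR_{k,z}$ is the Jordan block of size $k+1$ at the point $z\in\bbP^1(F)=F\cup\{\infty\}$ (elementary divisors, those at $\infty$ being the divisors of the pencil at infinity). Hence $p_M$, $i_M$, $(r_{M,z})_z$ are precisely the partitions of right minimal indices, left minimal indices and elementary-divisor degrees at the points of $\bbP^1(F)$, $\nrk_M$ is the generic rank of the pencil, and $Y_N\subset\bar Y_M$ is the pencil-degeneration relation analysed in \cite{EEK99}.

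The main step is then to match the three displayed inequalities with the hierarchy conditions of \cite[thm.~3.1]{EEK99}. The most transparent way to do this is to route through $\Hom$-dimensions. From \eqref{preprojective} and its standard companions ($\dim\Hom(\calP_k,\calI_h)=k+h$, $\dim\Hom(\calP_k,\calR_{h,z})=h+1$, $\dim\Hom(\calR_{k,z},\calR_{h,z})=\min(k,h)+1$, $\dim\Hom(\calR_{k,z},\calR_{h,z'})=0$ for $z\neq z'$, $\dim\Hom(\calR_{k,z},\calI_h)=k+1$, and their source-sink duals) a short telescoping computation, using $\dim\Hom(\calP_0,M)=\dim M_0=m$ and $\nrk_M+i_{M,\geq 0}=n$, gives for all $b\geq 1$
$$\sum_{a=1}^{b}(p_M+\nrk_M)_a=bn+m-\dim\Hom(\calP_b,M),\qquad \sum_{a=1}^{b}(r_{M,z}+i_{M,\geq 0})_a=\dim\Hom(\calR_{b-1,z},M),$$
and, dually, an expression of $\sum_{a=1}^{b}(i_M+\nrk_M)_a$ through $\dim\Hom(M,\calI_b)$. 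Therefore the three conditions of the Proposition are equivalent, in order, to $\dim\Hom(\calP_k,N)\geq\dim\Hom(\calP_k,M)$, $\dim\Hom(\calR_{k,z},N)\geq\dim\Hom(\calR_{k,z},M)$ and $\dim\Hom(N,\calI_k)\geq\dim\Hom(M,\calI_k)$ for all $k\geq 0$ and all $z\in\bbP^1(F)$. Since $\underline{\dim}N=\underline{\dim}M$, the Auslander-Reiten formula on the hereditary algebra $F\bfQ$ (with $\tau\calP_k=\calP_{k-1}$, $\tau\calI_k=\calI_{k+1}$, $\tau\calR_{k,z}=\calR_{k,z}$) promotes these one-sided families to the full set of inequalities $\dim\Hom(X,N)\geq\dim\Hom(X,M)$ and $\dim\Hom(N,X)\geq\dim\Hom(M,X)$ over all indecomposables $X$, and by Pokrzywa's theorem \cite[thm.~3.1]{EEK99} this set holds exactly when $Y_N\subset\bar Y_M$. (Alternatively, one checks directly that the displayed $\Hom$-inequalities are the conditions of \cite[thm.~3.1]{EEK99} rewritten.)

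I expect the delicate point to be this last reconciliation with \cite[thm.~3.1]{EEK99}: that reference treats the point at infinity separately and uses a particular bookkeeping of the minimal indices, so one must verify that its conditions are genuinely invariant under $\mathrm{PGL}_2(F)$ on $\bbP^1(F)$ and that no extra integrality constraint (of the kind appearing in closure theorems for singular pencils) is being suppressed. A self-contained alternative that avoids \cite{EEK99} is to prove directly that for the Kronecker algebra the degeneration order equals the $\Hom$-order -- which follows from the short classification of minimal degenerations of Kronecker representations -- and then conclude with the $\Hom$-dimension computation above.
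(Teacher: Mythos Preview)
Your approach is essentially the paper's own: the paper states the proposition with a \qed and no proof, simply citing Pokrzywa's theorem via \cite[thm.~3.1]{EEK99} as the source of the necessary and sufficient conditions. Your proposal supplies the dictionary between Kronecker canonical form and the quiver-theoretic invariants that the paper leaves implicit, so it is strictly more detailed than what appears there.
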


\smallskip

To each Kostant partition $\pi=((\beta_l)^{p_l},\dots,(\beta_1)^{p_1},(\beta_0)^{p_0})$
in $\Gamma\!_\beta$ we associate the preprojective $F\bfQ$-module of type $\pi$ given by
$\calP_\pi=(\calP_l)^{\oplus\, p_l}\oplus\cdots\oplus(\calP_1)^{\oplus\, p_1}
\oplus(\calP_0)^{\oplus\, p_0}$.
For any subset $\Gamma\subset\Gamma\!_\beta$ let
$$Y_\Gamma=\bigsqcup_{\pi\in\Gamma}Y_{\calP_\pi}\quad,\quad
\calY_\Gamma=[Y_\Gamma\,/\,G_\beta].$$
We'll need the following locally closed inclusions 
$$i_\Gamma:Y_\Gamma\to Y_\beta
\quad,\quad 
j_\beta:Y_\beta\to X_\beta
\quad,\quad 
j_\Gamma=j_\beta\circ i_\Gamma:Y_\Gamma\to X_\beta
.$$
Write $\calM_\Gamma=(j_\Gamma)^*\calL_\beta$ and $\calM_\beta=\calM_{\Gamma_\beta}$.
We abbreviate  $i_\pi=i_{\{\pi\}}$, $j_\pi=j_{\{\pi\}}$ and
$$Y_\beta=Y_{\Gamma\!_\beta}\quad,\quad \calY_\beta=\calY_{\Gamma\!_\beta}\quad,\quad
Y_\pi=Y_{\{\pi\}}\quad,\quad
\calY_\pi=\calY_{\{\pi\}}.$$

\smallskip

\begin{proposition} \label{prop:inftyX} \hfill
\begin{itemize}[leftmargin=8mm]
\item[$\mathrm{(a)}$] $i_\Gamma$, $j_\Gamma$ are open for each
order ideal $\Gamma\subset\Gamma\!_\beta$.
\item[$\mathrm{(b)}$] 
$Y_\sigma\subset \bar Y_\pi\Longrightarrow\sigma\geqslant\pi$ for each $\pi,\sigma\in\Gamma\!_\beta$.
%\item[$\mathrm{(c)}$] $\{Y_\pi\,;\,\pi\in\Gamma\!_\beta\}$ is an even stratification of $Y_\beta$.
\end{itemize}
\end{proposition}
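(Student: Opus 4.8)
The plan is to prove both statements by translating the orbit geometry into the combinatorics of Kostant partitions via Pokrzywa's theorem (Proposition \ref{prop:degen}), exploiting the fact that the relevant modules are preprojective, so most of the partition data in the degeneration criterion is trivial.

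For part (a), the key observation is that for a preprojective module $M=\calP_\pi$ one has $i_M=r_M=\emptyset$, and the rank satisfies $\nrk_M=n-i_{M,\geqslant 0}=n$ and also $m-p_{M,\geqslant 0}=n$, i.e.\ $p_{M,\geqslant 0}=r$. So the partition $p_M$ has exactly $r$ parts counted with the convention of \eqref{M}, and $p_M+\nrk_M=p_M+n$. First I would show that $Y_\beta=Y_{\Gamma\!_\beta}$ is open in $X_\beta$: a representation $N$ degenerates to nothing outside the preprojective locus precisely when its rank $\nrk_N$ is maximal, and by the third inequality in Proposition \ref{prop:degen} (with $M=\calP_\pi$, so $i_M+\nrk_M$ has all ``mass'' concentrated, giving $i_M+\nrk_M=(n,0,0,\dots)$ essentially) one sees that $Y_N\subset\bar Y_{\calP_\pi}$ forces $i_N=\emptyset$ and similarly $r_N=\emptyset$; hence the complement of $Y_\beta$ is the union of orbit closures of representations with a preinjective or regular summand, which is closed. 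Thus $j_\beta$ is open. Then for an order ideal $\Gamma\subset\Gamma\!_\beta$, openness of $i_\Gamma:Y_\Gamma\to Y_\beta$ follows once we know part (b): being an order ideal means $\Gamma$ is downward closed for $\leqslant$, and if $Y_\sigma\subset\bar Y_\pi$ with $\pi\in\Gamma$ then $\sigma\geqslant\pi$ forces $\sigma\in\Gamma$ (here I must double-check the direction: an order ideal in \S\ref{sec:HW} satisfies $\sigma\geqslant\pi\Rightarrow\pi\in\Gamma$, so I actually want that the \emph{closure} of $Y_\Gamma$ inside $Y_\beta$ only adds orbits $Y_\sigma$ with $\sigma$ strictly larger in the degeneration order, which lie in $\Gamma$ as well since order ideals here are up-closed in $\leqslant$). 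So $Y_\Gamma$ is closed under generization within $Y_\beta$, hence open in $Y_\beta$; composing with $j_\beta$ gives $j_\Gamma$ open. The bookkeeping on which way the order ideal closes is the one place to be careful.

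For part (b), suppose $Y_\sigma\subset\bar Y_\pi$ with $\pi,\sigma\in\Gamma\!_\beta$. Apply Proposition \ref{prop:degen} with $N=\calP_\sigma$, $M=\calP_\pi$. The second and third conditions are automatic (both sides have $i=\emptyset$, $r=\emptyset$, and equal rank $n$), and the first condition reads $p_\sigma+n\trianglelefteq p_\pi+n$, i.e.\ $p_\sigma\trianglelefteq p_\pi$ in dominance order. Now I would compare this with the bilexicographic order \eqref{bilex} on $\Pi_\beta$ restricted to $\Gamma\!_\beta$. Since a Kostant partition in $\Gamma\!_\beta$ is determined by the multiplicities $p_0,p_1,\dots$ of the roots $\beta_0<\beta_1<\cdots$, and since for preprojective modules $p_{\calP_\pi,k}$ records exactly the multiplicity of $\beta_k$ in $\pi$, the partition $p_\pi$ (in the notation of the paragraph containing \eqref{M}, where $p_{M,\geqslant a}=\sum_{k\geqslant a}p_{M,k}$) is the ``staircase'' partition conjugate-dual to the multiplicity sequence. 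A standard fact is that dominance $p_\sigma\trianglelefteq p_\pi$ on these cumulative-sum partitions is equivalent to a lexicographic comparison of the multiplicity sequences read from the largest root down — which is precisely $\sigma\geqslant\pi$ in the convex preorder induced order, hence in the bilexicographic order since on $\Gamma\!_\beta$ the left and right lexicographic conditions in \eqref{bilex} coincide (all parts are from the single chain $\Phi_{++}$). I would make this last equivalence precise by a short direct computation: unwinding $p_{\calP_\pi,\geqslant a}=\sum_{k\geqslant a}p_k$ and checking that $\sum_{a=1}^b p_{\sigma,\geqslant a}\leqslant\sum_{a=1}^b p_{\pi,\geqslant a}$ for all $b$ translates into the defining inequalities for $\sigma\geqslant_l\pi$.

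The main obstacle I anticipate is not conceptual but the precise matching of conventions: getting the dominance order on the cumulative partitions $p_M$ to line up with the bilexicographic order \eqref{bilex} on $\Pi_\beta$, including the direction of all inequalities and the harmless-looking shifts by $\nrk$ and the ``$+s$'' conventions for partitions. Once that dictionary is pinned down, both parts are short. A secondary point is confirming that $\bar Y_\pi\cap Y_\beta$ contains only orbits of preprojective modules (so that we may indeed take $N=\calP_\sigma$); this follows from the first step of part (a), namely that $Y_\beta$ is open and $G_\beta$-stable, so $\bar Y_\pi\cap Y_\beta$ is a union of orbits $Y_\sigma$ with $\sigma\in\Gamma\!_\beta$.
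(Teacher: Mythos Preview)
Your approach to (b) and the reduction of (a) to openness of $j_\beta$ both match the paper. For (b), the paper likewise applies Pokrzywa's criterion to get $p_\sigma\trianglelefteq p_\pi$, then passes to the transpose partitions $\lambda_M=(l^{p_{M,l}},\dots,0^{p_{M,0}})$ (reversing dominance) and notes that, since $\sum_k p_{M,k}=\sum_k p_{N,k}=r$, dominance is equivalent to $\lambda_N-\lambda_M$ being a sum of positive roots, which implies $\sigma\geqslant\pi$. Two small overstatements in your write-up: dominance is strictly stronger than lexicographic comparison, not equivalent to it; and the left and right lexicographic orders do not ``coincide'' on $\Gamma_\beta$. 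What is true, and all that is needed, is that dominance between partitions of equal size implies both halves of \eqref{bilex} simultaneously.

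Your argument that $Y_\beta$ is open in $X_\beta$, however, has the degeneration running the wrong way. You take $M=\calP_\pi$ preprojective and try to show that any $N$ with $Y_N\subset\bar Y_{\calP_\pi}$ is preprojective. Even if this held, it would say that orbit closures of points of $Y_\beta$ stay in $Y_\beta$---a closedness statement, not openness. And it is false: for $r\geqslant 1$ and $n\geqslant 1$ there is a dense preprojective orbit in $X_\beta$, whose closure is all of $X_\beta$ and therefore contains modules with regular summands (e.g.\ $\calP_0^{\oplus r}\oplus\calR$ for $\calR$ regular of dimension $n\delta$). Your final sentence, that the complement of $Y_\beta$ is a union of orbit \emph{closures} and hence closed, does not follow from the preceding claim (it would require the opposite implication), and in any case that union is infinite because of the $\bbP^1$ of regular modules, so closedness is not automatic.

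The paper avoids Pokrzywa for this step and argues directly: $M$ is preprojective iff $\Hom_{F\bfQ}(R,M)=0$ for every regular $R$ of dimension $\delta$; by Chevalley semi-continuity this is open in $M$ for each fixed $R$, and since such $R$ are parametrized by the projective variety $\bbP^1_F$, the locus where the vanishing holds for \emph{all} $R$ is still open. If you prefer to stay with Pokrzywa, the correct direction does go through: assuming $N$ preprojective and $Y_N\subset\bar Y_M$, the second condition in Proposition \ref{prop:degen} reads $0\trianglerighteq r_{M,z}+i_{M,\geqslant 0}$ for every $z$, forcing $r_{M,z}=\emptyset$ and $i_{M,\geqslant 0}=0$, hence $M$ preprojective. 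This shows $Y_\beta$ is stable under generization, hence open.
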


\begin{proof}
To prove (b), let $\pi,\sigma\in\Gamma\!_\beta$ and $M,N\in X_\beta(F)$ be of type $\pi,\sigma$.
Then, we have $\nrk_N=\nrk_M$.
%Part (a) is a consequence of the following relation which is easy to check
%$$Y_N\subset \overline Y_M\,,\,\sigma\in\Gamma\!_\beta\Longrightarrow \pi\in\Gamma\!_\beta.$$
By Proposition \ref{prop:degen}, we have
$Y_N\subset \overline Y_M$ if and only if $p_N\trianglelefteq p_M.$
Fix an integer $l$ such that $p_{M,>l}=p_{N,>l}=0$ and consider the partitions
$$\lambda_M=\big(l^{p_{M,l}},\dots,1^{p_{M,1}},0^{p_{M,0}}\big)
\quad,\quad
\lambda_N=\big(l^{p_{N,l}},\dots,1^{p_{N,1}},0^{p_{N,0}}\big).$$
The partitions $1+\lambda_M$, $1+\lambda_N$ are transpose of the partitions $p_M$, $p_N$.
Note that we have $\sum_kp_{M,k}=\sum_kp_{N,k}$.
We deduce that
\begin{align*}
Y_N\subset \overline Y_M
\iff \lambda_N\trianglerighteq \lambda_M
\iff \lambda_N-\lambda_M\text{\ is\ a\ sum\ of\ positive\ roots}
\ \,\Longrightarrow\,\,\sigma\geqslant\pi.
\end{align*}
By (b), to prove (a) it is enough to check that the inclusion $Y_\beta\subset X_\beta$ is open.
To do so, note that a representation $M\in X_\beta(F)$ is preprojective if and only if
$\Hom_{F\bfQ}(R,M)=0$ for any regular representation $R$ of dimension $\delta$.
Note also that $\Hom_{F\bfQ}(R,M)$ is the fiber at $(R,M)$ of the projection
$$\{(\phi,R,M)\,;\,\phi\in\Hom_{F\bfQ}(R,M)\,,\,R\in X_\delta\,,\,M\in X_\beta \}\to 
X_\delta\times X_\beta,$$
and the left hand side is the set of closed points of a closed $F$-subset of
$\bbA^{m+n}_F\times X_\delta\times X_\beta$.
Chevalley's semi-continuity theorem implies that
the set of all $(R,M)$ such that $\Hom_{F\bfQ}(R,M)=0$ is open. % for a fixed representation $R$ is open.
Since the moduli space of the regular representations of dimension $\delta$ is isomorphic to $\bbP^1_F$, hence
is projective, we deduce that the set of all $M\in X_\beta(F)$ such that $\Hom_{F\bfQ}(R,M)=0$ for any regular 
representation $R$ of dimension $\delta$ is also open.
\end{proof}

\iffalse%%%%%%%%%%%%%%%
\begin{remark}
The opposite degeneration order on $\{Y_\pi\,;\,\pi\in\Gamma_\beta\}$
and the bilexicographic order \eqref{bilex} on $\Gamma_\beta$ may not be equivalent.
However, the category $\calD_\beta$ is polynomial highest weight for both partial orders.
\end{remark}
\fi%%%%%%%%%%%%%%%%%

\smallskip

\begin{corollary}
$(j_\beta)^*\calL(\pi)=0$ unless $\pi\in\Gamma\!_\beta$, hence
$\calM_\beta=\bigoplus_{\pi\in\Gamma\!_\beta}(j_\beta)^*\calL(\pi)$.
\end{corollary}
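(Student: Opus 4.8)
The plan is to unwind the definitions of $\calL_\beta$, $\calL(\pi)$, and $j_\beta$, and then invoke Proposition~\ref{prop:inftyX}(b) together with the support conditions. Recall that $\calL_\beta=\bigoplus_{\pi\in\K\P\!_\beta}\calL(\pi)$ with $\calL(\pi)=V(\pi)\otimes IC(\pi)$, and $IC(\pi)$ is the intersection cohomology complex of the closure $\overline Y_\pi$ of the orbit $Y_\pi$ of the preprojective-or-mixed representation of type $\pi$ (shifted appropriately). Since $j_\beta:Y_\beta\to X_\beta$ is the open inclusion of the locus of preprojective representations, $(j_\beta)^*\calL(\pi)$ is supported on $Y_\beta\cap\overline Y_\pi$. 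The point is that this intersection is empty unless $\overline Y_\pi$ meets the preprojective locus, i.e.\ unless $\pi\in\Gamma\!_\beta$.

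First I would record that the open subset $Y_\beta\subset X_\beta$ is a union of $G_\beta$-orbits (being $G_\beta$-stable), so $Y_\beta\cap\overline Y_\pi$ is a union of orbits $Y_\sigma$ with $Y_\sigma\subset\overline Y_\pi$ and $\sigma$ the type of a preprojective representation, i.e.\ $\sigma\in\Gamma\!_\beta$. Next, I would note that the orbit $Y_\pi$ itself lies in $\overline Y_\pi$, and the generic point of $IC(\pi)$'s support is $Y_\pi$; more precisely $(j_\beta)^*\calL(\pi)\neq 0$ forces $Y_\beta\cap Y_\pi\neq\emptyset$, hence (since $Y_\beta$ is a union of orbits) $Y_\pi\subset Y_\beta$, which means $\pi\in\Gamma\!_\beta$. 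Conversely, for $\pi\in\Gamma\!_\beta$ the orbit $Y_\pi=Y_{\calP_\pi}$ is contained in $Y_\beta$, and by Proposition~\ref{prop:inftyX}(b) every $Y_\sigma\subset\overline Y_\pi$ has $\sigma\geqslant\pi$, so in fact $\overline Y_\pi\cap Y_\beta=\bigsqcup_{\sigma\geqslant\pi,\,\sigma\in\Gamma\!_\beta}Y_\sigma$ is a nonempty open subset of $\overline Y_\pi$ meeting $Y_\pi$, whence $(j_\beta)^*\calL(\pi)\neq 0$. Therefore $(j_\beta)^*\calL(\pi)=0$ precisely when $\pi\notin\Gamma\!_\beta$.

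The last display then follows immediately: since $\calL_\beta=\bigoplus_{\pi\in\K\P\!_\beta}\calL(\pi)$ and $(j_\beta)^*$ is additive, we get $\calM_\beta=(j_\beta)^*\calL_\beta=\bigoplus_{\pi\in\K\P\!_\beta}(j_\beta)^*\calL(\pi)=\bigoplus_{\pi\in\Gamma\!_\beta}(j_\beta)^*\calL(\pi)$, using that the remaining summands vanish. I expect the only mildly delicate point to be the nonvanishing direction — checking that $(j_\beta)^*\calL(\pi)\neq 0$ for $\pi\in\Gamma\!_\beta$ — but this is purely formal once one knows $Y_\pi$ is an orbit in the support of $IC(\pi)$ and $Y_\pi\subset Y_\beta$; the vanishing direction is where the geometric input of Proposition~\ref{prop:inftyX}(b), that preprojective orbit closures degenerate only to preprojective orbits, is used. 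Strictly speaking the statement as written only asserts the vanishing and the resulting decomposition, so the nonvanishing half can be omitted; the essential content is the observation that $\overline Y_\pi\cap Y_\beta=\emptyset$ whenever $\pi\notin\Gamma\!_\beta$, which is a direct consequence of the openness of $Y_\beta\hookrightarrow X_\beta$ (Proposition~\ref{prop:inftyX}(a)) and the fact that $Y_\beta$ is saturated under orbit closures within itself.
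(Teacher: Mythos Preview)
Your argument rests on the assumption that $IC(\pi)$ is the intersection cohomology complex of an orbit closure $\overline{Y_\pi}$ for every $\pi\in\KP_\beta$. This is not how $IC(\pi)$ is defined in the paper: the $IC(\pi)$'s are introduced abstractly as the simple perverse summands of $\calL_\beta$, indexed by Kostant partitions via the labelling convention in \eqref{PPP}. The symbol $Y_\pi$ is only defined in the paper for $\pi\in\Gamma_\beta$ (as the orbit of the preprojective module $\calP_\pi$); for $\pi$ with regular or preinjective parts there is no single orbit of type $\pi$, and the corresponding $IC(\pi)$ can be the intermediate extension of a \emph{nontrivial} local system rather than an IC sheaf of an orbit closure. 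Even for $\pi\in\Gamma_\beta$, the identification $IC(\pi)=IC(\k_\pi)$ is not a definition but a theorem proved later (Lemma~\ref{lem:A}(a)).

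The paper fills this gap by invoking Lusztig's classification \cite{L92}: each $IC(\pi)$ is the intermediate extension of a local system on (a dense open subset of) a stratum $X_\sigma$, where $\sigma=\rho(\pi)$ is the associated root partition. For $\pi\notin\Gamma_\beta$, the root partition $\sigma$ has a root outside $\Phi_{++}$, so every representation in $X_\sigma$ has a regular or preinjective summand, hence $X_\sigma\subset X_\beta\setminus Y_\beta$. Since $Y_\beta$ is open by Proposition~\ref{prop:inftyX}(a), its complement is closed and therefore contains $\overline{X_\sigma}=\mathrm{supp}\,IC(\pi)$, giving $(j_\beta)^*IC(\pi)=0$. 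Your final paragraph correctly identifies openness of $Y_\beta$ as the key geometric input, but without the Lusztig description of the supports the argument does not go through; Proposition~\ref{prop:inftyX}(b) plays no role here.
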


\begin{proof}
For each root partition $\sigma\in\Pi_\beta,$
let $X_\sigma=\bigcup_MY_M$ be the locally closed subset of 
$X_\beta$ consisting of the union of the orbits of all representations $M$ of type $\sigma$
such that the regular part of $M$ is semisimple.
By \cite{L92}, each perverse sheaves in the set $\{IC(\pi)\,;\,\pi\in\KP\!_\beta\}$
is the intermediate extension of a local system on a dense open subset of the stratum
$X_\sigma$ for some root partition $\sigma$.
Thus the corollary follows from Proposition \ref{prop:inftyX}(a).
\end{proof}

\smallskip

\subsection{The category $\calD_\beta$ and the moduli stack of representations of $\bfQ$}

Let $\Gamma$ be any order ideal of $\Gamma\!_\beta$.
The additive category $\Par(\calY_\Gamma)$ is generated by 
the summands of the complex $\calM_\Gamma$ and their cohomological shifts.
Passing to the homotopy categories, 
we get the graded triangulated category
$\D^\b_\mix(\calY_\Gamma)=\K^\b(\Par(\calY_\Gamma)).$ 
The maps $i_\Gamma$ and $j_\Gamma$ are open inclusions.
Thus, we have the restriction functor 
$(i_\Gamma)^*:\Par(\calY_\beta)\to\Par(\calY_\Gamma)$
which yields the graded triangulated functor
$(i_\Gamma)^*:\D^\b_\mix(\calY_\beta)\to\D^\b_\mix(\calY_\Gamma).$
For each element $\pi\in\Gamma$, let $IC(\pi)_\mix$ be the complex
$(j_\beta)^*IC(\pi)$ in $\Par(\calY_\beta)$ viewed as an object of  $\D^\b_\mix(\calY_\beta)$.
In \S\ref{sec:KLR} we have introduced graded subcategories $\calD_\beta$,
$\calD_\Gamma$ of the graded module category $\calC_\beta$
and the graded triangulated functor 
$L(h_\Gamma)^*:\D^\b(\calD_\beta)\to\D^\b(\calD_\Gamma).$

\smallskip

\begin{proposition}\label{prop:equivalence1}
Assume that $\Gamma\subset\Gamma\!_\beta$ is an order ideal.
\hfill
\begin{itemize}[leftmargin=8mm]
\item[$\mathrm{(a)}$] 
There is an equivalence of graded triangulated categories 
$\A''_\Gamma:\D^\b(\calD_\Gamma)\to\D^\b_\mix(\calY_\Gamma)$
such that
$L(h_\Gamma)^*Q(\pi)\mapsto (i_\Gamma)^*IC(\pi)_\mix$ for all $\pi\in\Gamma.$
\item[$\mathrm{(b)}$]
There is a right adjoint triangulated functor $(i_\Gamma)_*$ to $(i_\Gamma)^*$ yielding a commutative
diagram  
\begin{align*}\begin{split}
\xymatrix{
\D^\b(\calD_\beta)\ar[d]_-{\A''_\beta}
\ar@/^/[r]^-{L(h_\Gamma)^*}&\ar@/^/[l]^-{(h_\Gamma)_*}
\D^\b(\calD_\Gamma)\ar[d]^-{\A''_\Gamma}
\\
\D^\b_\mix(\calY_\beta)
\ar@/^/[r]^-{(i_\Gamma)^*}&\ar@/^/[l]^-{(i_\Gamma)_*}
\D^\b_\mix(\calY_\Gamma).
}\end{split}\end{align*}
\end{itemize}
\end{proposition}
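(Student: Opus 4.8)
The plan is to build the equivalence $\A''_\Gamma$ first at the level of additive categories of parity-type objects, then pass to homotopy categories, and finally transport the adjunction $((i_\Gamma)^*,(i_\Gamma)_*)$ across the resulting equivalences. The starting point is the functor $\Phi_\beta^\bullet$ of \S\ref{sec:CK}, which by \eqref{PHI} gives an equivalence $\Par(\calX_\beta)\to\calC_\beta^\proj$ sending $IC(\pi)\mapsto P(\pi)$. Composing with the open restriction $(j_\beta)^*:\Par(\calX_\beta)\to\Par(\calY_\beta)$ on the geometric side and with $(f_{\Gamma\!_\beta})^*:\calC_\beta^\proj\to\calD_\beta^\proj$ on the algebraic side, one expects a commuting square relating $(j_\beta)^*$ and $(f_{\Gamma\!_\beta})^*$; the key geometric input is the Corollary just proved, which says $(j_\beta)^*\calL(\pi)=0$ unless $\pi\in\Gamma\!_\beta$, so that $(j_\beta)^*$ kills exactly the summands $IC(\pi)$ with $\pi\notin\Gamma\!_\beta$, matching what $(f_{\Gamma\!_\beta})^*$ does to the $P(\pi)$'s (it sends $P(\pi)$ with $\pi\in\Gamma\!_\beta$ to $Q(\pi)$ and the others to objects outside $\calD_\beta$). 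First I would verify that $(j_\beta)^*$ induces an equivalence $\Par(\calY_\beta)\xrightarrow{\sim}\calD_\beta^\proj$, $IC(\pi)_\mix\mapsto Q(\pi)$; this is where one must check that no extra isomorphisms are created and that $(j_\beta)^*IC(\pi)$ stays indecomposable for $\pi\in\Gamma\!_\beta$, which should follow from openness of $j_\beta$ (Proposition \ref{prop:inftyX}(a)) together with the fact that $Y_\pi$ is dense in its closure inside $Y_\beta$ (Proposition \ref{prop:inftyX}(b)), so that intermediate extensions are detected on $Y_\beta$. Passing to bounded homotopy categories and using $\D^\perf(\calD_\beta)=\K^\b(\calD_\beta^\proj)$ together with Proposition \ref{prop:Cinfty}(a) (finite global dimension, so $\D^\perf(\calD_\beta)=\D^\b(\calD_\beta)$) gives $\A''_\beta:\D^\b(\calD_\beta)\xrightarrow{\sim}\D^\b_\mix(\calY_\beta)$ with $Q(\pi)\mapsto IC(\pi)_\mix$.

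For a general order ideal $\Gamma\subset\Gamma\!_\beta$, the same recipe applies with $\calY_\Gamma$ in place of $\calY_\beta$: the open inclusion $i_\Gamma:Y_\Gamma\to Y_\beta$ (Proposition \ref{prop:inftyX}(a)) gives $(i_\Gamma)^*:\Par(\calY_\beta)\to\Par(\calY_\Gamma)$, while on the algebraic side $(h_\Gamma)^*:\calD_\beta\to\calD_\Gamma$ sends $Q(\pi)\mapsto (f_\Gamma)^*P(\pi)$, which by Proposition \ref{prop:Cinfty}(b) and \cite[prop.~7.20]{K15b} is a projective cover of $L(\pi)$ in $\calD_\Gamma$ for $\pi\in\Gamma$ and has finite projective dimension, so the derived functor $L(h_\Gamma)^*$ is well-behaved on $\D^\b(\calD_\Gamma)=\D^\perf(\calD_\Gamma)$. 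One checks that under $\Phi^\bullet$ the objects $(i_\Gamma)^*IC(\pi)_\mix$ correspond to $L(h_\Gamma)^*Q(\pi)$; the compatibility $(i_\Gamma)^*\circ(j_\beta)^* = (j_\Gamma)^*$ and $(h_\Gamma)^*\circ(f_{\Gamma\!_\beta})^* = (f_\Gamma)^*$ make this a formal consequence of the $\beta$-level equivalence, once one knows that $(i_\Gamma)^*$ induces an equivalence $\Par(\calY_\Gamma)\xrightarrow{\sim}\calD_\Gamma^\proj$, i.e. that parity objects on $\calY_\beta$ restrict to a full set of indecomposable parity objects on $\calY_\Gamma$ and that $(i_\Gamma)^*IC(\pi)_\mix$ is either indecomposable (for $\pi\in\Gamma$) or zero (for $\pi\notin\Gamma$). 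This again uses Proposition \ref{prop:inftyX}(b): because $\Gamma$ is an order ideal, $\sigma\geqslant\pi$ and $Y_\sigma\subset\bar Y_\pi$ force $\pi\in\Gamma$ whenever $\sigma\in\Gamma$, so restriction to $Y_\Gamma$ does not decapitate the support of $IC(\pi)$ for $\pi\in\Gamma$. This proves (a), with $\A''_\Gamma$ defined as the composite of $\A''_\beta$ with the evident restriction equivalences, and by construction the left square of the diagram in (b) commutes.

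For part (b), the functor $(i_\Gamma)^*:\Par(\calY_\beta)\to\Par(\calY_\Gamma)$ is restriction along an open immersion, so at the level of the six-functor formalism it has a right adjoint $(i_\Gamma)_*$ given by (the derived, underived-looking) direct image; the issue is that $(i_\Gamma)_*$ need not preserve the small subcategory $\Par$. The clean way around this is to define $(i_\Gamma)_*$ on $\D^\b_\mix(\calY_\beta)=\K^\b(\Par(\calY_\beta))$ as the functor transported from $\D^\b(\calD_\Gamma)$ along the equivalences of part (a): since $\A''_\Gamma$ is an equivalence and $L(h_\Gamma)^*$ has a right adjoint $(h_\Gamma)_*$ (the exact full embedding $\calD_\Gamma\hookrightarrow\calD_\beta$ is exact, so it derives trivially and is right adjoint to $L(h_\Gamma)^*$ by Proposition \ref{prop:Cinfty}(b) and standard recollement for order ideals, \cite[prop.~7.20]{K15b}), one sets $(i_\Gamma)_* := \A''_\beta\circ(h_\Gamma)_*\circ(\A''_\Gamma)^{-1}$. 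It is then automatic that $(i_\Gamma)_*$ is right adjoint to $(i_\Gamma)^*$ and that the right square of the diagram commutes; one then notes, as a compatibility check rather than a definition, that this transported functor agrees with the geometric $(i_\Gamma)_*$ wherever the latter lands in $\Par$, using the projection/base-change identities for open immersions. I expect the main obstacle to be the very first step --- showing that $(j_\beta)^*:\Par(\calX_\beta)\to\Par(\calY_\beta)$ descends to an \emph{equivalence} onto $\calD_\beta^\proj$ rather than merely a full functor: one must rule out that two non-isomorphic $IC(\pi),IC(\pi')$ with $\pi,\pi'\in\Gamma\!_\beta$ become isomorphic after restriction, and show that $\Hom$-spaces are not enlarged. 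Openness of $j_\beta$ controls the first concern (distinct supports meet $Y_\beta$ in distinct dense opens by Proposition \ref{prop:inftyX}(b) and the Corollary), and a dimension/support argument via the recollement for the open--closed decomposition $X_\beta = Y_\beta \sqcup (X_\beta\setminus Y_\beta)$, combined with \eqref{PHI} and \eqref{PPP}, handles the $\Hom$-spaces; making this fully rigorous is the crux of the argument.
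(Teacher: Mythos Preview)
Your overall architecture matches the paper's: build an additive equivalence $\Par(\calY_\Gamma)\simeq\calD_\Gamma^\proj$, pass to homotopy categories using finite global dimension (Proposition~\ref{prop:Cinfty}), and for part (b) \emph{define} $(i_\Gamma)_*$ by transporting $(h_\Gamma)_*$ across the equivalences. That last point is exactly what the paper does, and your handling of (b) is fine.

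The gap is in the step you yourself flag as the crux. Your proposed resolution --- openness of $j_\beta$, indecomposability of restricted $IC$'s, and a ``dimension/support argument via recollement'' for Hom-spaces --- does not suffice. The functor $\Phi_\beta^\bullet$ is only an additive equivalence on $\Par(\calX_\beta)$; it is not a triangulated functor on $\D^\b(\calX_\beta)$, so you cannot feed the recollement triangle $i_!i^!\to\id\to(j_\beta)_*(j_\beta)^*$ through it and read off that $\Phi_\beta^\bullet((j_\beta)_*(j_\beta)^*IC(\pi))\simeq Q(\pi)$. That isomorphism is precisely what is needed to compare Hom-spaces on $\calY_\beta$ with Hom-spaces in $\calD_\beta$, and the paper proves it as Lemma~\ref{lem:A}(c) by a substantial argument: first a surjection $P(\pi)\twoheadrightarrow\Phi_\beta^\bullet(Q(\k_\pi))$ is constructed using weight truncation of mixed complexes and purity (Proposition~\ref{prop:BY}), and then equality is established by matching $\Delta$-filtration multiplicities via Kato's theorem \cite{K12}. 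None of this is visible from recollement alone.

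There is also a prior identification you are implicitly assuming: that the abstract $IC(\pi)$ defined by \eqref{PPP} coincides with the geometric $IC(\k_\pi)$ on the orbit closure $\bar Y_\pi$, for $\pi\in\Gamma_\beta$. This is Lemma~\ref{lem:A}(a), and its proof (Steps~1--3) requires computing $\Phi_\beta^\bullet(\nabla(\k_\pi))=\Delta(\pi)$ via the induction diagram and a cuspidality argument in the rank-one case. Without it you do not even know that $(j_\beta)^*IC(\pi)\neq 0$ for $\pi\in\Gamma_\beta$, nor that its support is $\bar Y_\pi\cap Y_\beta$. Once Lemmas~\ref{lem:A} and~\ref{lem:AA} are in hand, the actual proof of Proposition~\ref{prop:equivalence1} is the two-line argument you sketch; the content has been pushed into those lemmas.
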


\smallskip

The proof of the proposition consists of checking that the graded $\k$-algebra
\begin{align}\label{inftyVV}
\bfS_\Gamma=\End^\bullet_{\D^\b(\calY_\Gamma)}(\calM_\Gamma)
\end{align}
is the graded-endomorphism
ring of a graded-projective generator of the category $\calD_\Gamma$.
We'll abbreviate $\bfS_\beta=\bfS_{\Gamma\!_\beta}.$
To prove this, we introduce the following complexes in $\D^\b(\calX_\beta)$
\begin{align*}
\nabla(\k_\pi)=(j_\pi)_*\k_\pi[d_\pi]\quad&,\quad
IC(\k_\pi)=(j_\pi)_{!*}\k_\pi[d_\pi],\\
\Delta(\k_\pi)=(j_\pi)_!\k_\pi[d_\pi]\quad&,\quad
Q(\k_\pi)=(j_\beta)_*(j_\beta)^*IC(\k_\pi),
\end{align*}
where $d_\pi=\dim Y_\pi$ and $\k_\pi=\k_{Y_\pi}$ for each $\pi\in\Gamma\!_\beta$.
We consider the composed functor 
\begin{align*}\Psi_\Gamma^\bullet=\Phi_\beta^\bullet\, (j_\Gamma)_*:\D^\b(\calY_\Gamma)\to\calC_\beta\quad,\quad 
\calE\mapsto\Hom^\bullet_{\D^\b(\calX_\beta)}(\calL_\beta,(j_\Gamma)_*\calE)=
\Hom^\bullet_{\D^\b(\calY_\Gamma)}(\calM_\Gamma,\calE).\end{align*}

\smallskip

\begin{lemma}\label{lem:A}
For each $\pi\in\Gamma\!_\beta$
we have the following isomorphisms of graded $\bfR_\beta$-modules\hfill
\hfill
\begin{itemize}[leftmargin=8mm]
\item[$\mathrm{(a)}$] $\Phi_\beta^\bullet(IC(\k_\pi))\simeq P(\pi)$,
\item[$\mathrm{(b)}$] $\Phi_\beta^\bullet(\nabla(\k_\pi))\simeq \Delta(\pi)$,
\item[$\mathrm{(c)}$] $\Phi_\beta^\bullet(Q(\k_\pi))\simeq Q(\pi)$.
\end{itemize}
\end{lemma}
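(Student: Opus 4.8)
For part (a), the plan is to identify $IC(\k_\pi)$ with the summand $IC(\pi)$ of $\calL_\beta$, after which the claim is immediate from \eqref{PPP}. Since $\pi\in\Gamma_\beta$ is supported on real positive roots, $\rho(\pi)=\pi$ and, as in the proof of the Corollary, the stratum $X_{\rho(\pi)}$ is the single $G_\beta$-orbit $Y_{\calP_\pi}$; its stabiliser $\Aut_{F\bfQ}(\calP_\pi)$, being the group of units of the finite-dimensional algebra $\End_{F\bfQ}(\calP_\pi)$, is connected, so $Y_{\calP_\pi}$ carries no nontrivial irreducible $G_\beta$-equivariant local system. By Lusztig's classification \cite{L92} of the simple summands of $\calL_\beta$ as intermediate extensions of equivariant local systems on the strata, together with the labelling convention behind \eqref{PPP}, the summand indexed by $\pi$ is the intermediate extension of the constant sheaf from $Y_{\calP_\pi}$, i.e. $IC(\pi)=(j_\pi)_{!*}\k_\pi[d_\pi]=IC(\k_\pi)$.

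For part (b), I would argue in three steps. First, for a single root $\beta_k$: since $\calP_k$ is rigid ($\Ext^1_{F\bfQ}(\calP_k,\calP_k)=0$ by \eqref{preprojective}) and a brick ($\End_{F\bfQ}(\calP_k)=\k$), the orbit $Y_{\calP_k}$ is open dense in $X_{\beta_k}$ and $\calY_{\beta_k}\cong[\pt/\bbG_m]$; by the Corollary and part (a), $\calM_{\beta_k}=V((\beta_k))\otimes\k_{Y_{\calP_k}}[d]$, so by adjunction
$$\Phi^\bullet_{\beta_k}(\nabla(\k_{\calP_k}))=\Hom^\bullet_{\D^\b(\calY_{\beta_k})}(\calM_{\beta_k},\k[d])=V((\beta_k))^*\otimes H^\bullet_{\bbG_m}=V((\beta_k))^*\otimes\k[z].$$
By the Corollary once more, every composition factor of this module is $L(\beta_k)\langle a\rangle$, so it is a quotient of $\Delta(\beta_k)$; the category $\calD_{\beta_k}$ being polynomial highest weight with a single weight by Proposition \ref{prop:Cinfty}, $\Delta(\beta_k)$ is free of rank one over $\k[z_{\beta_k}]=\bbEnd_{\calC_{\beta_k}}(\Delta(\beta_k))$, and an infinite-dimensional quotient of a rank-one free $\k[z_{\beta_k}]$-module is the whole module, so $V((\beta_k))=\k$ and $\Phi^\bullet_{\beta_k}(\nabla(\k_{\calP_k}))=\Delta(\beta_k)$. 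Second, since $\Ext^1_{F\bfQ}(\calP_k,\calP_k)=0$, a module with a filtration all of whose subquotients are $\cong\calP_k$ is isomorphic to $\calP_k^{\oplus p}$ and such flags form a $\GL_p/B$-bundle, whence $\nabla(\k_{\calP_k})^{\circledast p}\cong\nabla(\k_{\calP_k^{\oplus p}})\langle p\rangle!$; applying $\Phi^\bullet$, Proposition \ref{prop:ind} and \eqref{Delta0} gives $\Phi^\bullet_{p\beta_k}(\nabla(\k_{\calP_k^{\oplus p}}))\langle p\rangle!=\Delta(\beta_k)^{\circ p}=\Delta(\beta_k)^{(p)}\langle p\rangle!$, hence $\Phi^\bullet_{p\beta_k}(\nabla(\k_{\calP_k^{\oplus p}}))=\Delta(\beta_k)^{(p)}$. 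Third, for general $\pi=((\beta_l)^{p_l},\dots,(\beta_0)^{p_0})\in\Gamma_\beta$ I would establish
$$\nabla(\k_\pi)\;\cong\;\nabla(\k_{\calP_0^{\oplus p_0}})\circledast\cdots\circledast\nabla(\k_{\calP_l^{\oplus p_l}}),$$
induction taken in increasing order of roots so as to match the $\circ^\op$ of Proposition \ref{prop:ind}: for the flag tuple $(p_0\beta_0,\dots,p_l\beta_l)$ the vanishings $\Ext^1_{F\bfQ}(\calP_j,\calP_i)=0$ for $j<i$ and $\Hom_{F\bfQ}(\calP_i,\calP_j)=0$ for $i>j$ from \eqref{preprojective} force any preprojective module to admit at most one filtration with associated graded $\bigoplus_k\calP_k^{\oplus p_k}$, so the projection $p$ in \eqref{ind-diag} restricts to an isomorphism from the preimage of $\prod_k\calY_{p_k\beta_k}$ onto $\calY_\pi$; smooth base change along the vector-bundle stack $q$, properness of $p$, and a dimension count for the shifts then give the displayed isomorphism, and applying $\Phi^\bullet_\beta$ with Proposition \ref{prop:ind} and \eqref{Delta} yields $\Phi^\bullet_\beta(\nabla(\k_\pi))=\Delta(\beta_l)^{(p_l)}\circ\cdots\circ\Delta(\beta_0)^{(p_0)}=\Delta(\pi)$.

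For part (c), part (a) gives $Q(\k_\pi)=(j_\beta)_*(j_\beta)^*IC(\pi)$, so by adjunction $\Phi^\bullet_\beta(Q(\k_\pi))=\Hom^\bullet_{\D^\b(\calY_\beta)}(\calM_\beta,(j_\beta)^*IC(\pi))$ and the adjunction unit induces $P(\pi)=\Phi^\bullet_\beta(IC(\pi))\to\Phi^\bullet_\beta(Q(\k_\pi))$. First, the Corollary gives $(j_\beta)^*IC(\sigma)=0$ for $\sigma\notin\Gamma_\beta$, so $[\Phi^\bullet_\beta(Q(\k_\pi)):L(\sigma)]=\dim\Hom^\bullet_{\D^\b(\calY_\beta)}((j_\beta)^*IC(\sigma),(j_\beta)^*IC(\pi))=0$ for such $\sigma$; hence $\Phi^\bullet_\beta(Q(\k_\pi))$ lies in $\calD_\beta$ and the map factors through the universal quotient $Q(\pi)=(f_{\Gamma_\beta})^*P(\pi)$. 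That $Q(\pi)\to\Phi^\bullet_\beta(Q(\k_\pi))$ is an isomorphism I would obtain by combining the recollement triangle $i_*i^!IC(\pi)\to IC(\pi)\to Q(\k_\pi)\xrightarrow{+1}$ — where $i$ is the closed complement of $Y_\beta$, an affine open subset of $X_\beta$ since it is a principal open, which yields surjectivity after a vanishing estimate for $\Phi^\bullet_\beta(i_*i^!IC(\pi))$ — with a comparison of classes in $\G_0(\calD_\beta)$, computing $\lgroup\Phi^\bullet_\beta(Q(\k_\pi))\rgroup$ from $\lgroup Q(\k_\pi)\rgroup$ through $\Phi^\bullet_\beta$ and parts (a),(b), and $\lgroup Q(\pi)\rgroup$ from its standard expansion coming from the polynomial highest weight structure of Proposition \ref{prop:Cinfty}, so that the surjection has trivial kernel.

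The main obstacle will be the geometric induction identity in part (b): one has to verify that the induction correspondence restricted to the preprojective locus is genuinely an isomorphism onto $\calY_\pi$ — which is precisely where the special features of the Kronecker quiver enter (rigidity of $\calP_k^{\oplus p}$, absence of backward homomorphisms, and the $\Ext^1$-vanishing \eqref{preprojective} between preprojectives in the correct order) — and then keep track of all cohomological and internal-degree shifts so that the answer is exactly $\nabla(\k_\pi)$ rather than merely a shift or a direct sum of shifts of it. The other delicate point is controlling $\Phi^\bullet_\beta(i_*i^!IC(\pi))$ in part (c), which relies on the affineness of $j_\beta$.
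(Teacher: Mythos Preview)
Your outline for part (b) is essentially the paper's Step~2 (the induction identity \eqref{IND5} plus the single-root case), and that part is fine. The two genuine problems are in (a) and (c).

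\medskip

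\textbf{Part (a).} The labelling in \eqref{PPP} is \emph{algebraic}: $IC(\pi)$ is by definition the simple summand of $\calL_\beta$ whose image under $\Phi_\beta^\bullet$ is $P(\pi)$, the projective cover of the simple $\bfR_\beta$-module $L(\pi)$. Nothing in that definition tells you on which stratum $IC(\pi)$ lives. Your sentence ``together with the labelling convention behind \eqref{PPP}, the summand indexed by $\pi$ is the intermediate extension of the constant sheaf from $Y_{\calP_\pi}$'' is therefore asserting exactly what has to be proved. The connectedness of $\Aut(\calP_\pi)$ only shows that there is a \emph{unique} Lusztig sheaf with support $\overline{Y_\pi}$; it does not identify its label under $\Phi_\beta^\bullet$. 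The paper's logical order reflects this: it first proves (a) for $r=1$ by a cuspidality argument (showing $L((\beta)^\flat)$ has vanishing restrictions, hence equals the cuspidal $L(\beta)$), then proves (b) in general, and only then in Step~3 deduces (a) for arbitrary $r$ by computing $[\Delta(\sigma):L(\pi^\flat)]$ via (b) and comparing with the support condition. Your argument for (a) in the single-root case could be shortened using the Corollary (since $\Gamma_{\beta_k}$ is a singleton), but for general $r$ you cannot avoid going through~(b).

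\medskip

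\textbf{Part (c).} Your surjectivity argument rests on the claim that $Y_\beta$ is a principal open in $X_\beta$, hence $j_\beta$ is affine. This is false already for $\beta=3\alpha_0+\alpha_1$ (so $n=1$, $m=3$, $r=2$): here $X_\beta\cong\bbA^6$ parametrises pairs $(x,y)\in(F^3)^2$, and $Y_\beta$ is the locus where $x,y$ are linearly independent (the unique preprojective type is $\calP_1\oplus\calP_0$). The complement $\{x\wedge y=0\}$ is the variety of $3\times 2$ matrices of rank $\leqslant 1$, which has codimension $2$, so $Y_\beta$ is not affine and the vanishing estimate you need for $\Phi_\beta^\bullet(i_*i^!IC(\pi))$ does not follow. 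The paper's Step~4 replaces this by a weight argument: working with the mixed lift $\calE_\bbd=(j_\beta)_*(j_\beta)^*IC(\pi)_\bbd$, one shows (Claim~\ref{claim:triangle3}) that the weight-$0$ part of $\calE_\bbd$ is exactly $IC(\pi)_\bbd$, with the remainder of strictly positive weight; since $\Phi_{\beta,\bbd}^\bullet(\calE_\bbd)$ is pure of weight $0$ (Proposition~\ref{prop:BY}, via the even affine stratification of $\Gr^+_\beta$), the long exact sequence forces each $\Phi^a(IC(\pi))\to\Phi^a(\calE)$ to be onto. After that, your multiplicity comparison for the isomorphism is close to the paper's Step~5, which compares $\Delta$-filtration multiplicities rather than classes in $\G_0$.

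\medskip

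A minor point in your single-root argument for (b): concluding simultaneously that $V((\beta_k))=\k$ and that the module is a quotient of $\Delta(\beta_k)$ is circular as written. The paper instead exhibits the surjection $P(\beta)\to\Phi_\beta^\bullet(\nabla(\k_{(\beta)}))$ directly (it is the restriction map $H^\bullet_{G_\beta}\to H^\bullet_{D_\beta}$ on the relevant summand) and then checks $\Ext^1(\Phi_\beta^\bullet(\nabla(\k_{(\beta)})),L(\beta))=0$ by a degree argument.
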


\begin{lemma}\label{lem:AA}
We have an equivalence of graded Abelian categories
$\calD_\Gamma\to\bfS_\Gamma\-\mod$.
The functor
$\Psi_\Gamma^\bullet$ 
gives an equivalence of graded additive categories $\Par(\calY_\Gamma)\to\calD^\proj_\Gamma$
such that $(j_\Gamma)^*IC(\pi)\mapsto(f_\Gamma)^*P(\pi)$ for each $\pi\in\Gamma$.
\end{lemma}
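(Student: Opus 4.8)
The plan is to use Lemma \ref{lem:A} as the technical engine and the structural results of \S\ref{sec:HW} (in particular Proposition \ref{prop:Cinfty} and the properties (a)--(d) of order ideals) to identify $\bfS_\Gamma$ with the graded-endomorphism ring of a graded-projective generator of $\calD_\Gamma$. First I would recall that, by \eqref{PHI}, the functor $\Phi_\beta^\bullet$ restricts to an equivalence $\Par(\calX_\beta)\simeq\calC_\beta^\proj$ sending $IC(\pi)$ to $P(\pi)$. Composing with $(j_\Gamma)_*$ gives $\Psi_\Gamma^\bullet$, and by Lemma \ref{lem:A}(c) together with the identity $Q(\pi)=(f_{\Gamma_\beta})^*P(\pi)$ (and its relative version for a general order ideal $\Gamma$) we get that $\Psi_\Gamma^\bullet$ carries $(j_\Gamma)^*IC(\pi)$ to $(f_\Gamma)^*P(\pi)$ for each $\pi\in\Gamma$. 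Since $\{(f_\Gamma)^*P(\pi)\,;\,\pi\in\Gamma\}$ is exactly the set of indecomposable projectives in $\calD_\Gamma$ (by property (c) of order ideals, $(f_\Gamma)^*P(\pi)$ is a projective cover of $L(\pi)$ in $\calC_\beta)_\Gamma=\calD_\Gamma$), the object $\calM_\Gamma=\bigoplus_{\pi\in\Gamma}(j_\Gamma)^*IC(\pi)^{\oplus m_\pi}$ maps under $\Psi_\Gamma^\bullet$ to a projective graded-generator of $\calD_\Gamma$.

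Next I would argue that $\Psi_\Gamma^\bullet$ is fully faithful on the additive category $\Par(\calY_\Gamma)$. The key point is that $(j_\Gamma)_*$ is fully faithful — since $j_\Gamma$ is an \emph{open} inclusion by Proposition \ref{prop:inftyX}(a), the counit $(j_\Gamma)^*(j_\Gamma)_*\to\id$ is an isomorphism, so $\Hom^\bullet_{\D^\b(\calX_\beta)}((j_\Gamma)_*\calE,(j_\Gamma)_*\calF)=\Hom^\bullet_{\D^\b(\calY_\Gamma)}(\calE,\calF)$ — combined with full faithfulness of $\Phi_\beta^\bullet$ on $\Par(\calX_\beta)$. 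One must check that $(j_\Gamma)_*$ actually lands in the subcategory of $\D^\b(\calX_\beta)$ on which $\Phi_\beta^\bullet$ is an equivalence, i.e. that $(j_\Gamma)_*$ of a summand of $\calM_\Gamma$ is a summand of a complex built from the $IC(\k_\pi)$'s or the $\calL_\beta$-summands; here Lemma \ref{lem:A} and the compatibility of intermediate extension with open restriction do the work. Consequently $\bfS_\Gamma=\End^\bullet_{\D^\b(\calY_\Gamma)}(\calM_\Gamma)\simeq\bbEnd_{\calD_\Gamma}(\Psi_\Gamma^\bullet(\calM_\Gamma))^{\text{(op)}}$, and by the abstract Morita statement recalled in \S\ref{sec:HW} (the functor $M\mapsto\bbHom(P,M)$ for a projective graded-generator $P$), we obtain the equivalence of graded Abelian categories $\calD_\Gamma\xrightarrow{\sim}\bfS_\Gamma\-\mod$ and, restricting to projectives, the equivalence $\Par(\calY_\Gamma)\xrightarrow{\sim}\calD_\Gamma^\proj$ with the stated effect on objects.

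I expect the main obstacle to be the full-faithfulness verification for $\Psi_\Gamma^\bullet$, or more precisely controlling $(j_\Gamma)_*$ on the semisimple complex $\calM_\Gamma$: one needs that $(j_\Gamma)_*(j_\Gamma)^*IC(\pi)$ decomposes appropriately (this is where $Q(\k_\pi)$ and Lemma \ref{lem:A}(c) enter) and that no higher $\Hom$'s in $\D^\b(\calX_\beta)$ are created that would not already be visible from $\bfR_\beta=\End^\bullet(\calL_\beta)^\op$. The combinatorial input that $\Gamma$ is an order ideal is essential here: it guarantees that the relevant strata $Y_\pi$ with $\pi\in\Gamma$ form an open union (Proposition \ref{prop:inftyX}) so that restriction and $*$-pushforward along $j_\Gamma$ behave as for an open immersion, and it guarantees (via property (c) of order ideals) that $(f_\Gamma)^*$ sends the projective generator of $\calC_\beta$ to a projective generator of $\calD_\Gamma$. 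Once these compatibilities are in place the rest is formal bookkeeping with the graded $\Hom$-spaces.
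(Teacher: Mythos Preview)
Your full-faithfulness strategy has a real gap. You argue that $\Psi_\Gamma^\bullet=\Phi_\beta^\bullet\circ(j_\Gamma)_*$ is fully faithful on $\Par(\calY_\Gamma)$ because $(j_\Gamma)_*$ is fully faithful (open immersion) and $\Phi_\beta^\bullet$ is fully faithful on $\Par(\calX_\beta)$. But the composition only works if $(j_\Gamma)_*$ carries $\Par(\calY_\Gamma)$ into $\Par(\calX_\beta)$, and it does not: the object $Q(\k_\pi)=(j_\beta)_*(j_\beta)^*IC(\k_\pi)$ is \emph{not} a semisimple complex on $\calX_\beta$. This is exactly why Step~4 in the proof of Lemma~\ref{lem:A} requires the delicate weight-filtration argument to exhibit even a surjection $P(\pi)\to\Phi_\beta^\bullet(Q(\k_\pi))$. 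The sentence ``here Lemma~\ref{lem:A} and the compatibility of intermediate extension with open restriction do the work'' is precisely the step that fails: intermediate extension is compatible with open restriction, but $*$-pushforward of an IC along an open immersion is not an IC in general. So you cannot invoke the known full faithfulness of $\Phi_\beta^\bullet$ on the image of $(j_\Gamma)_*$.

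The paper sidesteps full faithfulness entirely. For $\Gamma=\Gamma_\beta$ it observes that $\Psi_\beta^\bullet(\calM_\beta)=\bfS_\beta$ as a graded vector space, with the $\bfR_\beta$-module structure pulled back along the algebra map $(j_\beta)^*:\bfR_\beta\to\bfS_\beta$. Lemma~\ref{lem:A}(a),(c) identify the map $P_\beta\to Q_\beta$ with $\bfR_\beta\to\bfS_\beta$ and show it is \emph{surjective}; surjectivity then forces $\bbEnd_{\bfR_\beta}(Q_\beta)=\bbEnd_{\bfS_\beta}(\bfS_\beta)=\bfS_\beta$, which is the identification you need. For a smaller order ideal $\Gamma\subset\Gamma_\beta$ a second ingredient is required: the paper transports the problem to the affine Grassmannian via Proposition~\ref{prop:Y=X} and uses parity-sheaf theory \cite[cor.~2.9]{JMW} to get surjectivity of $(i_\Gamma)^*:\bfS_\beta\to\bfS_\Gamma$, and then compares $\Delta$-filtrations (via \cite{K12}) to identify $(h_\Gamma)^*Q(\pi)$ with $\bfS_\Gamma\,\e(\pi)$. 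Your ``relative version of Lemma~\ref{lem:A}(c) for general $\Gamma$'' is thus not a formality; it is established by these extra geometric inputs, not by the affine-properly-stratified machinery alone.
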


\begin{proof}[Proof of Proposition  $\ref{prop:equivalence1}$]
%Lemma \ref{lem:A}(a) implies that $IC(\k_\pi)\simeq IC(\pi)$ for all $\pi\in\Gamma\!_\beta$.
% yields an equivalence of graded additive categories 
%$$\calD^\proj_\Gamma\to\Par(\calY_\Gamma)\quad,\quad(h_\Gamma)^*Q(\pi)\mapsto (i_\Gamma)^*IC(\pi)
%\quad,\quad\forall\pi\in\Gamma.$$
Part (a) of the proposition follows from Proposition \ref{prop:Cinfty} and Lemma \ref{lem:AA}, 
taking the homotopy categories.
Applying Lemma \ref{lem:AA} both to $\Gamma$ and $\Gamma\!_\beta$,
we get two functors $A''_\Gamma$ and $A''_\beta$ such that
$A''_\Gamma\, L(h_\Gamma)^*=(i_\Gamma)^*\, A''_\beta$.
We define $(i_\Gamma)_*$ as the unique functor such that the square in part (b) of the proposition commutes.
\end{proof}

\smallskip

\begin{proof}[Proof of Lemma $\ref{lem:A}$]
By \cite[thm.~6.16]{L92}, the complexes $IC(\k_\pi)$ with $\pi\in\Gamma\!_\beta$ belong to the canonical basis
for each $\beta\in Q_+$, i.e., there is a map $\Gamma\!_\beta\to\K\P\!_\beta$, $\pi\mapsto\pi^\flat$ such that
$IC(\k_\pi)= IC(\pi^\flat).$ 

\medskip

\noindent{\bf Step 1 :} \emph{We prove $\mathrm{(a), (b)}$ in the case $r=1$.}

\medskip

Let $r=1$. Hence $\beta$ is the positive root $\beta_n$.
Further, we have $\Gamma\!_\beta=\{(\beta)\}$ and
$\pi\geqslant (\beta)$ for all $\pi\in\K\P\!_\beta$.
We abbreviate $\k_\beta=\k_{X_\beta}$. 
The stratum $Y_{(\beta)}$ is the open dense $G_\beta$-orbit in $X_\beta$, and it coincides with the open subset 
$Y_\beta$.
We deduce that $\k_{(\beta)}=\k_{Y_\beta}$ and 
\begin{align}\label{smooth}
\Delta(\k_{(\beta)})=(j_\beta)_!\k_{(\beta)}[d_\beta]\quad,\quad
\nabla(\k_{(\beta)})=(j_\beta)_*\k_{(\beta)}[d_\beta]\quad,\quad IC(\k_{(\beta)})=\k_\beta[d_\beta].
\end{align}
To prove (a) for $r=1$ we must check that $(\beta)^\flat=(\beta)$.
%Since $\Ext^1_{F\bfQ}(P,M)=0$ whenever $P$ is preprojective and $M$ is
%preinjective or regular, the following holds.

\smallskip

\begin{claim} 
If $\alpha,\gamma\neq 0$ and the condition $\alpha\prec\beta\prec\gamma$ does not hold, 
then the perverse sheaf $IC(\k_{(\beta)})[a]$ is not a direct summand of 
$IC(\pi)\circledast IC(\sigma)$ whenever $\pi\in\KP\!_\gamma$, $\sigma\in\KP\!_\alpha$ and $a\in\bbZ$. 
\qed
\end{claim}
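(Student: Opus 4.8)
The plan is to prove the Claim by a support argument; the only genuinely representation‑theoretic ingredient will be the description of the submodules of the exceptional module $\calP_n$. We may assume $\alpha+\gamma=\beta$ (otherwise $IC(\pi)\circledast IC(\sigma)$ lives on the moduli stack $\calX_{\alpha+\gamma}\neq\calX_\beta$ and the Claim is vacuous), and since $\beta=\beta_n=\alpha_0+n\delta$ admits a nontrivial decomposition in $Q_+$ only for $n\geqslant 1$, we may take $n\geqslant 1$. Recall from \eqref{smooth} that here $IC(\k_{(\beta)})=\k_\beta[d_\beta]$ is the shifted constant sheaf on $\calX_\beta$, whose support is all of $\calX_\beta$; in particular this support contains the open dense orbit $Y_\beta=Y_{(\beta)}=Y_{\calP_n}$, and hence the point $[\calP_n]$.

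First I would show that being a summand is incompatible with the support. Suppose $IC(\k_{(\beta)})[a]$ were a direct summand of $IC(\pi)\circledast IC(\sigma)=\ind_{(\gamma,\alpha)}\!\big(IC(\pi)\boxtimes IC(\sigma)\big)$, where in the induction diagram \eqref{ind-diag} the factor $\gamma$ indexes the quotient and $\alpha$ the subrepresentation. Since $\ind_{(\gamma,\alpha)}=(p_{(\gamma,\alpha)})_!\,(q_{(\gamma,\alpha)})^*[\dim q_{(\gamma,\alpha)}]$ and $p_{(\gamma,\alpha)}$ is proper, the support of $\ind_{(\gamma,\alpha)}(\,\cdot\,)$ is contained in $\Im(p_{(\gamma,\alpha)})$, which is the closed substack of those $[M]\in\calX_\beta$ such that $M$ has a subrepresentation of dimension vector $\alpha$. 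Combining, $[\calP_n]\in\Im(p_{(\gamma,\alpha)})$, so $\calP_n$ has a subrepresentation $N$ with $\dim N=\alpha$ and thus $\dim(\calP_n/N)=\gamma$; as $\alpha,\gamma\neq 0$, this $N$ is nonzero and proper.

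Next I would deduce $\alpha\prec\beta_n\prec\gamma$, contradicting the hypothesis. First, $N$ is preprojective: by the criterion recalled in the proof of Proposition \ref{prop:inftyX}, a non‑preprojective $N$ would carry a nonzero homomorphism from some regular module $R$ of dimension $\delta$, and composing with the inclusion $N\hookrightarrow\calP_n$ (and using left‑exactness of $\Hom_{F\bfQ}(R,-)$) would give $\Hom_{F\bfQ}(R,\calP_n)\neq 0$, contradicting that $\calP_n$ is preprojective. Hence $N\simeq\bigoplus_{j=1}^{p}\calP_{k_j}$, so $\alpha=\sum_{j}\beta_{k_j}$. From $\dim N\leqslant\dim\calP_n$ coordinatewise with $\dim N\neq\dim\calP_n$ we get $\sum_{j}k_j\leqslant n$ and $\sum_{j}(k_j+1)\leqslant n+1$, and if some $k_j$ equalled $n$ these inequalities would force $N=\calP_n$; so every $k_j\leqslant n-1$. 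Since the positive roots strictly below $\beta_n$ in the fixed convex preorder are exactly $\beta_0,\dots,\beta_{n-1}$, this gives $\alpha\prec\beta_n$. Finally, writing $\gamma=\beta_n-\alpha=c_0\alpha_0+c_1\alpha_1$, one checks $c_0,c_1\geqslant 0$ and $c_1-c_0=p-1\geqslant 0$, so that $\gamma=c_0\,\delta+(c_1-c_0)\,\gamma_0$ is a nonnegative integer combination of the positive roots $\delta$ and $\gamma_0$, each of which is $\succ\beta_n$; hence $\gamma\succ\beta_n$. This contradiction proves the Claim.

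The computations are routine; the only point that needs care is the convention in \eqref{ind-diag} — that it is $\alpha$, the dimension of the subrepresentation, which must be $\prec\beta_n$ (and $\gamma$, the dimension of the quotient, which must be $\succ\beta_n$), so that the conclusion matches the cuspidality hypothesis $\alpha\prec\beta\prec\gamma$ rather than its mirror image. The only real input beyond formal support‑tracking is that every submodule of $\calP_n$ is preprojective, which is precisely the vanishing‑of‑$\Hom$‑from‑regulars characterization of preprojectives already used in the proof of Proposition \ref{prop:inftyX}.
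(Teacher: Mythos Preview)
Your proof is correct and follows essentially the same approach as the paper: a support argument showing that if $IC(\k_{(\beta)})$ were a summand then $\calP_n$ would sit in a short exact sequence with sub of dimension $\alpha$ and quotient of dimension $\gamma$, forcing $\alpha\prec\beta_n\prec\gamma$. The only minor difference is in the step $\gamma\succ\beta_n$: the paper argues that the quotient $\calP_n/N$ has no preprojective summand (using $\Hom_{F\bfQ}(\calP_n,\calP_k)=0$ for $k<n$) and is therefore a sum of regular and preinjective modules, whereas you bypass this by a direct root-theoretic computation, writing $\gamma=c_0\delta+(p-1)\gamma_0$; both arguments are equally short and valid.
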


\smallskip

Indeed, for each $F\bfQ$-modules $M\in X_\alpha(F)$, $N\in X_\gamma(F)$ with an exact sequence 
$$0\to M\to\calP_n\to N\to 0,$$ 
the module $M$ is a sum of $\calP_m$'s for some integers $m<n$ 
because $\Hom_{F\bfQ}(Q,\calP_n)=0$ whenever $Q$ is preinjective or regular, and the module $N$
is a sum of regular and preinjective modules, because 
$\Hom_{F\bfQ}(\calP_n\,,\,\calP_k)=0$ for all $k<n$ by \eqref{preprojective}.
Hence, we have $\alpha\prec\beta\prec\gamma$.

\smallskip

By Proposition \ref{prop:ind},
for all $\sigma$, $\pi$ as above we have
$\bbHom_{\calC_\beta}(P(\sigma)\circ P(\pi)\,,\,L((\beta)^\flat))=0,$
and, by adjunction, that
$$\bbHom_{\calC_\alpha\times\calC_\gamma}(P(\sigma)\otimes P(\pi)\,,\,\Res_{\alpha,\gamma}L((\beta)^\flat)=0.$$
Thus we have $\Res_{\alpha,\gamma}L((\beta)^\flat)=0,$
hence the module $L((\beta)^\flat)$ is cuspidal, so it is  $L(\beta)$.
\iffalse%%%%%%%%%%%%%%%%%
{\color{red}By \eqref{RVV**} we have $\widehat g\lgroup \Delta(\beta)\rgroup=E(\beta)$.
A direct computation similar to \cite{C04}, \cite{McG05} 
implies that $E(\beta)=f\lgroup IC(\k_{(\beta)})\rgroup$ modulo $f\lgroup IC(\pi)\rgroup$'s with $IC(\pi)\neq IC(\k_\pi)$.
We deduce that $$\Phi_\beta^\bullet(IC(\k_{(\beta)}))=P(\beta),$$
proving (a) for $r=1$.}
\fi%%%%%%%%%%%%%%%%%%%

\smallskip

Now, we prove (b) for $r=1$.
The obvious morphism of complexes
$IC(\k_{(\beta)})\to\nabla(\k_{(\beta)})$ yields an $\bfR_\beta$-module homomorphism
\begin{align}\label{rho}
\Phi_\beta^\bullet(IC(\k_{(\beta)}))\to\Phi_\beta^\bullet(\nabla(\k_{(\beta)})).
\end{align} 
We have
\begin{align}\label{id1}
\begin{split}
\Phi_\beta^\bullet(IC(\k_{(\beta)}))&=\Phi_\beta^\bullet(\k_\beta[d_\beta]),\\
&=H^\bullet_{G_\beta}(X_\beta\,,\,\calL_\beta)[-d_\beta],\\
&=\big(V(\beta)\otimes H^\bullet_{G_\beta}(X_\beta\,,\,\k)\big)\oplus\bigoplus_{\pi\neq(\beta)}
\big(V(\pi)\otimes H^\bullet_{G_\beta}(X_\beta\,,\,IC(\pi)\big)[-d_\beta],
\end{split}
\intertext{and}
\label{id2}
\begin{split}
\Phi_\beta^\bullet(\nabla(\k_{(\beta)}))&=H^\bullet_{G_\beta}(Y_{\beta}\,,\,\calM_\beta)[-d_\beta]
=V(\beta)\otimes H^\bullet_{G_\beta}(Y_{\beta}\,,\,\k).
\end{split}
\end{align}
Under the isomorphisms \eqref{id1} and \eqref{id2} the map \eqref{rho} is
identified with the restriction
$H^\bullet_{G_\beta}(X_\beta\,,\,\calL_\beta)\to 
H^\bullet_{G_\beta}(Y_{\beta}\,,\,\calM_\beta).$
The restriction to $Y_\beta$ gives also a map
\begin{align}\label{surj1}H^\bullet_{G_\beta}(X_\beta\,,\,\k)\to H^\bullet_{G_\beta}(Y_{\beta}\,,\,\k).\end{align}
Let $D_\beta$ be the diagonal copy of $\bbG_{m}$ in $G_\beta$.
It is the stabilizer of any point of $Y_{(\beta)}$.
% dimension + Since the $F\bfQ$-module $\calP_{(\beta)}$ is indecomposable, 
%by the Fitting's lemma the reductive part of $D_\beta$ is the diagonal copy of  in $G_r$. 
Hence, we have
$H^\bullet_{G_\beta}(X_\beta\,,\,\k)=H^\bullet_{G_\beta}$ and
$H^\bullet_{G_\beta}(Y_{\beta}\,,\,\k)=H^\bullet_{D_\beta}$.
We deduce that the map \eqref{surj1} is surjective, hence \eqref{rho} is also surjective.
\iffalse%%%%%%%%%%%%%%%%%%%
The graded multiplicity of $L(\pi)$ in $\Phi_\beta^\bullet(\Delta(\k_{(\beta)}))$ is
\begin{align}\label{calcul1}
\begin{split}
[\Phi_\beta^\bullet(\Delta(\k_{(\beta)}))\,:\,L(\pi)]
&=\dim \Hom^\bullet_{\bfR_\beta}(P(\pi),\Phi_\beta^\bullet(\Delta(\k_{(\beta)}))),\\
&=\dim \e(\pi)\Phi_\beta^\bullet(\Delta(\k_{(\beta)})),\\
&=\dim \Hom^\bullet_{\D^\b(\calX_\beta)}(\Delta(\k_{(\beta)})\,,\,IC(\pi)),\\
&=\dim H^\bullet_{G_\beta}(Y_{(\beta)}\,,\,(j_\beta)^*IC(\pi))[-d_\beta],\\
&=\begin{cases}
\dim H^\bullet_{\bbC^\times}&\text{if}\ \pi=(\beta),\\
0&\text{else}.\end{cases}
\end{split}
\end{align}
Hence $\Phi_\beta^\bullet(\Delta(\k_{(\beta)}))$ and $\Delta(\beta)$
have the same class in the Grothendieck group of $\calC_\beta$.
\fi%%%%%%%%%%%%%%%%%%%%%%
So, to prove that $\Phi_\beta^\bullet(\nabla(\k_{(\beta)}))=\Delta(\beta)$
it is enough to check that
$$\Ext^1_{\calC_\beta}(\Phi_\beta^\bullet(\nabla(\k_{(\beta)})),L(\pi))=0\quad,\quad\forall \pi\leqslant(\beta),$$
or, equivalently, that 
$\Ext^1_{\calC_\beta}(\Phi_\beta^\bullet(\nabla(\k_{(\beta)})),L(\beta))=0.$
To do that, we apply the functor $\Hom_{\calC_\beta}(\bullet,L(\beta))$ to the 
short exact sequence in $\calC_\beta$ given by
$$\xymatrix{
0\ar[r]& J(\beta)\ar[r]&P(\beta)\ar[r]^-{\eqref{rho}}& \Phi_\beta^\bullet(\nabla(\k_{(\beta)}))\ar[r]& 0}.$$
\iffalse%%%%%%%%%%%%%%%
We get the exact sequence
$$\xymatrix{
0\ar[r]&\Hom_{\calC_\beta}(\Phi_\beta^\bullet(\Delta(\k_{(\beta)})),L(\beta))\ar[r]&
\Hom_{\calC_\beta}(P(\beta),L(\beta))\ar[r]&
\Hom_{\calC_\beta}(J(\beta),L(\beta))\ar[d]\\
&&0&\Ext^1_{\calC_\beta}(\Phi_\beta^\bullet(\Delta(\k_{(\beta)})),L(\beta))\ar[l].}$$
\fi%%%%%%%%%%%%%%%%%
We must check that
\begin{align}\label{eq1}\Hom_{\calC_\beta}(J(\beta),L(\beta))=0.\end{align}
%Note that $[J(\beta)\,:\,L(\beta)]\neq 0$, because
%$$[P(\beta)\,:\,L(\beta)]=\dim\Hom^\bullet_{\D^\b(\calX_\beta)}(IC(\beta)\,,\,IC(\beta))=\dim H^\bullet_{G_\beta}.$$
Let $H^\bullet_+$ be the kernel of the restriction map $H^\bullet_{G_\beta}\to H^\bullet_{D_\beta}$.
We have
$$J(\beta)=\big(V(\beta)\otimes H^\bullet_+\big)\oplus\bigoplus_{\pi\neq(\beta)}
\big(V(\pi)\otimes H^\bullet_{G_\beta}(X_\beta\,,\,IC(\pi)\big)[-d_\beta].$$
Since $H^0_+=0$, we do have \eqref{eq1} for degree reasons.

\medskip

\noindent{\bf Step 2 :} \emph{We prove $\mathrm{(b)}$ for any $r$.}

\medskip

Fix a tuple 
$\pi=((\beta_0)^{p_0},\dots,(\beta_{l-1})^{p_{l-1}},(\beta_l)^{p_l})$ in $Q_+^\beta$.
Taking the parts of $\pi$ in the reverse order we get a Kostant partition $\pi^\op$ in $\Gamma\!_\beta$.
Consider the open subsets $\calX_{(\pi)}\subset\calX_\pi$ and $\widetilde\calX_{(\pi)}\subset\widetilde\calX_\pi$ given by
%restrict the induction diagram \eqref{ind-diag} to the following setting
$$\calX_{(\pi)}=(\calY_{(\beta_0)})^{p_0}\times\dots\times(\calY_{(\beta_l)})^{p_l}
\quad,\quad\widetilde\calX_{(\pi)}=q_\pi^{-1}(\calX_{(\pi)}).$$
Comparing \eqref{fiber} with the third equality in \eqref{preprojective}, 
we deduce that the restriction of the map $q_\pi$ to
$\widetilde\calX_{(\pi)}$
is a gerbe with group the unipotent radical of $P_\pi$.
The second equality  in \eqref{preprojective} implies that the restriction of the map $p_\pi$ to $\widetilde\calX_{(\pi)}$
 is a closed embedding with image $\calY_\pi.$
In other words, any extension of
$\calP_l, \dots\calP_l,\calP_{l-1},\dots,\calP_0$ in that order
(meaning that $\calP_0$ is a subobject and $\calP_l$ a quotient object)
is necessarily trivial, hence isomorphic to $\calP_\pi$.
Further, 
the representation $\calP_\pi$ preserves a unique flag in $V$ which is conjugate to $V_\bullet$ 
under the action of $G_\beta$.
Thus, for each $k=0,\dots,l$, we get 
\begin{align}\label{IND5}
\begin{split}
\ind_{((\beta_k)^{p_k})}\big(\Delta(\k_{(\beta_k)})^{\otimes p_k}\big)=
\Delta(\k_{((\beta_k)^{p_k})})\langle p_k\rangle\,!,\\
\ind_{\bar\pi}\big(\Delta(\k_{((\beta_0)^{p_0})})\otimes\dots\otimes
\Delta(\k_{((\beta_{l-1})^{p_{l-1}})})\otimes\Delta(\k_{((\beta_l)^{p_l})})\big)
&=\Delta(\k_\pi).
\end{split}
\end{align}
From \eqref{Delta0}, \eqref{Delta}, \eqref{IND5}, Proposition \ref{prop:ind} and the isomorphism 
$\Phi_\beta^\bullet(\nabla(\k_{(\beta_k)}))\simeq\Delta(\beta_k)$
proved in Step 1  for each $k\in[0,l],$ we deduce that  
$\Phi_\beta^\bullet\big(\nabla(\k_\pi))=\Delta(\pi)$,
proving the part (b) of the lemma for any $r$.

\medskip

\noindent{\bf Step 3 :} \emph{We prove $\mathrm{(a)}$ for any $r$.}

\medskip

Step 2 implies that for each $\sigma\in\Gamma_\beta$ we have
\begin{align*}
\Delta(\sigma)&=\Phi_\beta^\bullet(\nabla(\k_\sigma))
=\bigoplus_{\pi\in\KP\!_\beta}V(\pi)\otimes H^\bullet_{G_\beta}(Y_\sigma\,,\,(j_\sigma)^*IC(\pi)).
\intertext{We must check that for all  $\pi\in\Gamma\!_\beta$
we have $\pi^\flat=\pi$. We have}
[\Delta(\sigma)\,:\,L(\pi^\flat)]&=\dim\,\e(\pi^\flat)\,\Delta(\sigma),\\
&=\dim\,H^\bullet_{G_\beta}(Y_\sigma\,,\,(j_\sigma)^*IC(\pi^\flat)),\\
&=\dim\,H^\bullet_{G_\beta}(Y_\sigma\,,\,(j_\sigma)^*IC(\k_\pi)),\\
%\intertext{Hence, we have}
&=\begin{cases}0&\text{if\ }Y_\sigma\not\subseteq\bar Y_\pi,\\
\neq 0&\text{if\ } \sigma=\pi.\end{cases}
\intertext{The first relation and Proposition \ref{prop:inftyX}(b) yield}
[\Delta(\pi^\flat)\,:\,L(\pi^\flat)]\neq0
&\Longrightarrow Y_{\pi^\flat}\subseteq\bar Y_\pi
\Longrightarrow \pi^\flat\geqslant\pi.
\end{align*}
The second relation yields $\pi\geqslant\pi^\flat$.
Hence, we have $\pi=\pi^\flat$ as wanted.

\medskip

\noindent{\bf Step 4 :} \emph{We construct
a surjective $\bfR_\beta$-module homomorphism 
$P(\pi)\to\Phi_\beta^\bullet(Q(\k_\pi))$.}

\medskip

We'll need mixed analogues of the complexes above. 
Let $X_{\beta,0}$ be the variety of all 
$\beta$-dimensional representations of the path algebra $F_0\bfQ$.
The linear algebraic $F_0$-group $G_{\beta,0}$ acts on $X_{\beta,0}$ and we can consider the quotient $F_0$-stack
$\calX_{\beta,0}=[X_{\beta,0}\,/\,G_{\beta,0}]$. 
Since the group $G_\beta$ is connected, by Lang's theorem we have
$\calX_\beta=\calX_{\beta,0}\otimes_{F_0}F.$ 
We consider the mixed complex 
$$IC(\pi)_\bbd=IC(\k_\pi)_\bbd=(j_\pi)_{!*}\k_\pi\langle d_\pi\rangle.$$
It is pure of weight 0.
We abbreviate
$\calE_\bbd=(j_\beta)_*(j_\beta)^*IC(\pi)_\bbd.$
The complex $\calE=\omega(\calE_\bbd)$ is $Q(\k_\pi)$.
The adjunction yields a canonical map $IC(\pi)_\bbd\to\calE_\bbd$, hence by functoriality a map
\begin{align}\label{map3}\Phi_\beta^\bullet(IC(\pi))\to\Phi_\beta^\bullet(\calE).\end{align}

\smallskip

\begin{claim}\label{claim:triangle3}
There is a distinguished triangle
$\xymatrix{\ar[r]& IC(\pi)_\bbd\ar[r]&\calE_\bbd\ar[r]&(\calE_\bbd)_{>0}.}$
\end{claim}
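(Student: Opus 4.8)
The plan is to identify this triangle with the canonical weight‐truncation triangle of $\calE_\bbd$ at weight $0$. Write $j=j_\beta\colon\calY_\beta\hookrightarrow\calX_\beta$ for the open immersion (equivalently, $G_\beta$‐equivariantly, $Y_\beta\hookrightarrow X_\beta$) and let $i\colon\calZ\hookrightarrow\calX_\beta$ be the complementary closed immersion, so that $\calE_\bbd=j_*j^*IC(\pi)_\bbd$ and the map $IC(\pi)_\bbd\to\calE_\bbd$ of the excerpt is the adjunction unit. The co‐attaching (recollement) triangle for $IC(\pi)_\bbd$ reads
\[
i_*i^!IC(\pi)_\bbd\longrightarrow IC(\pi)_\bbd\longrightarrow\calE_\bbd\longrightarrow i_*i^!IC(\pi)_\bbd[1],
\]
and rotating it produces a distinguished triangle
\[
IC(\pi)_\bbd\longrightarrow\calE_\bbd\longrightarrow i_*i^!IC(\pi)_\bbd[1]\longrightarrow IC(\pi)_\bbd[1]
\]
whose first arrow is the canonical one. (Note that $\pi\in\Gamma\!_\beta$, so by Proposition \ref{prop:inftyX} the complex $IC(\pi)_\bbd$ is supported on $\bar Y_\pi$ with $Y_\pi\subset Y_\beta$; this is not needed for the triangle itself but explains why $\calE_\bbd$ genuinely differs from $IC(\pi)_\bbd$.)

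Next I would pin down the weights of the two outer terms. By hypothesis $IC(\pi)_\bbd$ is pure of weight $0$, in particular it has weights $\leqslant 0$. For the third term, $i^!$ does not decrease weights, $i_*$ preserves weights (as $i$ is a closed, hence proper, immersion), and the cohomological shift $[\,1\,]$ raises weights by one; hence $i_*i^!IC(\pi)_\bbd[1]$ has weights $\geqslant 1$, i.e.\ $>0$. All these properties of the six operations on the mixed category are recalled in Appendix \ref{sec:ASMG}; one must of course carry out every functor there, and keep track that the conventions for the interplay of $[\bullet]$, $\langle\bullet\rangle$ and weights are the ones used in the text. Thus the triangle just constructed has left term of weights $\leqslant 0$ and right term of weights $>0$.

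Finally I would invoke the uniqueness of weight decompositions: any distinguished triangle $A\to B\to C\to A[1]$ with $A$ of weights $\leqslant 0$ and $C$ of weights $>0$ is canonically isomorphic, via the identity on $B$, to the weight‐truncation triangle $(B)_{\leqslant 0}\to B\to (B)_{>0}\to (B)_{\leqslant 0}[1]$; this follows from the vanishing of $\Hom$ from a complex of weights $\leqslant 0$ to one of weights $>0$, applied to the relevant composites. Applying this to the above triangle yields $(\calE_\bbd)_{\leqslant 0}\simeq IC(\pi)_\bbd$ and $(\calE_\bbd)_{>0}\simeq i_*i^!IC(\pi)_\bbd[1]$, which is exactly the asserted triangle. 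The only real issue is formal bookkeeping — checking that the recollement triangles, the weight behaviour of $i_*,i^!,[1]$, and the uniqueness of weight decompositions are all available for arbitrary mixed complexes on the equivariant stacks $\calX_{\beta,0}$ (not merely for mixed perverse sheaves) — and I expect no substantive obstacle once the statements of Appendix \ref{sec:ASMG} are quoted.
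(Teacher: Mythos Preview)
Your recollement approach is correct and more direct than the paper's, but the step invoking ``uniqueness of weight decompositions'' is flawed. For BBD weights the orthogonality goes $\Hom(\D^\b_{\leqslant 0},\D^\b_{>0})=0$, which is the \emph{opposite} direction from a t-structure; weight truncations are consequently \emph{not} unique up to isomorphism. A concrete counterexample: for $B=0$ and any nonzero pure weight-$0$ object $A$, both $0\to 0\to 0$ and $A\to 0\to A[1]$ satisfy your hypotheses, yet are not isomorphic. The vanishing $\Hom(A,C')=0$ lets you produce a morphism of triangles over $\id_B$, but you cannot conclude it is an isomorphism from that alone.

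Fortunately you do not need uniqueness at all. Your recollement triangle
\[
IC(\pi)_\bbd\longrightarrow \calE_\bbd\longrightarrow i_*i^!IC(\pi)_\bbd[1]\longrightarrow
\]
already has as its \emph{first arrow} the adjunction unit $IC(\pi)_\bbd\to j_*j^*IC(\pi)_\bbd=\calE_\bbd$, which is precisely the map appearing in the claim; hence its cone is, by definition, what the claim denotes $(\calE_\bbd)_{>0}$. Your weight computation $i_*i^!IC(\pi)_\bbd[1]\in\D^\b_{\geqslant 1}(\calX_{\beta,0})$ then finishes the argument, since for the subsequent application (the surjectivity of \eqref{map3}) all that is used is that the cone has weights $>0$. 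So drop the uniqueness paragraph and you are done.

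By contrast, the paper argues from the other end: it first applies Proposition~\ref{prop:triangle} to obtain $(\calE_\bbd)_0\to\calE_\bbd\to(\calE_\bbd)_{>0}$, then shows $(\calE_\bbd)_0\simeq IC(\pi)_\bbd$ by decomposing the pure complex $(\calE_\bbd)_0$ into its perverse cohomology, identifying ${}^pH^0$ via \cite[ex.~III.10.3]{KW}, and ruling out higher pieces by a multiplicity-one argument. Your route bypasses this entirely by naming the cone explicitly and reading off its weights from the behaviour of $i^!$, $i_*$ and $[1]$; this is shorter and avoids the perverse-cohomology bookkeeping. The paper's argument, on the other hand, does not rely on knowing a closed complement or the recollement package on stacks, only on the abstract weight truncation of Proposition~\ref{prop:triangle}.
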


\begin{proof} 
By \cite[\S 5.1.14\,,\,prop.~ 1.4.12]{BBD}, the mixed complex
$\calE_\bbd$ is in $\D^\b_{\geqslant 0}(\calX_{\beta,0}) 
\cap {}^p\D^\b(\calX_{\beta,0})^{\geqslant 0}.$
%[\,{}^p\!H^{a}\calE\,:\,IC(\pi)]=0\quad,\quad\forall a\neq 0.
By Proposition \ref{prop:triangle}, we have a distinguished triangle
\begin{align}\label{DT3}\xymatrix{\ar[r]&(\calE_\bbd)_0\ar[r]^-{f_\bbd}&\calE_\bbd\ar[r]&(\calE_\bbd)_{>0}.}
\end{align}
Write $\calE_{>0}=\omega((\calE_\bbd)_{>0})$, $\calE_0=\omega((\calE_\bbd)_0)$ and $f=\omega(f_\bbd)$.
We have short exact sequences
\begin{align*}
\xymatrix{0\ar[r]&{}^p\!H^a(\calE_0)\ar[r]^-{{}^p\!H^a(f)}&{}^p\!H^a\calE\ar[r]&{}^p\!H^a(\calE_{>0})\ar[r]& 0}
\quad,\quad\forall a\in\bbZ.
\end{align*}
By \cite[ex.~III.10.3]{KW} we have 
\begin{align}\label{zero}{}^p\!H^0(\calE_0)=IC(\pi).\end{align}
Since the mixed complex $(\calE_\bbd)_0$ is pure, we have
\begin{align*}\calE_0=IC(\pi)\oplus\bigoplus_{a>0}{}^p\!H^a(\calE_0)[-a].\end{align*}
Let $a>0$. If ${}^p\!H^{a}(\calE_0)\neq 0$ then the restriction of
$f$ to the summand ${}^p\!H^{a}(\calE_0)[-a]$ is nonzero.
Since $\calE=(j_\beta)_*(j_\beta)^*IC(\pi)$, this yields a nonzero map 
$$(j_\beta)^*{}^p\!H^{a}(\calE_0)[-a]\to (j_\beta)^*IC(\pi).$$
This is absurd by \eqref{zero} because, by definition of $\calE$, we have
\begin{align}
\sum_{a\in\bbZ}[{}^p\!H^a(\calE_0)\,:\,IC(\pi)]\leqslant
\sum_{a\in\bbZ}[{}^p\!H^a\calE_\bbd\,:\,IC(\pi)_\bbd]=1.\end{align}
We deduce that $(\calE_\bbd)_0=IC(\pi)_\bbd$, and  \eqref{DT3} yields Claim \ref{claim:triangle3}.
\end{proof}

\smallskip

\begin{claim}
The map \eqref{map3} is surjective.
\end{claim}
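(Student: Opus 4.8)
The plan is to unwind \eqref{map3} into a restriction map across the open inclusion $j_\beta\colon\calY_\beta\hookrightarrow\calX_\beta$, reduce its surjectivity to the surjectivity of the individual restriction maps between the intersection complexes $IC(\sigma)$, $\sigma\in\Gamma\!_\beta$, and then settle the latter by a weight argument on the mixed lifts.

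\emph{Unwinding the map.} By the discussion preceding Claim~\ref{claim:triangle3} and Steps~1--3 one has $\calE=\omega(\calE_\bbd)=Q(\k_\pi)=(j_\beta)_*(j_\beta)^*IC(\pi)$, and \eqref{map3} is obtained from the adjunction unit $IC(\pi)\to(j_\beta)_*(j_\beta)^*IC(\pi)$ by applying the cohomological functor $\Phi_\beta^\bullet=\Hom^\bullet_{\D^\b(\calX_\beta)}(\calL_\beta,-)$. Through $(j_\beta)^*\dashv(j_\beta)_*$ it is identified with the restriction morphism
\[
(j_\beta)^*\colon\Hom^\bullet_{\D^\b(\calX_\beta)}(\calL_\beta,IC(\pi))\longrightarrow\Hom^\bullet_{\D^\b(\calY_\beta)}\big(\calM_\beta,(j_\beta)^*IC(\pi)\big).
\]
Using $\calL_\beta=\bigoplus_{\sigma\in\K\P\!_\beta}V(\sigma)\otimes IC(\sigma)$ together with $(j_\beta)^*IC(\sigma)=0$ for $\sigma\notin\Gamma\!_\beta$ (the Corollary after Proposition~\ref{prop:inftyX}, since $V(\sigma)\neq0$), this morphism is the direct sum over $\sigma\in\Gamma\!_\beta$ of $\mathrm{id}_{V(\sigma)^\vee}\otimes r_\sigma$, where
\[
r_\sigma\colon\Hom^\bullet_{\D^\b(\calX_\beta)}\big(IC(\sigma),IC(\pi)\big)\longrightarrow\Hom^\bullet_{\D^\b(\calY_\beta)}\big((j_\beta)^*IC(\sigma),(j_\beta)^*IC(\pi)\big)
\]
is the restriction of morphisms. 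So it is enough to show that each $r_\sigma$, $\sigma\in\Gamma\!_\beta$, is surjective.

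\emph{Surjectivity of $r_\sigma$.} Fix $\sigma\in\Gamma\!_\beta$, set $\calZ=\calX_\beta\setminus\calY_\beta$ with closed inclusion $i_\calZ$, and apply $\Hom^\bullet_{\D^\b(\calX_\beta)}(IC(\sigma),-)$ to the triangle $i_{\calZ\,*}i_\calZ^!IC(\pi)\to IC(\pi)\to(j_\beta)_*(j_\beta)^*IC(\pi)$. Using the adjunctions $i_\calZ^*\dashv i_{\calZ\,*}$ and $(j_\beta)^*\dashv(j_\beta)_*$, the long exact sequence identifies $\coker r_\sigma$ with a graded submodule of $\Hom^{\bullet+1}_{\D^\b(\calZ)}\big(i_\calZ^*IC(\sigma),i_\calZ^!IC(\pi)\big)$ via the connecting homomorphism of the sequence. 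Now pass to the mixed lifts: $IC(\sigma)_\bbd$ and $IC(\pi)_\bbd$ are pure of weight $0$, so $i_\calZ^*IC(\sigma)_\bbd$ has weights $\leqslant0$ and $i_\calZ^!IC(\pi)_\bbd$ has weights $\geqslant0$; moreover the rotation of the triangle of Claim~\ref{claim:triangle3} shows $i_{\calZ\,*}i_\calZ^!IC(\pi)_\bbd\,[1]=(\calE_\bbd)_{>0}$ is carried by strictly positive weight. The weight estimates of \cite[\S\,5.1.14,\,\S\,5.3]{BBD}, in the normalization of Appendix~\ref{sec:ASMG} relating weights, Tate twists and the cohomological grading, then force the above connecting homomorphism to vanish, so $\coker r_\sigma=0$. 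Hence each $r_\sigma$ is surjective and therefore so is \eqref{map3}.

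\emph{Main obstacle.} The crux is the last step: one must verify carefully, using the purity of the $IC(\sigma)_\bbd$ and the precise conventions of Appendix~\ref{sec:ASMG}, that the connecting homomorphism $\Hom^\bullet_{\D^\b(\calY_\beta)}\big((j_\beta)^*IC(\sigma),(j_\beta)^*IC(\pi)\big)\to\Hom^{\bullet+1}_{\D^\b(\calZ)}\big(i_\calZ^*IC(\sigma),i_\calZ^!IC(\pi)\big)$ is identically zero over the whole range of degrees entering the long exact sequence; everything else is formal manipulation of adjunctions and of the distinguished triangle of Claim~\ref{claim:triangle3}. One should also arrange to invoke only Claim~\ref{claim:triangle3} and the earlier parts of the proof of Lemma~\ref{lem:A}, since the proof of Lemma~\ref{lem:AA} relies on Lemma~\ref{lem:A}.
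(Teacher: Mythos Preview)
Your reduction is correct and is essentially the same skeleton as the paper's proof: decompose along the summands $IC(\sigma)$ of $\calL_\beta$, apply $\Hom(IC(\sigma),-)$ to the triangle of Claim~\ref{claim:triangle3}, and kill the connecting map by weights. However, there is a genuine gap in your weight argument.

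The general estimates of \cite[\S 5.1.14,\,\S 5.3]{BBD} only tell you that the mixed vector space
\[
\HHom^a_{\D^\b(\calY_{\beta,0})}\big((j_\beta)^*IC(\sigma)_\bbd,(j_\beta)^*IC(\pi)_\bbd\big)
\]
has weights $\geqslant a$, while the target $\HHom^{a+1}_{\D^\b(\calZ_0)}\big(i_\calZ^*IC(\sigma)_\bbd,i_\calZ^!IC(\pi)_\bbd\big)\cong\HHom^a(IC(\sigma)_\bbd,(\calE_\bbd)_{>0})$ has weights $>a$. A Frobenius-equivariant map from a space of weights $\geqslant a$ to one of weights $>a$ need not vanish: the weight-$(a{+}1)$ piece of the source can map nontrivially. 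To conclude, you need the source to be \emph{pure} of weight $a$, and purity of $IC(\sigma)_\bbd$ alone does not give this; purity of $\HHom^\bullet$ between pure complexes can fail on an arbitrary stratified variety.

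This is precisely the input the paper supplies that your argument omits: via the isomorphism $\calY_\beta\simeq\calG r^+_\beta$ of Proposition~\ref{prop:Y=X}, the complexes $(j_\beta)^*IC(\sigma)_\bbd$ and $(j_\beta)^*IC(\pi)_\bbd$ become \emph{very pure} in the sense of \cite{BY13} on an even affine stratification (Proposition~\ref{prop:even1}(b)), and then Proposition~\ref{prop:BY} yields that $\HHom^\bullet$ between them is pure of weight $0$. Equivalently, $\Phi_{\beta,\bbd}^\bullet(\calE_\bbd)\in\D^+_0(\Spec F_0)$. Only with this purity does the weight comparison force the connecting map to vanish. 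Your ``Main obstacle'' paragraph correctly flags this step as the crux, but the suggestion that it follows from purity of the $IC(\sigma)_\bbd$ plus bookkeeping is not right; the affine Grassmannian geometry is essential.
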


\begin{proof}
By \cite[\S 5.3]{L98}, the complex $\calL_\beta$ has a canonical mixed structure $\calL_{\beta,\bbd}$
which is pure of weight 0. 
%Thus, the semisimple perverse sheaf ${}^p\!H^a\calL_\beta$ inherits a mixed structure 
%which is pure of weight $a$, but may not be semisimple.
%Let ${}^p\!H^a\calL_{\beta,\bbd}$ be its semisimplification.
Set
%$$\calL_{\beta,\bbd}=\sum_{a\in\bbZ}{}^p\!H^a\calL_{\beta,\bbd}[-a]\quad,\quad
$\calM_{\beta,\bbd}=(j_\beta)^*\calL_{\beta,\bbd}.$
The mixed complex $\calM_{\beta,\bbd}$ on $\calY_\beta$ is pure of weight 0.
We consider the following mixed complex of vector spaces in $\D^+_\bbd(\Spec F_0)$
$$\Phi_{\beta,\bbd}^\bullet(\calF_\bbd)=\HHom^\bullet_{\D^\b(\calX_{\beta,0})}(\,\calL_{\beta,\bbd}\,,\,\calF_\bbd\,)
\quad,\quad
\forall\calF_\bbd\in\D^\b_\bbd(\calX_{\beta,0}).$$ 
By \cite[cor.~3.10]{S12}, we have 
\begin{align}\label{E+}\Phi_{\beta,\bbd}^\bullet((\calE_\bbd)_{>0})\in\D^+_{>0}(\Spec F_0).\end{align}
Next, we consider the mixed complex $\Phi_{\beta,\bbd}^\bullet(\calE_\bbd)$. We have
\begin{align*}
\Phi_{\beta,\bbd}^\bullet(\calE_\bbd)=
\HHom^\bullet_{\D^\b(\calY_{\beta,0})}(\,\calM_{\beta,\bbd}\,,\,(j_\beta)^*IC(\pi)_\bbd\,)
\quad,\quad
\omega\Phi_{\beta,\bbd}^\bullet(\calE_\bbd)=\Phi_\beta^\bullet(\calE).
\end{align*} 
We'll use the notation $H_\beta$, $\calG r^+_\beta$, $\Gr^+_\beta$ and $S^+_\beta$ introduced in \S\ref{sec:Gr} below,
to which we refer for more details. In particular, we have
$\calG r^+_\beta=[\Gr^+_\beta/H_\beta]$ as a stack, and
the stratification $T^+_\beta$ is even affine by Proposition \ref{prop:even1}(b).
The image of the mixed complexes $\calM_{\beta,\bbd}$ and $(j_\beta)^*IC(\pi)_\bbd$ by 
the equivalence of triangulated categories $\D^\b_\bbd(\calY_{\beta,0})\to\D^\b_\bbd(\calG r^+_{\beta,0})$
in Proposition \ref{prop:Y=X} are very pure of weight 0.
%Their images by  the forgetful functor $\For$ are in $\C_\bbd(\Gr^+_{\beta,0},S)$. 
%We'll omit the functor $\For$ to simplify the notation.
\iffalse%%%%%%%%%%%%%%%%%%%%%%%%%%%
By \cite[thm.~5.2]{L98}, each complex $IC(\pi)$ with $\pi\in\KP\!_\beta$ has also a canonical 
mixed structure $IC(\pi)_\bbd$ which is pure of weight 0. 
We have
$$\calL_{\beta,\bbd}=\bigoplus_{\pi\in\K\P\!_\beta}V(\pi)_\bbd\otimes IC(\pi)_\bbd.$$ 
Here $V(\pi)_\bbd$ is a mixed complex of $\k$-vector spaces with 0 differential which is 
pure of weight 0.
Let $S$ denote the stratification  by $G_\beta$-orbits.
\fi%%%%%%%%%%%%%%%%%%%%%%%%%%%
We deduce from Proposition \ref{prop:BY} that %using \cite[prop.~2.6]{JMW} that 
$\Phi_{\beta,\bbd}^\bullet(\calE_\bbd)$ 
is free of finite rank as an $H^\bullet_{H_\beta}$-module and that
%\begin{align*}
%\k\otimes_{H^\bullet_{G_r}}\Phi_{\beta,\bbd}^\bullet(\calE_\bbd)=
%\HHom^\bullet_{\D^\b(\Gr^+_{\beta,0})}(\,\calM_{\beta,\bbd}\,,\,(j_\beta)^*IC(\pi)_\bbd\,)
%\end{align*} 
%which is pure of weight $0$ by Proposition \ref{prop:pure1}.
%Thus, we have 
\begin{align}\label{E}\Phi_{\beta,\bbd}^\bullet(\calE_\bbd)\in\D^+_0(\Spec F_0).\end{align}
%Thus the mixed vector space $\Phi_{\beta,\bbd}^a(\calE_\bbd)$ is pure of weight $a$ for each $a\in\bbZ$, and we have
%$$\Phi_{\beta,\bbd}^\bullet(\calE_\bbd)=\bigoplus_{a\in\bbZ}\Phi_{\beta,\bbd}^a(\calE_\bbd)[-a].$$
Now, we apply the functor $\Phi_{\beta,\bbd}^\bullet$
to the triangle in Claim \ref{claim:triangle3}. We get a long exact sequence of mixed vector spaces
$$\xymatrix{
\ar[r]&\Phi_{\beta,\bbd}^a(IC(\pi)_\bbd)\ar[r]^-{f^a}&\Phi_{\beta,\bbd}^a(\calE_\bbd)\ar[r]
&\Phi_{\beta,\bbd}^a((\calE_\bbd)_{>0})\ar[r]&\Phi_{\beta,\bbd}^{a+1}(IC(\pi)_\bbd)\ar[r]^-{f^{a+1}}&.
}$$
From \eqref{E+}, \eqref{E}, we deduce that the map $f^a$ is onto.
Hence, taking the sum over all integers $a$, we get that the map 
$$\omega\Phi_{\beta,\bbd}^\bullet(IC(\pi)_\bbd)=\Phi_\beta^\bullet(IC(\pi))
\to\omega\Phi_{\beta,\bbd}^\bullet(\calE_\bbd)=\Phi_\beta^\bullet(\calE)$$
in \eqref{map3} is surjective.
\iffalse%%%%%%%%%%%%%%%
For $\calF_\bbd=IC(\pi)_\bbd$, this follows from the fact that $\Phi_\beta^\bullet(IC(\pi))$ and
$\F^\bullet(IC(\pi)_\bbd)$ are direct summands of 
$\Phi_\beta^\bullet(\calL_\beta)$ and $\F^\bullet(\calL_{\beta,\bbd})$, and that the latter
can be interpreted as the (equivariant) Borel-Moore homology of a Steinberg variety of triples, 
because these Steinberg varieties admit partitions into affine spaces, see e.g., \cite{VV11}. % see [p 258]{R17}.
\fi%%%%%%%%%%%%%%%%%%
\end{proof}

\medskip

\noindent{\bf Step 5 :} \emph{We prove $\mathrm{(c)}$ for any $r$.}

\medskip

We must prove that $\Phi_\beta^\bullet(Q(\k_\pi))=Q(\pi)$.
For any Kostant partition $\sigma\in\K\P\!_\beta$, we have
\begin{align}\label{un}
\begin{split}
[\Phi_\beta^\bullet(Q(\k_\pi))\,:\,L(\sigma)]
&=\dim\bbHom_{\bfR_\beta}(P(\sigma)\,,\,\Phi^\bullet_\beta(Q(\k_\pi))),\\
&=\dim \e(\sigma)\Phi^\bullet_\beta(Q(\k_\pi)),\\
&=\dim\Hom^\bullet_{\D^\b(\calX_\beta)}(DQ(\k_\pi)\,,\,IC(\sigma)),\\
&=\dim\Hom^\bullet_{\D^\b(\calY_\beta)}((j_\beta)^* IC(\pi)\,,\,(j_\beta)^* IC(\sigma)),\\
&=0\ \text{if\ } \sigma\notin\Gamma\!_\beta.
\end{split}
\end{align}
Note that $(j_\beta)^*=(j_\beta)^!$ because $j_\beta$ is an open immersion.
We deduce that 
$\Phi_\beta^\bullet(Q(\k_\pi))\in\calD_\beta$.
By Step 4 the graded $\bfR_\beta$-module $P(\pi)$ maps onto $\Phi_\beta^\bullet(Q(\k_\pi))$.
Since the graded $\bfR_\beta$-module 
$Q(\pi)$ is the largest quotient of $P(\pi)$ which lies in $\calD_\beta$, 
we get a surjective graded $\bfR_\beta$-module homomorphism
\begin{align}\label{ONTO}Q(\pi)\to\Phi_\beta^\bullet(Q(\k_\pi)).\end{align}
Since the category $\calC_\beta$ is affine properly stratified, the module $P(\pi)$ has a $\Delta$-filtration for all 
$\pi\in\KP\!_\beta$. By Propositions \ref{prop:even1}, \ref{prop:Y=X} and \ref{prop:BY}, 
the extension algebra $\bfS_\beta$ in \eqref{inftyVV}
satisfies the conditions in \cite[thm.~4.1]{K12}, hence the graded $\bfS_\beta$-module $\Phi_\beta^\bullet(Q(\k_\pi))$
has an increasing filtration whose layers are isomorphic to the modules $\Phi_\beta^\bullet(\nabla(\k_\sigma))$
with $\sigma\in\Gamma\!_\beta$. Note that $\Phi_\beta^\bullet(\nabla(\k_\sigma))=\Delta(\sigma)$ by Step 2 above.
Hence, we must check that the multiplicity of $\Delta(\sigma)$ in $P(\pi)$ and $\Phi_\beta^\bullet(Q(\k_\pi))$ are the 
same for all $\sigma\in\Gamma\!_\beta$. This follows from the relations
\begin{align*}
P(\pi)&=\bigoplus_{\tau\in\KP\!_\beta}V(\tau)\otimes\Hom^\bullet(IC(\pi),IC(\tau)),\\
\Phi_\beta^\bullet(Q(\k_\pi))&=\bigoplus_{\tau\in\Gamma\!_\beta}V(\tau)\otimes
\Hom^\bullet((j_\beta)_!(j_\beta)^*IC(\pi),IC(\tau)),\\
\Delta(\sigma)&=\bigoplus_{\tau\in\Gamma\!_\beta}V(\tau)\otimes
\Hom^\bullet((j_\sigma)_!(k_\sigma[d_\sigma]),IC(\tau)).
\end{align*}

%%%%%%%%%%%%%%%%%%%%%%%%%%%%%%%%%%
\end{proof}

\smallskip

\begin{proof}[Proof of Lemma $\ref{lem:AA}$] %The proof is inspired by \cite{K12a}.

First, we prove the lemma for $\Gamma=\Gamma\!_\beta$.
Consider the graded $\bfR_\beta$-modules
$$P_\beta=\bigoplus_{\pi\in\KP\!_\beta}V(\pi)\otimes P(\pi)
\quad,\quad
Q_\beta=\bigoplus_{\pi\in\Gamma\!_\beta}V(\pi)\otimes Q(\pi).$$
The isomorphism \eqref{PPP} and Steps 3 and 5 above yield the following  isomorphisms
$$P_\beta=\Phi^\bullet_\beta(\calL_\beta)=\bfR_\beta
\quad,\quad
Q_\beta=\Psi^\bullet_\beta(\calM_\beta)=\bfS_\beta.$$
We have a surjective graded $\bfR_\beta$-module homomorphism $P_\beta\to Q_\beta$.
So, the $\k$-algebra homomorphism
\begin{align}\label{ONTO2}(j_\beta)^*:\bfR_\beta\to \bfS_\beta\end{align}
given by the restriction functor $(j_\beta)^*$ is surjective.
The $\bfR_\beta$-action on $Q_\beta$ 
factorizes through \eqref{ONTO2}
to an $\bfS_\beta$-action.
Hence, we have $\bbEnd_{\bfR_\beta}(Q_\beta)=\bfS_\beta.$
The graded $\bfR_\beta$-module $Q_\beta$
is a projective graded-generator of the category $\calD_\beta$, because
$Q(\pi)$ is the projective cover of  $L(\pi)$  in $\calD_\beta$ for each $\pi\in\Gamma\!_\beta$.
We have $\bbEnd_{\calD_\beta}(Q_\beta)=\bfS_\beta$, hence the category
$\calD_\beta$ is equivalent to $(\bfS_\beta)^\op\-\,\mod.$
Since $\bfS_\beta=(\bfS_\beta)^\op$, we get 
\begin{align}\label{DS}\calD_\beta=\bfS_\beta\-\mod.\end{align}
By Remark \ref{rem:mixte2}(a), \eqref{inftyVV} and \eqref{DS},
the functor $\Psi_\beta^\bullet$ yields an equivalence of graded additive categories 
$$\Par(\calY_\beta)=\bfS_\beta\-\,\proj=\calD^\proj_\beta$$ 
which maps $(j_\beta)^*IC(\pi)$ to $Q(\pi)$.

\smallskip

Now, we prove the lemma for any order ideal $\Gamma\subset\Gamma\!_\beta$.
By functoriality, we have the $\k$-algebra homomorphisms 
$$\xymatrix{(j_\Gamma)^*=(i_\Gamma)^*(j_\beta)^*:
\bfR_\beta\ar[r]&\bfS_\beta\ar[r]&\bfS_\Gamma.}$$
Under the equivalence of triangulated categories $\D^\b(\calY_\beta)\to \D^\b(\calG r^+_\beta)$
given by Proposition \ref{prop:Y=X}, 
the complex $\calL_\beta$ on $\calY_\beta$ is identified with an $H_\beta$-equivariant complex on
$\Gr^+_\beta$ which is parity, because the stratification $S^+_\beta$
is even by Proposition \ref{prop:even1}(b).
Hence, by \cite[cor.~2.9]{JMW}, we have
$(i_\Gamma)^*\bfS_\beta=\bfS_\Gamma.$
Since $(j_\beta)^*\bfR_\beta=\bfS_\beta$, we deduce that $(j_\Gamma)^*$ is surjective.

\smallskip

For each $\pi\in\Gamma$, the projective module $(h_\Gamma)^*Q(\pi)$ in $\calD_\Gamma$ is the cover of $L(\pi)$.
Since $\calD_\Gamma$ is a subcategory of $\calD_\beta$, 
we can view  $(h_\Gamma)^*Q(\pi)$ as a graded $\bfS_\beta$-module.
Let $\e(\pi)$ denote the image by the algebra homomorphism
$(j_\Gamma)^*$ of the idempotent $\e(\pi)$ in \eqref{idempotent}.
We claim that the $\bfS_\beta$-module $(h_\Gamma)^*Q(\pi)$ is isomorphic to the pullback by the algebra 
homomorphism $(i_\Gamma)^*$
of the projective $\bfS_\Gamma$-module $\bfS_\Gamma\, \e(\pi)$. 
Since $\bigoplus_{\pi\in\Gamma}(h_\Gamma)^*Q(\pi)$ is a projective graded-generator of
$\calD_\Gamma$, this yields an equivalence of graded Abelian categories
$\calD_\Gamma=\bfS_\Gamma\-\mod$.
We deduce that there is an equivalence of graded additive categories
$\calD^\proj_\Gamma=\bfS_\Gamma\-\,\proj$ and
$\bfS_\Gamma\-\,\proj=\Par(\calY_\Gamma)$, proving the lemma.

\smallskip

Now, we prove the claim.
As graded $\bfR_\beta$-modules, we have
\begin{align*}
Q(\pi)=\Hom^\bullet_{\D^\b(\calY_\beta)}((j_\beta)^*IC(\pi)\,,\,\calM_\beta)=\bfS_\beta \e(\pi).
\end{align*}
Consider the $\bfS_\beta$-module 
$Q(\pi)_\Gamma=\Psi_\Gamma^\bullet(j_\Gamma)^*IC(\pi)$. We have
\begin{align*}
%Q(\pi)&=\Hom^\bullet_{\D^\b(\calY_\beta)}((j_\beta)^*IC(\pi)\,,\,\calM_\beta)=\bfS_\beta \e(\pi),\\
Q(\pi)_\Gamma&=\Hom^\bullet_{\D^\b(\calY_\Gamma)}((j_\Gamma)^*IC(\pi)\,,\,\calM_\Gamma)
=\bfS_\Gamma \e(\pi).
\end{align*}
Thus the functor $(i_\Gamma)^*$ yields a $\bfS_\beta$-module homomorphism $Q(\pi)\to Q(\pi)_\Gamma$.
It is  surjective because the algebra homomorphism
$(i_\Gamma)^*:\bfS_\beta\to\bfS_\Gamma$ is surjective.
We also have a surjective $\bfS_\beta$-module homomorphism $Q(\pi)\to (h_\Gamma)^*Q(\pi)$.
We claim that 
$(h_\Gamma)^*Q(\pi)= Q(\pi)_\Gamma$
as graded $\bfS_\beta$-module.
This is proved as in Step 5 above.
We first observe that both modules belong the subcategory $\calD_\Gamma$ of $\calD_\beta$
and have $\Delta$-filtrations, using \cite{K12}.
Further, the multiplicities of the standard modules of $\calD_\Gamma$ in $(h_\Gamma)^*Q(\pi)$, 
$Q(\pi)_\Gamma$ are the same.
\end{proof}

\medskip

\section{The affine Grassmannians}
\label{sec:Gr}

Recall that $F$ is the algebraic closure of the finite field $F_0$. 
Write $ O=F[[t]]$, $K=F((t))$ and $O^-=F[t^{-1}].$
Let $\beta\in Q_{++}$ be as in \eqref{beta}. We abbreviate $G_r=GL_{r,F}$.
Let $\calA lg_F$ be the category of commutative $F$-algebras.
Let $\calG_r(O)$, $\calG_r(O^-)$, $\calG_r(K)$ be the presheaves of groups defined,
for any $R\in\calA lg_F$, by
$$\calG_r(O)(R)=G_r(R[[t]])\quad,\quad \calG_r(O^-)(R)=G_r(R[t^{-1}])\quad,\quad 
\calG_r(K)(R)=G_r(R((t))).$$
Let $(T_r,B_r)$ be the standard Borel pair in $G_r$.
If there is no confusion, we may abbreviate $T=T_r$ and $B=B_r$.
We denote the sets of characters of $T$, 
of dominant characters and of dominant characters with non negative entries by
$\Lambda^+_r\subset\Lambda_r\subset\bbZ^r.$
We equip $\Lambda_r$ with the partial order such that $\lambda\geqslant\mu$ whenever
$\lambda-\mu$ is a sum of positive roots. 
An interval of $\Lambda_r$ is a subset of the form
$$[\lambda\,,\,\mu]=\{\geqslant\!\lambda\}\cap\{\leqslant\!\mu\}\quad,\quad\lambda,\mu\in\Lambda_r,$$
where
$\{\leqslant\!\lambda\}=\{\nu\in\Lambda_r\,;\,\mu\leqslant\lambda\}$ and
$\{\geqslant\!\mu\}=\{\nu\in\Lambda_r\,;\,\nu\geqslant\mu\}.$
Consider the order ideal 
$\Lambda_\beta=\{\leqslant\!n\omega_1\}$
where 
$\omega_i=(1,\dots 1, 0,\dots,0)$ has $i$ entries equal to 1 for each $i\in[1,r]$.
We have
\begin{align*}
\Lambda_\beta=\{\lambda\in\Lambda^+_r\,;\,n_\lambda=n\}
\end{align*}
where 
$n_\lambda=\lambda_1+\cdots+\lambda_r$ for each
$\lambda=(\lambda_1,\dots,\lambda_r).$
We'll identify the sets  $\Gamma\!_\beta$ and $\Lambda_\beta$ via the bijection
\begin{align}\label{bij} \Gamma\!_\beta\to \Lambda_\beta\quad,\quad
\pi=((\beta_l)^{p_l},\dots,(\beta_1)^{p_1},
(\beta_0)^{p_0})\mapsto\lambda_\pi=
(l^{p_l},\dots,1^{p_1},0^{p_0}).
\end{align}
Let $W$ be the Weyl group of $G_r$, 
let $\widehat W$ be its affine Weyl group, and $M$ 
the set of maximal length representatives in $\widehat W$ of the left cosets in 
$\widehat W\,/\,W$.
For each weight $\lambda\in\Lambda_r$, let $P_\lambda\subset G_r$ be the standard parabolic subgroup
whose Levi factor is the centralizer $G_\lambda$ of the character $\lambda$ in $G_r$.

\smallskip

\subsection{The affine Grassmannians} \label{sec:AffGr}

We'll call \emph{affine Grassmannian} $\Gr$ the reduced algebraic $F$-space underlying the $F$-space 
which is sheafification of the
presheaf 
$\calA lg_F\to\calS ets$ taking $R$ to $\calG_r(K)(R)\,/\,\calG_r(O)(R).$
It is represented by a formally smooth reduced ind-projective $F$-scheme with a left action of the $F$-group $G_r(K)$.
The Cartan decomposition 
$$G_r(K)=\bigsqcup_{\lambda\in \Lambda_r}G_r(O)\,t^\lambda\, G_r(O)$$
yields a partition into $G_r(O)$-orbits 
$\Gr=\bigsqcup_{\lambda\in \Lambda_r}\Gr_\lambda$.
We have $\Gr_\lambda\subset\overline{\Gr_\mu}$ if and only if $\lambda\leqslant\mu$.
Define an irreducible projective $G_r(O)$-equivariant $F$-scheme of finite type by setting
$\Gr_\beta=\overline{\Gr_{n\omega_1}}.$
The set of $F$-points $\Gr_\beta(F)$
is the set of all $O$-sublattices of codimension $n$ in the \emph{standard lattice}
$L_0=O^{\oplus r}$.

\smallskip

We'll call \emph{thick affine Grassmannian} $\bfGr$ the reduced algebraic $F$-space underlying the $F$-space 
which is the sheafification of the presheaf  
$\calA lg_F\to\calS ets$ taking $R$ to $\calG_r(K)(R)\,/\,\calG_r( O^-)(R).$
It is represented by 
a reduced separated $F$-scheme of infinite type with a left action of the $F$-group $G_r(K)$,
see \cite{K89}.
The decomposition
$$G_r(K)=\bigsqcup_{\lambda\in \Lambda_r}G_r(O)\,t^\lambda\, G_r(O^-)$$
yields a partition of $\bfGr$ into $G_r( O)$-orbits 
$\bfGr=\bigsqcup_{\lambda\in \Lambda_r}\bfGr_\lambda$
such that
$\bfGr_\lambda\subset\overline{ \bfGr_\mu}$ if and only if $\lambda\geqslant\mu$.
The open subset $\bfGr_\beta=\bigsqcup_{\lambda\in \Lambda_\beta}\bfGr_\lambda$
is an irreducible $G_r(O)$-equivariant quasi-compact $F$-scheme of infinite type.

\smallskip

Fix a principal congruence subgroup $K_\beta$ of $G_r( O)$ which is contained
into $\bigcap_{\lambda\in\Lambda_\beta} t^\lambda G_r( O)t^{-\lambda}$.
The group $K_\beta$ acts trivially on $\Gr_\beta$
and freely on $\bfGr_\beta$.
We define
$$\Gr^-_\beta=\Gr_\beta
\quad,\quad
\Gr^+_\beta=\bfGr_\beta/K_\beta.$$ 
The $F$-scheme $\Gr^+_\beta$ is smooth of finite type,
because the $K_\beta$-action on $\bfGr_\beta$ is locally free.  % see \cite[prop.~3.2]{KS09}.
It is separated, see, e.g., \cite[\S A.6]{VV09}, \cite[lem.~6.3]{K17}.
Since $K_\beta$ is a normal subgroup of $G_r( O)$, we may consider the action of the affine algebraic 
$F$-group $H_\beta=G_r( O)/K_\beta$ on  $\Gr^\pm_\beta$.
The $H_\beta$-orbits give a partition
$$\Gr^\pm_\beta=\bigsqcup_{\lambda\in \Lambda_\beta}\Gr^\pm_\lambda.$$
Let $S_\beta^\pm=\{\Gr^\pm_\lambda\,;\,\lambda\in \Lambda_\beta\}$
be the corresponding stratification of $\Gr^\pm_\beta$.

\smallskip

Let $I\subset G_r( O)$ be an Iwahori subgroup containing $K_\beta$.
Consider the subgroup $I_\beta=I/K_\beta$ of $H_\beta.$
Let 
$T_\beta^\pm=\{\Gr^\pm_w\,;\,w\in M_\beta\}$
be the stratification of $\Gr^\pm_\beta$ by the $I_\beta$-orbits.
Here $M_\beta$ is a subset of $M$.

\smallskip

For each subset $\Gamma\subset\Lambda_\beta$, we consider the quotient stack
$\calG r^\pm_\Gamma=[\Gr^\pm_\Gamma/H_\beta]$, where
$$\Gr^\pm_\Gamma=\bigsqcup_{\lambda\in \Gamma}\Gr^\pm_\lambda.$$
Let $i^\pm_\Gamma$ be the locally closed inclusion $\Gr^\pm_\Gamma\subset \Gr^\pm_\beta$. 
We abbreviate $\calG r^\pm_\beta=\calG r^\pm_{\Lambda_\beta}$
and $\calG r^\pm_\lambda=\calG r^\pm_{\{\lambda\}}$.
The complex $IC(\lambda)^\pm$ defined by
\begin{align}\label{IC}
IC(\lambda)^\pm&=(i_\lambda)_{!*}\k_\lambda[d^\pm_\lambda]
\quad,\quad
d^\pm_\lambda=\dim \Gr^\pm_\lambda
\end{align}
can be viewed either as an object of $\D^\b(\calG r^+_\beta)$
or as an object of
$\D^\b_\mix(\calG r^+_\beta)$.  
In the latter case we write $IC(\lambda)^+_\mix$.
Applying the forgetful functor $\For$, we may also view them as objects of the categories
$\D^\b(\Gr^+_\beta,S)$ or $\D^\b_\mix(\Gr^+_\beta,S)$.
Recall from Definition \ref{def:even} the notion of good and even stratifications.

\smallskip

\begin{proposition}\label{prop:even1}
\hfill
\begin{itemize}[leftmargin=8mm]
\item[$\mathrm{(a)}$] $T_\beta^\pm$ is a good stratification.
\item[$\mathrm{(b)}$] $S_\beta^\pm$ is an even stratification.
\item[$\mathrm{(c)}$] $H^\bullet(\Gr^\pm_\lambda\,,\,\k)=H^\bullet(G_r/P_\lambda\,,\,\k)$ for each 
$\lambda\in\Lambda_r$.
\end{itemize}
\end{proposition}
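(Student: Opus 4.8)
The plan is to treat the three statements in tandem, since they all reduce to the well-understood geometry of $G_r(O)$-orbits and $I$-orbits on the affine Grassmannian of $GL_r$. First I would recall the standard stratifications: the $G_r(O)$-orbit $\Gr_\lambda$ on $\Gr = \Gr^-_\beta$ is an affine bundle over the partial flag variety $G_r/P_\lambda$, and the Iwahori orbits refine these into affine spaces indexed by $\widehat W / W$, i.e.\ by the minimal-length coset representatives; after quotienting by the principal congruence subgroup $K_\beta$ and passing to the $H_\beta = G_r(O)/K_\beta$-action, the same description carries over to $\Gr^+_\beta$, because $\Gr^+_\beta = \bfGr_\beta/K_\beta$ is locally a product of a finite-type smooth piece with a pro-unipotent group and the $H_\beta$- and $I_\beta$-orbits are the images of the corresponding $G_r(O)$- and $I$-orbits on $\bfGr_\beta$. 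For part (c), the orbit $\Gr^\pm_\lambda \cong H_\beta/\Stab$ retracts $H_\beta$-equivariantly onto $G_r/P_\lambda$ (the stabilizer has $P_\lambda$ as a maximal reductive quotient and a unipotent/pro-unipotent radical acting freely on the affine-bundle directions), so its cohomology is that of $G_r/P_\lambda$; here one uses that $P_\lambda$ is by definition the standard parabolic with Levi the centralizer $G_\lambda$ of $\lambda$.

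For part (b), that $S^\pm_\beta$ is an even stratification in the sense of Definition \ref{def:even}: the key inputs are (i) each stratum $\Gr^\pm_\lambda$ is simply connected and its cohomology $H^\bullet(G_r/P_\lambda,\k)$ vanishes in odd degrees (classical, since $G_r/P_\lambda$ has an affine paving), and (ii) the closure relations and the $*$- and $!$-restrictions of constant sheaves between strata have cohomology concentrated in even degrees. Statement (ii) follows from the fact that the $G_r(O)$-orbit closures on $\Gr$ admit $I$-orbit pavings by affine spaces of computable dimension parity — this is the classical even-dimensionality of Schubert cells in the affine Grassmannian relative to the ambient orbit — and the same paving descends to $\Gr^+_\beta$. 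Concretely I would invoke the $T^\pm_\beta$-paving from part (a) to compute stalks and costalks of $\k_{\Gr^\pm_\mu}$ restricted to $\Gr^\pm_\lambda$ and check degree parity; alternatively, cite the standard reference (Kazhdan–Lusztig/Lusztig, or the formulation in the literature on parity sheaves, e.g.\ \cite{JMW}) that the spherical stratification of the affine Grassmannian of a reductive group is even, and note the descent along the $K_\beta$-quotient preserves this.

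For part (a), that $T^\pm_\beta$ is a good stratification: a stratification is good when it is an affine even stratification with each stratum isomorphic to an affine space, so that parity sheaves exist and behave well. The $I_\beta$-orbits $\Gr^\pm_w$, $w \in M_\beta$, are affine spaces (images of the $I$-orbits on $\bfGr_\beta$, which are affine spaces by the Birkhoff/Iwahori decomposition), and the inclusion of each orbit into $\Gr^\pm_\beta$ is affine because Iwahori orbit closures in (thick) affine Grassmannians are defined by Bruhat-type conditions making the complement a divisor locally — this is the affineness of Schubert stratifications, standard for $\Gr$ and established for $\bfGr$ in \cite{K89}, and it descends to $\Gr^+_\beta$. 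Combined with the parity computation from (b) restricted to the finer $I_\beta$-strata, this gives goodness.

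I expect the main obstacle to be part (b) carried out honestly at the level of the thin/thick Grassmannians $\Gr^\pm_\beta$ rather than just citing the spherical case: one must verify that forming the quotient by the principal congruence subgroup $K_\beta$ (and, for $\Gr^+_\beta$, working with the thick model $\bfGr_\beta$ of infinite type) does not disturb the parity of stalks and costalks, and that the relevant cohomology groups — which a priori involve the infinite-dimensional group $H_\beta$ — are computed by the finite-dimensional flag varieties $G_r/P_\lambda$ as in (c). Once (c) is in hand and the $I_\beta$-paving of orbit closures is recorded, the parity bookkeeping in (a) and (b) is routine, so the real work is setting up the descent along $K_\beta$ and the comparison with $G_r/P_\lambda$ cleanly; I would isolate these as the two lemmas doing all the geometry and then assemble (a), (b), (c) formally.
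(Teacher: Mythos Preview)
Your treatment of (c) via the affine-bundle structure over $G_r/P_\lambda$ is fine and matches the paper. But there are two genuine problems with the rest.

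First, the logical flow is inverted. In Definition~\ref{def:even}, an \emph{even} stratification is by definition one possessing an even affine refinement with connected, simply connected strata. Hence once $T^\pm_\beta$ is shown to be even affine (which is part of (a)), part (b) follows immediately from (a) together with (c). The paper says exactly this: ``So (a) implies (b).'' Your plan to establish parity for $S^\pm_\beta$ first and then ``restrict to the finer $I_\beta$-strata'' goes the wrong direction; parity of $IC$-stalks along the coarse stratification says nothing about parity along the fine one.

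Second, and this is the real gap, you treat ``even affine'' as a purely topological parity statement. It is not: condition (3) in Definition~\ref{def:even} requires that the mixed sheaf $H^a((i_u)^*IC(v)_\bbd)$ be a direct sum of copies of $\k_u(-a/2)$, i.e.\ pure of the right weight \emph{with semisimple Frobenius action}. Your proposal never mentions weights or Frobenius. For $T^-_\beta$ this is classical (one checks the hypothesis of \cite[lem.~4.4.1]{BGS96}), but for the thick side $T^+_\beta$ it is the actual content of the proposition. The paper handles condition (2) by citing \cite{KT02} and parity of Kazhdan--Lusztig polynomials, and pointwise purity by the standard contracting-$\bbG_m$ argument; but Frobenius \emph{semisimplicity} is reduced (again via \cite[lem.~4.4.1]{BGS96}) to showing that each $\underline H^a(\Gr^+_{\beta,0},IC(w)^+_\bbd)$ is a sum of copies of $\k(-a/2)$. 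This is proved by lifting to Kashiwara's thick affine \emph{flag} variety, so that $IC(w)^+_\bbd$ becomes a summand of a pushforward $q_*IC(w)_\bbd$, and then running a decreasing induction on the Bruhat order using the push--pull functors $\SS_i=p^*p_*\langle 1\rangle$ along the $\bbP^1$-bundles $p:\Fl_\beta\to\Fl^i_\beta$. None of this apparatus is in your plan. The difficulty you single out---descent along the congruence quotient $K_\beta$---is actually harmless; the genuine obstacle is that in the thick model the strata have infinite codimension, so the usual arguments for Frobenius semisimplicity on $IC$-stalks (which ultimately rest on Deligne's theorem via proper pushforward from a resolution) are not directly available and must be rebuilt by the inductive scheme above.
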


\begin{proof}
The strata of $S_\beta^\pm$ are connected, because they are $H_\beta$-orbits.
The simply connectedness and the statement in (c) follow from the fact that $\Gr^\pm_\lambda$ is an affine bundle over 
$G_r/P_\lambda$, see, e.g., \cite[\S 5]{La}. So (a) implies (b).
The strata of $T_\beta^\pm$ are affine.
The conditions \cite[\S 4.1(a)-(d)]{Y09} are proved in \cite[\S 5.2]{Y09}.
So to prove (a), it is enough to prove that the strata of $T_\beta^\pm$ satisfy the conditions (2), (3) in Definition \ref{def:even}.

\smallskip

For $T^-_\beta$ this is well-known, the proof consists of checking
the conditions in  \cite[lem.~4.4.1]{BGS96}.

For $T^+_\beta$, the condition (2) follows from \cite{KT02} and
the odd vanishing of the Kazhdan-Lusztig polynomials.
Let us concentrate on the condition (3) for $T^+_\beta$.
A well-known argument of Kazhdan-Lusztig implies that $H^a((i_v^+)^*IC(w)^+_\bbd)$ 
is pure of weight $a$. See \cite[lem.~3.5]{G91} or \cite[lem.~3.1.3]{BY13}
for an easier proof which generalizes to our setting.
The condition (3) requires in addition that the mixed vector space $H^a((i_v^+)^*IC(w)^+_\bbd)$ is semisimple.
By \cite[lem.~4.4.1]{BGS96}, it is enough to check that the mixed vector space
$\underline H^a(\Gr^+_{\beta,0}\,,\,IC(w)^+_\bbd)$ is a sum of copies of $\k(-a/2)$  
for each $w\in M_\beta$ and $a\in\bbN$. 

\smallskip

To prove this, we consider Kashiwara's thick affine flag manifold $\bfF\bfl$. 
Let $\G$, $\B^+$ and $\B^-$ be as in the proof of Lemma \ref{lem:CD} below.
The scheme $\bfF\bfl$ is separated
of infinite type represented  by the quotient $\G\, /\,\B^-$ with a decomposition into $\B^+$-orbits 
$\bfF\bfl=\bigsqcup_v\bfF\bfl_v$ labelled by the affine Weyl group $\widehat W$ of $\G$.
The $\B^+$-orbit $\bfF\bfl_v$ is an affine space of codimension equal to the length of $v$.
Let $W_\beta\subset\widehat W$ be an order ideal. Let $K_\beta$ be a congruence subgroup
which acts freely on $\bfF\bfl_\beta=\bigsqcup_{v\in W_\beta}\bfF\bfl_v$.
We have a smooth (separated)
scheme $\Fl_\beta=\bfF\bfl_\beta\,/\,K_\beta$ with an affine stratification
$\Fl_\beta=\bigsqcup_{v\in W_\beta}\Fl_v$ such that $\Fl_v=\bfF\bfl_v\,/\,K_\beta$.
Here $W_\beta$ is the inverse image of $M_\beta$ by the obvious projection $\widehat W\to M$.
The bundle $\bfF\bfl\to\bfGr$ yields a bundle
$q:\Fl_{\beta,0}\to\Gr^+_{\beta,0}$ over $F_0$ with smooth, projective fibers with affine stratifications. See Section \ref{ss:mixcmplx} for the convention.
For each element $v\in W_\beta$ let $IC(v)_\bbd\in\D^\b_\bbd(\Fl_{\beta,0})$ 
be the intermediate extension of the mixed complex 
$\k_{\Fl_v}\langle d_v\rangle$, where $d_v=\dim \Fl_v$.
The map $q$ restricts to an isomorphism $\Fl_{w,0}\to\Gr^+_{w,0}$.
Thus, we have
$q_*IC(w)=IC(w)^+\oplus\calF$ for some complex $\calF\in\D^\b(\Gr^+_\beta)$.
The following is well-known.

\begin{claim}\label{claim:fact} Let $\calE\in\D^\b_\bbd(\calZ_0)$ be a pure complex and $\calF\in\D^\b_\bbd(\calZ_0)$
be a subquotient of some perverse cohomology sheaf ${}^p H^b(\calE)$. 
For each $a\in\bbZ$, the mixed vector space $\underline H^a(\calZ_0,\calF)$
is a subquotient of $\underline H^a(\calZ_0,\calE)$.
\qed
\end{claim}

Thus the mixed vector space $\underline H^a(\Gr^+_{\beta,0}\,,\,IC(w)^+_\bbd)$
is a subquotient of $\underline H^a(\Fl_{\beta,0}\,,\,IC(w)_\bbd)$  for each $a\in\bbN$.
Hence, it is enough to check that $\underline H^a(\Fl_{\beta,0}\,,\,IC(v)_\bbd)$ is a sum of copies of $\k(-a/2)$
for each $v\in W_\beta$. 
If  $v$ is maximal in the poset $W_\beta$, then
the stratum $\Fl_v$ is closed in $\Fl_\beta$, hence we have 
$IC(v)_\bbd=\k_{\Fl_v}\langle d_v\rangle$, thus the claim is obvious. 
For an arbitrary element $v\in W_\beta$, we argue by decreasing induction on the length of $v$.

\smallskip

For each simple reflection $s_i\in\widehat W$, we consider the parabolic subgroup $\P_i^-=\B^-s_i\B^-\cup\B^-$.
Let  $\bfF\bfl^i$ be the partial thick affine flag manifold $\bfF\bfl^i=\G\, /\,\P_i^-$.
We define as above a smooth $F_0$-scheme $\Fl^i_{\beta,0}$ with 
a $\bbP^1_{F_0}$-bundle $p:\Fl_{\beta,0}\to\Fl_{\beta,0}^i$ such that  
$$\Fl_{v,0}\cup\Fl_{vs_i,0}=p^{-1}p(\Fl_{v,0})\quad,\quad p(\Fl_{vs_i,0})=p(\Fl_{v,0})
\quad,\quad\forall v\in W_\beta.$$
Now, assume that $vs_i<v$. The map $p$ restricts to an isomorphism $\Fl_v\to p(\Fl_v)$.
Consider the endofunctor
$\SS_i$ of $\D^\b_\bbd(\Fl_{\beta,0})$
given by
$$\SS_i(\calE)=p^*p_*\calE\langle 1\rangle\quad,\quad\forall\calE\in\D^\b_\bbd(\Fl_{\beta,0}).$$
There is a complex $\calF$ supported on the closure of the stratum $\Fl_v$ in $\Fl_\beta$ such that
$$\omega\,\SS_i(IC(v)_\bbd)=IC(vs_i)\oplus\calF.$$
Hence Claim \ref{claim:fact} implies that $\underline H^a(\Fl_{\beta,0}\,,\,IC(vs_i)_\bbd)$
is a  subquotient of  the mixed vector space
$\underline H^a(\Fl_{\beta,0}\,,\,\SS_i(IC(v)_\bbd))$.
Now, consider the following Cartesian diagram of $F_0$-schemes
$$
\xymatrix{\Z^i_{\beta,0}\ar[r]^-m\ar[d]_-{\pi_1}&\Fl_{\beta,0}\ar[d]^-{p}\\
\Fl_{\beta,0}\ar[r]^-{p}&\Fl^i_{\beta,0}.
}$$
All maps are $\bbP^1_{F_0}$-bundles. By proper base change, we have
$$\SS_i(IC(v)_\bbd)=m_*(\pi_1)^*(IC(v)_\bbd)\langle 1\rangle.$$
We deduce that
$$\underline H^a(\Fl_{\beta,0}\,,\,\SS_i(IC(v)_\bbd))=
\underline H^a(\Z^i_{\beta,0}\,,\,(\pi_1)^*(IC(v)_\bbd)\langle 1\rangle).$$
The $F_0$-scheme $\Z^i_{\beta,0}$ has an affine stratification given by the product of Bruhat cells on 
$\G/\B^-$ and $\P_i^-/\B^-$.
The morphism $\pi_1$ is stratified.
The mixed complex 
$(\pi_1)^*(IC(v)_\bbd)\langle 1\rangle$ in $\D^\b_\bbd(\Z^i_{\beta,0})$ is the intermediate extension
of the constant sheaf on the open dense stratum in $(\pi_1)^{-1}(\Fl_{v,0}).$
It satisfies the conditions (2) and (3) in Definition \ref{def:even}, because the mixed complex 
$IC(v)_\bbd$ in $\D^\b_\bbd(\Fl_{\beta,0})$ satisfy them by the induction hypothesis and
both conditions are preserved by a pullback by a smooth stratified morphism.
\end{proof}

\smallskip

\subsection{The thick affine Grassmannian and the quiver $\bfQ$}\label{sec:tilting}

Let $\calC oh_\beta$ be the $F$-stack classifying coherent sheaves on $\bbP^1_F$ of rank $r$ and degree $n$.
Let $\calB un_\beta$ be the open substack parametrizing locally free coherent sheaves.
We abbreviate
$\calC oh=\bigsqcup_{\beta}\calC oh_\beta$ and $\calB un=\bigsqcup_{\beta}\calB un_\beta$.
Both stacks are smooth locally quotient stacks. 
%The stack $\calB un_\beta$ is connected.
Consider the vector bundle $\calO(\lambda)$ on $\bbP^1_F$ given by
$\calO(\lambda)=\calO(\lambda_1)\oplus\cdots\oplus\calO(\lambda_r)$ with
$\lambda\in\Lambda_\beta.$
Let $\calB un_\beta^+$ be the full substack of $\calB un_\beta$ classifying all vector bundles isomorphic to
$\calO(\lambda)$ for some $\lambda\in\Lambda_\beta$.

\smallskip

\begin{proposition}\label{prop:Y=X} 
\hfill
\begin{itemize}[leftmargin=8mm]
\item[$\mathrm{(a)}$] 
There are $F$-stack isomorphisms
$\calG r^+_\beta\simeq \calB un^+_\beta\simeq \calY_\beta$ taking
$\calG r^+_{\lambda_\pi}$ to the isomorphism classes of $\calO(\lambda_\pi)$ and $\calP_\pi$ for each 
$\pi\in\Gamma\!_\beta$.
\item[$\mathrm{(b)}$] 
$\lambda_\sigma\geqslant\lambda_\pi\iff Y_\sigma\subset\bar Y_\pi\Longrightarrow\sigma\geqslant\pi$
for each $\pi,\sigma\in \Gamma\!_\beta$.
\end{itemize}
\end{proposition}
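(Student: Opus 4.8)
The plan is to identify the two outer stacks with the same substack of the moduli of vector bundles on $\bbP^1_F$ — via uniformization on the affine-Grassmannian side and the tilting equivalence of \S\ref{sec:tilting} on the quiver side — after which part (b) drops out of closure relations already in hand. For the isomorphism $\calG r^+_\beta\simeq\calB un^+_\beta$ I would use the Beauville-Laszlo (Weil) uniformization of vector bundles on $\bbP^1_F$ at the point $0$: gluing a bundle from its (automatically trivial) restriction to $\bbP^1\setminus\{0\}$ and a lattice on the formal disk identifies $[\bfGr_r/\calG_r(O)]$ with the moduli stack of rank-$r$ vector bundles on $\bbP^1_F$, the $\calG_r(O)$-orbit $\bfGr_\lambda$ going to the isomorphism class of $\calO(\lambda)$ (so the closure order reverses, in agreement with \S\ref{sec:AffGr}). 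As $\bfGr_\beta=\bigsqcup_{\lambda\in\Lambda_\beta}\bfGr_\lambda$ is $\calG_r(O)$-stable and open, and $K_\beta$ acts freely on it, $\calG r^+_\beta=[\bfGr_\beta/\calG_r(O)]$ is identified with the substack of $\calB un_\beta$ of bundles isomorphic to $\calO(\lambda)$ for some $\lambda\in\Lambda_\beta$, which is $\calB un^+_\beta$ by definition, with $\calG r^+_{\lambda_\pi}$ going to the class of $\calO(\lambda_\pi)$. I would also record that $\bfGr_\beta$ is quasi-compact, so that all three stacks are of finite type.

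For the isomorphism $\calB un^+_\beta\simeq\calY_\beta$ I would use that $\calO\oplus\calO(1)$ is a tilting bundle on $\bbP^1_F$ with $\End(\calO\oplus\calO(1))\simeq F\bfQ$, so that $\RHom_{\bbP^1}(\calO\oplus\calO(1),-)$ induces an equivalence $\D^\b(\coh\,\bbP^1_F)\simeq\D^\b(F\bfQ\-\mod)$ carrying $\calO(k)$ to the indecomposable preprojective module $\calP_k$ of dimension vector $\beta_k$ for each $k\geqslant 0$ (as one reads off from $\Hom(\calO,\calO(k))=F^{k+1}$ and $\Hom(\calO(1),\calO(k))=F^k$). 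On the locus $\calB un^+_\beta$ the relevant higher cohomology vanishes, so $\Hom_{\bbP^1}(\calO\oplus\calO(1),-)$ is exact there and commutes with base change by cohomology and base change; it therefore defines a morphism of $F$-stacks from $\calB un^+_\beta$ into the stack of $F\bfQ$-modules, landing in the preprojective locus $\calY_\beta$ and sending $\calO(\lambda_\pi)=\bigoplus_k\calO(k)^{\oplus p_k}$ — necessarily with $\sum_k p_k=r$ and $\sum_k kp_k=n$ — to $\bigoplus_k\calP_k^{\oplus p_k}=\calP_\pi$ (cf.\ the bijection \eqref{bij}). A quasi-inverse is supplied by the Beilinson-type complex $M\mapsto[\calO(-1)\otimes V_1\to\calO\otimes V_0]$ built from the two structure maps of $M$, which on $\calY_\beta$ is a sheaf injection with globally generated, locally free cokernel and is again exact and base-change compatible. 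This gives $\calB un^+_\beta\simeq\calY_\beta$ as $F$-stacks, matching $\calO(\lambda_\pi)$ with $\calP_\pi$, and together with the first step proves (a).

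For part (b), composing the two isomorphisms identifies the substack $\calY_\sigma$ with $\calG r^+_{\lambda_\sigma}$, hence $Y_\sigma$ with the $H_\beta$-stable subscheme $\Gr^+_{\lambda_\sigma}$; closures are preserved, so $Y_\sigma\subset\bar Y_\pi\iff\Gr^+_{\lambda_\sigma}\subset\overline{\Gr^+_{\lambda_\pi}}$, which by the relation $\bfGr_\lambda\subset\overline{\bfGr_\mu}\iff\lambda\geqslant\mu$ recalled in \S\ref{sec:AffGr} is $\lambda_\sigma\geqslant\lambda_\pi$. Equivalently, and without using (a), this equivalence is already contained in the proof of Proposition \ref{prop:inftyX}(b), where $Y_\sigma\subset\bar Y_\pi\iff\lambda_\sigma\trianglerighteq\lambda_\pi$ in the dominance order, and on $\Lambda_\beta\subset\Lambda^+_r$ the dominance order coincides with $\geqslant$ on $\Lambda_r$. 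The last implication $Y_\sigma\subset\bar Y_\pi\Rightarrow\sigma\geqslant\pi$ is Proposition \ref{prop:inftyX}(b) itself.

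The hard part will be the second step: promoting the tilting/Beilinson equivalence of (derived) categories to an honest isomorphism of the moduli stacks $\calB un^+_\beta$ and $\calY_\beta$ — that is, checking exactness on the two open loci, compatibility with arbitrary base change, and agreement of automorphism groups in families — together with making the uniformization isomorphism precise, i.e.\ that it restricts to exactly the stated open substacks with the asserted matching of strata.
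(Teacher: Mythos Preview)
Your proposal is correct and follows essentially the same approach as the paper: the paper uses the tilting generator $\calO\oplus\calO(1)$ with $\End(\calO\oplus\calO(1))^\op=F\bfQ$ to get the equivalence $\calB un_\beta^+\to\calY_\beta$ taking $\calO(\lambda_\pi)\mapsto\calP_\pi$, cites the uniformization isomorphism $\calG r^+\simeq\bigsqcup_n\calB un_{r\alpha_0+n\delta}$ from \cite{Z16}, and deduces (b) from the proof of Proposition~\ref{prop:inftyX}(b). Your write-up is considerably more detailed than the paper's --- in particular the Beilinson quasi-inverse and the discussion of base change are your additions --- but the underlying structure is identical.
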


\begin{proof}
Let $\D^\b(\calC oh)$ denote the derived category of the Abelian category $\calC oh$.
The vector bundle $\calT=\calO\oplus\calO(1)$ over $\bbP^1_F$ is a tilting generator of 
$\D^\b(\calC oh)$, i.e., it is a generator of $\D^\b(\calC oh)$ as a triangulated
category such that $\End^{>0}_{\D^\b(\calC oh)}(\calT)=0$.
We have an $F$-algebra isomorphism $\End_{\D^\b(\calC oh)}(\calT)^\op=F\bfQ$ such that the elements
$x,y\in F\bfQ$ span the $F$-subspace $\Hom_{\D^\b(\calC oh)}(\calO,\calO(1))$. Thus the functor
$\RHom_{\D^\b(\calC oh)}(\calT,\bullet)$ yields an equivalence of triangulated categories
$$\D^\b(\calC oh)\to\D^\b(F\bfQ\-\mod)\quad,\quad
\calO(k)\mapsto\calP_k\quad,\quad \forall k\in\bbN.$$
This equivalence restricts to an equivalence of additive categories $\calB un_\beta^+\to\calY_\beta$
which maps $\calO(\lambda_\pi)$ to $\calP_\pi$ for each $\pi\in\Gamma\!_\beta$.
Now, part (a) follows from the following isomorphism of $F$-stacks
$\calG r^+\simeq \bigsqcup_n\calB un_{r\alpha_0+n\delta},$
see, e.g., \cite[thm.~2.3.7]{Z16} for details. 
Part (b) follows from the proof of Proposition \ref{prop:inftyX}(b).
\end{proof}

\smallskip

Let $\Gamma\subset\Gamma\!_\beta$ be an order ideal.
By Proposition \ref{prop:equivalence1}
we have adjoint functors
$(i_\Gamma)^*$, $(i_\Gamma)_*$ between the categories $\D^\b_\mix(\calY_\Gamma)$, $\D^\b_\mix(\calY_\beta)$.

\smallskip

\begin{proposition}\label{prop:equivalence8}
For each order ideal $\Gamma\subset\Gamma\!_\beta$
\hfill
\begin{itemize}[leftmargin=8mm]
\item[$\mathrm{(a)}$] 
there is an equivalence of graded triangulated categories 
$\A'_\Gamma:\D^\b_\mix(\calY_\Gamma)\to \D^\b_\mix(\calG r^+_\Gamma)$
such that
$(j_\Gamma)^*IC(\pi)_\mu\mapsto IC(\lambda_\pi)^+_\mix$ for all $\pi\in\Gamma,$
\item[$\mathrm{(b)}$]
there is a right adjoint triangulated functor $(i_\Gamma^+)_*$ to $(i_\Gamma^+)^*$ yielding a commutative
diagram  
$$\xymatrix{
\D^\b_\mix(\calY_\beta)\ar[d]_-{\A'_\beta}
\ar@/^/[r]^-{(i_\Gamma)^*}&\ar@/^/[l]^-{(i_\Gamma)_*}
\D^\b_\mix(\calY_\Gamma)\ar[d]_-{\A'_\Gamma}\\
\D^\b_\mix(\calG r_\beta^+)
\ar@/^/[r]^-{(i_\Gamma^+)^*}&\ar@/^/[l]^-{(i_\Gamma^+)_*}
\D^\b_\mix(\calG r_\Gamma^+).&
}$$
\end{itemize}
\end{proposition}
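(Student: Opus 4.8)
The plan is to transport everything across the $F$-stack isomorphism $\phi\colon\calG r^+_\beta\simto\calY_\beta$ of Proposition~\ref{prop:Y=X}(a). The first point is that $\phi$ is an isomorphism of \emph{stratified} stacks, carrying the stratum $\calG r^+_{\lambda_\pi}$ onto $\calY_\pi$ for each $\pi\in\Gamma\!_\beta$. Combining Propositions~\ref{prop:inftyX}(b) and~\ref{prop:Y=X}(b), if $\Gamma\subset\Gamma\!_\beta$ is an order ideal then $\{\lambda_\pi\,;\,\pi\in\Gamma\}$ is an order ideal of $\Lambda_\beta$ for the dominance order: if $\pi\in\Gamma$ and $\lambda_\tau\leqslant\lambda_\pi$, then $Y_\tau\subset\bar Y_\pi$, hence $\tau\leqslant\pi$ and $\tau\in\Gamma$. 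Consequently $\Gr^+_\Gamma$ is open in $\Gr^+_\beta$ and $\phi$ restricts to an isomorphism $\calG r^+_\Gamma\simeq\calY_\Gamma$ which intertwines the open inclusions $i^+_\Gamma$ and $i_\Gamma$. Moreover $\phi$ is defined over $F_0$: the tilting equivalence of \S\ref{sec:tilting} comes from the bundle $\calO\oplus\calO(1)$ on $\bbP^1_{F_0}$, and the uniformization $\calG r^+\simeq\bigsqcup_n\calB un_{r\alpha_0+n\delta}$ is of Beauville--Laszlo type, so it too descends to $F_0$; thus $\phi$ is compatible with the weight data.

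Using $\phi$ I would then define $\A'_\Gamma$. By Proposition~\ref{prop:even1}(b)--(c) the stratification $S^+_\beta$ is even with simply connected strata, so the indecomposable parity complexes on $\calG r^+_\Gamma$ are precisely the $IC(\lambda)^+$ with $\lambda\in\Gamma$, and $\D^\b_\mix(\calG r^+_\Gamma)=\K^\b(\Par(\calG r^+_\Gamma))$ with $\Par(\calG r^+_\Gamma)$ the additive category they generate; on the other side $\Par(\calY_\Gamma)$ is generated by the summands of $\calM_\Gamma=\bigoplus_{\pi\in\Gamma}V(\pi)\otimes(j_\Gamma)^*IC(\pi)$. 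Since $\phi$ identifies the two stratified stacks, it identifies these categories of parity complexes and hence induces an equivalence of graded triangulated categories $\A'_\Gamma\colon\D^\b_\mix(\calY_\Gamma)\to\D^\b_\mix(\calG r^+_\Gamma)$, and similarly $\A'_\beta$. To identify $\A'_\Gamma$ on objects it suffices, by naturality under $(i_\Gamma)^*$ and $(i^+_\Gamma)^*$, to treat $\Gamma=\Gamma\!_\beta$: by the Corollary following Proposition~\ref{prop:inftyX}, $(j_\beta)^*\calL(\pi)=0$ unless $\pi\in\Gamma\!_\beta$, and for $\pi\in\Gamma\!_\beta$ we have $IC(\pi)=IC(\k_\pi)=(j_\pi)_{!*}\k_\pi[d_\pi]$ (Lemma~\ref{lem:A}(a) and $\pi^\flat=\pi$, Step~3 of its proof). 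As $j_\beta$ is an open inclusion, $IC(\pi)_\mix=(j_\beta)^*IC(\pi)$ is the intersection cohomology complex of the stratum $\calY_\pi$ inside $\calY_\beta$; applying $\phi$ turns this into the intersection cohomology complex of $\calG r^+_{\lambda_\pi}$, i.e.\ $IC(\lambda_\pi)^+$. Passing to mixed enhancements gives $\A'_\Gamma((j_\Gamma)^*IC(\pi)_\mix)=IC(\lambda_\pi)^+_\mix$ for all $\pi\in\Gamma$, which is part~(a).

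For part~(b), the isomorphism $\phi$ intertwines the restriction functors along $i_\Gamma$ and $i^+_\Gamma$, so the square with horizontal arrows $(i_\Gamma)^*$, $(i^+_\Gamma)^*$ and vertical arrows $\A'_\beta$, $\A'_\Gamma$ commutes; by Proposition~\ref{prop:equivalence1}(b) the functor $(i_\Gamma)^*$ already carries a right adjoint $(i_\Gamma)_*$, and transporting it through $\A'_\beta$ and $\A'_\Gamma$ produces a triangulated functor $(i^+_\Gamma)_*$ right adjoint to $(i^+_\Gamma)^*$ and making the second square commute (and coinciding with the geometric push-forward whenever that preserves $\D^\b_\mix$). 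The bulk of the work is thus formal transport from the $\calY$-side, where the needed data was assembled in Propositions~\ref{prop:equivalence1} and~\ref{prop:Y=X} and Lemma~\ref{lem:A}. The step requiring genuine care --- the main obstacle --- is reconciling, under $\phi$, the two a priori different descriptions of the mixed derived category on $\calG r^+_\Gamma$: the one obtained by transporting $\K^\b(\Par(\calY_\Gamma))$ and the intrinsic one built from the $IC(\lambda)^+$. This rests on the evenness of $S^+_\beta$ (Proposition~\ref{prop:even1}(b)), so that parity complexes are semisimple and are exactly the $IC$'s of strata, together with the verification that $\phi$ is genuinely defined over $F_0$ so that the mixed/weight structures match on the nose; everything else (matching the order ideals, the object computation, the construction of $(i^+_\Gamma)_*$) is bookkeeping.
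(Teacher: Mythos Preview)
Your approach is essentially the paper's: use the stack isomorphism of Proposition~\ref{prop:Y=X} to identify $\Par(\calY_\Gamma)\simeq\Par(\calG r^+_\Gamma)$, pass to homotopy categories, and define $(i^+_\Gamma)_*$ by transport. The paper dispatches this in three lines; your elaboration on order-ideal preservation and the use of Lemma~\ref{lem:A}(a) to see that $(j_\beta)^*IC(\pi)$ is the $IC$ of the stratum $\calY_\pi$ is correct and indeed implicit in the paper's terse argument.

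One correction and one simplification. In your order-ideal check you write ``$\lambda_\tau\leqslant\lambda_\pi$, then $Y_\tau\subset\bar Y_\pi$''; by Proposition~\ref{prop:Y=X}(b) this should read $Y_\pi\subset\bar Y_\tau$, which still gives $\pi\geqslant\tau$ and hence $\tau\in\Gamma$, so your conclusion stands. More substantively, the $F_0$-descent you flag as ``the main obstacle'' is not needed here: in the paper's framework the equivariant mixed category $\D^\b_\mix(\calZ)$ is \emph{defined} as $\K^\b(\Par(\calZ))$ with $\Par(\calZ)$ a category of complexes over $F$ (see the appendix), the grading playing the role of weights; the stack isomorphism over $F$ therefore already gives the equivalence of graded additive categories $\Par(\calY_\Gamma)\simeq\Par(\calG r^+_\Gamma)$ and hence of their homotopy categories, with no Frobenius bookkeeping required.
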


\begin{proof}
Proposition \ref{prop:Y=X} yields an equivalence of graded additive categories
$\Par(\calY_\Gamma)\simeq \Par(\calG r_\Gamma^+).$
Taking the homotopy categories, we get an equivalence of graded triangulated categories
$\D^\b_\mix(\calY_\Gamma)\simeq\D^\b_\mix(\calG r^+_\Gamma).$
The other claims in the proposition are straightforward.
For instance, since the map $i^+_\Gamma$ is an open embedding, the functor 
$(i_\Gamma^+)^*:\D^\b_\mix(\calG r_\beta^+)\to\D^\b_\mix(\calG r_\Gamma^+)$ is well-defined.
We define $(i_\Gamma^+)_*$ to be the unique functor such that the diagram above commutes.
\end{proof}

\smallskip

\subsection{The thick affine Grassmannian and the category $\calD_\beta$}
Let $\Gamma\subset\Gamma\!_\beta$ be any order ideal.
We define the triangulated functor
\begin{align}\label{equivalence0}
\A_\Gamma=\A'_\Gamma\,\A''_\Gamma:\D^\b(\calD_\Gamma)\to \D^\b_\mix(\calG r_\Gamma^+).
\end{align}
Conjugating the functor $(i_\Gamma^+)_*$ with the Verdier duality yields the functor
$(i_\Gamma^+)_!$ which is left adjoint to $(i_\Gamma^+)^*$. 
We define the following complexes in $\D^\b_\mix(\calG r^+_\Gamma)$
\begin{align}\label{EDN+}
\Delta(\lambda)^+_\mix=(i_{\leqslant\lambda}^+)_!(i_{\leqslant\lambda}^+)^*IC(\lambda)^+_\mu\quad,\quad
\nabla(\lambda)^+_\mix=(i_{\leqslant\lambda}^+)_*(i_{\leqslant\lambda}^+)^*IC(\lambda)^+_\mu.
\end{align}

\smallskip

\begin{proposition}\label{prop:equivalence11}
Let $\Gamma\subset\Gamma\!_\beta$ be any order ideal.
\hfill
\begin{itemize}[leftmargin=8mm]
\item[$\mathrm{(a)}$] 
$\A_\Gamma$ is an
equivalence of graded triangulated categories such that
$\A_\Gamma(h_\Gamma)^*Q(\pi)=IC(\lambda_\pi)^+_\mix$
and
$\A_\Gamma\Delta(\pi)=\nabla(\lambda_\pi)^+_\mix$
for all $\pi\in\Gamma.$
\item[$\mathrm{(b)}$] 
The following diagram of functors commutes
$$\xymatrix{
\D^\b(\calD_\beta)\ar[d]_-{\A_\beta}
\ar@/^/[r]^-{L(h_\Gamma)^*}&\ar@/^/[l]^-{(h_\Gamma)_*}
\D^\b(\calD_\Gamma)\ar[d]_-{\A_\Gamma}\\
\D^\b_\mix(\calG r_\beta^+)
\ar@/^/[r]^-{(i_\Gamma^+)^*}&\ar@/^/[l]^-{(i_\Gamma^+)_*}
\D^\b_\mix(\calG r_\Gamma^+).
}$$

\end{itemize}
\end{proposition}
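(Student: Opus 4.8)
The plan is to assemble the proposition from the two equivalences already at hand. By definition $\A_\Gamma=\A'_\Gamma\circ\A''_\Gamma$, where $\A''_\Gamma\colon\D^\b(\calD_\Gamma)\to\D^\b_\mix(\calY_\Gamma)$ is the equivalence of Proposition~\ref{prop:equivalence1}(a) and $\A'_\Gamma\colon\D^\b_\mix(\calY_\Gamma)\to\D^\b_\mix(\calG r^+_\Gamma)$ is the equivalence of Proposition~\ref{prop:equivalence8}(a); hence $\A_\Gamma$ is an equivalence of graded triangulated categories. For the first identity in (a), observe that $Q(\pi)$ is projective in $\calD_\beta$ and $(h_\Gamma)^*$ preserves projectives, its right adjoint $(h_\Gamma)_*$ being exact, so $L(h_\Gamma)^*Q(\pi)=(h_\Gamma)^*Q(\pi)$ in $\D^\b(\calD_\Gamma)$. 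By Proposition~\ref{prop:equivalence1}(a) this object is carried by $\A''_\Gamma$ to $(i_\Gamma)^*IC(\pi)_\mix$; since $j_\Gamma=j_\beta\circ i_\Gamma$ and $IC(\pi)_\mix=(j_\beta)^*IC(\pi)$, the latter equals $(j_\Gamma)^*IC(\pi)$, which by Proposition~\ref{prop:equivalence8}(a) is carried by $\A'_\Gamma$ to $IC(\lambda_\pi)^+_\mix$. Hence $\A_\Gamma(h_\Gamma)^*Q(\pi)=IC(\lambda_\pi)^+_\mix$.

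The substantive point is the identity $\A_\Gamma\Delta(\pi)=\nabla(\lambda_\pi)^+_\mix$, which uses Lemma~\ref{lem:A}(b). First I would pin down $\A''_\Gamma\Delta(\pi)$. Base change along the open immersion $j_\Gamma$ shows that $(j_\Gamma)^*\nabla(\k_\pi)$ is the $*$-pushforward of $\k_\pi[d_\pi]$ from the stratum $\calY_\pi$, and that $(j_\Gamma)_*(j_\Gamma)^*\nabla(\k_\pi)=(j_\pi)_*\k_\pi[d_\pi]=\nabla(\k_\pi)$; therefore $\Psi_\Gamma^\bullet\bigl((j_\Gamma)^*\nabla(\k_\pi)\bigr)=\Phi_\beta^\bullet(\nabla(\k_\pi))=\Delta(\pi)$ by Lemma~\ref{lem:A}(b), and, $\A''_\Gamma$ being inverse to the equivalence induced by $\Psi_\Gamma^\bullet$ (Lemma~\ref{lem:AA}), this gives $\A''_\Gamma\Delta(\pi)=(j_\Gamma)^*\nabla(\k_\pi)$. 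Now $(j_\Gamma)^*\nabla(\k_\pi)$ is described by exactly the formula defining $\nabla(\lambda_\pi)^+_\mix$ in \eqref{EDN+}, namely a $*$-pushforward of the restriction of $IC(\pi)_\mix$ from the open union of strata in which $\calY_\pi$ has become closed (that this union is open and that $\calY_\pi$ is closed in it follow from Proposition~\ref{prop:Y=X}(b)). Since $\A'_\Gamma$ sends $IC(\pi)_\mix$ to $IC(\lambda_\pi)^+_\mix$ and is built from the stratum-preserving stack isomorphism $\calY_\Gamma\simeq\calG r^+_\Gamma$ of Proposition~\ref{prop:Y=X}(a), it intertwines the corresponding open-restriction functors and their right adjoints; applying it to this formula yields $\nabla(\lambda_\pi)^+_\mix$, and composing with the previous step proves the claim. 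Along the way the normalization of the cohomological shifts is automatic, the relevant $IC$'s on both sides being normalized to be self-dual.

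For part (b) I would paste the commutative squares of Propositions~\ref{prop:equivalence1}(b) and~\ref{prop:equivalence8}(b). Stacking them gives
$$(i^+_\Gamma)^*\,\A_\beta=(i^+_\Gamma)^*\,\A'_\beta\,\A''_\beta=\A'_\Gamma\,(i_\Gamma)^*\,\A''_\beta=\A'_\Gamma\,\A''_\Gamma\,L(h_\Gamma)^*=\A_\Gamma\,L(h_\Gamma)^*,$$
which is the commutativity of the pullback square. For the pushforward square, $(h_\Gamma)_*$ is right adjoint to $L(h_\Gamma)^*$ and $(i^+_\Gamma)_*$ is right adjoint to $(i^+_\Gamma)^*$; since $\A_\beta$ and $\A_\Gamma$ are equivalences, taking right adjoints in $L(h_\Gamma)^*\cong\A_\Gamma^{-1}(i^+_\Gamma)^*\A_\beta$ and invoking uniqueness of adjoints gives $(h_\Gamma)_*\cong\A_\beta^{-1}(i^+_\Gamma)_*\A_\Gamma$, i.e.\ $\A_\beta(h_\Gamma)_*\cong(i^+_\Gamma)_*\A_\Gamma$.

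The step I expect to be the main obstacle is the identification $\A''_\Gamma\Delta(\pi)=(j_\Gamma)^*\nabla(\k_\pi)$: Lemma~\ref{lem:A}(b) is a statement about the module-valued functor $\Phi_\beta^\bullet$ applied to the genuinely non-parity complex $\nabla(\k_\pi)$ on $\calX_\beta$, whereas $\A''_\Gamma$ and the objects in \eqref{EDN+} live in the mixed homotopy category $\D^\b_\mix(\calY_\Gamma)=\K^\b(\Par(\calY_\Gamma))$. Reconciling the two requires the dictionary of Appendix~\ref{sec:ASMG} between parity complexes and the mixed derived category — legitimate here since the orbit stratification is even, by Proposition~\ref{prop:even1}(b) — together with the compatibility of $\Psi_\Gamma^\bullet$ with restriction along $j_\Gamma$. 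The remaining steps are formal manipulations with the two preceding propositions.
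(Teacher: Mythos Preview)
Your argument for part (b) and for the first identity in (a) is correct and matches the paper's: both follow by composing the squares of Propositions~\ref{prop:equivalence1} and~\ref{prop:equivalence8}.

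For the identity $\A_\Gamma\Delta(\pi)=\nabla(\lambda_\pi)^+_\mix$, however, you take a different and substantially harder route than the paper, and the obstacle you flag in your last paragraph is real and not easily resolved. The problem is exactly as you say: $\Psi_\Gamma^\bullet$ is only shown to give an equivalence on the additive category $\Par(\calY_\Gamma)$, and the complex $(j_\Gamma)^*\nabla(\k_\pi)$ is not parity in general. So from $\Psi_\Gamma^\bullet\bigl((j_\Gamma)^*\nabla(\k_\pi)\bigr)=\Delta(\pi)$ you cannot conclude $\A''_\Gamma\Delta(\pi)=(j_\Gamma)^*\nabla(\k_\pi)$ in $\D^\b_\mix(\calY_\Gamma)$. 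Worse, the mixed functors $(i_\Gamma)_*$ on the stack side are \emph{defined} in Proposition~\ref{prop:equivalence1}(b) via transport through $\A''$, not geometrically, so the appendix dictionary for schemes does not directly give you what you need. Making your route precise would essentially require re-proving part (b) of the current proposition for the order ideal $\{\leqslant\pi\}$ and then using the definition \eqref{EDN+} --- which is exactly what the paper does directly.

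The paper's argument is a one-line bypass of this difficulty. It first establishes (b), then observes that in the polynomial highest weight category $\calD_\beta$ one has the standard characterization
\[
\Delta(\pi)=(h_{\leqslant\pi})_*\,(h_{\leqslant\pi})^*Q(\pi)
\]
(property (d) in \S\ref{sec:HW}, together with $L(h_{\leqslant\pi})^*Q(\pi)=(h_{\leqslant\pi})^*Q(\pi)$ since $Q(\pi)$ is $\Delta$-filtered). Applying the commutative square of (b) with the order ideal $\{\leqslant\pi\}$ and the first identity of (a) gives immediately
\[
\A_\beta\Delta(\pi)=(i_{\leqslant\lambda_\pi}^+)_*(i_{\leqslant\lambda_\pi}^+)^*IC(\lambda_\pi)^+_\mix=\nabla(\lambda_\pi)^+_\mix,
\]
the last equality being the definition \eqref{EDN+}. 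This avoids touching Lemma~\ref{lem:A}(b) or any non-parity object in $\D^\b(\calY_\Gamma)$: everything stays inside the mixed categories and uses only the functors already set up. The reduction to $\Gamma=\Gamma_\beta$ is harmless since $\Delta(\pi)$ already lives in $\calD_\Gamma$ for any ideal $\Gamma\ni\pi$.
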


\begin{proof}
The proposition follows from Propositions \ref{prop:equivalence1}, \ref{prop:equivalence8}, except the last claim in (a).
We may assume $\Gamma=\Gamma\!_\beta$.
 By part (b), for each $\pi\in\Gamma\!_\beta$ we have
\begin{align*}
\nabla(\lambda_\pi)^+_\mix
=(i_{\leqslant\lambda_\pi}^+)_*(i_{\leqslant\lambda_\pi}^+)^*IC(\lambda_\pi)^+_\mix
=\A_\beta(h_{\leqslant\pi})_*L(h_{\leqslant\pi})^*Q(\pi)
=\A_\beta(\Delta(\pi)).
\end{align*}
\end{proof}

\smallskip

\subsection{The non equivariant case}\label{sec:NE}
Fix an order ideal $\Gamma\subset\Gamma_\beta$. For the Radon transform studied in the next section,
we need to consider the mixed category
$\D^\b_\mix(\Gr^+_\Gamma,S)$ rather than the equivariant mixed category $\D^\b_\mix(\calG r^+_\Gamma)$.
Let us give some properties of $\D^\b_\mix(\Gr^+_\Gamma,S)$.
We write $V(\lambda_\pi)=V(\pi)$ for each $\pi\in\Gamma$.
Set
\begin{align*}
\calM_\Gamma^\pm=\bigoplus_{\lambda\in\Gamma}V(\lambda)\otimes IC(\lambda)^\pm.
\end{align*}
We view $\calM_\Gamma^\pm$ either as an object of $\D^\b_\mix(\calG r^+_\Gamma)$,
or as an object of $\D^\b_\mix(\Gr^+_\Gamma,S)$ via the forgetful functor
$\For:\D^\b(\calG r^\pm_\Gamma)\to\D^\b(\Gr^\pm_\Gamma,S)$.
We define
\begin{align}\label{sharp}
\bfS^\pm_\Gamma =\End^\bullet_{\D^\b(\calG r^\pm_\Gamma)}(\calM_\Gamma^\pm)
\quad,\quad
\bfS^{\pm,\,\sharp}_\Gamma= \End^\bullet_{\D^\b(\Gr^\pm_\Gamma)}(\calM_\Gamma^\pm).\end{align}

\smallskip

\begin{proposition}\label{prop:isom5}
Let $\Gamma\subset\Lambda_\beta$ be any order ideal.
\hfill
\begin{itemize}[leftmargin=8mm]
\item[$\mathrm{(a)}$] 
$\Par(\calG r^\pm_\Gamma)\simeq \bfS^\pm_\Gamma\-\,\proj$ and
$\Par(\Gr^\pm_\Gamma,S)\simeq \bfS^{\pm,\,\sharp}_{\Gamma}\-\,\proj$
as graded additive categories.
\item[$\mathrm{(b)}$] 
%$\bfR^\pm_\beta$, $\bfS^\pm_\beta$ have a finite global dimension.
%\item[$\mathrm{(c)}$] 
$\D^\b_\mix(\calG r^\pm_\Gamma)=\D^\perf(\bfS^\pm_\Gamma)$ and
$\D^\b_\mix(\Gr^\pm_\Gamma,S)=\D^\perf(\bfS^{\pm,\,\sharp}_\Gamma)$ as graded triangulated categories.
\item[$\mathrm{(c)}$] 
$\bfS^\pm_\Gamma$ is free of finite rank as an $H^\bullet_{G_r}$-module, and
$\k\otimes_{H^\bullet_{G_r}}\bfS^\pm_\Gamma\simeq \bfS^{\pm,\,\sharp}_\Gamma$ as graded $\k$-algebras.
\end{itemize}
\end{proposition}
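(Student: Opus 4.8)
The plan is to run, on each of the two affine Grassmannians, the same parity-complex formalism already used in the proof of Lemma~\ref{lem:AA}; the heart of the matter is part (c), and parts (a), (b) are essentially bookkeeping.

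For part (a), note that $\Par(\calG r^\pm_\Gamma)$ is, by definition, the strictly full additive subcategory of the idempotent-complete category $\D^\b(\calG r^\pm_\Gamma)$ generated by the direct summands of $\calM^\pm_\Gamma$ and their shifts, and likewise $\Par(\Gr^\pm_\Gamma,S)$ is generated by $\For\calM^\pm_\Gamma$ and its shifts. Applying the standard projectivization equivalence (cf.\ \cite[prop.~E.4]{AJS}) to the object $\calM^\pm_\Gamma$, the functor $\Hom^\bullet(\calM^\pm_\Gamma,-)$ identifies $\Par(\calG r^\pm_\Gamma)$ with the category of finitely generated graded projective modules over $(\bfS^\pm_\Gamma)^\op$, and similarly for the non-equivariant side. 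Since the complexes $IC(\lambda)^\pm$ and the multiplicity spaces $V(\lambda)$ are self-dual, so is $\calM^\pm_\Gamma$, whence Verdier duality induces a graded algebra isomorphism $\bfS^\pm_\Gamma\simeq(\bfS^\pm_\Gamma)^\op$ (resp.\ $\bfS^{\pm,\,\sharp}_\Gamma\simeq(\bfS^{\pm,\,\sharp}_\Gamma)^\op$), exactly as in the proof of Lemma~\ref{lem:AA}; this gives the two equivalences of (a). Part (b) is then immediate: by the definition of the mixed categories (as for $\D^\b_\mix(\calY_\Gamma)$, see Appendix~\ref{sec:ASMG} and Remark~\ref{rem:mixte2}) one has $\D^\b_\mix(\calG r^\pm_\Gamma)=\K^\b(\Par(\calG r^\pm_\Gamma))$ and $\D^\b_\mix(\Gr^\pm_\Gamma,S)=\K^\b(\Par(\Gr^\pm_\Gamma,S))$, while for any graded ring $\bfR$ one has $\K^\b(\bfR\-\proj)=\D^\perf(\bfR)$; combining these with (a) yields (b).

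The real content is part (c). First I would record that $H^\bullet_{H_\beta}=H^\bullet_{G_r}$, since $H_\beta=G_r(O)/K_\beta$ with $K_\beta$ a pro-unipotent congruence subgroup --- hence cohomologically trivial --- and $G_r(O)$ is homotopy equivalent to $G_r$. Next, by Proposition~\ref{prop:even1}(b) the stratification $S^\pm_\beta$, and hence the one it induces on the open subset $\Gr^\pm_\Gamma$, is even, so $\calM^\pm_\Gamma$ is an $H_\beta$-equivariant parity complex on $\Gr^\pm_\Gamma$. The general homological properties of parity complexes then apply --- freeness of equivariant $\Hom$ over the equivariant cohomology ring and its compatibility with the forgetful functor, cf.\ \cite[\S2]{JMW} together with Proposition~\ref{prop:BY}, which is used in precisely this way in Step~4 of the proof of Lemma~\ref{lem:A}. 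They give that $\bfS^\pm_\Gamma=\Hom^\bullet_{\D^\b(\calG r^\pm_\Gamma)}(\calM^\pm_\Gamma,\calM^\pm_\Gamma)$ is free of finite rank as an $H^\bullet_{G_r}$-module, and that $\For$ induces an isomorphism of graded vector spaces $\k\otimes_{H^\bullet_{G_r}}\bfS^\pm_\Gamma\simto\End^\bullet_{\D^\b(\Gr^\pm_\Gamma)}(\calM^\pm_\Gamma)=\bfS^{\pm,\,\sharp}_\Gamma$. Since $\For$ is a triangulated functor compatible with Yoneda composition and carries the $H^\bullet_{G_r}$-action to the non-equivariant one through the augmentation $H^\bullet_{G_r}\to\k$, this isomorphism is multiplicative, i.e.\ an isomorphism of graded $\k$-algebras, which is (c).

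I expect the main obstacle to be the parity input on the \emph{thick} side: establishing that the equivariant complexes $IC(\lambda)^+$ are genuinely parity is the content of Proposition~\ref{prop:even1} and ultimately rests on \cite{KT02} and the odd vanishing of Kazhdan--Lusztig polynomials; once this --- together with the base-change statement for parity complexes, whose multiplicativity has to be verified rather than merely invoked --- is in hand, the rest is routine.
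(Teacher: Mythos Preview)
Your proof is correct and follows essentially the same approach as the paper: parts (a), (b) via the projectivization equivalence of Remark~\ref{rem:mixte2}(a), and part (c) via the parity-complex base-change formula \cite[prop.~2.6]{JMW}, which applies because the stratification $S^\pm_\beta$ is even by Proposition~\ref{prop:even1}(b). One minor point: your appeal to Proposition~\ref{prop:BY} is slightly imprecise since that statement assumes the $G$-orbits are affine (true for the $I_\beta$-stratification $T^\pm_\beta$ but not for $S^\pm_\beta$), so the freeness in (c) should be drawn directly from the JMW parity formalism rather than from Proposition~\ref{prop:BY}.
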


\begin{proof}
Parts (a), (b) follow from Remark \ref{rem:mixte2}(a).
To prove (c), note that \cite[prop.~2.6]{JMW} yields
\begin{align}\label{isom33}
\Hom^\bullet_{\D^\b(\Gr^\pm_\Gamma,S)}(\For\calE\,,\,\For\calF)=
\k\otimes_{H^\bullet_{G_r}}\Hom^\bullet_{\D^\b(\calG r^\pm_\Gamma)}(\calE\,,\,\calF)
\quad,\quad\forall\calE,\calF\in\Par(\calG r^\pm_\Gamma),\end{align}
because the stratification $S$ is even.
\end{proof}

\smallskip

By Proposition \ref{prop:isom5}(c), there is an obvious surjective algebra homomorphism
$\xi^\pm:\bfS^\pm_\Gamma\to\bfS^{\pm,\,\sharp}_\Gamma$.
The restriction of scalars relatively to  the morphism $\xi^\pm$
is exact and yields a functor of triangulated categories
$(\xi^\pm)_*: \D^\b_\mix(\Gr^\pm_\Gamma,S)\to\D^\b_\mix(\calG r^\pm_\Gamma).$
The left adjoint functor
$L(\xi^\pm)^*:\D^\b_\mix(\calG r^\pm_\Gamma)\to\D^\b_\mix(\Gr^\pm_\Gamma,S)$ 
is the functor $\For$. It is given by derived induction relatively to the morphism $\xi^\pm$.

\smallskip

For any $\alpha\in Q_+$, 
an element of the representation variety $X_\alpha$ is a pair of matrices $(x,y)$ 
corresponding to the actions of the generators
$x,y$ of the path algebra $F\bfQ$. 
In particular, setting $\alpha=\delta$ and $x=1$, $y=0$, we have the element $(1,0)\in X_\delta$.
Let $\calE_\Gamma$ be the quotient stack over 
$\calY_\Gamma$ given by
\begin{align*}
\calE_\Gamma=\Big[\Big\{(x,y,\varphi)\,;\,(x,y)\in 
Y_\Gamma\,,\,\varphi\in\Hom_{F\bfQ}((x,y),(1,0))\Big\}\,\Big/ G_\beta\Big].
\end{align*}
The morphism $\calE_\Gamma\to\calY_\Gamma$ such that $(x,y,\varphi)\mapsto (x,y)$ is a vector bundle of rank $r$, because
$$\Hom_{F\bfQ}((x,y)\,,\,(1,0))= \coker(y)^*$$
and $\coker(y)\simeq F^r$ since $y$ is injective.
Hence $\calE_\Gamma$ yields a stack homomorphism
$\calY_\Gamma\to [\bullet/G_r]$.
The pullback by this map is a graded $\k$-algebra homomorphism
\begin{align}\label{mor2}H^\bullet_{G_r}\to H^\bullet(\calY_\Gamma\,,\,\k).\end{align}
By \eqref{inftyVV} there is a graded algebra homomorphism
\begin{align}\label{Hy}H^\bullet(\calY_\Gamma\,,\,\k)\to\bfS_\Gamma\end{align}
such that the elements in the image graded-commute with the elements of $\bfS_\Gamma$.
Composing \eqref{Hy} with \eqref{mor2} yields a graded algebra homomorphism
\begin{align}\label{CENT1}H^\bullet_{G_r}\to 
Z(\bfS_\Gamma).\end{align}
We define
$\bfS_\Gamma^\sharp=\k\otimes_{H^\bullet_{G_r}}\!\!\bfS_\Gamma$ and
$\calD_\Gamma^{\sharp}=\bfS_\Gamma^\sharp\-\mod.$
Let $\xi:\bfS_\Gamma\to\bfS^\sharp_\Gamma$ be the specialization homomorphism.

\smallskip

By Lemma \ref{lem:AA}, we have
$\calD_\Gamma=\bfS_\Gamma\-\mod.$
The restriction of scalars relatively to the map $\xi$
yields a full embedding of graded Abelian categories 
\begin{align}\label{xi1}\xi_*:\calD_\Gamma^{\sharp}\to\calD_\Gamma.\end{align}
It is exact and gives a functor of triangulated categories
$\xi_*: \D^\perf(\calD^{\,\sharp}_\Gamma)\to\D^\b(\calD_\Gamma),$
which is not fully faithful in general.
Let $\xi^*$, $L\xi^*$ be the left adjoint functors, which are
given by induction and derived induction relatively to $\xi$.

\smallskip

\begin{lemma} \label{lem:sharp} Let $\Gamma\subset\Lambda_\beta$ be any order ideal.
\hfill
\begin{itemize}[leftmargin=8mm]
\item[$\mathrm{(a)}$] 
$\bfS_\Gamma^\sharp=\bfS_\Gamma^{+,\,\sharp}$
as graded $\k$-algebras.
\item[$\mathrm{(b)}$] $\Par(\Gr^+_\Gamma,S)\simeq\bfS_\Gamma^\sharp\-\,\proj$
as graded additive categories.
%\item[$\mathrm{(c)}$] {\color{red}$\bfS_\beta^\sharp$ and $\bfS_\beta$ have a finite global dimension.}
\end{itemize}
\end{lemma}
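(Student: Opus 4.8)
The plan is to deduce both parts by comparing two descriptions of the $\k$-algebra governing $\Par(\Gr^+_\Gamma,S)$, one coming from the affine Grassmannian side and one from the quiver side, and showing they coincide after specialization. For part (a), I would start from the stack isomorphism $\calG r^+_\Gamma\simeq\calY_\Gamma$ of Proposition \ref{prop:Y=X}(a), restricted from $\Gamma\!_\beta$ to the order ideal $\Gamma$; this identifies the parity complexes $IC(\lambda_\pi)^+$ with the summands $(j_\Gamma)^*IC(\pi)$ of $\calM_\Gamma$, so that $\bfS^+_\Gamma=\bfS_\Gamma$ as graded $\k$-algebras (compatibly with the $H^\bullet_{G_r}$-actions). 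The $H^\bullet_{G_r}$-module structure on $\bfS^+_\Gamma$ is the one from Proposition \ref{prop:isom5}(c), and on $\bfS_\Gamma$ it is the one given by \eqref{CENT1}, i.e. the composite of \eqref{mor2} and \eqref{Hy}; the key compatibility to check is that these two $H^\bullet_{G_r}$-actions agree under the identification $\bfS^+_\Gamma=\bfS_\Gamma$. Granting this, specializing at $\k\otimes_{H^\bullet_{G_r}}(-)$ gives $\bfS^{+,\sharp}_\Gamma=\bfS_\Gamma^\sharp$, which is part (a).

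The main obstacle is exactly this identification of $H^\bullet_{G_r}$-module structures. On the Grassmannian side the action comes from the classifying-space map $\calG r^+_\Gamma\to[\bullet/G_r]$ implicit in Proposition \ref{prop:isom5}(c) (via the even stratification and \cite{JMW}); on the quiver side it comes from the vector bundle $\calE_\Gamma\to\calY_\Gamma$ with fibre $\coker(y)^*\simeq F^r$, giving $\calY_\Gamma\to[\bullet/G_r]$ and hence \eqref{mor2}. I would argue that under the tilting equivalence of Proposition \ref{prop:Y=X}, the cokernel bundle on the quiver side corresponds to the tautological rank-$r$ bundle on $\calG r^+_\Gamma$ classifying the lattice $L\subset K^{\oplus r}$ (or its quotient $L_0/L$ together with the standard lattice), so the two maps to $[\bullet/G_r]$ agree up to isomorphism. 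This is essentially the statement that under $\calO(\lambda_\pi)\leftrightarrow\calP_\pi$ the fibre data match; it follows from the explicit form of the tilting functor $\RHom_{\D^\b(\calC oh)}(\calT,\bullet)$ and the description of a coherent sheaf of rank $r$, degree $n$ on $\bbP^1$ as a pair $(x,y)$ with $\coker(y)$ the stalk at the point $t=\infty$. The remaining details are bookkeeping about gradings (internal degree $2$ on $H^\bullet_{G_r}$).

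For part (b), I would combine part (a) with Proposition \ref{prop:isom5}(a), which already gives $\Par(\Gr^+_\Gamma,S)\simeq\bfS^{+,\sharp}_\Gamma\-\,\proj$ as graded additive categories; rewriting $\bfS^{+,\sharp}_\Gamma=\bfS_\Gamma^\sharp$ via part (a) yields $\Par(\Gr^+_\Gamma,S)\simeq\bfS_\Gamma^\sharp\-\,\proj$, which is part (b). Here one uses that $\bfS_\Gamma$ is free of finite rank over $H^\bullet_{G_r}$ (Proposition \ref{prop:isom5}(c)), so $\bfS_\Gamma^\sharp$ is a finite-dimensional graded $\k$-algebra and $\bfS_\Gamma^\sharp\-\,\proj$ is the appropriate category of finitely generated projective graded modules. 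No further argument is needed beyond citing Remark \ref{rem:mixte2}(a) for the equivalence $\Par\simeq(-)\-\,\proj$, exactly as in the proof of Lemma \ref{lem:AA}.
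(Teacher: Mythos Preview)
Your proposal is correct and follows essentially the same approach as the paper: establish $\bfS_\Gamma=\bfS^+_\Gamma$ via the stack isomorphism, identify the two $H^\bullet_{G_r}$-actions by matching the underlying $G_r$-bundles, then specialize; part (b) is deduced from (a) and Proposition~\ref{prop:isom5}(a) exactly as you say.

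The only place where the paper is more precise than your sketch is the bundle identification. Rather than invoking the lattice description (which properly belongs to the thin Grassmannian $\Gr^-$, not $\Gr^+$), the paper observes that $\calE_\Gamma$ is the pullback of the universal bundle on $\calB un_\beta^+\times\bbP^1_F$ along the inclusion $\calY_\Gamma\subset\calY_\beta=\calB un_\beta^+\times\{0\}$, and that under $\calB un_\beta^+\simeq\calG r^+_\beta$ the associated $G_r$-torsor becomes $[\Gr^+_\Gamma/U_\beta]\to\calG r^+_\Gamma$, where $U_\beta$ is the unipotent radical of $H_\beta$ with Levi $G_r$. This makes the map $\calG r^+_\Gamma\to[\bullet/G_r]$ the tautological one (quotient by the unipotent part of $H_\beta$), so the central map \eqref{CENT2} agrees with \eqref{CENT1} on the nose. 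Your heuristic via $\coker(y)$ as a stalk of the universal bundle is the same content, but you should replace the lattice language by this torsor description when writing it up.
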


\begin{proof}
The module $Q_\Gamma=(h_\Gamma)^*Q_\beta$, is a projective graded-generator of $\calD_\Gamma$.
By Proposition \ref{prop:equivalence11} there is an equivalence of graded triangulated categories
$\A_\beta:\D^\b(\calD_\beta)\to\D^\b_\mix(\calG r^+_\beta)$
such that $\A_\beta Q_\Gamma=\calM_\Gamma^+$.
Hence, we have
\begin{equation}
\begin{aligned}\label{isom3}
\bfS_\Gamma
=\bbEnd_{\calD_\Gamma}(Q_\Gamma)
=\bbEnd_{\D^\b(\calD_\Gamma)}(Q_\Gamma)
=\bbEnd_{\D^\b_\mix(\calG r^+_\Gamma)}(\calM_\Gamma^+)
=\End^\bullet_{\D^\b(\calG r^+_\Gamma)}(\calM_\Gamma^+)
=\bfS^+_\Gamma.
\end{aligned}
\end{equation}
The vector bundle $\calE_\Gamma$ over $\calY_\Gamma$ is isomorphic to the
pull-back of the universal bundle over $\calB un_\beta^+\times\bbP^1_F$ by the embedding
$$\calY_\Gamma\subset\calY_\beta=\calB un_\beta^+\times\{0\}\subset\calB un_\beta^+\times\bbP^1_F.$$
Therefore, under the stack isomorphism
$\calY_\Gamma\simeq\calG r^+_\Gamma$ in Proposition \ref{prop:Y=X},
the $G_r$-torsor associated with $\calE_\Gamma$ is identified with the
$G_r$-torsor $[\Gr^+_\Gamma/U_\beta]$ over $\calG r^+_\Gamma$, where
$U_\beta$ is the unipotent radical of $H_\beta$ with Levi complement $G_r$.
We deduce that the obvious graded $\k$-algebra homomorphism
\begin{align}\label{CENT2}H^\bullet_{G_r}\to H^\bullet(\calG r^+_\Gamma\,,\,\k)\to
Z(\bfS_\Gamma^+),\end{align}
is identified with the central homomorphism
\eqref{CENT1} under the algebra isomorphism $\bfS_\Gamma=\bfS^+_\Gamma$.
This implies that
$\bfS_\Gamma^\sharp=\bfS_\Gamma^{+,\,\sharp}.$
Part (b) follows from (a) and Proposition \ref{prop:isom5}(a).
% whilepart (c) follows from (b) and Proposition \ref{prop:isom5}.
\end{proof}

\smallskip

For each Kostant partition $\pi\in\Gamma\!_\beta$ we define the module
$\Delta(\pi)^\sharp\in\calD_\beta^{\,\sharp}$ by setting
\begin{align}\label{Dsharp}\Delta(\pi)^\sharp=\xi^*\Delta(\pi).\end{align}

\smallskip

\begin{proposition}\label{prop:equivalence3}
Let $\Gamma\subset\Lambda_\beta$ be any order ideal.
\hfill
\begin{itemize}[leftmargin=8mm]
\item[$\mathrm{(a)}$] 
We have an equivalence of graded triangulated categories
$\A_\Gamma^\sharp:\D^\perf(\calD_\Gamma^\sharp)\to\D^\b_\mix(\Gr^+_\Gamma,S)$
such that the following diagram of functors commutes
$$\xymatrix{
\D^\perf(\calD^{\,\sharp}_\Gamma)\ar[d]_-{\A^\sharp_\Gamma}
\ar@/^/[r]^{\xi_*}&\ar@/^/[l]^-{L\xi^*}\ar[d]^-{\A_\Gamma}\D^\b(\calD_\Gamma)\\
\D^\b_\mix(\Gr^+_\Gamma,S)\ar@/^/[r]^{\xi_*^+}&\ar@/^/[l]^-{\For}\D^\b_\mix(\calG r^+_\Gamma).}$$

\item[$\mathrm{(b)}$] 
For each $\pi\in\Gamma$, we have
$\A_\Gamma^\sharp L(h_\Gamma)^*Q(\pi)^\sharp= (i^+_\Gamma)^*IC(\lambda_\pi)^+_\mix$
and
$\A_\Gamma^\sharp\Delta(\pi)^\sharp= \nabla(\lambda_\pi)^+_\mix.$
\end{itemize}
\end{proposition}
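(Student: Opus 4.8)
The plan is to deduce Proposition \ref{prop:equivalence3} from Proposition \ref{prop:equivalence11} and Lemma \ref{lem:sharp} by a base-change (specialization) argument over the central subalgebra $H^\bullet_{G_r}$. Concretely, I would first set $\Gamma=\Gamma\!_\beta$ without loss of generality, using Proposition \ref{prop:equivalence11}(b) together with the fact (Lemma \ref{lem:sharp}) that the specialization behaves compatibly with the open restriction functors $L(h_\Gamma)^*$, $(i^+_\Gamma)^*$; the general order-ideal case then follows by restricting along the already-established commuting squares. So the core is to produce $\A_\beta^\sharp$.

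For this, recall from Lemma \ref{lem:AA} that $\calD_\beta=\bfS_\beta\-\mod$ with $\bfS_\beta=\bfS_\beta^+$ (the isomorphism \eqref{isom3}), and from Lemma \ref{lem:sharp}(a) that $\bfS_\beta^\sharp=\bfS_\beta^{+,\sharp}$, where by definition $\bfS_\beta^{+,\sharp}=\k\otimes_{H^\bullet_{G_r}}\bfS_\beta^+$ via the central homomorphism \eqref{CENT1}=\eqref{CENT2}. By Proposition \ref{prop:isom5}(b)–(c) the non-equivariant mixed category is $\D^\b_\mix(\Gr^+_\beta,S)=\D^\perf(\bfS_\beta^{+,\sharp})$, and $\bfS_\beta^+$ is free of finite rank over $H^\bullet_{G_r}$ with $\k\otimes_{H^\bullet_{G_r}}\bfS_\beta^+\simeq\bfS_\beta^{+,\sharp}$. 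Hence I would simply define
\begin{align*}
\A_\beta^\sharp:\D^\perf(\calD_\beta^\sharp)=\D^\perf(\bfS_\beta^\sharp)=\D^\perf(\bfS_\beta^{+,\sharp})=\D^\b_\mix(\Gr^+_\beta,S),
\end{align*}
the middle equalities being Lemma \ref{lem:sharp}(a) and Proposition \ref{prop:isom5}(b). Under the identifications $\calD_\beta=\bfS_\beta\-\mod$ and $\D^\b_\mix(\calG r^+_\beta)=\D^\perf(\bfS_\beta^+)$, the equivalence $\A_\beta$ of Proposition \ref{prop:equivalence11} is (up to the chosen identification $\bfS_\beta=\bfS_\beta^+$) the tautological one; therefore the left square reduces to the statement that the functors $\xi_*$, $\xi_*^+$ are, on both sides, restriction of scalars along the \emph{same} algebra map $\bfS_\beta\to\bfS_\beta^\sharp$, and $L\xi^*$, $\For$ are the corresponding (derived) inductions. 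This commutativity is then immediate from the identification of \eqref{CENT1} with \eqref{CENT2} established in the proof of Lemma \ref{lem:sharp}.

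For part (b): the first relation $\A_\Gamma^\sharp L(h_\Gamma)^*Q(\pi)^\sharp=(i^+_\Gamma)^*IC(\lambda_\pi)^+_\mix$ follows by applying $\For=L\xi^*$ to the relation $\A_\Gamma L(h_\Gamma)^*Q(\pi)=IC(\lambda_\pi)^+_\mix$ of Proposition \ref{prop:equivalence11}(a), together with the commuting square in part (a) and the fact that $\For IC(\lambda_\pi)^+_\mix$ is by definition the class of $IC(\lambda_\pi)^+$ in the non-equivariant mixed category; here I should note that $Q(\pi)^\sharp$ is projective in $\calD_\beta^\sharp$ (it is $\xi^*Q(\pi)$, the image of the projective generator summand), so $L\xi^*$ agrees with $\xi^*$ on it and no higher-Tor terms intervene. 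For the standard modules, I would combine $\Delta(\pi)^\sharp=\xi^*\Delta(\pi)$ (definition \eqref{Dsharp}) with $\A_\Gamma\Delta(\pi)=\nabla(\lambda_\pi)^+_\mix$ (Proposition \ref{prop:equivalence11}(a)) and the commuting square, \emph{provided} that $L\xi^*\Delta(\pi)=\xi^*\Delta(\pi)$, i.e. that $\Delta(\pi)$ has no higher derived specialization. This is the one point needing an argument: $\Delta(\pi)$ is, by Proposition \ref{prop:Cinfty}(a), a standard module in the polynomial highest weight category $\calD_\beta$, free over $\bbEnd_{\calD_\beta}(\Delta(\pi))^\op\simeq\k[z_1,\dots,z_{r_\pi}]^{S_\pi}$, and the central homomorphism \eqref{CENT1} from $H^\bullet_{G_r}$ hits exactly (a polynomial subalgebra mapping onto) this endomorphism ring — so $\Delta(\pi)$ is flat over $H^\bullet_{G_r}$ and $L\xi^*\Delta(\pi)=\k\otimes_{H^\bullet_{G_r}}\Delta(\pi)$ is concentrated in degree $0$. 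I expect the identification of the image of \eqref{CENT1} inside $\bbEnd_{\calD_\beta}(\Delta(\pi))^\op$ (equivalently, that $\nabla(\lambda_\pi)^+_\mix$ is the $\k$-specialization of $\nabla(\lambda_\pi)$ over $H^\bullet_{G_r}$, i.e. the geometric statement that restricting the equivariant standard object to the non-equivariant category kills the equivariant parameters without truncation) to be the main technical obstacle; everything else is formal transport of structure along the already-constructed equivalences.
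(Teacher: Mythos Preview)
Your construction of $\A^\sharp_\Gamma$ and the verification of the commutative square are essentially identical to the paper's: both invoke Lemma \ref{lem:sharp}(b) (equivalently Proposition \ref{prop:isom5}) to identify $\D^\perf(\calD_\Gamma^\sharp)=\K^\b(\bfS_\Gamma^\sharp\text{-}\proj)$ with $\K^\b(\Par(\Gr^+_\Gamma,S))=\D^\b_\mix(\Gr^+_\Gamma,S)$, and the diagram commutes because both pairs $(\xi_*,L\xi^*)$ and $(\xi_*^+,\For)$ are restriction/induction along the \emph{same} surjection $\bfS_\Gamma\to\bfS_\Gamma^\sharp$. The first identity in (b) is also handled the same way.

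For the relation $\A_\Gamma^\sharp\Delta(\pi)^\sharp=\nabla(\lambda_\pi)^+_\mix$, however, you and the paper diverge. The paper simply \emph{re-runs} the adjoint-functor argument of Proposition \ref{prop:equivalence11} inside the sharp category: one checks directly that $\Delta(\pi)^\sharp=(h_{\leqslant\pi})_*L(h_{\leqslant\pi})^*Q(\pi)^\sharp$ (both $\xi^*$ and $(h_{\leqslant\pi})^*$ are left adjoints to commuting full embeddings, hence commute), and that $\A^\sharp_\Gamma$ intertwines these with $(i^+_{\leqslant\lambda})_*(i^+_{\leqslant\lambda})^*$ on the non-equivariant side, which gives $\nabla(\lambda_\pi)^+_\mix$ by definition \eqref{DN}. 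This avoids any Tor-vanishing input at this stage. Your route instead transports the equivariant statement through the square using $L\xi^*\Delta(\pi)=\Delta(\pi)^\sharp$, i.e.\ freeness of $\Delta(\pi)$ over $H^\bullet_{G_r}$. That freeness is true---indeed the paper proves it later as Lemma \ref{lem:DP}(a), by exhibiting $\Delta(\pi)$ as a summand of the induced module \eqref{IND3}, which is free over $\k[z_1,\dots,z_r]$---so your argument is correct, but it front-loads a fact the paper defers. Your version also implicitly needs $\For\,\nabla(\lambda_\pi)^+_\mix=\nabla(\lambda_\pi)^+_\mix$, i.e.\ compatibility of the equivariant $(i^+_{\leqslant\lambda})_*$ (defined via Proposition \ref{prop:equivalence8}) with the non-equivariant one (from Proposition \ref{prop:A}); this holds but deserves a sentence, whereas the paper's direct argument sidesteps it entirely.
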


\begin{proof}
By Lemma \ref{lem:sharp}, we have
an equivalence of graded triangulated categories
$$\A^\sharp_\Gamma:\D^\perf(\calD_\Gamma^\sharp)=\K^\b(\bfS_\Gamma^\sharp\-\,\proj)
\to\K^\b(\Par(\Gr^+_\Gamma,S))=\D^\b_\mix(\Gr^+_\Gamma,S)$$
such that 
$\A^\sharp_\Gamma L(h_\Gamma)^*Q(\pi)^\sharp=(i_\Gamma^+)^*IC(\lambda_\pi)^+$
for all $\pi\in\Gamma.$
The isomorphism $\A_\Gamma^\sharp\Delta(\pi)^\sharp= \nabla(\lambda_\pi)^+_\mix$
is proved as in Proposition \ref{prop:equivalence11}.
\end{proof}

For a future use, we now describe explicitly the map 
$\xi:\bfS_\Gamma\to\bfS_\Gamma^\sharp$.
To simplify we set $\Gamma=\Gamma\!_\beta$.
Let $c_t(\calE_\beta)$ be the Chern polynomial of the rank $r$ vector bundle $\calE_\beta\to\calY_\beta$. 
Let $J_\beta\subset  Z(\bfS_\beta)$
be the ideal generated by the image by \eqref{Hy} of the non-constant coefficients of  $c_t(\calE_\beta)$.

\smallskip

Let $\Lambda$ be the ring of symmetric functions in two sets of variables
$x_1,x_2,\dots$ and $y_1,y_2,\dots$ and coefficients in $\k$.
Let $e_i(x), e_i(y)$ be
the $i$th elementary symmetric functions and
$E_t(x),$ $E_t(y)$ be the corresponding generating series. 
Since $X_\beta$ is an affine space, we have 
\begin{align}\label{ID0}H^\bullet(\calX_\beta\,,\,\k)=H^\bullet_{G_\beta}=\Lambda\,/\,(e_i(x),e_j(y)\,;\,i>m\,,\,j>n).
\end{align}
Let $I_\beta\subset H^\bullet_{G_\beta}$ be the ideal generated by the non-constant coefficients of
the formal series 
\begin{align}\label{Eb}E_\beta(t)=E_{-t}(x)\,/\,E_{-t}(y).\end{align}
The restriction $(j_\beta)^*:H^\bullet(\calX_\beta,\k)\to H^\bullet(\calY_\beta,\k)$ 
maps the coefficient of $t^i$ in $E_\beta(t)$ to 0 for each $i>r$, 
due to the exact sequence \eqref{EX1}.
Let $I'_\beta\subset  Z(\bfS_\beta)$ be the ideal generated by the image by $\eqref{Hy}$ of $(j_\beta)^*I_\beta$.

\begin{proposition}\label{prop:sharp}
We have $\bfS_\beta^\sharp=\bfS_\beta/J_\beta\bfS_\beta$ and $J_\beta=I'_\beta$.
\end{proposition}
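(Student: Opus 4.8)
The plan is to prove the two asserted identities separately and then combine them. First I would establish that $\bfS_\beta^\sharp=\bfS_\beta/J_\beta\bfS_\beta$. By definition $\bfS_\beta^\sharp=\k\otimes_{H^\bullet_{G_r}}\bfS_\beta$, where the $H^\bullet_{G_r}$-module structure comes from the composition \eqref{CENT1} of \eqref{mor2} and \eqref{Hy}. Since $\k=\k\otimes_{H^\bullet_{G_r}}H^\bullet_{G_r}$ kills exactly the augmentation ideal $H^{>0}_{G_r}$, the tensor product kills the ideal of $Z(\bfS_\beta)$ generated by the image of $H^{>0}_{G_r}$ under \eqref{CENT1}. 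Now $H^\bullet_{G_r}=H^\bullet_{GL_r}$ is a polynomial ring on the Chern classes of the tautological rank $r$ bundle, so its augmentation ideal is generated by those Chern classes. The map \eqref{mor2} arises precisely from the $G_r$-torsor associated with $\calE_\Gamma$, hence pulls the tautological Chern classes back to the Chern classes $c_i(\calE_\beta)$; therefore the image of $H^{>0}_{G_r}$ in $H^\bullet(\calY_\beta,\k)$, and then in $Z(\bfS_\beta)$ via \eqref{Hy}, generates exactly the ideal $J_\beta$. This gives $\bfS_\beta^\sharp=\bfS_\beta/J_\beta\bfS_\beta$.

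Next I would prove $J_\beta=I'_\beta$ as ideals of $Z(\bfS_\beta)$. Both are defined as the ideal generated by the image under \eqref{Hy} of certain elements of $H^\bullet(\calY_\beta,\k)$, so it suffices to compare those elements inside $H^\bullet(\calY_\beta,\k)$ — or, via $(j_\beta)^*$, to compare $c_t(\calE_\beta)$ with $(j_\beta)^*E_\beta(t)$ modulo the respective ideals they generate. The key geometric input is the identification in the proof of Lemma \ref{lem:sharp}: under $\calY_\beta\simeq\calB un_\beta^+$, the bundle $\calE_\beta$ is the pullback of the universal bundle on $\calB un_\beta^+\times\bbP^1_F$ along the section at $0\in\bbP^1_F$. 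Concretely, on the quiver side, from $\Hom_{F\bfQ}((x,y),(1,0))=\coker(y)^*$ one sees $\calE_\beta\cong\coker(y)^*$ as a vector bundle over $\calY_\beta$. The exact sequence of tautological bundles on $\calX_\beta$ (the sequence labelled \eqref{EX1}, restricting to the $0\to V_1\xrightarrow{y}V_0\to\coker(y)\to 0$ over $\calY_\beta$ where $y$ is injective) gives in equivariant cohomology the total Chern class relation $c_t(\coker(y))=c_t(V_0)/c_t(V_1)$. Since $c_t(V_0)$ corresponds to $E_{-t}(x)$ (eigenvalues the $x$-variables) and $c_t(V_1)$ to $E_{-t}(y)$ in the presentation \eqref{ID0}, this is exactly $c_t(\calE_\beta)=(j_\beta)^*E_\beta(t)$ up to the usual dualization sign $t\mapsto -t$, which affects neither the ideal generated by the non-constant coefficients nor the conclusion. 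Hence the non-constant coefficients of $c_t(\calE_\beta)$ and of $(j_\beta)^*E_\beta(t)$ generate the same ideal of $H^\bullet(\calY_\beta,\k)$, so $J_\beta=I'_\beta$.

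The main obstacle I anticipate is bookkeeping the precise relationship between $\calE_\beta$ and the tautological bundles: one must check carefully that $\calE_\beta=\coker(y)^*$ (note the dual) matches the universal-bundle description on $\calB un_\beta^+$ at the point $0$, and track the dualization so that "$E_{-t}$ versus $E_t$" and "$\coker(y)$ versus its dual" are handled consistently — these sign/duality issues are exactly where an off-by-inversion error would creep in, although since we only ever generate ideals by non-constant coefficients of invertible power series, the final statement is robust. A secondary point to verify is that $H^\bullet_{G_r}\to Z(\bfS_\beta)$ really does land in the center and that the ideal it generates in $Z(\bfS_\beta)$ equals $J_\beta$ rather than something larger (i.e. that \eqref{Hy} is injective enough on the relevant classes); this follows from the compatibility \eqref{CENT2} established in the proof of Lemma \ref{lem:sharp}, which identifies \eqref{CENT1} with the map coming from $H^\bullet_{G_r}\to H^\bullet(\calG r^+_\beta,\k)\to Z(\bfS^+_\beta)$ under $\bfS_\beta=\bfS^+_\beta$.
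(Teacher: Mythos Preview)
Your proposal is correct and follows essentially the same approach as the paper: the first identity is unpacked from the definition via the Chern classes generating the augmentation ideal of $H^\bullet_{G_r}$, and the second is deduced from the exact sequence \eqref{EX1} of tautological bundles together with the Whitney formula, giving $c_t(\calE_\beta)=(j_\beta)^*E_\beta(t)$ up to the harmless $t\mapsto -t$ dualization. The paper is terser (it calls the first identity ``the definition'' and goes through \eqref{EX1} and the diagram \eqref{MAP2} directly without the detour through Lemma~\ref{lem:sharp}), and your secondary worry about \eqref{Hy} being ``injective enough'' is moot since $J_\beta$ is defined as the ideal generated by those images.
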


\begin{proof}
The first claim is the definition of $\bfS_\beta^\sharp$. We now concentrate on the second one.
The tautological representations of the group $G_\beta$ in $F^m$ and $F^n$ yield two vector bundles
$\calU_0$, $\calU_1$ over $\calX_\beta$ of rank $m$, $n$ respectively, with an exact sequence of vector bundles
over $\calY_\beta$
\begin{align}\label{EX1}
\xymatrix{0\ar[r]&(j_\beta)^*\calU_1\ar[r]&(j_\beta)^*\calU_0\ar[r]&(\calE_\beta)^*\ar[r]& 0.}\end{align}
Since the $\k$-algebra homomorphism
$(j_\beta)^*:\bfR_\beta\to\bfS_\beta$ in \eqref{ONTO} is onto, it gives an algebra homomorphism
$Z(\bfR_\beta)\to Z(\bfS_\beta)$, which fits in the following commutative diagram
\begin{align}\label{MAP2}
\begin{split}
\xymatrix{
\ar[r]H^\bullet_{G_r}\ar@/^2pc/[rr]^{\eqref{CENT1}}&H^\bullet(\calY_\beta\,,\,\k)\ar[r]& Z(\bfS_\beta)\\
H^\bullet_{G_\beta}\ar@{=}[r]&H^\bullet(\calX_\beta\,,\,\k)\ar@{=}[r]\ar[u]^-{(j_\beta)^*}&
Z(\bfR_\beta)\ar[u]_-{(j_\beta)^*}
}
\end{split}
\end{align}
Let $c_t(\calU_0)$ and $c_t(\calU_1)$ be the Chern polynomials.
By \eqref{EX1} we have 
$$c_t(\calE_\beta)=(j_\beta)^*c_{-t}(\calU_0)\,/\,(j_\beta)^*c_{-t}(\calU_1).$$ 
The identification \eqref{ID0} takes the formal series $c_{-t}(\calU_0)\,/\,c_{-t}(\calU_1)$
to  $E_\beta(t),$ hence $J_\beta=I'_\beta$.
\end{proof}

\smallskip

\begin{remark}\label{rem:fl} 
The $\k$-algebra $\bfS_\Gamma^\sharp$ is finite dimensional by Lemma \ref{lem:sharp},
because $\bfS_\Gamma^{+,\sharp}$ is finite dimensional by \eqref{sharp}.
Hence the category $\calD_\Gamma^\sharp$ is Artinian.
\end{remark}

\smallskip

\begin{example}
Assume that $\beta=\beta_n$. Hence $G_\beta=G_n\times G_{n+1}$ and
the $G_\beta$-variety $Y_\beta$ contains a single orbit with stabilizer $D_\beta=\bbG_{m,F}$.
The constant sheaf is a graded-generator of $\Par(\calY_\beta)$, 
hence $\Par(\calG r_\beta^+)=\Par(\calY_\beta)=\bfS_\beta$-$\proj$ with 
$\bfS_\beta=H^\bullet(\calY_\beta,\k)=H^\bullet_{D_\beta}=\k[z]$.
Further, we have
$\Par(\Gr^+_\beta,S)=\bfS_\beta^\sharp\-\,\proj$ with $\bfS^\sharp_\beta=\k$.
\end{example}

\smallskip

\subsection{The Radon transform}\label{sec:Radon}
For each subset $\Gamma$ of $\Lambda_\beta$ or $M_\beta$ we have
$$\Gr^\pm_\Gamma=\bigsqcup_{\lambda\in \Lambda}\Gr^\pm_\lambda
\quad,\quad
\Gr^\pm_\Gamma=\bigsqcup_{w\in \Gamma}\Gr^\pm_w
\quad,\quad d^\pm_\lambda=\dim \Gr^\pm_\lambda
\quad,\quad d^\pm_w=\dim \Gr^\pm_w.$$
We equip the $F$-scheme $\Gr^\pm_\Gamma$ with the stratifications given by
$S_\Gamma^\pm=\Gr^\pm_\Gamma\cap S_\beta^\pm$ and
$T_\Gamma^\pm=\Gr^\pm_\Gamma\cap T_\beta^\pm.$
We'll abbreviate
$S=S_\Gamma^\pm$ and $T=T_\Gamma^\pm$.
The $F$-scheme $\Gr^\pm_\Gamma$ has an obvious $F_0$-structure $\Gr^\pm_{\Gamma,0}$.
The stratifications $S^\pm$ and  $T^\pm$ are defined over $F_0$.

\smallskip

First, let us recall the Radon transform, following \cite{Y09}.
Let $U_0$ be the diagonal orbit of $G_r(F_0((t)))$ acting on the origin of 
$\Gr_0\times\bfGr_0$.
The diagonal action of the $F_0$-group $K_{\beta,0}$ on  
$U_0\cap (\Gr_{\Gamma,0}\times \bfGr_{\Gamma,0})$ is free. 
Let $U_{\Gamma,0}$ be the quotient by $K_{\beta,0}$.
It is open and dense in $\Gr^-_{\Gamma,0}\times \Gr^+_{\Gamma,0}$ and
may view as a correspondence of $F_0$-schemes \begin{align*}
\xymatrix{\Gr^-_{\Gamma,0}&\ar[l]_-{f^-_\Gamma}U_{\Gamma,0}\ar[r]^-{f^+_\Gamma}&\Gr^+_{\Gamma,0}}.
\end{align*}
We  write $U_\Gamma=U_{\Gamma,0}\otimes_{F_0}F$.
From now on we'll always assume that 
$\Gamma$ is an interval in $\Lambda_\beta$ or in $M_\beta$.
Then, the scheme  $\Gr_\Gamma^\pm$ is irreducible, and we may set $d^\pm_\Gamma=\dim\Gr^\pm_\Gamma$.

\smallskip

The Radon transform denotes both a pair of adjoint functors
$(R_\Gamma^-\,,\,R_\Gamma^+)$
between the categories $\D^\b(\Gr^\pm_{\Gamma},T)$
and  a pair of adjoint functors
$(R_{\Gamma,\bbd}^-\,,\,R_{\Gamma,\bbd}^+)$ between the categories of mixed complexes
$\D^\b_\bbd(\Gr^\pm_{\Gamma,0},T)$. Both pairs of functors are given by
$$\big(\,(f^+_\Gamma)_!(f^-_\Gamma)^*\,,\,
(f^-_\Gamma)_*(f^+_\Gamma)^!\,\big).$$
Since the correspondence $U_\Gamma$ is $I_\beta$-equivariant,
the functors $R_{\Gamma,\bbd}^\pm$ preserve the full subcategories
$\D^\b_{\lozenge,\bbd}(\Gr^\pm_{\Gamma,0},T)$ of $\D^\b_\bbd(\Gr^\pm_{\Gamma,0},T).$
By \cite[cor.~4.1.5\,,\,\S 5.4]{Y09}, they yield a commutative square of functors
\begin{align}\label{DIAG7}
\begin{split}
\xymatrix{
\D^\b_{\lozenge,\bbd}(\Gr^-_{\Gamma,0}\,,\,T)\ar@/^/[r]^{R^-_{\Gamma,\bbd}}\ar[d]_\omega&\ar@/^/[l]^{R^+_{\Gamma,\bbd}}\D^\b_{\lozenge,\bbd}(\Gr^+_{\Gamma,0}\,,\,T)\ar[d]^\omega\\
\D^\b(\Gr^-_\Gamma\,,\,T)
\ar@/^/[r]^{R^-_\Gamma}&\ar@/^/[l]^{R^+_\Gamma}
\D^\b(\Gr^+_{\Gamma}\,,\,T).
}
\end{split}
\end{align}
In this diagram the two pairs of horizontal arrows are adjoint equivalences.
Since the correspondence $U_\Gamma$ is $H_\beta$-equivariant, the Radon transform preserves the 
$S$-constructible complexes.
Hence, it yields a commutative square of equivalences
\begin{align}\label{DIAG1}
\begin{split}
\xymatrix{
\D^\b_{\lozenge,\bbd}(\Gr^-_{\Gamma,0}\,,\,S)\ar@/^/[r]^{R^-_{\Gamma,\bbd}}\ar[d]_\omega&\ar@/^/[l]^{R^+_{\Gamma,\bbd}}\D^\b_{\lozenge,\bbd}(\Gr^+_{\Gamma,0}\,,\,S)\ar[d]^\omega\\
\D^\b(\Gr^-_\Gamma\,,\,S)
\ar@/^/[r]^{R^-_\Gamma}&\ar@/^/[l]^{R^+_\Gamma}
\D^\b(\Gr^+_{\Gamma}\,,\,S).
}
\end{split}
\end{align}

\smallskip

Given a nested pair of subsets $\Gamma\!_1\subset\Gamma$ in $\Lambda_\beta$,
we consider the following diagram 
\begin{align}\label{diag6}
\begin{split}
\xymatrix{
\Gr^-_ {\Gamma\!_1}\ar[d]_-{i_ {\Gamma\!_1,\Gamma}^-}&\ar[l]_-{f^-_{\Gamma\!_1}}
U_ {\Gamma\!_1}\ar[r]^-{f^+_ {\Gamma\!_1}}\ar[d]&
\Gr^+_{\Gamma\!_1}\ar[d]^-{i_{\Gamma\!_1,\Gamma}^+}\\
\Gr^-_{\Gamma}&\ar[l]_-{f^-_{\Gamma}}U_{\Gamma}\ar[r]^-{f^+_{\Gamma}}&\Gr^+_{\Gamma}
.}
\end{split}
\end{align}

\smallskip

\begin{lemma}\label{lem:CD}
\hfill
\begin{itemize}[leftmargin=8mm]
\item[$\mathrm{(a)}$] 
If ${\Gamma\!_1}=\{\geqslant\!\lambda\}\cap\Gamma$, then the right square in \eqref{diag6} is Cartesian.
\item[$\mathrm{(b)}$] 
If ${\Gamma\!_1}=\{\leqslant\!\mu\}\cap\Gamma$, then the left square in \eqref{diag6} is Cartesian.
\end{itemize}
\end{lemma}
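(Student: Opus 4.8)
The plan is to prove Lemma~\ref{lem:CD} by unwinding the definitions of the orbit stratifications on the thick and thin affine Grassmannians and the diagonal correspondence $U_\Gamma$, reducing everything to a statement about lattices and the Cartan/Iwasawa decompositions. Recall that a point of $\Gr^-_\beta=\Gr_\beta$ is an $O$-lattice $L\subset K^{\oplus r}$ of codimension $n$ in $L_0=O^{\oplus r}$, and a point of $\Gr^+_\beta=\bfGr_\beta/K_\beta$ is (the $K_\beta$-orbit of) an $O^-$-lattice, i.e.\ a point of $G_r(K)/G_r(O^-)$ in the stratum over $\Lambda_\beta$. The correspondence $U_\Gamma$ is the image in $\Gr^-_\Gamma\times\Gr^+_\Gamma$ of the diagonal $G_r(K)$-orbit through the pair of base points, so a point of $U_\Gamma$ is a pair $(L^-,L^+)$ of lattices which are in \emph{relative position zero}, i.e.\ $L^-$ and $L^+$ are ``transverse'' in the sense that they come from a common $G_r(K)$-translate of $(L_0,L_0)$. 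The maps $f^\pm_\Gamma$ are the two projections.

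\textbf{Step 1: the case (a).} Suppose $\Gamma\!_1=\{\geqslant\!\lambda\}\cap\Gamma$. I must show the square
\[
\xymatrix{
U_{\Gamma\!_1}\ar[r]^-{f^+_{\Gamma\!_1}}\ar[d]&\Gr^+_{\Gamma\!_1}\ar[d]^-{i^+_{\Gamma\!_1,\Gamma}}\\
U_\Gamma\ar[r]^-{f^+_\Gamma}&\Gr^+_\Gamma
}
\]
is Cartesian, i.e.\ that the natural map $U_{\Gamma\!_1}\to U_\Gamma\times_{\Gr^+_\Gamma}\Gr^+_{\Gamma\!_1}$ is an isomorphism. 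Since $U_{\Gamma\!_1}=U_\Gamma\cap(\Gr^-_{\Gamma\!_1}\times\Gr^+_{\Gamma\!_1})$ by construction (intersecting the diagonal orbit with the product of the smaller locally closed pieces), this amounts to: if $(L^-,L^+)\in U_\Gamma$ and $L^+\in\Gr^+_{\Gamma\!_1}$ then automatically $L^-\in\Gr^-_{\Gamma\!_1}$. The key geometric input is that for a pair in relative position zero the two invariants are linked: the $G_r(O)$-type of $L^-$ and the $G_r(O)$-type of $L^+$ coincide (both equal to the unique $\mu$ with $(L^-,L^+)$ conjugate to $(L_0, t^{-\mu}L_0)$, say), so that $\Gr^+_\mu\ni L^+\iff\Gr^-_\mu\ni L^-$ under the identification $\Lambda_\beta=\Gamma\!_\beta$. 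Thus the condition ``$L^+$ lies in the closed-from-above piece indexed by $\{\geqslant\!\lambda\}$'' is equivalent to ``$L^-$ lies in the open-from-below piece indexed by $\{\geqslant\!\lambda\}$'' — here one uses that $\Gr^+_\mu\subset\overline{\Gr^+_{\mu'}}\iff\mu\geqslant\mu'$ while $\Gr^-_\mu\subset\overline{\Gr^-_{\mu'}}\iff\mu\leqslant\mu'$, so that $\{\geqslant\!\lambda\}$ is an open coideal on the $+$ side and an open ideal on the $-$ side. Therefore $U_\Gamma\times_{\Gr^+_\Gamma}\Gr^+_{\Gamma\!_1}=U_\Gamma\cap(\Gr^-_\Gamma\times\Gr^+_{\Gamma\!_1})=U_\Gamma\cap(\Gr^-_{\Gamma\!_1}\times\Gr^+_{\Gamma\!_1})=U_{\Gamma\!_1}$, as schemes (the relevant loci are reduced and the identity on points extends to an isomorphism of schemes since all maps are immersions between smooth schemes).

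\textbf{Step 2: the case (b)} is formally dual. With $\Gamma\!_1=\{\leqslant\!\mu\}\cap\Gamma$, the left square is Cartesian iff $U_{\Gamma\!_1}=U_\Gamma\times_{\Gr^-_\Gamma}\Gr^-_{\Gamma\!_1}$, i.e.\ iff $(L^-,L^+)\in U_\Gamma$ and $L^-\in\Gr^-_{\Gamma\!_1}$ forces $L^+\in\Gr^+_{\Gamma\!_1}$. This follows by the same type-matching as in Step~1: $\{\leqslant\!\mu\}$ is an open ideal on the $-$ side and an open coideal on the $+$ side, and relative position zero transports the one condition to the other. The only point requiring care — and the \textbf{main obstacle} — is justifying rigorously the claim that for $(L^-,L^+)$ in the diagonal $G_r(K)$-orbit the $G_r(O)$-orbit types on the two factors agree; this is essentially the statement that $U_\Gamma$ is, \emph{fibrewise over each $\Gr^\pm_\lambda$}, an affine fibration with the \emph{same} $\lambda$ appearing on both ends, which one extracts from Yun's description of the Radon transform (loc.\ cit., the correspondence is ``type-preserving'' because $U$ is a single $G_r(K)$-orbit) — concretely, one conjugates so $L^-=L_0$, then transversality means $L^+ = g\,L_0$ with $g\in G_r(O)\,G_r(O^-)$-avoiding, forcing $L^+$ into a single $G_r(O)$-orbit whose label is the one dual to $\lambda$ under $\Lambda_\beta\to\Lambda_\beta$, $\lambda\mapsto\lambda$, and similarly on the other side. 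Once this ``same-label'' lemma is in hand, both (a) and (b) are immediate set-theoretic verifications on $F$-points, upgraded to scheme isomorphisms because every morphism in sight is an immersion and all schemes involved are reduced (indeed smooth of finite type after the $K_\beta$-quotient).
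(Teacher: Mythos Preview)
Your argument rests on a ``same-label lemma'': that for $(L^-,L^+)\in U_\Gamma$ the $G_r(O)$-orbit types of $L^-$ and $L^+$ coincide. This is false, and the gap is fatal. Recall that $U_\Gamma$ is \emph{open and dense} in $\Gr^-_\Gamma\times\Gr^+_\Gamma$. Hence the fibre of $f^+_\Gamma$ over any point $L^+\in\Gr^+_\nu$ is open (and dense) in $\Gr^-_\Gamma$, so it meets every stratum $\Gr^-_\mu$ with $\mu\in\Gamma$, not just $\Gr^-_\nu$. Concretely: writing a point of $U_\Gamma$ as $(gx^-,gx^+)$, the label of $gx^-$ is the Cartan double coset $G_r(O)\,t^\mu\,G_r(O)$ containing $g$, while the label of $gx^+$ is the Iwasawa-type double coset $G_r(O)\,t^\nu\,G_r(O^-)$ containing $g$; these are two different decompositions of $G_r(K)$ and the labels need not agree. (Right-multiplying $t^\mu$ by a generic $k\in G_r(O^-)$ fixes the second label but changes the first.)

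What is true, and what the paper proves, is an \emph{inequality} rather than an equality. Passing to the finer Iwahori stratification $T$ (with $\B^+=I$ and $\B^-$ the opposite Iwahori), the paper reduces part~(a) to
\[
\B^+\,w\,\B^-\ \cap\ \B^+\,v\,\B^+ \;=\;\emptyset\quad\text{whenever } v<w,
\]
and part~(b) to the transposed statement
\[
\B^+\,w\,\B^+\ \cap\ \B^+\,v\,\B^- \;=\;\emptyset\quad\text{whenever } v>w.
\]
These are Bruhat-type closure statements, equivalent to $\B^+\,w\,\B^+\,x^+\subset\B^+\,\{\leqslant w\}\,x^+$ on the thick side, and they require an induction on the length of $w$ using the standard rank-one computation $\B^+s_i\B^+\cdot\B^+w\,x^+\subset\B^+w\,x^+\cup\B^+s_iw\,x^+$. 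This inductive argument is the actual content of the lemma; your transversality heuristic skips it entirely. To repair the proof you must replace the same-label claim by the one-sided Bruhat comparison and supply that induction.
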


\begin{proof}
Let $x^\pm$ be the origin in $\Gr^\pm_\beta$. We abbreviate
$$\G=G_r(K)\quad,\quad\P^+=G_r( O)\quad,\quad\P^-=G_r(O^-)\quad,\quad\B^+=I_\beta.$$
Let $\B^-\subset\P^-$ be the \emph{co-Iwahori} subgroup opposite to $\B^+$.
It is the preimage under the projection $\P^-\to G_r(F)$ of the Borel subgroup opposite to the image
of $\B^+$ by the projection $\P^+\to G_r(F)$.
Fix a nested pair of subsets  $\Gamma\!_1\subset\Gamma$ in $M_\beta$.
We consider the diagram \eqref{diag6}, with
$$\Gr^\pm_{\Gamma\!_1}=\B^+\,\Gamma\!_1\, x^\pm\quad,\quad\Gr^\pm_{\Gamma}=\B^+\,\Gamma\, x^\pm.$$
The original diagram \eqref{diag6} is a particular case of this new diagram.
Note that we have
$$\P^\pm x^\pm=\B^\pm\, x^\pm\quad,\quad\P^\pm x^\mp=x^\mp\quad,\quad
\Gr^\pm_\beta=\B^+\,M_\beta\, x^\pm\quad,\quad \Gr^\pm_w=\B^+\,w\,x^\pm\quad,\quad\forall w\in M_\beta,$$
and that
\begin{itemize}[leftmargin=8mm]
\item[$\mathrm{(a)}$] 
the right square is Cartesian $\Longleftarrow$ 
$\B^+\, {\Gamma\!_1}\, \P^-x^-\cap \B^+\,\Gamma\, x^-\subset \B^+\, {\Gamma\!_1} \,x^-$,
\item[$\mathrm{(b)}$] 
the left square is Cartesian $\Longleftarrow$ 
$\B^+\, {\Gamma\!_1}\, \P^+x^+\cap \B^+\,\Gamma\, x^+\subset \B^+\, {\Gamma\!_1} \,x^+$.
\end{itemize}
Therefore, to prove the lemma it is enough to check that
\begin{itemize}[leftmargin=8mm]
\item[$\mathrm{(c)}$] 
$\B^+\, \{\geqslant\!w\}\, \B^-x^-\subset \B^+\,  \{\geqslant\!w\}\,x^-$,
\item[$\mathrm{(d)}$] 
$\B^+\, \{\leqslant \! w\}\, \B^+\, x^+\subset \B^+\, \{\leqslant \! w\}\,x^+$.
\end{itemize}
To prove (d) it is enough to prove that 
$\B^+\, w\, \B^+\,x^+\subset \B^+\,\{\leqslant\!w\} \,x^+$. We'll argue by induction on the length of $w$. 
If $w=1$ this is obvious. Let $\U_i\subset\B^+$ be the root subspace associated with the simple root $\alpha_i$.
If $w=s_iv>v$, then $\U_i\,w\subset w\,\P^-$, hence
\begin{align}\label{form3}\B^+\, s_i\,\B^+\, w\,x^+=\B^+\, s_i\,\U_i\,w\,x^+=\B^+\, v\, x^+.\end{align}
Using \eqref{form3} and the equality $\B^+\,s_i\,\B^+\,s_i\B^+=\B^+\,s_i\B^+\cup \B^+$, we deduce that
\begin{align}\label{form4}
\begin{split}
\B^+\,s_i\,\B^+\,v\,x^+&=\B^+\,s_i\,\B^+\,s_i\,\B^+\,w\,x^+
=\B^+\,w\,x^+\cup\B^+\,s_i\,\B^+\,w\,x^+
=\B^+\,w\,x^+\cup\B^+\,v\,x^+.
\end{split}
\end{align}
Using \eqref{form3}, \eqref{form4}, we deduce that
\begin{align}\label{form5}
\begin{split}
\B^+\,w\,\B^+\,x^+&=\B^+\,s_i\,v\,\B^+\,x^+,\\
&\subset\bigcup_{u\leqslant v}\B^+\,s_i\,\B^+\,u\,x^+,\\
&\subset\bigcup_{u\leqslant v}(\B^+\,u\,x^+\cup\B^+\,s_i\,u\,x^+),\\
&\subset \B^+\,\{\leqslant\!w\} \,x^+.
\end{split}
\end{align}
To prove (c), note that
\begin{itemize}[leftmargin=8mm]
\item[$\mathrm{(c)}$] 
$\Longleftrightarrow\B^+\, w\,\B^-x^-\cap \B^+\, v\,x^-=\emptyset$, $\forall v<w$,
\item[$\mathrm{(d)}$] 
$\Longleftrightarrow\B^+\, w\,\B^+x^+\cap \B^+\, v\,x^+=\emptyset$, $\forall v>w$,
\end{itemize}
Thus, the proof of (d) above implies that 
$\B^+\, w\,\B^+\cap \B^+\, v\,\B^-=\emptyset$ in $\G$ for all $v>w$,
from which we deduce that
$\B^+\, w\,\B^-\cap \B^+\, v\,\B^+=\emptyset$ in $\G$ for all $v<w$,
and this implies (c).
\end{proof}

\smallskip

Let  $\Gamma$ be an interval in $\Lambda_\beta$ or in $M_\beta$.
For each $\lambda,$ $\mu$ we abbreviate
$i^\pm_{\leqslant\mu}=i^\pm_{\{\leqslant\mu\}\cap\Gamma\,,\,\Gamma}$ and
$i^\pm_{\geqslant\lambda}=i^\pm_{\{\geqslant\lambda\}\cap\Gamma\,,\,\Gamma}.$

\begin{lemma}\label{lem:FORM2} 
In $\D^\b_{\lozenge,\bbd}(\Gr^\pm_{\Gamma,0}\,,\,S)$ or $\D^\b_{\lozenge,\bbd}(\Gr^\pm_{\Gamma,0}\,,\,T)$
the following hold
\hfill
\begin{itemize}[leftmargin=8mm]
\item[$\mathrm{(a)}$] 
$(i_{\leqslant\mu}^-)^!\circ R^+_{\Gamma,\bbd}=R^+_{\{\leqslant\mu\}\cap\Gamma,\bbd}\circ(i_{\leqslant\mu}^+)^!$
and
$(i_{\geqslant\lambda}^+)^*\circ R^-_{\Gamma,\bbd}=
R^-_{\{\geqslant\lambda\}\cap\Gamma,\bbd}\circ(i_{\geqslant\lambda}^-)^*$,
\item[$\mathrm{(b)}$] 
$(i_{\geqslant\lambda}^-)^*\circ R^+_{\Gamma,\bbd}=
R^+_{\{\geqslant\lambda\}\cap\Gamma,\bbd}\circ(i_{\geqslant\lambda}^+)^*$
and
$(i_{\leqslant\mu}^+)^!\circ R^-_{\Gamma,\bbd}=R^-_{\{\leqslant\mu\}\cap\Gamma,\bbd}\circ(i_{\leqslant\mu}^-)^!$,
\item[$\mathrm{(c)}$] 
$R^+_{\Gamma,\bbd}\circ(i_{\geqslant\lambda}^+)_*=
(i_{\geqslant\lambda}^-)_*\circ R^+_{\{\geqslant\lambda\}\cap\Gamma,\bbd}$
and
$R^-_{\Gamma,\bbd}\circ(i_{\leqslant\mu}^-)_!=(i_{\leqslant\mu}^+)_!\circ R^-_{\{\leqslant\mu\}\cap\Gamma,\bbd},$
\item[$\mathrm{(d)}$] 
$R^+_{\Gamma,\bbd}\circ(i_{\leqslant\mu}^+)_!=(i_{\leqslant\mu}^-)_!\circ R^+_{\{\leqslant\mu\}\cap\Gamma,\bbd}$
 and
$R^-_{\Gamma,\bbd}\circ(i_{\geqslant\lambda}^-)_*=
(i_{\geqslant\lambda}^+)_*\circ R^-_{\{\geqslant\lambda\}\cap\Gamma,\bbd}.$
\end{itemize}
\end{lemma}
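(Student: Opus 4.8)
The plan is to prove the two identities of part (a) directly by base change from Lemma~\ref{lem:CD}, and to deduce parts (b), (c), (d) from (a) by formal arguments. The geometric input driving everything is the following: by the closure relations of \S\ref{sec:AffGr}, closures of $H_\beta$-orbits decrease the order parameter on $\Gr^-_\beta$ and increase it on $\Gr^+_\beta$; consequently $\Gr^-_{\{\leqslant\mu\}\cap\Gamma}$ is closed and $\Gr^-_{\{\geqslant\lambda\}\cap\Gamma}$ open in $\Gr^-_\Gamma$, whereas $\Gr^+_{\{\geqslant\lambda\}\cap\Gamma}$ is closed and $\Gr^+_{\{\leqslant\mu\}\cap\Gamma}$ open in $\Gr^+_\Gamma$. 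So $i^-_{\leqslant\mu}$ and $i^+_{\geqslant\lambda}$ are closed immersions, $i^-_{\geqslant\lambda}$ and $i^+_{\leqslant\mu}$ are open immersions. This opposite behaviour of the two stratifications is exactly why Lemma~\ref{lem:CD} produces a Cartesian square in \eqref{diag6} only on the right for the up-set cutoff $\{\geqslant\lambda\}$ and only on the left for the down-set cutoff $\{\leqslant\mu\}$.

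For part (a) I would argue as follows. Writing $R^+_{\Gamma,\bbd}=(f^-_\Gamma)_*(f^+_\Gamma)^!$, apply the base-change isomorphism $g^{!}f_{*}\simeq (f')_{*}(g')^{!}$ to the left square of \eqref{diag6} with $\Gamma_1=\{\leqslant\mu\}\cap\Gamma$, which is Cartesian by Lemma~\ref{lem:CD}(b); this yields $(i^-_{\leqslant\mu})^{!}(f^-_\Gamma)_{*}\simeq (f^-_{\Gamma_1})_{*}\,\ell^{!}$ with $\ell$ the middle vertical map of \eqref{diag6}. Combined with $\ell^{!}(f^+_\Gamma)^{!}=(f^+_{\Gamma_1})^{!}(i^+_{\leqslant\mu})^{!}$, which is the commutativity of the right square, this gives the first identity of (a). Dually, writing $R^-_{\Gamma,\bbd}=(f^+_\Gamma)_!(f^-_\Gamma)^*$ and using proper base change $g^{*}f_{!}\simeq (f')_{!}(g')^{*}$ on the right square of \eqref{diag6} with $\Gamma_1=\{\geqslant\lambda\}\cap\Gamma$ (Cartesian by Lemma~\ref{lem:CD}(a)) together with the commutativity of the left square gives the second identity of (a). These base-change isomorphisms are the standard ones for mixed complexes, and they respect the $\lozenge$- and $S$- (resp. $T$-) constructible subcategories since the correspondences $U_\Gamma$ are $H_\beta$- and $I_\beta$-equivariant.

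Part (c) I would get from part (a) by passing to adjoints. The functor $R^-_{\Gamma,\bbd}$ is left adjoint to $R^+_{\Gamma,\bbd}$, being a composite of $(f^-_\Gamma)^{*}\dashv(f^-_\Gamma)_{*}$ and $(f^+_\Gamma)_{!}\dashv(f^+_\Gamma)^{!}$, and for the immersions at hand one has $(i)_{!}\dashv(i)^{!}$ and $(i)^{*}\dashv(i)_{*}$. Taking the left adjoint of the first identity of (a) then gives the second identity of (c), and taking the right adjoint of the second identity of (a) gives the first identity of (c). Finally, parts (b) and (d) follow from (a) and (c) by composing with the inverse equivalences: on $\D^\b_{\lozenge,\bbd}(\Gr^\pm_{\Gamma,0},S)$ and on $\D^\b_{\lozenge,\bbd}(\Gr^\pm_{\Gamma,0},T)$ the functors $R^-_{\Gamma,\bbd}$ and $R^+_{\Gamma,\bbd}$ are mutually inverse by \eqref{DIAG7} and \eqref{DIAG1}, so pre- and post-composing, say, the second identity of (a) with the suitable Radon equivalences $R^\pm$ produces the first identity of (b), and similarly for the remaining three — here one also uses that $(i^\pm)^{*}$, $(i^\pm)^{!}$, $(i^\pm)_{*}$, $(i^\pm)_{!}$ preserve these subcategories, again by equivariance.

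I expect the main obstacle to be bookkeeping rather than a substantive difficulty: for each of the eight identities one must see which of the six operations is being commuted past which of the projections $f^\pm_\Gamma$, hence which square of \eqref{diag6} is relevant, and invoke the matching case of Lemma~\ref{lem:CD}. The point to keep firmly in mind is that only the two identities of (a) are pure base change — the other six must be routed through adjunction and the Radon equivalence — and that swapping an up-set cutoff for a down-set one would call on a square of \eqref{diag6} that is \emph{not} Cartesian, so the open/closed dictionary of the first paragraph has to be respected throughout.
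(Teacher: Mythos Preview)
Your proposal is correct and follows essentially the same approach as the paper: derive (a) from Lemma~\ref{lem:CD} via base change, obtain (c) from (a) by passing to adjoints using the adjunction $(R^-_{\Gamma,\bbd},R^+_{\Gamma,\bbd})$, and then get (b) and (d) by pre- and post-composing with the quasi-inverse Radon transforms. Your opening paragraph on which of the immersions $i^\pm_{\leqslant\mu}$, $i^\pm_{\geqslant\lambda}$ are open or closed is useful orientation but not strictly required for the argument, since the base change in (a) uses only the Cartesian property of the relevant square together with mere commutativity of the other, exactly as you say.
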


\begin{proof}
Part (a) follows from Lemma \ref{lem:CD} and proper base change.
Part (c) follows from (a) by adjunction.
Since $R^+_{\Gamma,\bbd}$ and $R^-_{\Gamma,\bbd}$ are quasi-inverse, part
(d) follows from (c), and (b) from (a).
\end{proof}

\smallskip

\begin{lemma}\label{lem:1S}
\hfill
\begin{itemize}[leftmargin=8mm]
\item[$\mathrm{(a)}$] 
If $\Gamma=\{\lambda\}$ in $\Lambda_\beta$, then we have
$R^-_{\lambda,\bbd}(\k_\lambda\langle d_\lambda^-\rangle)=\k_\lambda\langle-d_\lambda^-\rangle$
in $\D^\b_{\lozenge,\bbd}(\Gr^+_{\lambda,0}\,,\,S)$.
\item[$\mathrm{(b)}$] 
If $\Gamma=\{w\}$ in $M_\beta$, then we have
$R^-_{w,\bbd}(\k_w\langle d_w^-\rangle)=\k_w\langle-d_w^-\rangle$
in $\D^\b_{\lozenge,\bbd}(\Gr^+_{w,0}\,,\,S)$.
\end{itemize}
\end{lemma}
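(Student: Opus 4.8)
The plan is to reduce the statement to a direct computation of the fibres of $f^+_\lambda$ (resp.\ $f^+_w$), which turn out to be affine spaces of the expected dimension, so that the Radon transform of a constant sheaf is again a constant sheaf up to a shift.

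First I would observe that $(f^-_\lambda)^*\k_\lambda=\k_{U_\lambda}$, so that
$$R^-_{\lambda,\bbd}(\k_\lambda\langle d^-_\lambda\rangle)=(f^+_\lambda)_!(f^-_\lambda)^*\k_\lambda\langle d^-_\lambda\rangle=\big((f^+_\lambda)_!\k_{U_\lambda}\big)\langle d^-_\lambda\rangle,$$
reducing (a) to the claim $(f^+_\lambda)_!\k_{U_\lambda}=\k_{\Gr^+_\lambda}\langle -2d^-_\lambda\rangle$. Since $U_\lambda$ is open and dense in $\Gr^-_\lambda\times\Gr^+_\lambda$, the morphism $f^+_\lambda$ is dominant and its generic fibre is open dense in the irreducible variety $\Gr^-_\lambda$, hence irreducible of dimension $d^-_\lambda$; as the correspondence is $H_\beta$-equivariant and $\Gr^+_\lambda$ is a single $H_\beta$-orbit, \emph{all} fibres of $f^+_\lambda$ are isomorphic, hence all irreducible of dimension $d^-_\lambda$. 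Moreover $(f^+_\lambda)_!\k_{U_\lambda}$ is an $H_\beta$-equivariant complex on $\Gr^+_\lambda$, which is simply connected (an affine bundle over $G_r/P_\lambda$), so its cohomology sheaves are constant and the complex is determined by its stalk $C=R\Gamma_c\big((f^+_\lambda)^{-1}(q),\k\big)$ at any point $q$.

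The key step is then to identify the fibre. Taking $q=t^\lambda x^+$ and writing $U=\G\cdot(x^-,x^+)$ with $\G=G_r(K)$ and $\operatorname{Stab}_\G(x^+)=\P^-$, a point $p$ lies in the fibre iff $(p,t^\lambda x^+)\in U$, i.e.\ iff $p\in t^\lambda\P^-x^-$; using the co-Iwahori identity $\P^-x^-=\B^-x^-$ from the proof of Lemma \ref{lem:CD} this gives
$$(f^+_\lambda)^{-1}(t^\lambda x^+)=(t^\lambda\B^-x^-)\cap\Gr^-_\lambda.$$
Splitting $\Gr^-_\lambda$ into its $\B^+$-orbits $\Gr^-_w$, each piece $(t^\lambda\B^-x^-)\cap\Gr^-_w$ is an intersection of opposite Schubert-type cells, hence empty or an affine space --- this is exactly the kind of Bruhat/co-Iwahori manipulation carried out in \eqref{form3}--\eqref{form5}. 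Combined with the irreducibility and the dimension count above, the fibre is a single affine space $\bbA^{d^-_\lambda}$, so $C=R\Gamma_c(\bbA^{d^-_\lambda},\k)=\k(-d^-_\lambda)[-2d^-_\lambda]=\k\langle -2d^-_\lambda\rangle$, which is concentrated in one cohomological degree. Hence $(f^+_\lambda)_!\k_{U_\lambda}=\k_{\Gr^+_\lambda}\langle -2d^-_\lambda\rangle$ and $R^-_{\lambda,\bbd}(\k_\lambda\langle d^-_\lambda\rangle)=\k_\lambda\langle -d^-_\lambda\rangle$, which is (a). For (b) the argument is identical with $H_\beta$ replaced by $I_\beta$: $\Gr^\pm_w$ is already a single $\B^+$-orbit, namely an affine space, and the fibre of $f^+_w$ over the base point of $\Gr^+_w$ is directly one opposite-cell intersection $\cong\bbA^{d^-_w}$, yielding $R^-_{w,\bbd}(\k_w\langle d^-_w\rangle)=\k_w\langle -d^-_w\rangle$.

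The main obstacle is the fibre identification: the dimension $d^-_\lambda$ is forced by the openness and density of $U_\lambda$, but to see that $(t^\lambda\B^-x^-)\cap\Gr^-_\lambda$ is genuinely an affine space --- rather than merely an irreducible variety of that dimension, or an \emph{a priori} disjoint union of cells --- one needs the explicit co-Iwahori combinatorics already used to prove Lemma \ref{lem:CD}. A more formal alternative, should the direct computation become cumbersome, is to exploit that $R^-_{\lambda,\bbd}$ is an equivalence on the $\lozenge$-subcategories (recorded after \eqref{DIAG7} and \eqref{DIAG1}): then $R^-_{\lambda,\bbd}(\k_\lambda\langle d^-_\lambda\rangle)$ is indecomposable with graded endomorphism ring $\bbEnd(\k_{\Gr^-_\lambda})=H^\bullet(G_r/P_\lambda)$, and the equivariance together with the dimension count and the purity of the top compactly-supported cohomology then pin down the cohomological shift and the Tate twist without computing the cell intersection explicitly.
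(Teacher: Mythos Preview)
Your overall strategy coincides with the paper's: both compute $R^-_{\lambda,\bbd}(\k_\lambda\langle d^-_\lambda\rangle)=(f^+_\lambda)_!\k_{U_\lambda}\langle d^-_\lambda\rangle$, use $H_\beta$-equivariance over the single orbit $\Gr^+_\lambda$ to reduce to a single fibre, identify that fibre as an affine space of dimension $d^-_\lambda$, and invoke the simple connectedness of $\Gr^+_\lambda$ to conclude. The paper's fibre description is $G_r(O)\,t^\lambda x^-\cap t^\lambda G_r(O^-)\,x^-$, which is your $(t^\lambda\P^-x^-)\cap\Gr^-_\lambda$ before you pass to $\B^-$; the paper then simply asserts this is an affine space (a standard Birkhoff-cell fact) and stops.

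Where your write-up diverges is in the justification of that last point, and there the argument has a genuine gap. You propose to split $\Gr^-_\lambda$ into its $\B^+$-orbits, observe that each piece $(t^\lambda\B^-x^-)\cap\Gr^-_w$ is empty or an affine space, and then conclude from irreducibility and the dimension count that the fibre is a \emph{single} affine space. But an irreducible variety stratified by affine cells is in general not itself an affine space (think of $\bbP^1$, or any Schubert variety); irreducibility only gives you a unique open cell of top dimension, not that the lower cells are absent. So this step does not go through as written. The clean fix is the paper's: recognise the fibre directly as the intersection of the $G_r(O)$-orbit $\Gr^-_\lambda$ with the translate $t^\lambda\,G_r(O^-)\cdot x^-$ of the opposite big cell, which is a single affine chart by the Birkhoff decomposition of $\Gr$, with no further cell-by-cell analysis needed. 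Your alternative route via the equivalence and endomorphism rings could also be made to work, but it is more laborious than the one-line Birkhoff identification.
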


\begin{proof}
Consider the diagram
$$\xymatrix{\Gr^-_{\lambda}&\ar[l]_-{f^-_\lambda}U_{\lambda}\ar[r]^-{f^+_\lambda}&\Gr^+_{\lambda}}.$$
Since $U_\lambda$ is open dense in $\Gr^-_\lambda\times\Gr^+_\lambda$, it has dimension 
$d_\lambda^-+d^+_\lambda$.
Since the map $f^\pm_\lambda$ is $H_\beta$-equivariant over an $H_\beta$-orbit,
it is a fibration whose fiber at $t^\lambda x^\pm$ is isomorphic to
$$G_r(O)t^\lambda x^\mp\cap t^\lambda G_r(O^\mp) x^\mp,$$
%=\big(G_r(O)t^\lambda\cap t^\lambda G_r(O^\mp)\big) x^\mp,$$
which is an affine space (here we write $O^+=O$).
Thus, the map $f^\pm_\lambda$ is a (\'etale locally trivial) 
fibration whose fibers are affine spaces of dimension $d^\mp_\lambda$.
Part (a) follows, because $\Gr^+_{\lambda}$ is simply connected.
Part (b) is proved in a similar way.
\end{proof}

\smallskip

Now, we consider the mixed category $\D^\b_\mix(\Gr^\pm_\Gamma,S)$.
The functors
$(i^\pm_\Gamma)_{*}$, $(i^\pm_\Gamma)_{!}$, $(i^\pm_\Gamma)^{*}$ and $(i^\pm_\Gamma)^{!}$ between 
$\D^\b_\bbd(\Gr^\pm_{\Gamma,0},S)$, $\D^\b_\bbd(\Gr^\pm_{\beta,0},S)$ lift to the mixed categories
$\D^\b_\mix(\Gr^\pm_\Gamma,S)$, $\D^\b_\mix(\Gr^\pm_\beta,S)$ under the functor $\iota$,
and they enjoy all the usual adjunction properties, see Proposition \ref{prop:A}.
We consider the objects in $\D^\b_\mix(\Gr^\pm_\beta\,,\,S)$ given by
\begin{align}\label{DN}
\begin{split}
\Delta(\lambda)^+_\mix=(i_{\leqslant\lambda}^+)_!(i_{\leqslant\lambda}^+)^*IC(\lambda)_\mu^+\quad,\quad
\nabla(\lambda)^+_\mix=(i_{\leqslant\lambda}^+)_*(i_{\leqslant\lambda}^+)^*IC(\lambda)_\mu^+,\\
\Delta(\lambda)^-_\mix=(i_{\geqslant\lambda}^-)_!(i_{\geqslant\lambda}^-)^*IC(\lambda)_\mu^-\quad,\quad
\nabla(\lambda)^-_\mix=(i_{\geqslant\lambda}^-)_*(i_{\geqslant\lambda}^-)^*IC(\lambda)_\mu^-.
\end{split}
\end{align}
The Verdier duality yields an involution $D$ of the category  $\D^\b_\mix(\Gr^\pm_\beta\,,\,S)$
such that
$D(\Delta(\lambda)^\pm_\mix)=\nabla(\lambda)^\pm_\mix$ and
$D(IC(\lambda)^\pm_\mix)=IC(\lambda)_\mix^\pm.$
The complexes  $\Delta(\lambda)^+_\mix$, $\nabla(\lambda)^+_\mix$ 
are non-equivariant analogues of the objects 
of $\D^\b_\mix(\calG r^+_\beta)$ in \eqref{EDN+}.

\smallskip

\begin{proposition}\label{prop:equivalence6}
\hfill
\begin{itemize}[leftmargin=8mm]
\item[$\mathrm{(a)}$] 
We have an equivalence of graded triangulated categories 
$\B_\beta^\sharp:\D^\b_\mix(\Gr^+_\beta,S)\to\D^\b_\mix(\Gr^-_\beta,S)$
\item[$\mathrm{(b)}$] 
$\B_\beta^\sharp\nabla(\lambda)^+_\mix=\Delta(\lambda)_\mix^-$ 
for each $\lambda\in\Lambda_\beta$.
\end{itemize}
\end{proposition}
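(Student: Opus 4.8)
The plan is to obtain the equivalence $\B_\beta^\sharp$ directly from the Radon transform established in \eqref{DIAG1}. Recall that $\D^\b_\mix(\Gr^\pm_\beta,S)$ is, by definition, the bounded homotopy category $\K^\b(\Par(\Gr^\pm_\beta,S))$, so a graded triangulated equivalence between the two sides will follow from an equivalence of the underlying graded additive categories of parity complexes $\Par(\Gr^+_\beta,S)\to\Par(\Gr^-_\beta,S)$. The Radon functors $R^\pm_{\beta,\bbd}$ of \cite{Y09} give, after passing through $\omega$, a pair of quasi-inverse equivalences $R^-_\beta:\D^\b(\Gr^-_\beta,S)\to\D^\b(\Gr^+_\beta,S)$ and $R^+_\beta$ in the other direction between the genuine (non-mixed, bounded derived) categories, and since $S^\pm_\beta$ is an even stratification by Proposition \ref{prop:even1}(b), the parity complexes are the direct summands of the $IC$-sheaves. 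The first step is therefore to check that $R^+_\beta$ carries parity complexes to parity complexes. This I would do by a d\'evissage along the stratification using Lemma \ref{lem:FORM2}: the statements (a)--(d) there say precisely that $R^+_{\beta,\bbd}$ intertwines the standard functors $(i^+_{\leqslant\mu})_!$, $(i^+_{\geqslant\lambda})_*$, etc., with their counterparts on $\Gr^-$, so that it sends $\Delta(\lambda)^+_\mix$ and $\nabla(\lambda)^+_\mix$ to the analogous standard/costandard objects on the thin side; combined with Lemma \ref{lem:1S}, which computes the Radon transform of the constant sheaf on a single stratum, this pins down the image of each $IC(\lambda)^+_\mix$ up to the parity filtration and shows that parity is preserved. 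Having established this at the level of mixed categories $\D^\b_{\lozenge,\bbd}$, applying $\omega$ and restricting to parity objects gives the desired additive equivalence, and passing to homotopy categories yields part (a).

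For part (b), I would trace a $\Delta$/$\nabla$-object through the same diagram. By Lemma \ref{lem:FORM2}(a) we have $(i^+_{\geqslant\lambda})^*\circ R^-_{\beta,\bbd}=R^-_{\{\geqslant\lambda\}\cap\Lambda_\beta,\bbd}\circ(i^-_{\geqslant\lambda})^*$, and dually (using (c)) $R^+_{\beta,\bbd}\circ(i^+_{\leqslant\lambda})_*=(i^-_{\leqslant\lambda})_*\circ R^+_{\{\leqslant\lambda\}\cap\Lambda_\beta,\bbd}$. Starting from $\nabla(\lambda)^+_\mix=(i^+_{\leqslant\lambda})_*(i^+_{\leqslant\lambda})^*IC(\lambda)^+_\mix$, apply $R^+_{\beta,\bbd}$: the outer $(i^+_{\leqslant\lambda})_*$ passes through to $(i^-_{\leqslant\lambda})_*$, and on the closed stratum $\{\leqslant\lambda\}\cap\Lambda_\beta$ the relevant $IC$ restricts to a shift of the constant sheaf on $\Gr^+_\lambda$, so Lemma \ref{lem:1S}(a) converts it into the constant sheaf on $\Gr^-_\lambda$ with the opposite Tate twist, i.e.\ $(i^-_{\leqslant\lambda})^*IC(\lambda)^-_\mix$ up to the normalisation $\langle d^-_\lambda\rangle$ built into \eqref{IC}. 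This yields exactly $(i^-_{\geqslant\lambda})_!(i^-_{\geqslant\lambda})^*IC(\lambda)^-_\mix=\Delta(\lambda)^-_\mix$ — note the direction of the order ideal flips because $\Gr^-_\lambda\subset\overline{\Gr^-_\mu}$ iff $\lambda\geqslant\mu$, opposite to the thick case — so $\B_\beta^\sharp\nabla(\lambda)^+_\mix=\Delta(\lambda)^-_\mix$ as claimed.

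The main obstacle I anticipate is bookkeeping rather than conceptual: getting the grading shifts and Tate twists to match exactly. The Radon transform is not t-exact and shifts weights/cohomological degrees in a way that depends on $d^\pm_\lambda$, $d^\pm_\Gamma$, and on the dimension of $U_\Gamma$, so one has to be careful that the functor $R^+_{\beta,\bbd}$ as normalised in \cite{Y09} really does produce the objects $\Delta(\lambda)^-_\mix$ on the nose, with the conventions of \eqref{DN} and \eqref{IC}, and not off by an overall shift. This requires checking the dimension count $\dim U_\lambda=d^-_\lambda+d^+_\lambda$ (already noted in the proof of Lemma \ref{lem:1S}) against the $[\dim q]$-type shifts in the definitions, and verifying that the parity normalisation $\langle d^\pm_\lambda\rangle$ is compatible with the fibre-bundle structure of $f^\pm_\lambda$. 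A secondary point is to make sure the equivalence descends correctly from $\D^\b_{\lozenge,\bbd}$ to the plain mixed category $\D^\b_\mix$ via the identification $\D^\b_\mix(\Gr^\pm_\beta,S)=\K^\b(\Par(\Gr^\pm_\beta,S))$ and the functor $\iota$ of Appendix \ref{sec:ASMG}; this is routine given \eqref{DIAG1} but should be spelled out.
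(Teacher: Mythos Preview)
Your approach has a genuine gap at the very first step of part (a). You propose to show that the Radon transform $R^+_\beta$ takes parity complexes to parity complexes, but this is neither established by the d\'evissage you sketch nor is it true in general. In an even stratification, the parity complexes are precisely direct sums of shifts of $IC$-sheaves; the Radon transform, however, is far from $t$-exact and does not send $IC$-sheaves to sums of $IC$-sheaves. What Lemmas \ref{lem:FORM2} and \ref{lem:1S} let you compute is the image of standard and costandard objects $\Delta(\lambda)^\pm$, $\nabla(\lambda)^\pm$, and these are \emph{not} parity complexes. So the proposed route from ``Radon preserves parity'' to ``equivalence on $\K^\b(\Par)$'' does not get off the ground.

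The paper's argument avoids this by using a \emph{different} realisation of $\D^\b_\mix$. The key input, due to \cite{BBM04} and \cite{Y09}, is that the shifted Radon transform $S^-_\Gamma=R^-_\Gamma[d^+_\Gamma]$ exchanges \emph{tilting} mixed perverse sheaves on $\Gr^-_\Gamma$ (for the Iwahori stratification $T$) with \emph{projective} mixed perverse sheaves on $\Gr^+_\Gamma$. One then invokes Propositions \ref{prop:AA} and \ref{prop:B}, which give $\D^\b_\mix(\Gr^-_\Gamma,T)\simeq\K^\b(\P_\mix(\Gr^-_\Gamma,T)^\tilt)$ and $\D^\b_\mix(\Gr^+_\Gamma,T)\simeq\K^\b(\P_\mix(\Gr^+_\Gamma,T)^\proj)$, to obtain the equivalence on $T$-constructible mixed categories; restricting to $S$-constructible objects then yields $S^-_{\beta,\mix}$. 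This tilting/projective interchange is the essential idea you are missing. The computation in Step 2 of the paper (tracing $\nabla(\lambda)^-_\mix$ to $\Delta(\lambda)^+_\mix$ via the chain $\{\lambda\}\subset\{\leqslant\lambda\}\subset\Lambda_\beta$ and the identity $d^-_\lambda+d^+_\lambda=d^+_\beta$) is close in spirit to your part (b), but note it runs in the opposite direction and the final functor is $\B_\beta^\sharp=D\circ S^+_{\beta,\mix}\circ D$, with the Verdier duality needed to flip $\nabla^-\mapsto\Delta^+$ into the desired $\nabla^+\mapsto\Delta^-$.
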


\begin{proof}

\medskip

\noindent{\bf Step 1 :} \emph{We construct an equivalence 
$S_{\Gamma,\mix}^-:\D^\b_\mix(\Gr^-_\Gamma\,,\,S)\to\D^\b_\mix(\Gr^+_{\Gamma}\,,\,S)$}

\medskip

Let $\P_{\lozenge,\bbd}(\Gr^+_{\Gamma,0}\,,\,T)^\proj$
be the full subcategory of $\P_{\lozenge,\bbd}(\Gr^+_{\Gamma,0}\,,\,T)$
consisting of the objects whose image by $\omega$ belongs to $\P(\Gr^+_{\Gamma}\,,\,T)^\proj$.
Let $\P_{\lozenge,\bbd}(\Gr^+_{\Gamma,0}\,,\,T)^\tilt$
be the full subcategory of $\P_{\lozenge,\bbd}(\Gr^+_{\Gamma,0}\,,\,T)$ consisting of the tilting objects.
By Proposition \ref{prop:even1}, the stratification $T$ on  $\Gr^-_{\Gamma}$ is good.
Hence, by Propositions \ref{prop:A}(a) and \ref{prop:B}(b), 
the functor $\iota$ restricts to a fully faithful functor
$\P\!_\mix(\Gr^-_\Gamma,T)\to\P\!_{\lozenge,\bbd}(\Gr^-_{\Gamma,0},T)$
and to an equivalence
\begin{align}\label{INCL}
\P\!_\mix(\Gr^-_{\Gamma}\,,\,T)^\tilt\to\P_{\lozenge,\bbd}(\Gr^-_{\Gamma,0}\,,\,T)^\tilt.
\end{align}
In particular, the category $\P_{\lozenge,\bbd}(\Gr^-_{\Gamma,0}\,,\,T)^\tilt$
is the full subcategory of $\P_{\lozenge,\bbd}(\Gr^-_{\Gamma,0}\,,\,T)$
consisting of the objects whose image by $\omega$ belongs to $\P(\Gr^-_{\Gamma}\,,\,T)^\tilt$.
We define the functors
$S^\mp_\Gamma=R^\mp_\Gamma[\pm d^+_\Gamma]$
and
$S^\mp_{\Gamma,\bbd}=R^\mp_{\Gamma,\bbd}\langle\pm d^+_\Gamma\rangle.$
By \cite{BBM04}, \cite{Y09}, the functors $S^-_\Gamma$ and $S^+_\Gamma$ 
yield inverse equivalences of additive categories
$$\xymatrix{
\P(\Gr^-_\Gamma\,,\,T)^\tilt\ar@/^/[r]^{S^-_\Gamma}&\ar@/^/[l]^{S^+_\Gamma}\P(\Gr^+_{\Gamma}\,,\,T)^\proj.
}$$
Hence, by \eqref{DIAG7}, we have the diagram of equivalences of additive categories
\begin{align}\label{DIAG9}\begin{split}
\xymatrix{
\P_{\lozenge,\bbd}(\Gr^-_{\Gamma,0}\,,\,T)^\tilt\ar@/^/[r]^{S^-_{\Gamma,\bbd}}\ar[d]_\omega&
\ar@/^/[l]^{S^+_{\Gamma,\bbd}}
\P_{\lozenge,\bbd}(\Gr^+_{\Gamma,0}\,,\,T)^\proj\ar[d]^\omega\\
\P(\Gr^-_{\Gamma}\,,\,T)^\tilt\ar@/^/[r]^{S^-_\Gamma}&
\ar@/^/[l]^{S^+_\Gamma}\P(\Gr^+_{\Gamma}\,,\,T)^\proj.
}
\end{split}
\end{align}
Thus,
the set of indecomposable objects in $\P_{\lozenge,\bbd}(\Gr^+_{\Gamma,0}\,,\,T)^\proj$ is
$$\{S^-_{\Gamma,\bbd}\iota\, T(w)_\mu(a/2)\,;\,w\in W_\Gamma\,,\,a\in\bbZ\}.$$
On the other hand, the set of indecomposable objects in $\P\!_\mix(\Gr^+_{\Gamma}\,,\,T)^\proj$ is
$$\{P(w)_\mu(a/2)\,;\,w\in W_\Gamma\,,\,a\in\bbZ\}.$$
Since the objects $\iota\, P(w)_\mu(a/2)$ and $S^-_{\Gamma,\bbd}\iota\, T(w)_\mu(a/2)$ 
have isomorphic images by $\omega$, we deduce that the functor $\iota$ restricts to an equivalence
\begin{align}\label{INCL2}
\P\!_\mix(\Gr^+_{\Gamma}\,,\,T)^\proj\to\P_{\lozenge,\bbd}(\Gr^+_{\Gamma,0}\,,\,T)^\proj.
\end{align}
From \eqref{DIAG9}, \eqref{INCL}, \eqref{INCL2}, we deduce that there are equivalences of graded additive categories
\begin{align}\label{Pmu}
\xymatrix{
\P\!_\mix(\Gr^-_\Gamma\,,\,T)^\tilt\ar@/^/[r]^{S^-_{\Gamma,\mix}}&\ar@/^/[l]^{S^+_{\Gamma,\mix}}\P\!_\mix(\Gr^+_{\Gamma}\,,\,T)^\proj.
}\end{align}
Taking the homotopy categories, by Propositions \ref{prop:AA}, \ref{prop:B} we get quasi-inverse
equivalences of graded triangulated categories $S^\pm_{\Gamma,\mix}$ yielding the following commutative diagram 
\begin{align}
\label{DIAG2}
\begin{split}
\xymatrix{
\D^\b_\mix(\Gr^-_\Gamma\,,\,T)\ar@/^/[r]^{S^-_{\Gamma,\mix}}\ar[d]_\iota&
\ar@/^/[l]^{S^+_{\Gamma,\mix}}\D^\b_\mix(\Gr^+_{\Gamma}\,,\,T)\ar[d]^\iota,\\
\D^\b_{\lozenge,\bbd}(\Gr^-_{\Gamma,0}\,,\,T)
\ar@/^/[r]^{S^-_{\Gamma,\bbd}}&\ar@/^/[l]^{S^+_{\Gamma,\bbd}}
\D^\b_{\lozenge,\bbd}(\Gr^+_{\Gamma,0}\,,\,T).
}
\end{split}
\end{align}
In other words, the functors $S^\pm_{\Gamma,\bbd}$ between the categories
$\D^\b_{\lozenge,\bbd}(\Gr^\pm_{\Gamma,0}\,,\,T)$ are genuine.
Thus, the functors $S^\pm_{\Gamma,\bbd}$ between the categories
$\D^\b_{\lozenge,\bbd}(\Gr^\pm_{\Gamma,0}\,,\,S)$ are also genuine 
by \cite[lem.~7.21]{AR13d}, i.e., 
we have quasi-inverse
equivalences of graded triangulated categories $S^\pm_{\Gamma,\mix}$ yielding the following commutative diagram 
\begin{align}
\label{DIAG7}
\begin{split}
\xymatrix{
\D^\b_\mix(\Gr^-_\Gamma\,,\,S)\ar@/^/[r]^{S^-_{\Gamma,\mix}}\ar[d]_\iota&
\ar@/^/[l]^{S^+_{\Gamma,\mix}}\D^\b_\mix(\Gr^+_{\Gamma}\,,\,S)\ar[d]^\iota\\
\D^\b_{\lozenge,\bbd}(\Gr^-_{\Gamma,0}\,,\,S)
\ar@/^/[r]^{S^-_{\Gamma,\bbd}}&\ar@/^/[l]^{S^+_{\Gamma,\bbd}}
\D^\b_{\lozenge,\bbd}(\Gr^+_{\Gamma,0}\,,\,S).
}
\end{split}
\end{align}

\smallskip

\iffalse%%%%%%%%%%%%%%%%%%%%%%%%%%%
Finally, we consider $S$-constructible objects. 
Composing \eqref{DIAG7} with \eqref{DIAG2}, we get the commutative diagram of functors
\begin{align}
\label{DIAG4}
\begin{split}
\xymatrix{
\D^\b_\mix(\Gr^-_\Gamma\,,\,T)\ar@/^/[r]^{S^-_{\Gamma,\mix}}\ar[d]_\zeta&
\ar@/^/[l]^{S^+_{\Gamma,\mix}}\D^\b_\mix(\Gr^+_{\Gamma}\,,\,T)\ar[d]^\zeta\\
\D^\b(\Gr^-_{\Gamma}\,,\,T)
\ar@/^/[r]^{S^-_\Gamma}&\ar@/^/[l]^{S^+_\Gamma}
\D^\b(\Gr^+_{\Gamma}\,,\,T)\\
\D^\b(\Gr^-_{\Gamma}\,,\,S)
\ar@/^/[r]^{S^-_\Gamma}\ar[u]&\ar@/^/[l]^{S^+_\Gamma}
\D^\b(\Gr^+_{\Gamma}\,,\,S).\ar[u]
}
\end{split}
\end{align}
The lower vertical arrows are full embeddings.
%The lower horizontal arrows are equivalences.
Since $\D^\b_\mix(\Gr^\pm_\Gamma,S)\subset\D^\b_\mix(\Gr^\pm_\Gamma,T)$ is the full subcategory of objects whose
images  by $\omega$ lies in $\D^\b(\Gr^\pm_\Gamma,S)$,  the functors $S^\pm_{\Gamma,\mix}$ restrict to 
adjoint equivalences
\begin{align}\label{EQUIV1}
\xymatrix{
\D^\b_\mix(\Gr^-_\Gamma\,,\,S)\ar@/^/[r]^{S^-_{\Gamma,\mix}}&\ar@/^/[l]^{S^+_{\Gamma,\mix}}\D^\b_\mix(\Gr^+_{\Gamma}\,,\,S).
}\end{align}
\fi%%%%%%%%%%%%%%%%%%%%%%%%%%%%%

\medskip

\noindent{\bf Step 2 :} \emph{We prove that $S^-_{\beta,\mu}(\nabla(\lambda)_\mix^-)=\Delta(\lambda)_\mix^+$.}

\medskip

Since the map $i_{\leqslant\mu}^-$ is a closed embedding, the functor $(i_{\leqslant\mu}^-)_!$ 
preserves tilting mixed perverse sheaves.
Hence, by Lemma \ref{lem:FORM2} and \eqref{DIAG9}, the inclusion $\{\leqslant\!\mu\}\cap\Gamma\subset\Gamma$ 
yields the commutative square of functors
\begin{align}\label{sq1}
\begin{split}
\xymatrix{
\P\!_{\lozenge,\bbd}(\Gr^-_{\{\leqslant\mu\}\cap\Gamma,0}\,,\,T)^\tilt\ar[rr]^-{S^-_{\{\leqslant\mu\}\cap\Gamma,\bbd}}\ar[d]_-{(i_{\leqslant\mu}^-)_!}
&&\P\!_{\lozenge,\bbd}(\Gr^+_{\{\leqslant\mu\}\cap\Gamma,0}\,,\,T)^\proj\ar[d]^-{(i_{\leqslant\mu}^+)_!}\\
\P\!_{\lozenge,\bbd}(\Gr^-_{\Gamma,0}\,,\,T)^\tilt\ar[rr]^{S^-_{\Gamma,\bbd}}&&
\P\!_{\lozenge,\bbd}(\Gr^+_{\Gamma,0}\,,\,T)^\proj.
}
\end{split}\end{align}
From  \eqref{INCL}, \eqref{INCL2}, we deduce that there is a commutative square of functors
\begin{align}\label{sq5}
\begin{split}
\xymatrix{
\P\!_\mix(\Gr^-_{\{\leqslant\mu\}\cap\Gamma}\,,\,T)^\tilt\ar[rr]^-{S^-_{\{\leqslant\mu\}\cap\Gamma,\mix}}\ar[d]_-{(i_{\leqslant\mu}^-)_!}
&&\P\!_\mix(\Gr^+_{\{\leqslant\mu\}\cap\Gamma}\,,\,T)^\proj\ar[d]^-{(i_{\leqslant\mu}^+)_!}\\
\P\!_\mix(\Gr^-_{\Gamma}\,,\,T)^\tilt\ar[rr]^{S^-_{\Gamma,\mix}}&&
\P\!_\mix(\Gr^+_{\Gamma}\,,\,T)^\proj.
}
\end{split}\end{align}
Taking the homotopy categories, by Propositions \ref{prop:AA}, \ref{prop:B} we get the following
commutative square of functors
\begin{align}\label{sq9}
\begin{split}
\xymatrix{
\D^\b_\mix(\Gr^-_{\{\leqslant\mu\}\cap\Gamma}\,,\,S)\ar[rr]^-{S^-_{\{\leqslant\mu\}\cap\Gamma,\mix}}\ar[d]_-{(i_{\leqslant\mu}^-)_!}
&&\D^\b_\mix(\Gr^+_{\{\leqslant\mu\}\cap\Gamma}\,,\,S)\ar[d]^-{(i_{\leqslant\mu}^+)_!}\\
\D^\b_\mix(\Gr^-_{\Gamma}\,,\,S)\ar[rr]^{S^-_{\Gamma,\mix}}&&
\D^\b_\mix(\Gr^+_{\Gamma}\,,\,S).
}
\end{split}\end{align}

Similarly, since the map $i_{\geqslant\lambda}^-$ is an open embedding, the functor $(i_{\geqslant\lambda}^-)^*$ 
preserves tilting mixed perverse sheaves.
Hence, the inclusion $\{\geqslant\!\lambda\}\cap\Gamma\subset\Gamma$ yields the commutative square of functors
\begin{align}\label{sq2}
\begin{split}\xymatrix{
\P\!_{\lozenge,\bbd}(\Gr^-_{\{\geqslant\lambda\}\cap\Gamma,0}\,,\,T)^\tilt
\ar[rr]^-{S^-_{\{\geqslant\lambda\}\cap\Gamma,\bbd}}
&&\P\!_{\lozenge,\bbd}(\Gr^+_{\{\geqslant\lambda\}\cap\Gamma,0}\,,\,T)^\proj\\
\P\!_{\lozenge,\bbd}(\Gr^-_{\Gamma,0}\,,\,T)^\tilt\ar[rr]^{S^-_{\Gamma,\bbd}}\ar[u]^-{(i_{\geqslant\lambda}^-)^*}&&
\P\!_{\lozenge,\bbd}(\Gr^+_{\Gamma,0}\,,\,T)^\proj\ar[u]_-{(i_{\geqslant\lambda}^+)^*\langle d\rangle},
}
\end{split}\end{align}
where $d=d^+_{\{\geqslant\lambda\}\cap\Gamma}-d^+_\Gamma$.
Taking the adjoint functors and using a similar argument as above, 
we obtain the commutative square of functors
\begin{align}\label{sq8}
\begin{split}\xymatrix{
\D^\b_\mix(\Gr^-_{\{\geqslant\lambda\}\cap\Gamma}\,,\,S)
\ar[rr]^-{S^-_{\{\geqslant\lambda\}\cap\Gamma,\mix}}\ar[d]_-{(i_{\geqslant\lambda}^-)_*}
&&\D^\b_\mix(\Gr^+_{\{\geqslant\lambda\}\cap\Gamma}\,,\,S)\ar[d]^-{(i_{\geqslant\lambda}^+)_*\langle -d\rangle}\\
\D^\b_\mix(\Gr^-_{\Gamma}\,,\,S)\ar[rr]^{S^-_{\Gamma,\mix}}&&\D^\b_\mix(\Gr^+_{\Gamma}\,,\,S).
}
\end{split}\end{align}
In particular,  applying \eqref{sq9} and \eqref{sq8} to the chain of inclusions
$\{\lambda\}\subset\{\leqslant\!\lambda\}\subset\Lambda_\beta$
and using the identity $d^-_\lambda+d^+_\lambda=d^+_\beta$,
we get the commutative diagram of functors
\begin{align}\label{DIAG3}
\begin{split}
\xymatrix{
\D^\b_\mix(\Gr^-_{{\lambda}}\,,\,S)\ar[r]^-{S^-_{\lambda,\mix}}\ar[d]_-{(i_{\lambda}^-)_*}
&\D^\b_\mix(\Gr^+_{{\lambda}}\,,\,S)\ar[d]^-{(i_{\lambda}^+)_!\langle d^-_\lambda\rangle}\\
\D^\b_\mix(\Gr^-_{{\leqslant\lambda}}\,,\,S)
\ar[r]^-{S^-_{\leqslant\lambda,\mix}}\ar[d]_-{(i_{\leqslant\lambda}^-)_*}
&\D^\b_\mix(\Gr^+_{{\leqslant\lambda}}\,,\,S)\ar[d]^-{(i_{\leqslant\lambda}^+)_!}\\
\D^\b_\mix(\Gr^-_{\beta}\,,\,S)\ar[r]^{S^-_{\beta,\mix}}&\D^\b_\mix(\Gr^+_{\beta}\,,\,S).
}
\end{split}
\end{align}
Lemma \ref{lem:1S}, \eqref{DN} and \eqref{DIAG3} yield an isomorphism
\begin{align*}
S^-_{\beta,\mix}(\nabla(\lambda)^-_\mix)
&=S^-_{\beta,\mix}(i_{\geqslant\lambda}^-)_*(i_{\geqslant\lambda}^-)^*(IC(\lambda)^-_\mix),\\
&=S^-_{\beta,\mix}(i_{\geqslant\lambda}^-)_*(i_{\lambda}^-)_*(\k\langle d_\lambda^-\rangle),\\
&=S^-_{\beta,\mix}(i_{\leqslant\lambda}^-)_*(i_{\lambda}^-)_*(\k\langle d_\lambda^-\rangle),\\
&=(i_{\leqslant\lambda}^+)_!(i_{\lambda}^+)_!S^-_{\lambda,\mix}(\k\langle 2d_\lambda^-\rangle),\\
&=(i_{\leqslant\lambda}^+)_!(i_{\lambda}^+)_!(\k\langle d_\lambda^+\rangle),\\
&=\Delta(\lambda)^+_\mix.
\end{align*}
Thus, the functor $\B_\beta^\sharp=D\circ S^+_{\beta,\mu}\circ D$ 
satisfies the conditions in the proposition.

%%%%%%%%%%%%%%%%%%%%%%%%%%%%%%%%%%%%%%%%%%%%%%%%%%%%%%%
\end{proof}

\medskip

\section{Coherent sheaves on the nilpotent cone}

\smallskip

\subsection{Perverse coherent sheaves on the nilpotent cone}\label{sec:PCoh}

From now on, we write $G_r=GL_{r,\k}$.
This notation differs from the notation used in the beginning of \S\ref{sec:Gr}.
Let $(T_r,B_r,N_r)$ be the standard Borel triple.
We may abbreviate $T=T_r$, $B=B_r$ or $N=N_r$.
The group of characters of $T$ is identified with $\bbZ^r$.
Let $\frakg_r$, $\frakb_r$, $\frakn_r$ be the Lie algebra of $G_r$, $B_r$, $N_r$ and
$i:\calN_r\to\frakg_r$ the embedding of the nilpotent cone.
We'll abbreviate $G_r^c=G_r\times\bbG_{m,\k}$. 
The group $G_r^c$ acts on $\frakg_r$, $\calN_r$ via
$$(g,z)\cdot\xi=z^{-2}\Ad(g)(\xi)\quad,\quad\forall\xi\in\frakg_r\quad,\quad\forall (g,z)\in G_r^c.$$ 
For $X=\frakg_r$ or $\calN_r$, let $\Coh^{G_r^c}(X)$ be the category of
$G_r^c$-coherent sheaves on $X$. We equip this Abelian category 
with the grading shift functors $\langle\bullet\rangle$ such that $\langle -1\rangle$ is the tensor product
with the tautological 1-dimensional $\bbG_m$-module (of weight 1).
We equip the triangulated category $\D^\b\Coh^{G_r^c}(X)$ with the grading shift triangulated functors 
$\langle\bullet\rangle$.
We consider  Bezrukavnikov's graded Abelian subcategory of perverse coherent sheaves
$$\PCoh([\calN_r/G_r^c])\subset\D^\b\Coh([\calN_r/G_r^c]).$$

\smallskip

By \cite{M13}, it is
\emph{graded properly stratified}, with standard, costandard, proper standard, proper costandard 
and simple objects given by
$\pmb\Delta(\lambda)^\sharp$,
$\pmb\nabla(\lambda)^\sharp$,
$\pmb{\bar\Delta}(\lambda)^\sharp$,
$\pmb{\bar\nabla}(\lambda)^\sharp$,
$\bfL(\lambda)^\sharp$ with $\lambda\in\Lambda_r.$
Let us recall briefly their definitions.
Consider the diagram of $G_r^c$-equivariant $\k$-schemes
\begin{align*}
\xymatrix{\calN_r&\ar[l]_-q G_r\times_{B_r}\frakn_r\ar[r]^-p&G_r/B_r.}
\end{align*}
For each weight $\lambda\in\bbZ^r$, the  \emph{Andersen-Jantzen sheaf} $A(\lambda)$ 
is the complex in $\in\D^\b\Coh([\calN_r/G_r^c])$ given by
$A(\lambda)=Rq_*p^*\calO_{G_r/B}(\lambda).$
It is perverse.
Set $\delta_{\lambda}=\text{min}\{\ell(w)\,;\,ww_0\lambda\in\Lambda_r\}.$
If $\lambda=\lambda_\pi$ as in \eqref{bij}, then we have
\begin{align*}
\delta_{\lambda_\pi}=r(r-1)/2-\sum_{k=0}^lp_k(p_k-1)/2.
\end{align*}
The proper standard and costandard objects are given by
\begin{align}\label{PS}
\pmb{\bar\Delta}(\lambda)^\sharp=A(w_0\lambda)\langle\delta_{\lambda}\rangle\quad,\quad
\pmb{\bar\nabla}(\lambda)^\sharp=A(\lambda)\langle-\delta_{\lambda}\rangle
\quad,\quad\lambda\in\Lambda_r.
%\bar\pmb\Delta(\lambda)^\sharp&=Li^*\bar\pmb\Delta(\lambda),&
%\bar\pmb\nabla(\lambda)^\sharp&=Li^*\bar\pmb\nabla(\lambda).&
\end{align}
Let $V(\lambda)$ be the simple rational $G_r$-module with highest weight $\lambda$.
We define
\begin{align*}
\bfT(\lambda)=V(\lambda)\otimes\calO_{\frakg_r}\quad,\quad
\bfT(\lambda)^\sharp=Li^*\bfT(\lambda)=V(\lambda)\otimes\calO_{\calN_r}.
\end{align*}
The set of indecomposable tilting objects is
$\{\bfT(\lambda)^\sharp\langle a\rangle\,;\,\lambda\in\Lambda_r\,,\,a\in\bbZ\}.$
For any order ideal  $\Gamma\subset\Lambda_r$ we consider the Serre subcategory 
$\PCoh([\calN_r/G_r^c])_\Gamma$ of $\PCoh([\calN_r/G_r^c])$ 
generated by the constituents of the objects in
$\{\bfT(\lambda)^\sharp\langle a\rangle\,;\,\lambda\in\Gamma\,,\,a\in\bbZ\}.$
We have
$$\D^\b\Coh([\calN_r/G_r^c])=\D^\b\PCoh([\calN_r/G_r^c]).$$ 
Setting $\Gamma=\{<\lambda\}$ for some weight $\lambda\in\Lambda_r$,  we may consider the quotient functor 
$$(\pi_{\geqslant\lambda})^*:\D^\b\Coh([\calN_r/G_r^c])\to
\D^\b(\PCoh([\calN_r/G_r^c])\,/\,\PCoh([\calN_r/G_r^c])_{<\lambda}).$$
Let $(\pi_{\geqslant\lambda})_!$ and $(\pi_{\geqslant\lambda})_*$ be the left and right adjoints,
see \cite[\S\S 1,2]{A15d} for details.
We define
$$\pmb\Delta(\lambda)^\sharp=(\pi_{\geqslant\lambda})_!(\pi_{\geqslant\lambda})^*(\bfT(\lambda)^\sharp)\langle-\delta_\lambda\rangle
\quad,\quad
\pmb\nabla(\lambda)^\sharp=(\pi_{\geqslant\lambda})_*(\pi_{\geqslant\lambda})^*(\bfT(\lambda)^\sharp)\langle\delta_\lambda\rangle.$$
Both objects belong to the category $\PCoh([\calN_r/G_r^c])_{\leqslant\!\lambda}$. 
They are called the standard and costandard objects respectively.

\smallskip

Being the Serre subcategory 
associated with the order ideal $\Gamma$ of $\Lambda_r$,  the Abelian category
$\PCoh([\calN_r/G_r^c])_\Gamma$ is graded properly stratified, with the
standard, costandard, proper standard 
and proper costandard objects given by
$\pmb\Delta(\lambda)^\sharp$, 
$\pmb\nabla(\lambda)^\sharp$, 
$\pmb{\bar\Delta}(\lambda)^\sharp$,
$\pmb{\bar\nabla}(\lambda)^\sharp$ with
$\lambda\in\Gamma.$

\smallskip

Consider the graded triangulated subcategories   
$$\D^\b\Coh([\frakg_r/G_r^c])_\Gamma\subset\D^\b\Coh([\frakg_r/G_r^c])
\quad,\quad
\D^\perf\Coh([\calN_r/G_r^c])_\Gamma\subset\D^\perf\Coh([\calN_r/G_r^c])$$
generated by the sets
$\bfT_\Gamma=\{\bfT(\lambda)\langle a\rangle\,;\,\lambda\in\Gamma\,,\,a\in\bbZ\}$
and
$\bfT_\Gamma^\sharp=\{\bfT(\lambda)^\sharp\langle a\rangle\,;\,\lambda\in\Gamma\,,\,a\in\bbZ\}$.
We have
$$\D^\perf\Coh([\calN_r/G_r^c])_\Gamma=\D^\perf(\PCoh([\calN_r/G_r^c])_\Gamma).$$

\smallskip

\subsection{The derived Satake equivalence}

Recall the graded triangulated categories 
$\D^\b_\mix(\calG r^-_\beta)=\K^\b(\Par(\calG r^-_\beta))$ and
$\D^\b_\mix(\Gr^-_\beta,S)=\K^\b(\Par(\Gr^-_\beta,S)).$
The following  is an instance of the \emph{derived Satake equivalence} of Bezrukavnikov-Finkelberg.

\smallskip

\begin{proposition}\label{prop:equivalence7}
\hfill
\begin{itemize}[leftmargin=8mm]
\item[$\mathrm{(a)}$] 
There is an equivalence of graded triangulated categories 
$\C_\beta:\D^\b_\mix(\calG r^-_\beta)\to\D^\b\Coh([\frakg_r/G_r^c])_{\Lambda_\beta}$
such that
$\C_\beta(IC(\lambda)^-_\mix)=\bfT(\lambda)$ for all $\lambda\in \Lambda_\beta.$
\item[$\mathrm{(b)}$] 
There is an equivalence of graded triangulated categories 
$\C_\beta^\sharp:\D^\b_\mix(\Gr^-_\beta,S)\to\D^\perf\Coh([\calN_r/G_r^c])_{\Lambda_\beta}$
such that the following diagram commutes
\begin{align}\label{diag5}
\begin{split}
\xymatrix{
\D^\b_\mix(\Gr^-_\beta,S)\ar[d]_-{\C_\beta^\sharp}
\ar@/^/[r]^-{\xi_*}&\ar@/^/[l]^-{L\xi^*}
\D^\b_\mix(\calG r^-_\beta)\ar[d]^-{\C_\beta}\\
\D^\perf\Coh([\calN_r/G_r^c])_{\Lambda_\beta}
\ar@/^/[r]^-{i_*}&\ar@/^/[l]^-{Li^*}
\D^\b\Coh([\frakg_r/G_r^c])_{\Lambda_\beta}.
}
\end{split}
\end{align}
\item[$\mathrm{(c)}$]  For each $\lambda\in \Lambda_\beta$  we have
$\C_\beta^\sharp\Delta(\lambda)^-_\mix=\pmb\Delta(\lambda)^\sharp$,
$\C_\beta^\sharp\nabla(\lambda)^-_\mix=\pmb\nabla(\lambda)^\sharp$ and
$\C_\beta^\sharp IC(\lambda)^-_\mix=\bfT(\lambda)^\sharp.$
\end{itemize}
\end{proposition}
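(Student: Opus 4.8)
The plan is to obtain the proposition from the derived Satake equivalence of Bezrukavnikov and Finkelberg for $GL_r$, which is its own Langlands dual, in its loop-rotation-equivariant and mixed form, by restricting to the finite-type piece indexed by $\Lambda_\beta$ and then base-changing along the closed embedding $i\colon\calN_r\hookrightarrow\frakg_r$.

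To prove (a), I would record derived Satake as an equivalence between the $GL_r(\calO)\rtimes\bbG_m$-equivariant constructible derived category of the affine Grassmannian of $GL_r$ and $\D^\b\Coh([\frakg_r/G_r^c])$, the loop-rotation $\bbG_m$ corresponding to the dilation $\bbG_m$ on $\frakg_r$, with internal and cohomological degree linked as in the Convention. Since $S^-_\beta$ is even by Proposition \ref{prop:even1}(b), the category $\D^\b_\mix(\calG r^-_\beta)=\K^\b(\Par(\calG r^-_\beta))$ is, in the formalism of Appendix \ref{sec:ASMG}, a model for the loop-rotation-equivariant derived category of the closed subvariety $\Gr^-_\beta=\overline{\Gr_{n\omega_1}}$; restricting derived Satake to complexes supported on $\Gr^-_\beta$ then gives a fully faithful functor $\C_\beta$ into $\D^\b\Coh([\frakg_r/G_r^c])$. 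Its essential image is $\D^\b\Coh([\frakg_r/G_r^c])_{\Lambda_\beta}$ and $\C_\beta(IC(\lambda)^-_\mix)=\bfT(\lambda)$, because under derived Satake the simple spherical perverse sheaf $IC(\lambda)^-$ corresponds to $V(\lambda)\otimes\calO_{\frakg_r}=\bfT(\lambda)$ (through ordinary Satake $IC(\lambda)^-\leftrightarrow V(\lambda)$ and the embedding $\Rep(G_r)\hookrightarrow\D^\b\Coh([\frakg_r/G_r^c])$, $V\mapsto V\otimes\calO_{\frakg_r}$): the $IC(\lambda)^-_\mix$ with $\lambda\in\Lambda_\beta$ generate the source, while the $\bfT(\lambda)$ with $\lambda\in\Lambda_\beta$ generate the target by definition.

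For (b), recall that $\calN_r$ is cut out in $\frakg_r$ by the $G_r$-invariant functions of positive degree, which form a polynomial algebra canonically isomorphic to $H^\bullet_{G_r}$; hence $Li^*$ is the derived base change along $H^\bullet_{G_r}\to\k$, and $\D^\perf\Coh([\calN_r/G_r^c])_{\Lambda_\beta}$ is its essential image on $\D^\b\Coh([\frakg_r/G_r^c])_{\Lambda_\beta}$. On the constructible side $H^\bullet_{H_\beta}=H^\bullet_{G_r}$, and by Proposition \ref{prop:isom5}(c) and Lemma \ref{lem:sharp} the functor $\For=L\xi^*$ is the same derived base change under the presentations $\D^\b_\mix(\calG r^-_\beta)=\D^\perf(\bfS^-_\beta)$ and $\D^\b_\mix(\Gr^-_\beta,S)=\D^\perf(\bfS^{-,\,\sharp}_\beta)$ with $\bfS^{-,\,\sharp}_\beta=\k\otimes_{H^\bullet_{G_r}}\bfS^-_\beta$. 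Since derived Satake is $H^\bullet_{G_r}$-linear---the canonical $H^\bullet_{G_r(\calO)}$-action on the left matching the action of the invariant functions on the right---base-changing it along $H^\bullet_{G_r}\to\k$ produces an equivalence $\C_\beta^\sharp$ filling the square \eqref{diag5}; it takes values in $\D^\perf$ by Proposition \ref{prop:isom5}(b) and the equality $\D^\perf\Coh([\calN_r/G_r^c])_{\Lambda_\beta}=\D^\perf(\PCoh([\calN_r/G_r^c])_{\Lambda_\beta})$.

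Finally (c): the identity $\C_\beta^\sharp IC(\lambda)^-_\mix=\bfT(\lambda)^\sharp$ is immediate from (a) and \eqref{diag5}, since $IC(\lambda)^-_\mix$ is $\For$ of its equivariant avatar and $\bfT(\lambda)^\sharp=Li^*\bfT(\lambda)$. For the standard and costandard objects one checks that $\C_\beta^\sharp$ is t-exact for the mixed perverse t-structure on the source and Bezrukavnikov's perverse-coherent t-structure on the target---part of the normalization of derived Satake, verifiable on the generators, where $IC(\lambda)^-_\mix$ is perverse and the tilting sheaf $\bfT(\lambda)^\sharp$ lies in $\PCoh([\calN_r/G_r^c])_{\Lambda_\beta}$---and that it intertwines the recollement functors $(i^-_{\geqslant\lambda})^*,(i^-_{\geqslant\lambda})_!$ along the poset stratification with their coherent counterparts $(\pi_{\geqslant\lambda})^*,(\pi_{\geqslant\lambda})_!$. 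Feeding $\C_\beta^\sharp IC(\lambda)^-_\mix=\bfT(\lambda)^\sharp$ into the constructions of $\Delta(\lambda)^-_\mix,\nabla(\lambda)^-_\mix$ in \eqref{DN} and of $\pmb\Delta(\lambda)^\sharp,\pmb\nabla(\lambda)^\sharp$ then yields $\C_\beta^\sharp\Delta(\lambda)^-_\mix=\pmb\Delta(\lambda)^\sharp$ and $\C_\beta^\sharp\nabla(\lambda)^-_\mix=\pmb\nabla(\lambda)^\sharp$, the shifts $\langle\pm\delta_\lambda\rangle$ of \eqref{PS} being forced by the discrepancy between the $[\,d^-_\lambda\,]$-normalization of $IC(\lambda)^-$ and that of $\bfT(\lambda)^\sharp$; this can be organized exactly as the identification $\A_\beta\Delta(\pi)=\nabla(\lambda_\pi)^+_\mix$ in the proof of Proposition \ref{prop:equivalence11}. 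The main difficulty is not a single step but the simultaneous control of the mixed enhancement and of the grading normalizations: one must know that $\K^\b(\Par)$ genuinely computes the loop-rotation-equivariant derived category compatibly with the Satake functor (whence the role of evenness, Proposition \ref{prop:even1}), reconcile the cohomological-degree and internal-degree conventions on the two sides---including the $[\,d^-_\lambda\,]$ shifts in the $IC$ normalization and the $\langle\delta_\lambda\rangle$ shifts in the Andersen--Jantzen description \eqref{PS}---and pass rigorously from the ind-scheme form of derived Satake to the finite-type truncation indexed by $\Lambda_\beta$.
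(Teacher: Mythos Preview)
Your strategy is correct and matches the paper's: invoke derived Satake for (a) (the paper cites Riche \cite[thm.~5.5.1]{R17}) and its nilpotent-cone variant for (b) (the paper cites Achar--Rider \cite{A15d}), with the commutativity of \eqref{diag5} coming from the base-change identity
\[
\k\otimes_{H^\bullet_{G_r}}\Hom_{\D^\b\Coh([\frakg_r/G_r^c])}(\bfT(\lambda),\bfT(\mu)\langle a\rangle)\;\cong\;\Hom_{\D^\perf\Coh([\calN_r/G_r^c])}(\bfT(\lambda)^\sharp,\bfT(\mu)^\sharp\langle a\rangle),
\]
which the paper proves by commuting derived $G_r$-invariants with derived base change along $H^\bullet_{G_r}\to\k$ (using \cite[App.~A]{MR16}) and the vanishing of higher Ext between locally free sheaves. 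The main difference in execution is that the paper works throughout at the level of additive categories of generators---establishing $\Par(\calG r^-_\beta)\simeq\Add(\bfT_\beta)$ directly from the Hom-space isomorphism \cite[(5.6.2)]{R17} and then passing to $\K^\b$---rather than invoking a triangulated equivalence on an ind-scheme and restricting; this sidesteps having to justify what ``restricting derived Satake to $\Gr^-_\beta$'' means in the mixed $\K^\b(\Par)$ formalism. For (c) the paper does not argue via t-exactness or matching recollement functors at all: it simply cites \cite[cor.~3.4]{AR16a}. Your direct argument is essentially what that reference contains, but carrying it out here would require verifying that the poset-stratification functors $(i^-_{\geqslant\lambda})_!,(i^-_{\geqslant\lambda})^*$ and their coherent counterparts $(\pi_{\geqslant\lambda})_!,(\pi_{\geqslant\lambda})^*$ are intertwined by $\C_\beta^\sharp$ in the mixed model, which is nontrivial and not supplied in this paper.
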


\begin{proof}
Since the group $K_\beta$ is unipotent, the obvious functor
$$\D^\b_\mix(\calG r^-_\beta)=\D^\b_\mix([\Gr^-_\beta/H_\beta])\to
\D^\b_\mix([\Gr^-_\beta/G_r(O)])$$
is a triangulated equivalence, compare \cite[thm.~3.7.3]{BL}.
Hence, part (a) follows from  \cite[thm.~5.5.1]{R17}. Part (b) is \cite[prop.~5.7]{A15d}.
More precisely, let $\Add(\bfT_\beta)$ be the additive full subcategory of $\Coh([\frakg_r/G_r^c])$
generated by the set of objects $\bfT_\beta$. 
By \cite[cor.~5.5.4]{R17}, we have an equivalence of triangulated categories
$\D^\b\Coh([\frakg_r/G_r^c])_{\Lambda_\beta}=\K^\b(\Add(\bfT_\beta)).$
The equivalence $\C_\beta$ follows from the isomorphism \cite[(5.6.2)]{R17}
$$\Hom_{\D^\b(\calG r^-_\beta)}(IC(\lambda)^-\,,\,IC(\mu)^-\langle a\rangle)=
\Hom_{\D^\b\Coh([\frakg_r/G_r^c])}(\bfT(\lambda)\,,\,\bfT(\mu)\langle a\rangle)
\quad,\quad\forall\lambda,\mu\in\Lambda_\beta$$
 which yields an equivalence of graded additive categories
$\Par(\calG r^-_\beta)=\Add(\bfT_\beta)$
which commutes with the grading shift functors $\langle\bullet\rangle$.
%the vanishing \cite[lem.~5.5.2]{R17}
%$$\Hom^{>0}_{\D(\frakg_r)}(\bfT(\lambda)\,,\,T(\mu)^\frakg\langle a\rangle)=0,$$
%and the fact that the triangulated category $\D(\frakg_r)$ is generated by the set of objects
%$\{\bfT(\lambda)\langle a\rangle\,;\,\lambda\in\Lambda_\beta\,,\,a\in\bbZ\}$, compare \cite[lem.~5.5.3]{R17}.
The equivalence $\C_\beta^\sharp$ is proved in a similar way in \cite[prop.~5.5,\,thm.~2.16]{A15d}.

\smallskip

The commutativity of the equivalences $\C_\beta$, $\C_\beta^\sharp$ with the functors $L\xi^*$, $Li^*$
is a consequence of the isomorphisms \eqref{isom33} and the isomorphism
\begin{align}\label{ISOM3}
\k\otimes_{H^\bullet_{G_r}}
\Hom_{\D^\b\Coh([\frakg_r/G_r^c])}(\bfT(\lambda)\,,\,\bfT(\mu)\langle a\rangle)
=\Hom_{\D^\perf\Coh([\calN_r/G_r^c])}(\bfT(\lambda)^\sharp\,,\,\bfT(\mu)^\sharp\langle a\rangle),
\end{align}
where the $H^\bullet_{G_r}$-module structure  comes from the identification of graded rings
$H^\bullet_{G_r}=\k[\frakg_r/\!/G_r]$.
The commutativity with $\xi_*$, $i_*$ follows by adjunction.

\smallskip

More precisely, in the derived category of $\calO_{\frakg_r/\!/G_r}$-modules, we have
$$
\RHom_{\D^\b\Coh([\frakg_r/G_r^c])}(\bfT(\lambda)\,,\,\bfT(\mu)\langle a\rangle)
=\Inv^{G_r\times(\frakg_r/\!/G_r)}
\RHom_{\D^\b(\Coh(\frakg_r))}(\bfT(\lambda)\,,\,\bfT(\mu)\langle a\rangle),
$$
where $\Inv^{G_r\times(\frakg_r/\!/G_r)}$ is the derived functor of invariants
relatively to the flat affine group scheme $G_r\times(\frakg_r/\!/G_r)$ over 
$\frakg_r/\!/G_r$.
Derived invariants commute with the derived base change functor
$M\mapsto\k\stackrel{\L}{\otimes}_{H^\bullet_{G_r}}M$,
see, e.g., \cite[App.~A]{MR16}. We deduce that
\begin{align*}
\k\stackrel{\L}{\otimes}_{H^\bullet_{G_r}}
\RHom_{\D^\b\Coh([\frakg_r/G_r^c])}(\bfT(\lambda)\,,\,\bfT(\mu)\langle a\rangle)
&=\Inv^{G_r}\Big(\k\stackrel{\L}{\otimes}_{H^\bullet_{G_r}}
\RHom_{\D^\b(\Coh(\frakg_r))}(\bfT(\lambda)\,,\,\bfT(\mu)\langle a\rangle)\Big)\\
&=\RHom_{\D^\perf\Coh([\calN_r/G_r^c])}(\bfT(\lambda)^\sharp\,,\,\bfT(\mu)^\sharp\langle a\rangle).
\end{align*}
Now, since $\bfT(\lambda)$,  $\bfT(\lambda)^\sharp$ are locally free and $G_r$ is reductive, we have
$$\RHom^{>0}_{\D^\b\Coh([\frakg_r/G_r^c])}(\bfT(\lambda)\,,\,\bfT(\mu)\langle a\rangle)=
\RHom^{>0}_{\D^\perf\Coh([\calN_r/G_r^c])}(\bfT(\lambda)^\sharp\,,\,\bfT(\mu)^\sharp\langle a\rangle)=0.$$
The isomorphism \eqref{ISOM3} follows.
Part (c) of the proposition is \cite[cor.~3.4]{AR16a}.
\end{proof}

\smallskip

We can now prove the following theorem.

\smallskip

\begin{theorem}\label{thm:final1} There is an equivalence of graded triangulated categories
$$\E^\sharp_\beta:\D^\perf(\calD_\beta^{\,\sharp})\to\D^\perf\Coh([\calN_r/G_r^c])_{\Lambda_\beta}$$
such that $\E^\sharp_\beta\Delta(\pi)^\sharp=\pmb\Delta(\lambda_\pi)^\sharp$
for all $\pi\in\Gamma\!_\beta$.

\end{theorem}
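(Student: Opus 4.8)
The plan is to assemble the equivalence $\E^\sharp_\beta$ as the composition of the three functors appearing in the chain sketched in the introduction, namely
\[
\E^\sharp_\beta=\C^\sharp_\beta\circ\B^\sharp_\beta\circ\A^\sharp_\beta:\D^\perf(\calD_\beta^{\,\sharp})
\xrightarrow{\A^\sharp_\beta}\D^\b_\mix(\Gr^+_\beta,S)
\xrightarrow{\B^\sharp_\beta}\D^\b_\mix(\Gr^-_\beta,S)
\xrightarrow{\C^\sharp_\beta}\D^\perf\Coh([\calN_r/G_r^c])_{\Lambda_\beta},
\]
using the identification $\Lambda_\beta=\Gamma\!_\beta$ of \eqref{bij}. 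All three functors have already been produced: $\A^\sharp_\beta$ is the equivalence of Proposition \ref{prop:equivalence3}(a) with $\Gamma=\Gamma\!_\beta$, $\B^\sharp_\beta$ is the Radon transform equivalence of Proposition \ref{prop:equivalence6}(a), and $\C^\sharp_\beta$ is the derived Satake equivalence of Proposition \ref{prop:equivalence7}(b). Each is an equivalence of graded triangulated categories, so the composition is one as well; nothing remains to be checked on that front. The only substantive point is the tracking of the standard objects through the chain.

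So the argument reduces to chasing $\Delta(\pi)^\sharp$ through the three functors. First, by Proposition \ref{prop:equivalence3}(b) applied with $\Gamma=\Gamma\!_\beta$ (so $h_\Gamma=\id$), we have $\A^\sharp_\beta\Delta(\pi)^\sharp=\nabla(\lambda_\pi)^+_\mix$, the object of $\D^\b_\mix(\Gr^+_\beta,S)$ defined in \eqref{DN}. Next, Proposition \ref{prop:equivalence6}(b) gives $\B^\sharp_\beta\nabla(\lambda_\pi)^+_\mix=\Delta(\lambda_\pi)^-_\mix$. Finally, Proposition \ref{prop:equivalence7}(c) gives $\C^\sharp_\beta\Delta(\lambda_\pi)^-_\mix=\pmb\Delta(\lambda_\pi)^\sharp$. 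Composing the three identities yields $\E^\sharp_\beta\Delta(\pi)^\sharp=\pmb\Delta(\lambda_\pi)^\sharp$ for all $\pi\in\Gamma\!_\beta$, which is exactly the claim.

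There is essentially no obstacle here: the theorem is a bookkeeping corollary of the three equivalences established in the preceding subsections, and the main work of the paper lies in constructing those three equivalences (in particular Lemma \ref{lem:AA} and Propositions \ref{prop:equivalence1}, \ref{prop:equivalence3}, which feed into $\A^\sharp_\beta$, and the delicate mixed-geometry arguments of Proposition \ref{prop:even1} and \S\ref{sec:Radon}, which feed into $\B^\sharp_\beta$). The one place where a small amount of care is needed is the compatibility of the notation between the two incarnations of $\Delta(\lambda)^+_\mix$ — the equivariant one in \eqref{EDN+} and the non-equivariant one in \eqref{DN} — together with the observation (made explicitly right after \eqref{DN}) that the latter is the non-equivariant analogue of the former under the functor $\For$; this is what makes the output of Proposition \ref{prop:equivalence3}(b) the legitimate input of Proposition \ref{prop:equivalence6}(b). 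Once this is noted, the proof is a one-line composition. I would write it essentially as: define $\E^\sharp_\beta=\C^\sharp_\beta\,\B^\sharp_\beta\,\A^\sharp_\beta$; it is an equivalence of graded triangulated categories as a composition of such; and the behaviour on standard objects follows by concatenating Proposition \ref{prop:equivalence3}(b), Proposition \ref{prop:equivalence6}(b) and Proposition \ref{prop:equivalence7}(c), applied with $\Gamma=\Gamma\!_\beta$ and under the bijection \eqref{bij}.
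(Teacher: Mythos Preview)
Your proposal is correct and follows essentially the same approach as the paper: define $\E^\sharp_\beta$ as the composite $\C^\sharp_\beta\,\B^\sharp_\beta\,\A^\sharp_\beta$ and track $\Delta(\pi)^\sharp\mapsto\nabla(\lambda_\pi)^+_\mix\mapsto\Delta(\lambda_\pi)^-_\mix\mapsto\pmb\Delta(\lambda_\pi)^\sharp$ via Propositions \ref{prop:equivalence3}(b), \ref{prop:equivalence6}(b), \ref{prop:equivalence7}(c). The only point the paper makes explicit that you leave implicit is the preliminary observation \eqref{TRIA1} that $\Delta(\pi)^\sharp$ actually lies in $\D^\perf(\calD_\beta^{\,\sharp})$ (justified via Lemma \ref{lem:DP}(b) and finite global dimension of $\calD_\beta$) and that $\pmb\Delta(\lambda_\pi)^\sharp$ lies in $\D^\perf\Coh([\calN_r/G_r^c])_{\Lambda_\beta}$ (cited from \cite{M13}); without this the statement about where the objects live is not a priori meaningful, so it is worth noting.
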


\begin{proof}
Let first observe that
\begin{align}\label{TRIA1}
\Delta(\pi)^\sharp\in\D^\perf(\calD_\beta^{\,\sharp})\quad,\quad
\pmb\Delta(\lambda_\pi)^\sharp\in\D^\perf\Coh([\calN_r/G_r^c])_{\Lambda_\beta}
\quad,\quad\forall\pi\in\Gamma\!_\beta.
\end{align}
The second identity follows from \cite[prop.~5.4]{M13}, the first one from
Lemma \ref{lem:DP}(b) below, and the fact that the category $\calD_\beta$ has finite global dimension.
Now, by Propositions \ref{prop:equivalence6}, \ref{prop:equivalence3}, 
 \ref{prop:equivalence7}  there is
a chain of equivalences of graded triangulated categories
\begin{align*}
\xymatrix{
\E^\sharp_\beta:\D^\perf(\calD_\beta^{\,\sharp})\ar[r]^-{\A^\sharp_\beta}&
\D^\b_\mix(\Gr^+_\beta,S)\ar[r]^-{\B_\beta^\sharp}&\D^\b_\mix(\Gr^-_\beta,S)\ar[r]^-{\C_\beta^\sharp}&\D^\perf\Coh([\calN_r/G_r^c])_{\Lambda_\beta}
}
\end{align*}
such that
$$\xymatrix{
\Delta(\pi)^\sharp\langle a\rangle\ar@{|->}[r]^-{\A^\sharp_\beta}&
\nabla(\lambda_\pi)^+_\mix\langle a\rangle\ar@{|->}[r]^-{\B_\beta^\sharp}&
\Delta(\lambda_\pi)^-_\mix\langle a\rangle\ar@{|->}[r]^-{\C_\beta^\sharp}&
\pmb\Delta(\lambda_\pi)^\sharp\langle a\rangle.
%&\nabla(\lambda)^+_\mix\langle a\rangle\mapsto\Delta(\lambda)^-_\mix\langle a\rangle\mapsto\pmb\Delta(\lambda)^\sharp\langle a\rangle.
}$$
\smallskip
\end{proof}

\smallskip

\begin{conjecture} \label{conj:1}
There is an equivalence of graded triangulated categories
$$\E_\beta:\D^\b(\calD_\beta)\to\D^\b\Coh([\frakg_r/G_r^c])_{\Lambda_\beta}$$
such that the following square of functors commutes
$$\xymatrix{
\D^\perf(\calD^{\,\sharp}_\beta)\ar[d]_-{\E_\beta^\sharp}
\ar@/^/[r]^-{\xi_*}&\ar@/^/[l]^-{L\xi^*}
\D^\b(\calD_\beta)\ar[d]^-{\E_\beta}\\
\D^\perf\Coh([\calN_r/G_r^c])_{\Lambda_\beta}
\ar@/^/[r]^-{i_*}&\ar@/^/[l]^-{Li^*}
\D^\b\Coh([\frakg_r/G_r^c])_{\Lambda_\beta}.}$$
Further, we have
$\E_\beta\Delta(\pi)=\pmb\Delta(\lambda_\pi)$
for each $\pi\in\Gamma\!_\beta$.
\qed
\end{conjecture}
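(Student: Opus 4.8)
\medskip

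The plan is to prove Conjecture~\ref{conj:1} by promoting the chain of equivalences behind Theorem~\ref{thm:final1} from the ``thin'' specialised categories to the ``thick'' equivariant ones. Explicitly, I would build $\E_\beta$ as the composition
$$\E_\beta:\D^\b(\calD_\beta)\xrightarrow{\ \A_\beta\ }\D^\b_\mix(\calG r^+_\beta)\xrightarrow{\ \B_\beta\ }\D^\b_\mix(\calG r^-_\beta)\xrightarrow{\ \C_\beta\ }\D^\b\Coh([\frakg_r/G_r^c])_{\Lambda_\beta},$$
where $\A_\beta$ is the equivalence of Proposition~\ref{prop:equivalence11}(a) (which already satisfies $\A_\beta\Delta(\pi)=\nabla(\lambda_\pi)^+_\mix$) and $\C_\beta$ is the equivariant derived Satake equivalence of Proposition~\ref{prop:equivalence7}(a). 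The \emph{only} genuinely new input is therefore an equivariant Radon transform $\B_\beta$, together with the two properties $\B_\beta\nabla(\lambda)^+_\mix=\Delta(\lambda)^-_\mix$ for the equivariant standard objects (defined in $\D^\b_\mix(\calG r^-_\beta)$ as in \eqref{EDN+} and \eqref{DN}) and the refinement $\C_\beta\Delta(\lambda)^-_\mix=\pmb\Delta(\lambda)$ of Proposition~\ref{prop:equivalence7}(a); the latter is the equivariant form of Proposition~\ref{prop:equivalence7}(c) and follows from \cite{AR16a} with no change. Granting this, the identity $\E_\beta\Delta(\pi)=\pmb\Delta(\lambda_\pi)$ is immediate from $\Delta(\pi)\mapsto\nabla(\lambda_\pi)^+_\mix\mapsto\Delta(\lambda_\pi)^-_\mix\mapsto\pmb\Delta(\lambda_\pi)$, exactly as in the proof of Theorem~\ref{thm:final1}.

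To construct $\B_\beta$ I would imitate \S\ref{sec:Radon}. The correspondence $U_\beta\subset\Gr^-_\beta\times\Gr^+_\beta$ is equivariant for the diagonal $H_\beta$-action, so it descends to a correspondence $\calG r^-_\beta\xleftarrow{\,f^-\,}[U_\beta/H_\beta]\xrightarrow{\,f^+\,}\calG r^+_\beta$ of stacks and to an adjoint pair $R^\pm_\beta=\bigl((f^+)_!(f^-)^*,\,(f^-)_*(f^+)^!\bigr)$ on $\D^\b(\calG r^\pm_\beta)$. The maps $f^\pm$ are stratified, being $H_\beta$-equivariant affine-space fibrations over $H_\beta$-orbits (the fibre computation is that of Lemma~\ref{lem:1S}); since the stratification $S^\pm_\beta$ is even by Proposition~\ref{prop:even1}(b), $R^\pm_\beta$ preserves parity complexes, so a suitable cohomological twist and Verdier conjugation of $R^+_\beta$ --- the recipe of the proof of Proposition~\ref{prop:equivalence6} --- descends, on the homotopy categories, to a functor $\B_\beta:\D^\b_\mix(\calG r^+_\beta)\to\D^\b_\mix(\calG r^-_\beta)$. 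The identity $\B_\beta\nabla(\lambda)^+_\mix=\Delta(\lambda)^-_\mix$ would then be established exactly as in Step~2 of that proof, using the equivariant forms of Lemmas~\ref{lem:CD}, \ref{lem:FORM2} and \ref{lem:1S}; these concern only the orbit geometry of $U_\beta$, which is unchanged on $H_\beta$-quotient stacks.

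The hard part will be to show that $\B_\beta$ is an equivalence, and I would deduce this from $\B_\beta^\sharp$ of Proposition~\ref{prop:equivalence6} by a graded Nakayama argument. Under the identifications $\D^\b_\mix(\calG r^\pm_\beta)=\D^\perf(\bfS^\pm_\beta)$ and $\D^\b_\mix(\Gr^\pm_\beta,S)=\D^\perf(\bfS^{\pm,\sharp}_\beta)$ of Proposition~\ref{prop:isom5}, with $\bfS^\pm_\beta$ finite free over $H^\bullet_{G_r}$ and $\k\otimes_{H^\bullet_{G_r}}\bfS^\pm_\beta=\bfS^{\pm,\sharp}_\beta$ (Proposition~\ref{prop:isom5}(c)), the functor $\B_\beta$ is $H^\bullet_{G_r}$-linear, and by $H_\beta$-equivariance of $U_\beta$ it is intertwined with $\B_\beta^\sharp$ by $\For=\k\stackrel{\L}{\otimes}_{H^\bullet_{G_r}}(-)$. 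Hence for parity generators $E,F$ the natural morphism $\RHom_{\D^\b_\mix(\calG r^+_\beta)}(E,F)\to\RHom(\B_\beta E,\B_\beta F)$ is a map of perfect complexes of graded $H^\bullet_{G_r}$-modules which, after $\stackrel{\L}{\otimes}_{H^\bullet_{G_r}}\k$, becomes the corresponding morphism for $\B_\beta^\sharp$ (using \eqref{isom33}), hence an isomorphism; since $H^\bullet_{G_r}$ is connected non-negatively graded, graded Nakayama forces it to be an isomorphism, so $\B_\beta$ is fully faithful. Essential surjectivity then follows because $\D^\b_\mix(\calG r^-_\beta)$ is generated, as a thick subcategory, by the standard objects $\Delta(\lambda)^-_\mix$, all of which lie in the essential image of $\B_\beta$. (Alternatively one could run the Beilinson--Bezrukavnikov--Mirković argument of \cite{BBM04,Y09} $H_\beta$-equivariantly, which amounts to developing the tilting/projective theory of $\bfS^\pm_\beta$ as standardly stratified deformations over $H^\bullet_{G_r}$ via \cite{K12}, as in Lemma~\ref{lem:AA}; I expect the Nakayama route to be shorter.)

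Finally, the commutative square of Conjecture~\ref{conj:1} would be assembled by pasting three squares along shared edges: the square relating $\A_\beta$ to $\A^\sharp_\beta$ via $(\xi_*,L\xi^*)$ and $(\xi^+_*,\For)$, which is Proposition~\ref{prop:equivalence3}(a) with $\Gamma=\Lambda_\beta$; the square relating $\B_\beta$ to $\B_\beta^\sharp$, both intertwined by $\For$, which is exactly the base-change compatibility used above; and the square \eqref{diag5} relating $\C_\beta$ to $\C^\sharp_\beta$ via $(\xi_*,L\xi^*)$ and $(i_*,Li^*)$, which is Proposition~\ref{prop:equivalence7}(b). The resulting compatibility of $\E_\beta=\C_\beta\B_\beta\A_\beta$ and $\E^\sharp_\beta=\C^\sharp_\beta\B_\beta^\sharp\A^\sharp_\beta$ with $(\xi_*,L\xi^*)$ and $(i_*,Li^*)$ is the asserted square, which completes the proof. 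The single point needing new work, and the main obstacle, is the equivalence statement for $\B_\beta$; everything else recombines results already established in the paper.
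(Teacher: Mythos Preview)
The paper does \emph{not} prove this statement --- it is stated as a conjecture and left open (the \qed\ merely marks the end of the statement). So there is no proof to compare against; the question is whether your sketch would close the gap. It does not, and the obstruction is exactly the one the paper flags.

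Your argument hinges on constructing an equivariant Radon transform $\B_\beta$ on $\D^\b_\mix(\calG r^\pm_\beta)=\K^\b(\Par(\calG r^\pm_\beta))$, and for this you assert that $R^\pm_\beta$ preserves parity complexes because $f^\pm$ are ``stratified affine-space fibrations''. This is the gap. The parity-preservation criterion of \cite[Prop.~2.34]{JMW} applies to \emph{proper} stratified maps (for pushforward) and smooth ones (for pullback); here $f^+$ is not proper, and $(f^+)_!$ along a non-proper map has no reason to send IC sheaves to sums of shifted IC sheaves. Indeed, in the non-equivariant case the paper does \emph{not} prove $\B_\beta^\sharp$ by parity preservation: the proof of Proposition~\ref{prop:equivalence6} goes through the tilting-to-projective equivalence \eqref{Pmu} in $\P_\mix(\Gr^\pm_\Gamma,T)$, then passes to homotopy categories via Propositions~\ref{prop:AA} and \ref{prop:B}. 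That route is unavailable equivariantly: the paper explicitly notes (Remark~\ref{rem:mixte2}(b)) that no equivariant analogue of Proposition~\ref{prop:A} is known, so there is no t-structure on $\D^\b_\mix(\calG r^\pm_\beta)$ with a tilting/projective theory to run the argument through.

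Your Nakayama fallback presupposes that $\B_\beta$ is already defined as a functor between perfect categories of $H^\bullet_{G_r}$-modules, which is precisely what is missing. Once $\B_\beta E$ is not known to be parity, $\RHom(\B_\beta E,\B_\beta F)$ need not be a perfect $H^\bullet_{G_r}$-complex, and the reduction to $\B_\beta^\sharp$ via $\otimes^{\L}_{H^\bullet_{G_r}}\k$ does not get off the ground. In short, the ``single point needing new work'' you identify is the whole difficulty, and nothing in your sketch addresses why the equivariant Radon transform should land in, or be expressible via, the parity/pure formalism that defines $\D^\b_\mix(\calG r^\pm_\beta)$.
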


\smallskip

\begin{remark}
Given any weight dominant $\lambda$,
let $P_\lambda$ be the largest parabolic subgroup such that the line bundle
$\calO_{G_r/B}(\lambda)$ on $G_r/B$ is the 
pull-back of a line bundle on $G_r\,/\,P_\lambda$. 
Let $\calO_{G_r\,/\,P_\lambda}(\lambda)$ denote this line bundle.
Let $\frakp_\lambda$ be the Lie algebra of $P_\lambda$.
The diagram of $G_r^c$-equivariant $\k$-schemes 
\begin{align*}
\xymatrix{\frakg_r&\ar[l]_-{q_\lambda} G_r\times_{P_\lambda}
\frakp_\lambda\ar[r]^-{p_\lambda}&G_r/P_\lambda}
\end{align*}
gives rise to the following complexes
\begin{align*}
\pmb\Delta(\lambda)=(Rq_{w_0\lambda})_*(p_{w_0\lambda})^*\calO_{G_r\,/\,P_{w_0\lambda}}(w_0\lambda)
\quad,\quad
%\pmb\Delta(\lambda)^\flat&=Li^*\pmb\Delta(\lambda),&
\pmb\nabla(\lambda)=(Rq_\lambda)_*(p_\lambda)^*\calO_{G_r\,/\,P_\lambda}(\lambda).
%\pmb\nabla(\lambda)^\flat&=Li^*\pmb\nabla(\lambda).&
\end{align*}
The standard and costandard objects 
should be made more explicit in the following way:
$\pmb\Delta(\lambda)^\sharp=Li^*\pmb\Delta(\lambda)$ and
$\pmb\nabla(\lambda)^\sharp=Li^*\pmb\nabla(\lambda)$.
This should follows from the computation in  \cite[(29)-(30)]{B03}.

\end{remark}

\medskip

\section{The equivalence of monoidal categories}

In Theorem \ref{thm:final1} we have constructed an equivalence of graded triangulated categories
$$\E^\sharp_\beta:\D^\perf(\calD_\beta^{\,\sharp})\to\D^\perf\Coh([\calN_r/G_r^c])_{\Lambda_\beta}.$$
Our next goal is to prove that it yields indeed an equivalence of graded Abelian
categories
$$\bigoplus_{\beta\in Q_{++}}\calD_\beta^{\,\sharp}\to\bigoplus_{\beta\in Q_{++}}\PCoh([\calN_r/G_r^c])_{\Lambda_\beta}.$$

\smallskip

\subsection{The monoidal structure on  $\calD^{\,\sharp}$} \label{sec:monD}

Consider the graded Abelian categories 
$$\calD^{\,\sharp}=\bigoplus_{\beta\in Q_{++}}\calD_\beta^{\,\sharp}\quad,\quad
\calD=\bigoplus_{\beta\in Q_{++}}\calD_\beta\quad,\quad
\calC=\bigoplus_{\beta\in Q_+}\calC_\beta.$$
We have obvious fully faithful functors $\calD^{\,\sharp}\,\subset\,\calD\,\subset\,\calC.$
The category $\calD_\beta^\sharp$ is Artinian by Remark \ref{rem:fl}, for each $\beta\in Q_{++}$.
Hence we have $\calD^\sharp\subset\calD^\fl$.
For each $\alpha,\gamma\in Q_+$ with $\beta=\alpha+\gamma$, 
there is an obvious inclusion $\bfR_{\alpha}\otimes\bfR_{\gamma}\subset\bfR_\beta$.
The induction functors relative to those embeddings
equip the categories $\calC$ and $\calC^\fl$ with an exact monoidal structure $\circ$. 

\smallskip

\begin{lemma}\label{lem:Dmonoidal} 
The bifunctor $\circ$ preserves the subcategory $\calD^\fl$ of $\calC^\fl$.
\end{lemma}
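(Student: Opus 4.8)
The plan is to reduce the statement to a claim about composition factors. Recall that $\calD^\fl$ is, by definition, the full subcategory of $\calC^\fl$ whose objects have all composition factors among the graded shifts of $L(\pi)$ with $\pi\in\Gamma\!_\beta$ for the appropriate $\beta$, i.e.\ Kostant partitions supported on $\Phi_{++}=\{\beta_n\,;\,n\in\bbN\}$. Since $\circ=\Ind$ is exact, it suffices to show that for simple objects $L(\sigma)\in\calD_\alpha$, $L(\tau)\in\calD_\gamma$ (so $\sigma\in\Gamma\!_\alpha$ and $\tau\in\Gamma\!_\gamma$), the induction $L(\sigma)\circ L(\tau)$ lies in $\calD_\beta^\fl$ where $\beta=\alpha+\gamma$; by exactness and induction on length this propagates to all finite-length objects. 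Equivalently, every composition factor of $L(\sigma)\circ L(\tau)$ is of the form $L(\pi)$ with $\pi\in\Gamma\!_\beta$.

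First I would reduce to checking which simple modules $L(\pi)$, $\pi\in\KP\!_\beta$, can appear in $\Res_{\alpha,\gamma}$-adjoint computations. By Frobenius reciprocity (the adjunction \eqref{indres1}), $\bbHom_{\calC_\beta}(P(\pi)\,,\,L(\sigma)\circ L(\tau))=\bbHom_{\calC_\alpha\times\calC_\gamma}(\Res_{\alpha,\gamma}P(\pi)\,,\,L(\sigma)\otimes L(\tau))$; so $[L(\sigma)\circ L(\tau):L(\pi)]\neq0$ forces $\Res_{\alpha,\gamma}P(\pi)$ to have $L(\sigma)\otimes L(\tau)$ as a subquotient. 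The key input is then a positivity/support statement at the level of the quiver geometry, which is exactly what Proposition \ref{prop:inftyX} and the corollary after it encode: the preprojective locus $Y_\beta\subset X_\beta$ is open, and the $IC$ sheaves $IC(\pi)$ with $\pi\in\Gamma\!_\beta$ are precisely those supported on $\bar Y_\beta$. I would translate the $\bfR_\beta$-module statement into a statement about $\Phi_\beta^\bullet$: the module $L(\sigma)\circ L(\tau)$ corresponds, under the equivalences of \S\ref{sec:CK}, to a semisimple complex whose simple summands $IC(\pi)$ must satisfy the support condition inherited from the fact that $IC(\sigma), IC(\tau)$ are supported on the preprojective loci $\bar Y_\alpha, \bar Y_\gamma$ and induction $\ind_\nu$ is a proper pushforward of an inverse image along the induction diagram \eqref{ind-diag}.

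The cleanest route, I expect, is the argument already used in Step 1 of the proof of Lemma \ref{lem:A}: if $M\in X_\alpha(F)$, $N\in X_\gamma(F)$ fit into an exact sequence $0\to M\to P\to N\to0$ with $P$ preprojective, then (using $\Hom_{F\bfQ}(Q,\calP_k)=0$ for $Q$ preinjective or regular, and \eqref{preprojective}) $M$ and $N$ are again preprojective. Geometrically this says that the image of $\widetilde\calX_\nu\cap$(preprojective flags) under $p_\nu$ lands in $\calY_\beta$, so $\ind_\nu$ carries complexes supported on preprojective loci to complexes supported on $\bar Y_\beta$. Hence every $IC(\pi)$ appearing in $IC(\sigma)\circledast IC(\tau)$ has $\pi\in\Gamma\!_\beta$ by the corollary after Proposition \ref{prop:inftyX}, and applying $\Phi_\beta^\bullet$ (together with the projective-cover identification \eqref{PPP} and exactness of $(j_\beta)^*$) we conclude that $P(\sigma)\circ P(\tau)$, and a fortiori $L(\sigma)\circ L(\tau)$, has all composition factors $L(\pi)$ with $\pi\in\Gamma\!_\beta$.

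The main obstacle is matching the bookkeeping between the two monoidal structures: the induction $\circ$ on $\calC_\beta$ corresponds under $\Phi_\beta^\bullet$ to $\circledast$ on $\Par(\calX_\beta)$ only up to the order-reversal $\circ^\op$ (Proposition \ref{prop:ind}), and one works at the level of \emph{projective} generators in \S\ref{sec:CK} whereas here we want the statement for \emph{simple} (finite-length) modules. So the last step is to pass from projectives to simples: since $L(\sigma)$ is a quotient of $P(\sigma)$ and $L(\tau)$ of $P(\tau)$, exactness of $\circ$ gives a surjection $P(\sigma)\circ P(\tau)\onto L(\sigma)\circ L(\tau)$, whence $[L(\sigma)\circ L(\tau):L(\pi)]\le[P(\sigma)\circ P(\tau):L(\pi)]$, and the right side vanishes for $\pi\notin\Gamma\!_\beta$ by the geometric argument above. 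This settles that $\circ$ preserves $\calD^\fl$, and one notes in passing that $\calD^\fl$ contains the unit (the trivial $\bfR_0$-module) and is closed under the associativity constraint since these are inherited from $\calC^\fl$, so $(\calD^\fl,\circ)$ is an exact monoidal subcategory.
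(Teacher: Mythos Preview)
There is a genuine gap in the last step. Even if you grant that every summand of $IC(\sigma)\circledast IC(\tau)$ is of the form $IC(\rho)$ with $\rho\in\Gamma\!_\beta$, this only gives $P(\sigma)\circ P(\tau)\simeq\bigoplus_\rho V_\rho\otimes P(\rho)$ with all $\rho\in\Gamma\!_\beta$. It does \emph{not} follow that the composition factors of $P(\sigma)\circ P(\tau)$ lie in $\Gamma\!_\beta$: the module $P(\rho)$ is the projective cover of $L(\rho)$ in $\calC_\beta$, not in $\calD_\beta$, and for $\rho\in\Gamma\!_\beta$ it typically has many composition factors $L(\pi)$ with $\pi\notin\Gamma\!_\beta$ (this is exactly why one must pass to $Q(\rho)=(f_{\Gamma\!_\beta})^*P(\rho)$ to get a projective in $\calD_\beta$). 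Hence the bound $[L(\sigma)\circ L(\tau):L(\pi)]\leqslant[P(\sigma)\circ P(\tau):L(\pi)]$ is useless: the right-hand side is in general nonzero for $\pi\notin\Gamma\!_\beta$. Your appeal to ``exactness of $(j_\beta)^*$'' does not repair this, since $(j_\beta)^*$ acts on sheaves over $\calX_\beta$, not on composition series in $\calC_\beta$.

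There is also a misreading of Step~1 of Lemma~\ref{lem:A}: that argument takes $0\to M\to\calP_n\to N\to 0$ with the \emph{middle} term preprojective and concludes that $N$ is a sum of regular and preinjective modules, i.e.\ \emph{not} preprojective. For your intended support argument you need the opposite implication (if the sub and quotient are preprojective then so is the extension), which is indeed true by left-exactness of $\Hom_{F\bfQ}(R,-)$, but this is not what Step~1 says, and in any case it does not rescue the argument above.

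The paper's route is entirely different and avoids geometry: by associativity of $\circ$ it suffices to treat $L(\beta_n)\circ L(\beta_m)$ for $n,m\in\bbN$. If $n\geqslant m$ this is the proper standard $\bar\Delta(\beta_n,\beta_m)$ by \eqref{Delta}; if $n\leqslant m$ it is (up to a grading shift) the proper costandard $\bar\nabla(\beta_m,\beta_n)$ by \cite{Mc18}. Either way it lies in $\calD$ by construction of the stratified structure.
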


\begin{proof}
For each Kostant partitions $\pi,$ $\sigma$ supported on $\Phi_{++}$, let us check that the induced
module $L(\pi)\circ L(\sigma)$ belongs to $\calD$. By the associativity of the induction, we may assume that
$\pi=(\beta_n)$ and  $\sigma=(\beta_m)$ with $n,m\in\bbN$. If $n\geqslant m$ then 
$L(\beta_n)\circ L(\beta_m)$
is the proper standard module $\bar\Delta(\beta_n,\beta_m)$ by \eqref{Delta}.
If $n\leqslant m$ then $L(\beta_n)\circ L(\beta_m)$
is the proper costandard module $\bar\nabla(\beta_n,\beta_m)$, up to some grading shift, 
by \cite[(5.2), thm.~10.1(2)]{Mc18} and the definition of the proper costandard modules in \cite[\S24]{Mc18}. 
Hence it lies again in $\calD$.
\end{proof}

\smallskip

\begin{proposition} \label{prop:monoidal}
The bifunctor $\circ$ 
preserves the subcategory $\calD^{\,\sharp}$ of $\calC^\fl$, yielding an exact graded monoidal full subcategory
$(\calD^{\,\sharp},\circ)$ of $(\calC^\fl,\circ).$
\end{proposition}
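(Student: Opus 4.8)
The plan is to deduce Proposition~\ref{prop:monoidal} from the already-established equivalence $\E^\sharp_\beta$ of Theorem~\ref{thm:final1} together with Lemma~\ref{lem:Dmonoidal}. By Lemma~\ref{lem:Dmonoidal} the bifunctor $\circ$ sends $\calD^\fl\times\calD^\fl$ into $\calD^\fl$, so the only thing left to prove is that if $M\in\calD_\alpha^\sharp$ and $N\in\calD_\gamma^\sharp$ then $M\circ N$ lies in $\calD_\beta^\sharp$, where $\beta=\alpha+\gamma$. Since $\calD_\beta^\sharp$ is the Serre subcategory of $\calD_\beta$ cut out by the quotient $\bfS_\beta\to\bfS_\beta^\sharp=\k\otimes_{H^\bullet_{G_r}}\bfS_\beta$, and since all objects involved are of finite length (Remark~\ref{rem:fl}), it suffices to check the claim for $M=L(\pi)$, $N=L(\sigma)$ with $\pi\in\Gamma\!_\alpha$, $\sigma\in\Gamma\!_\gamma$ such that both $L(\pi),L(\sigma)$ already lie in $\calD^\sharp$, and then to see that every composition factor of $L(\pi)\circ L(\sigma)$ is again a simple object of $\calD^\sharp$. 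As in Lemma~\ref{lem:Dmonoidal}, associativity of $\circ$ reduces this further to the case $\pi=(\beta_n)$, $\sigma=(\beta_m)$.

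For the reduced case I would argue on two fronts. First, the simple objects of $\calD_\beta^\sharp$ are exactly the $L(\pi)$ with $\pi\in\Gamma\!_\beta$ on which $\bfS_\beta$ acts through $\bfS_\beta^\sharp$; equivalently (via $\E^\sharp_\beta$ of Theorem~\ref{thm:final1} and its t-exactness, once established, or directly via the description $\bfS_\beta^\sharp=\bfS_\beta/J_\beta\bfS_\beta$ of Proposition~\ref{prop:sharp}) they are characterized by the vanishing of the central classes coming from $H^{>0}_{G_r}$. So I need: the central homomorphism $H^\bullet_{G_r}\to Z(\bfS_\beta)$ of \eqref{CENT1} is compatible, under induction $\circ$, with the corresponding homomorphisms for $\alpha$ and $\gamma$. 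Concretely, the vector bundle $\calE_\beta\to\calY_\beta$ restricts along the induction diagram to (a quotient of) the sum of the bundles $\calE_\alpha$, $\calE_\gamma$; the generating series $E_\beta(t)=E_{-t}(x)/E_{-t}(y)$ of \eqref{Eb} is multiplicative in $\beta$ under the parabolic restriction, and hence the ideal $J_\beta$ pulls back into the ideals $J_\alpha$, $J_\gamma$ inside the endomorphism rings of the relevant induced projectives. This is the technical heart of the argument and I expect it to be the main obstacle: one must track the $H^\bullet_{G_r}$-action through the geometric (or algebraic) induction functor $R^G_{L\subset P}$ and see that it factors through $H^\bullet_{G_{r_\alpha}}\otimes H^\bullet_{G_{r_\gamma}}$ in a way compatible with the Chern-class ideals. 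The cleanest route is probably to phrase it on the $\calY$-side: the stack $\calE_\Gamma$ and the resulting map $\calY_\Gamma\to[\bullet/G_r]$ behave well under the induction diagram \eqref{ind-diag}, because the universal rank-$r$ bundle on $\calB un^+\times\bbP^1$ restricts compatibly, and then invoke the algebra homomorphism \eqref{Hy}.

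Second, once that compatibility is in hand, the case $n\geqslant m$ gives $L(\beta_n)\circ L(\beta_m)=\bar\Delta(\beta_n,\beta_m)$, a proper standard object in $\calD_\beta$; its composition factors are $L(\pi)$ with $\pi\in\Gamma\!_\beta$, and each such $L(\pi)$ is acted on by $H^{>0}_{G_r}$ through the quotient because $\bar\Delta(\beta_n,\beta_m)$ is, by the multiplicativity of the Chern ideals, annihilated by $J_\beta$ (equivalently, its image under the functor $\A_\beta$ followed by the derived Satake equivalence is a perverse coherent sheaf on $\calN_r$, not merely on $\frakg_r$). Hence $\bar\Delta(\beta_n,\beta_m)\in\calD_\beta^\sharp$. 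The case $n\leqslant m$ is symmetric, giving a grading shift of $\bar\nabla(\beta_n,\beta_m)$, which is the Verdier/duality partner and lies in $\calD_\beta^\sharp$ by the same reasoning applied to costandards. Combining, $\circ$ preserves $\calD^\sharp$; exactness of $\circ$ is inherited from exactness on $\calC^\fl$, and the monoidal-category axioms (associativity constraint, unit $\bf1=\bfR_0$) restrict from $(\calC^\fl,\circ)$ since $\bf1\in\calD^\sharp_0$. This yields the exact graded monoidal full subcategory $(\calD^\sharp,\circ)\subset(\calC^\fl,\circ)$ as claimed.

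A cleaner alternative, if one is willing to first record t-exactness of $\E^\sharp_\beta$ (which the paper announces and uses anyway), is purely categorical: transport $\circ$ through $\E^\sharp_\beta$ and recognize that on the coherent side the convolution $\circ^\op$ on $\PCoh([\calN/G^c])_{\Lambda^+}$ is exact and closed (this is essentially Cautis--Williams plus the derived Satake formalism), so $M\circ N$ corresponds to an object of $\PCoh$, i.e. lies in $\calD^\sharp$. I would present the direct argument above as the primary proof and mention this transported argument only if the t-exactness statement is available at this point in the text; the direct argument has the advantage of not circularly invoking the monoidality of $\E^\sharp$, which is only conjectural here (Conjecture~\ref{conj:monoidal}).
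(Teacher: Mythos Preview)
Your reduction to simple objects is invalid, and this is the main gap. The subcategory $\calD_\beta^\sharp\subset\calD_\beta$ is \emph{not} a Serre subcategory: it consists of the $\bfS_\beta$-modules annihilated by the ideal $J_\beta$, and a module can have all composition factors in $\calD_\beta^\sharp$ without itself lying there (take for instance $\bfS_\beta=\k[z]$, $J_\beta=(z)$, and the module $\k[z]/(z^2)$). In fact every simple $L(\pi)$ with $\pi\in\Gamma_\beta$ already lies in $\calD_\beta^\sharp$, since $J_\beta$ is generated by central elements of positive degree; so ``checking composition factors'' carries no information beyond Lemma~\ref{lem:Dmonoidal}. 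The same objection applies to your reduction of $M,N$ to simples: even if each $L(\pi_i)\circ L(\sigma_j)$ lies in $\calD^\sharp$, the extensions assembling $M\circ N$ need not. Your alternative route via the t-exactness of $\E^\sharp_\beta$ is circular: in the paper, Theorem~\ref{thm:2} relies on the fact that $\bar\Delta(\pi),\bar\nabla(\pi)\in\calD_\beta^\sharp$, which is deduced precisely from Proposition~\ref{prop:monoidal}.

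The correct argument is hidden inside your text but not extracted. You note that $E_\beta(t)$ is multiplicative under parabolic restriction; this is exactly the point, and it finishes the proof in one stroke without any case analysis. The paper observes that the inclusion $Z(\bfR_\beta)\subset Z(\bfR_\alpha)\otimes Z(\bfR_\gamma)$ is compatible with the diagonal $H^\bullet_{G_\beta}\to H^\bullet_{G_\alpha}\otimes H^\bullet_{G_\gamma}$, and under this diagonal $E_\beta(t)\mapsto E_\alpha(t)\otimes E_\gamma(t)$. Hence $J_\beta\subset (J_\alpha\otimes 1)+(1\otimes J_\gamma)$, so $J_\beta$ annihilates $M\circ N=\bfR_\beta\otimes_{\bfR_\alpha\otimes\bfR_\gamma}(M\otimes N)$ for \emph{any} $M\in\calD_\alpha^\sharp$, $N\in\calD_\gamma^\sharp$. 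No reduction to simples, no splitting into $n\geqslant m$ versus $n\leqslant m$, is needed.
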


\begin{proof}
The $Z(\bfR_\beta)$-action on $\bfR_\beta$ by multiplication preserves the subalgebra
$\bfR_{\alpha}\otimes\bfR_{\gamma}$.
The action on the unit of $\bfR_{\alpha}\otimes\bfR_{\gamma}$
gives an inclusion 
\begin{align}\label{INC1}Z(\bfR_\beta)\subset Z(\bfR_{\alpha})\otimes Z(\bfR_{\gamma})\end{align} 
which fits into the following commutative diagram
\begin{align}\label{DIAG4}
\begin{split}
\xymatrix{
Z(\bfR_\beta)\ar[r]^-{\eqref{INC1}}& Z(\bfR_{\alpha})\otimes Z(\bfR_{\gamma})\\
\ar@{=}[u]H^\bullet_{G_\beta}\ar[r]^-{\Delta_{\alpha,\gamma}}&
H^\bullet_{G_\alpha}\otimes H^\bullet_{G_\gamma}\ar@{=}[u]
}
\end{split}
\end{align}
where
$\Delta_{\alpha,\gamma}$ is the diagonal map.
The formal series $E_\beta(t)$ in \eqref{Eb} is such that
\begin{align}\label{ID4}\Delta_{\alpha,\gamma}E_\beta(t)=E_{\alpha}(t)\otimes E_{\gamma}(t).\end{align}
Thus, under the inclusion \eqref{INC1}, we have
\begin{align}\label{ID3}
J_\beta\subset (J_\alpha\otimes 1)+(1\otimes J_\gamma).
\end{align}
We deduce that
$J_\beta$ annihilates the $\bfR_\beta$-module
$$M\circ N=\bfR_\beta\otimes_{\bfR_{\alpha}\otimes\bfR_{\gamma}}(M\otimes N)$$
for any modules $M\in\calD_{\alpha}$, $N\in\calD_{\gamma}$ killed by 
$J_\alpha$, $J_\gamma$ respectively.
The category $\calD_\beta^{\,\sharp}$ consists of the modules in $\calD_\beta$ which are killed by $J_\beta$.
From the discussion above, we deduce that
the monoidal structure $\circ$  preserves the subcategory $\calD^{\,\sharp}$ of $\calD$, i.e., the functor $\xi_*$ in
\eqref{xi1} extends to a monoidal functor
$(\calD^{\,\sharp},\circ)\to (\calD,\circ).$
\end{proof}

\smallskip

\subsection{The monoidal structure on perverse coherent sheaves on the nilpotent cone}

Fix $\alpha,\beta,\gamma\in Q_{++}$ with $\beta=\alpha+\gamma$.
Write $\alpha=u\alpha_0+k\delta$ and $\gamma=v\alpha_0+l\delta$.
Let $P_{u,v}^c$ be the standard parabolic subgroup of $G_r^c$ with
Levi subgroup $G_{u}\times G_{v}\times\bbG_m$.
Let $\calN_{u,v}$ be the nilpotent cone of $P_{u,v}^c$.
We consider the following diagram of Artin stacks
\begin{align}\label{DIAG-IND}
\xymatrix{
[\calN_u/G_u^c]\times[\calN_v/G_v^c]
&\ar[l]_-{q}[\calN_{u,v}/P_{u,v}^c]\ar[r]^-{p}&
[\calN_r/G_r^c].
}
\end{align}
Since the morphism $q$ is smooth, we have a triangulated bifunctor
$$\circ=Rp_*\, q^*:
\D^\b\Coh([\calN_u/G_u^c])\times\D^\b\Coh([\calN_v/G_v^c])\to
\D^\b\Coh([\calN_r/G_r^c]).$$
We consider the graded Abelian category  given by
\begin{align*}\PCoh([\calN/G^c])_{\Lambda^+}
=\bigoplus_{r>0}\PCoh([\calN_r/G_r^c])_{\Lambda^+_r}
=\bigoplus_{\beta\in Q_{++}}\PCoh([\calN_r/G_r^c])_{\Lambda_\beta}.
\end{align*}

\smallskip

\begin{proposition}\label{prop:mon4}
The functor $\circ$ restricts to an exact bifunctor of graded Abelian categories
$$\circ:\PCoh([\calN_u/G_u^c])\times\PCoh([\calN_v/G_v^c])\to
\PCoh([\calN_r/G_r^c]),$$
giving rise to an exact graded monoidal category 
$\big(\PCoh([\calN/G^c])_{\Lambda^+}\,,\,\circ\big)$.
\end{proposition}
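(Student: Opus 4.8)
The plan is to reduce the claim to showing that the convolution functor $\circ = Rp_*\,q^*$ on derived categories of coherent sheaves is \emph{t-exact} for the perverse coherent t-structure on $[\calN_r/G_r^c]$, and that it lands in the subcategory indexed by $\Lambda_\beta$. First I would observe that exactness of a monoidal product on the full perverse subcategory plus the inclusions $\PCoh([\calN_r/G_r^c])_{\Lambda^+_r}\subset\PCoh([\calN_r/G_r^c])$ being Serre automatically restricts $\circ$ to the $\Lambda^+$-graded pieces, provided one checks that $\bfT(\lambda)^\sharp\circ\bfT(\mu)^\sharp$ has constituents with weights in $\Lambda^+_r$ when $\lambda\in\Lambda^+_u$, $\mu\in\Lambda^+_v$; this last point follows from the Satake/parabolic-induction picture since the tilting objects $\bfT(\lambda)$ on $[\frakg/G^c]$ correspond under $\C_\beta$ to $IC(\lambda)^-_\mix$, and convolution of such on the thick affine Grassmannian preserves the support conditions that cut out $\Lambda_\beta$. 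Concretely, I would transport the statement along the derived Satake equivalence $\C_\beta$ of Proposition~\ref{prop:equivalence7}: the diagram \eqref{DIAG-IND} is the geometric Satake source of the convolution product on $\bigoplus_r\D^\b\Coh([\calN_r/G_r^c])$, and Bezrukavnikov--Finkelberg identify it with the convolution on equivariant perverse sheaves on the affine Grassmannian, which is t-exact by the fundamental theorem of geometric Satake.

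The key steps, in order, would be: (1) recall that $q$ in \eqref{DIAG-IND} is smooth of some relative dimension $d_{u,v}$, so $q^*[\,d_{u,v}\,]$ is perverse t-exact (with the appropriate normalization built into the dimension function defining perverse coherent sheaves), and that $p$ is proper; (2) invoke the derived Satake equivalence to replace $\D^\b\Coh([\calN_r/G_r^c])_{\Lambda_\beta}$ by $\D^\b_\mix(\Gr^-_\beta,S)$ via $\C_\beta^\sharp$, under which $\circ$ becomes the convolution $\star$ of equivariant complexes on the affine Grassmannian --- here one needs to check that the parabolic induction diagram on the nilpotent-cone side matches the convolution diagram on the Satake side, which is exactly the content of \cite[\S5]{R17} and \cite[\S5]{A15d} and should be cited rather than reproved; (3) use that the Satake convolution is t-exact for the perverse t-structure (Lusztig/Ginzburg/Mirković--Vilonen for the ordinary case, and the mixed refinement of Achar--Riche), hence $\circ$ is t-exact for the perverse coherent t-structure; (4) conclude that $\circ$ restricts to an exact bifunctor on the hearts, and check the weight/support bookkeeping that keeps us inside the $\Lambda^+$-graded summand, using that the classes $[\bfT(\lambda)^\sharp]$ generate and that induction from $G_u\times G_v$ of a polynomial-weight representation stays polynomial; (5) assemble the associativity constraint, unit object ($\calO_{\calN_0}=\k$, the structure sheaf of the point for $r=0$), and the pentagon axiom from the corresponding data on the Satake side, so that $\big(\PCoh([\calN/G^c])_{\Lambda^+}\,,\,\circ\big)$ is an exact graded monoidal category in the sense of \S\ref{sec:HW}.

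I would streamline step (2)--(3) by noting that t-exactness is really the only substantive input: once we know $Rp_*q^*$ sends the perverse coherent heart to itself, exactness as a bifunctor of Abelian categories is formal (a t-exact triangulated functor restricts to an exact functor on hearts, and bi-t-exactness in each variable gives bi-exactness). The monoidal coherence data can then be inherited: associativity comes from the standard three-fold convolution diagram $[\calN_{u,v,w}/P^c_{u,v,w}]$ and its two factorizations, each computing $(\,\cdot\circ\cdot\,)\circ\cdot$ and $\cdot\circ(\,\cdot\circ\cdot\,)$ up to canonical isomorphism by base change and the projection formula; the unit and the isomorphism $\varepsilon:\bf 1\otimes\bf 1\to\bf 1$ are trivial since $r=0$ contributes the trivial group.

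The hard part will be making precise the claim that the parabolic-induction diagram \eqref{DIAG-IND} on $[\calN_r/G_r^c]$ corresponds, under derived geometric Satake, to the convolution diagram on $\Gr^-_\beta$ in a way compatible with the \emph{perverse} t-structures on both sides --- in particular verifying that the normalizations (the shift $[\dim q]$, the choice of dimension function in Bezrukavnikov's perverse coherent t-structure, and the cohomological-versus-perverse shift on the Satake side) all line up so that t-exactness transports correctly. A secondary subtlety is that \eqref{DIAG-IND} uses $\calN_{u,v}$, the nilpotent cone of the Levi $G_u\times G_v$ rather than of the parabolic $P_{u,v}$; one must check (or cite) that the relevant map is still smooth with the expected fibers and that $Rp_*$ of the appropriately shifted $q^*$ of a perverse coherent sheaf is again perverse coherent. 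I expect that all of this is available in \cite{R17} and \cite{A15d} and in Bezrukavnikov--Finkelberg, so the proof will consist largely of assembling these citations, with steps (1), (4) and (5) being genuinely routine.
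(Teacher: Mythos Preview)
Your approach via derived Satake is valid in principle, but it is much heavier than what the paper actually does, and the ``hard part'' you identify (matching the induction diagram \eqref{DIAG-IND} with Satake convolution and tracking all normalizations) is entirely avoided there.

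The paper's proof is a direct two-line computation on the coherent side. It observes that for weights $\mu\in\bbZ^u$, $\nu\in\bbZ^v$, the base change theorem applied to the Springer-type diagrams defining the Andersen--Jantzen sheaves gives
\[
A(\mu)\circ A(\nu)\;=\;A(\mu,\nu),
\]
where $(\mu,\nu)\in\bbZ^r$ is the concatenated weight. Since $A(\mu,\nu)$ is perverse coherent (as recalled just before \eqref{PS}) and the proper standard objects are shifts of Andersen--Jantzen sheaves, it follows at once that $\pmb{\bar\Delta}(\mu)^\sharp\circ\pmb{\bar\Delta}(\nu)^\sharp$ is perverse. One then bootstraps to simples: each $\bfL(\mu)^\sharp$ is a quotient of $\pmb{\bar\Delta}(\mu)^\sharp$ and a subobject of $\pmb{\bar\nabla}(\mu)^\sharp$, and the same identity (with $w_0$-twists) handles proper costandards, so $\bfL(\mu)^\sharp\circ\bfL(\nu)^\sharp$ is perverse and $\circ$ is t-exact on the heart. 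No Satake equivalence, no convolution on $\Gr$, no normalization-matching is needed.

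What each approach buys: yours gives a conceptual explanation (t-exactness on the nilpotent cone is the shadow of t-exactness of Satake convolution), at the cost of importing a substantial monoidal compatibility statement and chasing shifts. The paper's approach is elementary and self-contained --- the only input is the definition of $A(\lambda)$ and one proper-base-change square --- and it also makes the $\Lambda^+$ bookkeeping immediate, since $A(\mu,\nu)$ visibly lies in the correct weight range.
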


\begin{proof} For any characters $\mu,$ $\nu$ of $T_{u}$, $T_{v}$ respectively, 
let $(\mu,\nu)$ be the character of the torus
$T_r=T_{u}\times T_{v}$ obtained by glueing $\mu$ with $\nu$.
The base change theorem yields the following isomorphism of Andersen-Jantzen sheaves
$A(\mu)\circ A(\nu)=A(\mu,\nu).$ 
Hence, for all dominant $\mu$, $\nu$ the complex
$\pmb{\bar\Delta}(\mu)^\sharp\,\circ\,\pmb{\bar\Delta}(\nu)^\sharp$
is perverse.
We deduce that the complex
$\bfL(\mu)^\sharp\,\circ\,\bfL(\nu)^\sharp$ is perverse as well, and 
the proposition follows.
\end{proof}

\smallskip

\begin{remark}
By Proposition \ref{prop:mon4}, the triangulated category
$$\D^\perf\Coh([\calN/G^c])_{\Lambda^+}=
\bigoplus_{\beta\in Q_{++}}\D^\perf\Coh([\calN_r/G_r^c])_{\Lambda_\beta}$$
is monoidal.
Considering instead of \eqref{DIAG-IND} the following diagram of Artin stacks
\begin{align*}
\xymatrix{
[\frakg_u/G_u^c]\times[\frakg_v/G_v^c]
&\ar[l][\frakp_{u,v}/P_{u,v}^c]\ar[r]&
[\frakg_r/G_r^c],
}
\end{align*}
we get a triangulated monoidal structure on the triangulated category defined by
$$\D^\b\Coh([\frakg/G^c])_{\Lambda^+}=
\bigoplus_{\beta\in Q_{++}}\D^\b\Coh([\frakg_r/G_r^c])_{\Lambda_\beta}.$$
The functor $i_*$ in \eqref{diag5}  yields a monoidal functor
$\D^\perf\Coh([\calN/G^c])_{\Lambda^+}\to\D^\b\Coh([\frakg/G^c])_{\Lambda^+}.$ 
The left adjoint functor $Li^*$ is not monoidal.
\end{remark}

\iffalse%%%%%%%%%%%%%
Let us consider the following, more general, setting.
Write $G=G_\gamma$, $\calN=\calN_\gamma$ and let
$P\subset G$ a standard parabolic subgroup with Levi subgroup $L\subset P$, 
$\frakp$, $\frakl$ their Lie algebras, and $\calN_\frakl$, $\calN_\frakp$ their nilpotent cones.
The diagram of Artin stacks
\begin{align*}
\xymatrix{
[\calN_\frakl\,/\,L^c]
&\ar[l]_-{q}[\calN_\frakp\,/\,P^c]\ar[r]^-{p}&
[\calN\,/\,G^c]
}
\end{align*}
yields the functor
$q^*\circ Rp_*:\D^\b\Coh^{L^c}(\calN_\frakl)\to\D^\b\Coh^{G^c}(\calN),$
and we must check that this functor is exact for the perverse coherent t-structure.
\fi%%%%%%%%%%%%%%%

\smallskip

\subsection{The monoidal equivalence}\label{sec:mon3}

The category
$\PCoh([\calN_r/G_r^c])_{\Lambda_\beta}$ is graded properly stratified.
The graded Abelian categories $\calD_\beta^{\,\sharp}$, $\calD_\beta$
are equipped with the pair of adjoint functors $(\xi^*\,,\,\xi_*)$.
By Proposition \ref{prop:monoidal} and the definition of the proper standard and costandard modules
$\bar\Delta(\pi)$, $\bar\nabla(\pi)$ in $\calD_\beta$ in  \cite[(6.5), (6.7)]{K15c}, we deduce that
$\bar\Delta(\pi), \bar\nabla(\pi)\in\calD_\beta^{\,\sharp}$ for all $\pi\in\Gamma\!_\beta$.
We may write
$\bar\Delta(\pi)^\sharp=\bar\Delta(\pi)$ and $\bar\nabla(\pi)^\sharp=\bar\nabla(\pi)$
to indicate that we view them as objects in $\calD_\beta^{\,\sharp}$.
In \eqref{Dsharp} we have defined
$\Delta(\pi)^\sharp=\xi^*\Delta(\pi)$ for all $\pi\in\Gamma\!_\beta.$
Hence, we can consider the subcategory $\calD_\beta^{\,\sharp,\,\Delta}\subset\calD_\beta^{\,\sharp}$ 
consisting of the objects with a  
finite filtration whose subquotients are isomorphic to some $\Delta(\pi)^\sharp$ with $\pi\in\Gamma\!_\beta$.

\smallskip

\begin{lemma}\label{lem:DP}
\hfill
\begin{itemize}[leftmargin=8mm]
\item[$\mathrm{(a)}$]  $\xi^*$ restricts to an exact functor
$\calD_\beta^{\,\Delta}\to\calD_\beta^{\,\sharp,\,\Delta}.$
\item[$\mathrm{(b)}$] 
$\calD_\beta^{\,\sharp,\,\proj}=
\{P\in\calD_\beta^{\,\sharp,\,\Delta}\,;\,\bbExt^1_{\calD_\beta^{\,\sharp}}(P\,,\,\Delta(\pi)^\sharp)=0
\,,\,\forall\pi\in\Gamma\!_\beta\}$.
\item[$\mathrm{(c)}$] 
$\PCoh([\calN_r/G_r^c])_{\Lambda_\beta}^\proj=
\{P\in\PCoh([\calN_r/G_r^c])_{\Lambda_\beta}^\Delta\,;\,
\bbExt^1_{\PCoh([\calN_r/G_r^c])}(P\,,\,\pmb\Delta(\lambda)^\sharp)=0\,,\,
\forall\lambda\in\Lambda_\beta\}$.
\end{itemize}
\end{lemma}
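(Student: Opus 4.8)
The three statements are standard characterizations of the projective objects in a properly stratified category, so the plan is to reduce all of them to general facts about properly stratified (or quasi-hereditary-type) categories and then transfer them across the equivalences already established. First I would recall that $\calD_\beta$ is affine properly stratified, that $\calD_\beta^{\,\sharp}$ is Artinian (Remark \ref{rem:fl}) and hence actually graded \emph{properly stratified} in the usual finite sense, with standard objects $\Delta(\pi)^\sharp=\xi^*\Delta(\pi)$, proper standard objects $\bar\Delta(\pi)^\sharp$, and simple objects $L(\pi)^\sharp$ for $\pi\in\Gamma\!_\beta$; this last point needs a short argument, namely that $\xi^*$ sends the standard flag filtration of $P(\pi)\in\calD_\beta$ to a filtration of $Q(\pi)^\sharp=\xi^*Q(\pi)$ by $\Delta(\pi)^\sharp$'s, which is exactly what part (a) asks for in a more precise form.

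For part (a), the key point is that $\Delta$-filtered modules are $\mathrm{Tor}$-acyclic for $\xi$: since $\Delta(\pi)$ is free over the polynomial ring $\bbEnd_{\calC_\beta}(\Delta(\pi))^\op$ (Proposition \ref{prop:Cinfty}(a)) and $\xi$ is the specialization $\bfS_\beta\to\bfS_\beta^\sharp=\k\otimes_{H^\bullet_{G_r}}\bfS_\beta$, one checks that the central subalgebra $H^\bullet_{G_r}$ acts on $\Delta(\pi)$ through that polynomial ring in such a way that $\Delta(\pi)$ is free, hence flat, over it. Concretely, $z_\alpha$ in \eqref{CENT3} and the symmetric functions $z_i$ in \eqref{ISOM0} are exactly the images of the generators of $H^\bullet_{G_r}$ under \eqref{CENT1}, up to the identification $\bfS_\beta=\bfS_\beta^+$; so $\Delta(\pi)$ is a free $H^\bullet_{G_r}$-module, $L_i\xi^*\Delta(\pi)=0$ for $i>0$, and $\xi^*$ is exact on $\calD_\beta^{\,\Delta}$. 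Exactness on short exact sequences of $\Delta$-filtered objects then follows, and the image lands in $\calD_\beta^{\,\sharp,\,\Delta}$ by the very definition of that subcategory.

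For parts (b) and (c), both have the same shape, so I would prove a single lemma: in a graded properly stratified category $\calA$ with standard objects $\{\Delta(\lambda)\}$ and proper standard objects $\{\bar\Delta(\lambda)\}$, an object $P$ is projective if and only if $P$ is $\Delta$-filtered and $\bbExt^1(P,\Delta(\lambda))=0$ for all $\lambda$. The "only if" direction is immediate (projectives are $\Delta$-filtered by the general theory, and $\bbExt^{>0}$ out of a projective vanishes). For the "if" direction one argues by induction on the $\Delta$-length: the vanishing of $\bbExt^1(P,\Delta(\lambda))$ for $\lambda$ maximal among those with $\Delta(\lambda)$ a subobject lets one split off $\Delta(\lambda)$, and since $\Delta(\lambda)$ is free over a polynomial ring $\bbEnd(\Delta(\lambda))^\op$ the $\bbExt^1$-vanishing against $\Delta(\lambda)$ upgrades (using the standard fact $\bbExt^1(\Delta(\mu),\Delta(\lambda))\ne 0 \Rightarrow \mu<\lambda$ plus the acyclicity of $P(\lambda)\to\Delta(\lambda)$) to $\bbExt^1(P,L(\lambda))=0$, and one concludes $P$ is a direct sum of $Q(\lambda)$'s, i.e.\ projective. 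I would then apply this lemma to $\calA=\calD_\beta^{\,\sharp}$ (using (a) and the identification $\calD_\beta^\sharp=\bfS_\beta^\sharp\-\mod$ from Lemma \ref{lem:AA}) to get (b), and to $\calA=\PCoh([\calN_r/G_r^c])_{\Lambda_\beta}$ (which is graded properly stratified by \cite{M13} and Arinkin--Bezrukavnikov, with standards $\pmb\Delta(\lambda)^\sharp$) to get (c).

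\textbf{Main obstacle.} The routine parts are the homological bookkeeping; the step that genuinely needs care is the freeness/$\mathrm{Tor}$-acyclicity claim underlying part (a) — that is, pinning down precisely how the central homomorphism \eqref{CENT1} interacts with the polynomial endomorphism rings $\bbEnd_{\calC_\beta}(\Delta(\pi))^\op\cong\k[z_1,\dots,z_{r_\pi}]^{S_\pi}$, so that one can conclude $\Delta(\pi)$ is free over $H^\bullet_{G_r}$ and hence $L\xi^*$ is concentrated in degree $0$ on $\calD_\beta^{\,\Delta}$. Once that is established, everything else is an application of the general theory of (graded, properly) stratified categories together with Proposition \ref{prop:sharp} and the Koszul-type identifications already in place.
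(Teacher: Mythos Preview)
Your treatment of part (a) is essentially the paper's: the key point is that each $\Delta(\pi)$ is free over $H^\bullet_{G_r}$, and this is shown exactly as you indicate, by tracing the central map \eqref{CENT1} through the inclusions of centers and using that $\Delta(\beta_k)$ is free over $\k[z_{\beta_k}]$ so that the induced module \eqref{IND3} is free over $\k[z_1,\dots,z_r]$, hence over $H^\bullet_{G_r}$, and $\Delta(\pi)$ is a summand. Your identification of this freeness as the main obstacle is exactly right. Part (c) is, as you say, a general fact about graded properly stratified categories, and the paper simply cites this.

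For part (b), however, there is a genuine gap. First, you assume that $\calD_\beta^{\,\sharp}$ is graded properly stratified with the $\Delta(\pi)^\sharp$ as standards; this is \emph{not} established anywhere in the paper prior to this lemma (it follows \emph{a posteriori} from Theorem~\ref{thm:2}, whose proof uses the lemma), so you cannot invoke the general characterization of projectives without first proving the stratified structure, which is more than the one-line consequence of (a) you sketch. Second, your proposed induction on $\Delta$-length does not work as written: to split an exact sequence $0\to\Delta(\lambda)^\sharp\to P\to K\to 0$ you need $\bbExt^1(K,\Delta(\lambda)^\sharp)=0$, not $\bbExt^1(P,\Delta(\lambda)^\sharp)=0$, and the claimed ``upgrade'' from $\bbExt^1(P,\Delta(\lambda)^\sharp)=0$ to $\bbExt^1(P,L(\lambda)^\sharp)=0$ is not justified (it would require control of $\bbExt^2$ against the radical of $\Delta(\lambda)^\sharp$, which you do not have).

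The paper avoids both issues by a different route. It first observes $\xi^*(\calD_\beta^{\,\proj})=\calD_\beta^{\,\sharp,\,\proj}$ (adjointness plus Krull--Schmidt), which together with (a) gives the easy inclusion. For the hard inclusion it establishes, via adjunction and (a), the vanishing $\bbExt^{i}_{\calD_\beta^{\,\sharp}}(\Delta(\sigma)^\sharp,\bar\nabla(\pi)^\sharp)=0$ for all $i>0$, and from this deduces the characterization $\calD_\beta^{\,\sharp,\,\Delta}=\{M:\bbExt^{>0}_{\calD_\beta^{\,\sharp}}(M,\bar\nabla(\pi)^\sharp)=0\ \forall\pi\}$. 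Then, given $P\in\calD_\beta^{\,\sharp,\,\Delta}$ with $\bbExt^1(P,\Delta(\pi)^\sharp)=0$, one takes a projective cover $Q\to P$; its kernel $K$ is $\Delta^\sharp$-filtered by the Ext-characterization just proved, and the hypothesis on $P$ makes the short exact sequence $0\to K\to Q\to P\to 0$ split. This argument uses the proper costandards $\bar\nabla(\pi)^\sharp$ as an intermediary and does not need to know in advance that $\calD_\beta^{\,\sharp}$ is properly stratified.
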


\begin{proof}
Fix  $\pi=((\beta_l)^{p_l},\dots,(\beta_0)^{p_0})$ in $\Gamma\!_\beta$.
The map \eqref{INC1} yields an inclusion
\begin{align*}
Z(\bfR_\beta)\subset Z(\bfR_{\beta_l})^{\otimes p_l}\otimes\dots\otimes Z(\bfR_{\beta_0})^{\otimes p_0}.
\end{align*}
Since $\pi$ is a Kostant partition of $\beta$, we have $r_\pi=r$, hence the central elements 
$z_{\beta_1},\dots,z_{\beta_l}$ in \eqref{CENT3} yield a $\k$-algebra embedding
\begin{align}\label{INC4}
\k[z_1,z_2,\dots,z_r]\subset Z(\bfR_{\beta_l})^{\otimes p_l}\otimes\dots\otimes Z(\bfR_{\beta_0})^{\otimes p_0}
\end{align}
which fits into the following commutative diagram of inclusions
$$\xymatrix{
Z(\bfR_\beta)\ar[r]^-{\eqref{INC1}}&Z(\bfR_{\beta_l})^{\otimes p_l}\otimes\dots\otimes(\bfR_{\beta_0})^{\otimes p_0}\\
\k[z_1,z_2,\dots,z_r]^{S_r}\ar[u]\ar[r]&\k[z_1,z_2,\dots,z_r]\ar[u]_-{\eqref{INC4}}
}$$
Now, for each $k=1,\dots,r$, the map 
$H^\bullet_{G_{\beta_k}}\to Z(\bfS_{\beta_k})$ in \eqref{CENT1} is the composition of the map 
$H^\bullet_{G_{\beta_k}}=\k[z_{\beta_k}]\to Z(\bfR_{\beta_k})$ in \eqref{CENT3} with the restriction 
$(j_{\beta_k})^*:Z(\bfR_{\beta_k})\to Z(\bfS_{\beta_k})$.
Hence, the proof of \eqref{ID3} implies that the map 
$H^\bullet_{G_{\beta}}\to Z(\bfS_{\beta})$
 in \eqref{CENT1} is the composition of
the chain of map 
$$\xymatrix{
H^\bullet_{G_{\beta}}\ar@{=}[r]&\k[z_1,z_2,\dots,z_r]^{S_r}\ar[r]&Z(\bfR_\beta)\ar[r]^{(j_\beta)^*}&Z(\bfS_\beta)
}.$$
Since $\Delta(\beta_k)$ is a free module over $\k[z_{\beta_k}]$ for each $k$,
the induced module
\begin{align}\label{IND3}
\Delta(\beta_l)^{\circ\, p_l}\circ\dots\circ
\Delta(\beta_1)^{\circ\, p_1}\circ\Delta(\beta_0)^{\circ\, p_0}
\end{align}
is free over $\k[z_1,z_2,\dots,z_r]$, hence over $H^\bullet_{G_r}$.
By \eqref{Delta0}, \eqref{Delta}, the standard module $\Delta(\pi)$ is a direct summand of 
the induced module \eqref{IND3}, 
hence it is also free over $H^\bullet_{G_r}$.
Part (a) follows, because 
$\xi^*(M)=\k\otimes_{H^\bullet_{G_r}}\!M$ for each module $M\in\calD_\beta$.

\smallskip

Now, let us concentrate on part (b).
Since $\xi^*$ is left adjoint to $\xi_*$, which is exact, we deduce that 
$\xi^*(\calD_\beta^{\,\proj})\subset\calD_\beta^{\,\sharp,\,\proj}$.
Using the fact that the categories $\calD_\beta$, $\calD^{\,\sharp}_\beta$ are both Krull-Schmidt,
we get indeed an equality
$\xi^*(\calD_\beta^{\,\proj})=\calD_\beta^{\,\sharp,\,\proj}.$
Since the category $\calD_\beta$ is affine highest weight, we have
$$\calD_\beta^{\,\proj}=
\{P\in\calD_\beta^{\,\Delta}\,;\,\bbExt^1_{\calD_\beta}(P\,,\,\Delta(\pi))=0
\,,\,\forall\pi\in\Gamma\!_\beta\},$$
hence part (a) implies that 
$$\calD_\beta^{\,\sharp,\,\proj}\subset
\{P\in\calD_\beta^{\,\sharp,\,\Delta}\,;\,
\bbExt^1_{\calD_\beta^{\,\sharp}}(P\,,\,\Delta(\pi)^\sharp)=0
\,,\,\forall\pi\in\Gamma\!_\beta\}.$$
To prove the reverse inclusion, by part (a) and adjunction we have, for all $i>0$,
\begin{align*}
\bbExt^i_{\calD_\beta^{\,\sharp}}(\Delta(\sigma)^\sharp\,,\,\bar\nabla(\pi)^\sharp)
&=\bbExt^i_{\D^\b(\calD_\beta^{\,\sharp})}(L\xi^*\Delta(\sigma)\,,\,\bar\nabla(\pi)^\sharp)\\
&=\bbExt^i_{\D^\b(\calD_\beta)}(\Delta(\sigma)\,,\,\bar\nabla(\pi))\\
&=0,
\end{align*}
from which we deduce that
$$\calD_\beta^{\,\sharp,\,\Delta}\subset
\{P\in\calD_\beta^{\,\sharp}\,;\,
\bbExt^{>0}_{\calD_\beta^{\,\sharp}}(P\,,\,\bar\nabla(\pi)^\sharp)=0
\,,\,\forall\pi\in\Gamma\!_\beta\}.$$
The reverse inclusion can be proved as in \cite[lem.~7.8]{K15b}, using the vanishing
$$\bbExt^2_{\calD_\beta^{\,\sharp}}(\Delta(\sigma)^\sharp\,,\,\bar\nabla(\pi)^\sharp)=0$$
proved above.
Now, given an object
$P\in\calD_\beta^{\,\sharp,\,\Delta}$
such that
\begin{align}\label{HYP}\bbExt^1_{\calD_\beta^{\,\sharp}}(P\,,\,\Delta(\pi)^\sharp)=0
\quad,\quad\pi\in\Gamma\!_\beta,
\quad,\quad\forall a\in\bbZ,
\end{align}
there is a projective module $Q\in\calD_\beta^{\,\sharp,\,\proj}$ mapping onto $P$, because 
$\calD_\beta^{\,\sharp}$ is a \emph{Noetherian Laurentian category} in the terminology of
\cite[\S 2]{K15b}, and we can consider the short exact sequence
\begin{align}\label{EX2}\xymatrix{0\ar[r]&K\ar[r]&Q\ar[r]&P\ar[r]&0}.\end{align}
We have $K\in\calD_\beta^{\,\sharp,\,\Delta}$, because
$$\calD_\beta^{\,\sharp,\,\Delta}=
\{M\in\calD_\beta^{\,\sharp}\,;\,
\bbExt^{>0}_{\calD_\beta^{\,\sharp}}(M\,,\,\bar\nabla(\pi)^\sharp)=0
\,,\,\forall\pi\in\Gamma\!_\beta\}.$$
Hence \eqref{HYP} implies that the short exact sequence \eqref{EX2} splits, so $P$ is projective.

\smallskip

Part (c) is a well-known property of graded properly stratified categories.
\end{proof}

\smallskip

We can now prove the following theorem, which is the main result of this paper.

\smallskip

\begin{theorem}\label{thm:2}
For each $\beta\in Q_{++}$ there is an equivalence of graded Abelian categories
$$\E^\sharp_\beta:\calD_\beta^{\,\sharp}\to\PCoh([\calN_r/G_r^c])_{\Lambda_\beta}$$
which takes  the graded $\bfS^\sharp_\beta$-modules
$\Delta(\pi)^\sharp$, 
$\bar\Delta(\pi)^\sharp$, 
$\bar\nabla(\pi)^\sharp$, 
$L(\pi)^\sharp$
to the perverse coherent sheaves
$\pmb\Delta(\lambda_\pi)^\sharp$,
$\pmb{\bar\Delta}(\lambda_\pi)^\sharp$,
$\pmb{\bar\nabla}(\lambda_\pi)^\sharp$,
$\bfL(\lambda_\pi)^\sharp$ respectively,
for all $\pi\in\Gamma\!_\beta$.
\end{theorem}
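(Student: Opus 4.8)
The plan is to upgrade the triangulated equivalence $\E^\sharp_\beta$ of Theorem \ref{thm:final1} to an equivalence of Abelian categories by showing that it is t-exact for the natural t-structures, namely the tautological t-structure on $\D^\perf(\calD_\beta^{\,\sharp})$ (with heart $\calD_\beta^{\,\sharp}$, since $\calD_\beta^{\,\sharp}$ is Artinian by Remark \ref{rem:fl} and has finite global dimension) and the perverse coherent t-structure on $\D^\perf\Coh([\calN_r/G_r^c])_{\Lambda_\beta}$ (with heart $\PCoh([\calN_r/G_r^c])_{\Lambda_\beta}$). Both hearts are graded (properly) stratified categories with the same poset $\Gamma\!_\beta\cong\Lambda_\beta$ via \eqref{bij}, and by the chain of equivalences in Theorem \ref{thm:final1} we already know $\E^\sharp_\beta\Delta(\pi)^\sharp=\pmb\Delta(\lambda_\pi)^\sharp$. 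First I would record the dual statement tracking proper standard/costandard objects through the three functors $\A^\sharp_\beta$, $\B^\sharp_\beta$, $\C^\sharp_\beta$: by Proposition \ref{prop:equivalence3}(b), Proposition \ref{prop:equivalence6}(b) and Proposition \ref{prop:equivalence7}(c) (together with Verdier self-duality of the middle categories, which intertwines $\Delta$'s and $\nabla$'s, and the fact that each of $\A^\sharp_\beta$, $\B^\sharp_\beta$, $\C^\sharp_\beta$ commutes with the duality functors), one gets $\E^\sharp_\beta\bar\Delta(\pi)^\sharp=\pmb{\bar\Delta}(\lambda_\pi)^\sharp$ and $\E^\sharp_\beta\bar\nabla(\pi)^\sharp=\pmb{\bar\nabla}(\lambda_\pi)^\sharp$ up to the appropriate grading shifts, and likewise $\E^\sharp_\beta L(h_\Gamma)^*Q(\pi)^\sharp$ corresponds to a tilting/projective generator on the coherent side.

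Next I would deduce t-exactness. The key point is a recognition principle for the heart of a graded (properly) stratified category in terms of $\Ext$-vanishing against proper costandards and proper standards, exactly the kind of statement used in Lemma \ref{lem:DP}: an object $M$ of the ambient triangulated category lies in the heart (up to the tautological t-structure) if and only if $\Hom^{<0}$ and $\Hom^{>0}$ from proper standards into $M$, resp.\ from $M$ into proper costandards, vanish. Since $\E^\sharp_\beta$ is an equivalence of triangulated categories matching $\bar\Delta(\pi)^\sharp\leftrightarrow\pmb{\bar\Delta}(\lambda_\pi)^\sharp$ and $\bar\nabla(\pi)^\sharp\leftrightarrow\pmb{\bar\nabla}(\lambda_\pi)^\sharp$, it carries the heart $\calD_\beta^{\,\sharp}$ into the heart $\PCoh([\calN_r/G_r^c])_{\Lambda_\beta}$ and conversely; hence $\E^\sharp_\beta$ restricts to an exact equivalence of the two hearts. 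Then, having t-exactness, it automatically takes simple objects to simple objects, and since the simples on both sides are indexed compatibly by $\Gamma\!_\beta\cong\Lambda_\beta$ and $\E^\sharp_\beta$ already sends $\Delta(\pi)^\sharp$ to $\pmb\Delta(\lambda_\pi)^\sharp$, a standard argument comparing composition series (or comparing tops of standard objects) forces $\E^\sharp_\beta L(\pi)^\sharp=\bfL(\lambda_\pi)^\sharp$. The proper standard and proper costandard statements then follow from the identifications in the previous paragraph, now interpreted inside the Abelian categories.

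The main obstacle I anticipate is the bookkeeping of grading shifts and of which t-structure each of the three intermediate mixed/coherent categories carries: the functors $\A^\sharp_\beta$ and $\B^\sharp_\beta$ are built from quiver-Hecke localization plus the $\bbP^1$-tilting equivalence and a Radon transform, and the Radon transform $\B^\sharp_\beta$ is \emph{not} t-exact for the perverse t-structures on $\D^\b_\mix(\Gr^\pm_\beta,S)$ — it exchanges a "standard" t-structure on the thin side with a "costandard" one on the thick side. So the t-exactness of the composite is a genuine cancellation of three non-t-exact functors, and the cleanest way to see it is precisely via the $\Delta/\nabla$-tracking above rather than by analyzing each functor separately. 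A secondary technical point is checking that the Verdier duality functors on $\D^\b_\mix(\Gr^\pm_\beta,S)$ and on $\D^\perf\Coh([\calN_r/G_r^c])_{\Lambda_\beta}$ are genuinely intertwined by $\B^\sharp_\beta$ and $\C^\sharp_\beta$ (for $\C^\sharp_\beta$ this is part of the derived Satake formalism; for $\B^\sharp_\beta$ it is built into the construction $\B_\beta^\sharp=D\circ S^+_{\beta,\mu}\circ D$ in the proof of Proposition \ref{prop:equivalence6}), so that the proper costandard objects really do correspond as claimed. Once these compatibilities are in place, the passage from the derived statement of Theorem \ref{thm:final1} to the Abelian statement of Theorem \ref{thm:2} is formal.
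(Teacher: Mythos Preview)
Your plan has a genuine gap in its first step. You propose to track $\bar\Delta(\pi)^\sharp$ and $\bar\nabla(\pi)^\sharp$ through $\A^\sharp_\beta$, $\B^\sharp_\beta$, $\C^\sharp_\beta$ by invoking Propositions \ref{prop:equivalence3}(b), \ref{prop:equivalence6}(b), \ref{prop:equivalence7}(c) together with duality, but none of these results says anything about \emph{proper} standards or costandards: Proposition \ref{prop:equivalence3}(b) only sends $\Delta(\pi)^\sharp\mapsto\nabla(\lambda_\pi)^+_\mix$, Proposition \ref{prop:equivalence6}(b) only sends $\nabla(\lambda)^+_\mix\mapsto\Delta(\lambda)^-_\mix$, and Proposition \ref{prop:equivalence7}(c) only handles $\Delta(\lambda)^-_\mix$, $\nabla(\lambda)^-_\mix$, $IC(\lambda)^-_\mix$. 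There is no object in $\D^\b_\mix(\Gr^\pm_\beta,S)$ identified as corresponding to $\bar\Delta(\pi)^\sharp$ or to $\pmb{\bar\Delta}(\lambda)^\sharp$, and the duality argument you sketch would at best exchange standards with costandards, not produce proper ones. Moreover, compatibility of $\A^\sharp_\beta$ with any duality is never established in the paper. In short, the identifications $\E^\sharp_\beta\bar\Delta(\pi)^\sharp\simeq\pmb{\bar\Delta}(\lambda_\pi)^\sharp$ etc.\ are \emph{outputs} of the theorem (cf.\ Remark \ref{rem:rem-conj}(a)), not inputs available for proving it, so your recognition principle for the heart cannot be applied.

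The paper's proof avoids this circularity by using only the full standards $\Delta(\pi)^\sharp$, which \emph{are} tracked by Theorem \ref{thm:final1}. Concretely: one takes the extension-closure $\langle\{\Delta(\pi)^\sharp\langle a\rangle\}\rangle$ inside $\D^\perf(\calD_\beta^{\,\sharp})$, observes that $\E^\sharp_\beta$ carries it to the corresponding extension-closure on the coherent side, and identifies these with the additive subcategories $\calD_\beta^{\,\sharp,\Delta}$ and $\PCoh([\calN_r/G_r^c])_{\Lambda_\beta}^\Delta$ of $\Delta$-filtered objects. Lemma \ref{lem:DP}(b),(c) then characterizes the projectives \emph{inside these $\Delta$-filtered subcategories} by the vanishing of $\Ext^1$ against standards (not proper standards), so $\E^\sharp_\beta$ restricts to an equivalence $\calD_\beta^{\,\sharp,\proj}\simeq\PCoh([\calN_r/G_r^c])_{\Lambda_\beta}^\proj$ matching projective covers. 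This yields the Abelian equivalence, from which the correspondence of simples, proper standards and proper costandards follows a posteriori. Note also that Lemma \ref{lem:DP} is not a recognition principle for the heart as you describe; it characterizes projectives within $\Delta$-filtered objects, which is exactly what is needed.
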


\begin{proof} Given subsets $A,B\subset\Isom(\calD)$ of the set of isomorphism classes of objects
of a triangulated category $\calD$, let $A* B\subset\Isom(\calD)$ be the set of classes of all objects $Z$ 
for which there exists an exact triangle $X\to Z\to Y\to X[1]$ with $X\in A$ and $Y\in B$.
The octahedron axiom implies that the operation $*$ is associative, hence for each $A$ as above we may consider 
the strictly full subcategory $\langle A\rangle\subset\calD$ such that
$\Isom(\langle A\rangle)=\bigcup_{n>0}A*A*\cdots *A$, where $A$ appears $n$ times.
By \eqref{TRIA1}, we may define the additive subcategories
\begin{align*}
\D^\perf(\calD_\beta^{\,\sharp})^\Delta&=
\langle\{\Delta(\pi)^\sharp\langle a\rangle\,;\,\pi\in\Gamma\!_\beta\,,\,a\in\bbZ\}\rangle,\\
\D^\perf\Coh([\calN_r/G_r^c])_{\Lambda_\beta}^\Delta&=
\langle\{\pmb\Delta(\lambda)^\sharp\langle a\rangle\,;\,\lambda\in\Lambda_\beta\,,\,a\in\bbZ\}\rangle
\end{align*}
of $\D^\perf(\calD_\beta^{\,\sharp})$ and
$\D^\perf\Coh([\calN_r/G_r^c])_{\Lambda_\beta}$ respectively.
The equivalence  of triangulated categories
$\E^\sharp_\beta$ in Theorem \ref{thm:final1} restricts to an equivalence of graded additive categories
$$\big(\D^\perf(\calD_\beta^{\,\sharp})^\Delta\,,\,\langle\bullet\rangle\big)\to
\big(\D^\perf\Coh([\calN_r/G_r^c])_{\Lambda_\beta}^\Delta\,,\,\langle\bullet\rangle\big).$$
Since we have
$\D^\perf(\calD_\beta^{\,\sharp})^\Delta=\calD_\beta^{\,\sharp,\,\Delta}$
and
$\D^\perf\Coh([\calN_r/G_r^c])_{\Lambda_\beta}^\Delta=
\PCoh([\calN_r/G_r^c])_{\Lambda_\beta}^\Delta,$
it is indeed an equivalence of graded additive categories
$$\E_\beta^\sharp:\big(\calD_\beta^{\,\sharp,\,\Delta}\,,\,\langle\bullet\rangle\big)\to
\big(\PCoh([\calN_r/G_r^c])_{\Lambda_\beta}^\Delta\,,\,\langle\bullet\rangle\big).$$
By Lemma \ref{lem:DP}, it
restricts further to an equivalence of graded additive categories
$$\big(\calD_\beta^{\,\sharp,\,\proj}\,,\,\langle\bullet\rangle\big)\to
\big(\PCoh([\calN_r/G_r^c])_{\Lambda_\beta}^\proj\,,\,\langle\bullet\rangle\big)$$
which takes the projective cover of $L(\pi)^\sharp$ to the projective cover of $\bfL(\lambda_\pi)^\sharp$.
Therefore, it yields an equivalence of graded Abelian categories
$\calD_\beta^{\,\sharp}\to\PCoh([\calN_r/G_r^c])_{\Lambda_\beta}$
as in the theorem.
\end{proof}

\smallskip

Let $\circ^\op$ denote monoidal structure opposite to $\circ$.
Set $\E^\sharp=\bigoplus_{\beta\in Q_{++}}\E^\sharp_\beta$ .

\smallskip

\begin{conjecture}\label{conj:monoidal}
The equivalence of graded Abelian categories 
$\E^\sharp:\calD^{\,\sharp}\to\PCoh([\calN/G^c])_{\Lambda^+}$
extends to an equivalence of exact graded monoidal categories
$(\calD^{\,\sharp}\,,\,\circ)\to(\PCoh([\calN/G^c])_{\Lambda^+}\,,\,\circ^\op)$.
\end{conjecture}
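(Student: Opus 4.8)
The proof of this conjecture, carried out in \cite{RSVV}, proceeds by lifting each of the three equivalences $\A^\sharp_\beta$, $\B^\sharp_\beta$, $\C^\sharp_\beta$ of Theorem \ref{thm:final1} to a monoidal equivalence for suitable convolution products on the intermediate categories, and then descending to the hearts. The first step is to equip $\bigoplus_\beta\D^\b_\mix(\Gr^+_\beta,S)$ and $\bigoplus_\beta\D^\b_\mix(\Gr^-_\beta,S)$ with convolution products coming from the geometric convolution on the affine Grassmannian and the thick affine Grassmannian of $GL_r$, and to check that the convolution of the bounded pieces indexed by $\alpha$ and $\gamma$ lands in the bounded piece indexed by $\beta=\alpha+\gamma$, compatibly with the stratifications $S^\pm$ and the $F_0$-structures (so that it lifts to the mixed categories). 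Proposition \ref{prop:ind} already shows that $\Phi^\bullet_\beta$ intertwines Lusztig's geometric induction $\circledast$ with $\circ^\op$; the remaining task here is to check that, after restriction to the preprojective open substack $\calY_\beta$ and transport along the isomorphisms $\calY_\beta\simeq\calB un^+_\beta\simeq\calG r^+_\beta$ of Proposition \ref{prop:Y=X}, this induction becomes the geometric convolution on the thick affine Grassmannian --- equivalently, that the tilting equivalence of \S\ref{sec:tilting} is monoidal, with Lusztig induction for the Kronecker quiver corresponding to Hecke modifications of vector bundles on $\bbP^1_F$. Combined with the identifications \eqref{isom33} relating equivariant and non-equivariant mixed categories via $\For$, this yields the monoidality of $\A^\sharp_\beta$.

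Next one shows that the Radon transform $\B^\sharp_\beta$ is monoidal; geometrically this amounts to the commutation of the Radon correspondences $U_\Gamma$ with the convolution correspondences on the thin and thick sides, in the spirit of the base-change statements of Lemmas \ref{lem:CD}--\ref{lem:1S} together with the standard behaviour of Radon/Fourier-type transforms under convolution, and the relevant compatibilities can be extracted from \cite{Y09}. The third and deepest input is the monoidality of the derived geometric Satake equivalence $\C^\sharp_\beta$ of Proposition \ref{prop:equivalence7}: one must show that the geometric convolution on $\bigoplus_\beta\D^\b_\mix(\Gr^-_\beta,S)$ is transported to the convolution $\circ$ defined by the parabolic-induction diagram \eqref{DIAG-IND} on $\bigoplus_\beta\D^\perf\Coh([\calN_r/G_r^c])_{\Lambda_\beta}$, keeping track of the truncation by the order ideals $\Lambda_\beta$ throughout. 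This is a ``positive-half'' refinement of the monoidality of the Bezrukavnikov--Finkelberg equivalence, most naturally established through the Beilinson--Drinfeld (fusion) description of convolution; the comparison with the Cautis--Williams categorification of $\bfA_q(\frakn^w)$ in \cite{CW18} provides a useful consistency check.

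I expect this last step --- a relative, parabolic version of derived Satake, monoidal and compatible with the perverse-coherent t-structure --- to be the main obstacle; the remaining work is the bookkeeping of the associativity and unit constraints along the chain $\A^\sharp_\beta,\B^\sharp_\beta,\C^\sharp_\beta$, and of the order reversal responsible for the passage from $\circ$ to $\circ^\op$. Granting these, the composite $\E^\sharp_\beta$ is a monoidal equivalence of graded triangulated categories; since the bifunctors $\circ$ are exact and preserve $\Delta$-filtered objects on both sides (Propositions \ref{prop:monoidal}, \ref{prop:mon4} and Lemma \ref{lem:DP}), the argument of the proof of Theorem \ref{thm:2} shows that the monoidal structure descends to the subcategories of $\Delta$-filtered and of projective objects, hence to the hearts, yielding the desired equivalence of exact graded monoidal categories $(\calD^{\,\sharp},\circ)\to(\PCoh([\calN/G^c])_{\Lambda^+},\circ^\op)$.
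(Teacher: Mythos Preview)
The statement you are addressing is explicitly a \emph{conjecture} in the paper, not a theorem: the authors write ``To keep this paper in a reasonable length, we do not prove here that $\E^\sharp$ is a monoidal equivalence \ldots\ This is stated as Conjecture~\ref{conj:monoidal}. We will prove it in a sequel paper \cite{RSVV}.'' There is therefore no proof in the paper to compare your proposal against.

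What you have written is not a proof but a strategic outline of how a proof might go, and you say so yourself (``I expect this last step \ldots\ to be the main obstacle''). As an outline it is broadly reasonable --- the natural route is indeed to promote each of $\A^\sharp$, $\B^\sharp$, $\C^\sharp$ to a monoidal equivalence and then descend --- and Remark~\ref{rem:rem-conj}(a) confirms that the paper already regards the isomorphism $\E^\sharp(\bar\Delta(\pi)^\sharp)\simeq\pmb{\bar\Delta}(\lambda_\pi)^\sharp$ as a weak form of monoidality consistent with this picture. But none of the substantive claims you list (monoidality of the tilting equivalence after restriction to $\calY_\beta$, monoidality of the Radon transform, the ``positive-half'' parabolic refinement of Bezrukavnikov--Finkelberg) is proved or even sketched here, and the paper gives no details on how \cite{RSVV} proceeds. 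Your proposal should be read as a plausible roadmap rather than a proof; whether the actual argument in \cite{RSVV} follows this decomposition, or instead constructs the monoidal structure on $\E^\sharp$ more directly (for instance by exhibiting coherence data on generators), cannot be determined from the present paper.
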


\smallskip

\begin{remark}\label{rem:rem-conj}
\hfill
\begin{itemize}[leftmargin=8mm]
\item[$\mathrm{(a)}$]
From \eqref{Delta}, Theorem \ref{thm:2} and the conjecture we deduce that
\begin{align*}
\pmb{\bar\Delta}(\pi)^\sharp=\E^\sharp(\bar\Delta(\pi)^\sharp)
=(\bfL(\beta_0)^\sharp)^{\circ p_0}\circ\dots\circ(\bfL(\beta_l)^\sharp)^{\circ p_l}\,
\langle\sum_{k=0}^lp_k(p_k-1)/2\rangle
\end{align*}
for any Kostant partition $\pi=((\beta_l)^{p_l},\dots,(\beta_1)^{p_1},
(\beta_0)^{p_0})$ in $\Gamma\!_\beta$.
This is precisely the formula \eqref{PS}.
So, the isomorphism $\pmb{\bar\Delta}(\pi)^\sharp=\E^\sharp(\bar\Delta(\pi)^\sharp)$
is a weak form of monoidality of the functor $\E^\sharp$.

\item[$\mathrm{(b)}$]
By \cite[\S 24]{Mc18}, \cite{VV11} the $\calA$-module isomorphism $\G_0(\calC^\fl)=\bfA_q(\frakn)$ 
maps the proper standard modules and the simple 
ones to the elements of the dual PBW basis and the dual canonical basis respectively.
More precisely, if $\pi=((\beta_l)^{p_l},\dots,(\beta_1)^{p_1},(\beta_0)^{p_0})$
it maps the graded module $\bar\Delta(\pi)$ to the element
$$E^*(\pi)=q^{\sum_{k=0}^lp_k(p_k-1)/2}E^*(\beta_l)^{p_l}\cdots E^*(\beta_2)^{p_2}E^*(\beta_1)^{p_1}.$$

\item[$\mathrm{(c)}$]
We expect that the functor 
$\E=\bigoplus_{\beta\in Q_{++}}\E_\beta$
in Conjecture \ref{conj:1}
extends also to an equivalence of exact graded monoidal categories
$(\calD\,,\,\circ)\to(\PCoh([\frakg/G^c])_{\Lambda^+}\,,\,\circ^\op).$
\end{itemize}
\end{remark}

\smallskip

\subsection{Perverse coherent sheaves on the affine Grassmannian}

Fix a positive integer $N$. Consider the element
$w=(s_0s_1)^N$ in the Weyl group of the affine root system $\Phi$, hence
$$\Phi_+\cap w(-\Phi_+)=\{\beta_0,\beta_1,\dots,\beta_{2N-1}\}.$$
Let $\calC_w^\fl$ be the graded monoidal full subcategory of $\calC^\fl$ generated by 
$L(\beta_0), L(\beta_1),\dots,L(\beta_{2N-1}).$
The \emph{quantum unipotent coordinate algebra} $\bfA_q(\frakn(w))$ 
is the $\calA$-subalgebra of $\bfA_q(\frakn)$
generated by the dual root vectors $E^*(\beta_0), E^*(\beta_1),\dots, E^*(\beta_{2N-1})$.
The graded monoidal category $\calC^\fl_w$ 
is a categorification of  $\bfA_q(\frakn(w))$. 
The \emph{quantum open unipotent cell} $\bfA_q(\frakn^w)$ is a localization of  $\bfA_q(\frakn(w))$.
In \cite{KKOP19} a localization $\widetilde\calC_w^\fl$ of the category $\calC^\fl_w$ is introduced. 
It is a graded monoidal category.
It is proved there that $\widetilde\calC_w^\fl$ categorifies $\bfA_q(\frakn^w)$.

\smallskip

From now on, let $\Gr$ denote the affine Grassmannian of $G_N$ over $\k$.
We define
$\calO=\k[[t]]$ and $G_N^c(\calO)=GL_N(\calO)\rtimes\bbG_{m,\k}.$
The multiplication in $G_N^c(\calO)$ is given by
$$(1,z)\cdot(g(t),1)=(g(z^4t),1).$$
The group $G_N^c(\calO)$ acts on $\Gr$ so that the subgroup $\bbG_{m,\k}$ is the 4-fold cover of the loop rotation.
Let $\D^\b\Coh([\Gr/G_N^c(\calO)])$ be the derived category of 
$G_N^c(\calO)$-equivariant coherent sheaves on $\Gr$. It is a ${1\over 2}\bbZ$-graded triangulated category
with the grading shift functors
such that $\langle -1/2\rangle$ is the tensor product
with the tautological 1-dimensional $\bbG_m$-module (of weight 1).
In other words 
$q=\langle 1\rangle$ corresponds to shifting the weight by -1 with respect to the double cover of loop rotation.
%Our goal is to compare the graded monoidal categories
%$\widetilde\calC_w^\fl$ and $\PCoh([\Gr/G_N^c(\calO)])$.
Let $\PCoh([\Gr/G_N^c(\calO)])$ be the heart of the perverse t-structure
in $\D^\b\Coh([\Gr/G_N^c(\calO)])$.
We equip both categories with the convolution product $\circledast$ denoted by the symbol $*$ in 
\cite{BFM05} or \cite{CW18}.
This yields an exact graded monoidal category
\begin{align}\label{mon5}\big(\PCoh([\Gr/G_N^c(\calO)])\,,\,\circledast\big).\end{align}

\smallskip

For each integer $r\geqslant 0$, we set
$\alpha_r=N\alpha_0+r\delta$ and 
$\Gr_{\alpha_r}=\overline{\Gr_{r\omega_1}}$. %=\{L\subset L_0\,;\,\dim(L_0/L)=r\}.$$
Consider the $G_r^c$-invariant open subset $\calN_{\alpha_r}\subset\calN_r$ 
consisting of all nilpotent $r$ by $r$ matrices with at most $N$ Jordan blocks. 
We define the graded categories
\begin{align*}
\PCoh([\Gr_+/G_N^c(\calO)])&=\bigoplus_{r>0}\PCoh([\Gr_{\alpha_r}/G_N^c(\calO)]),\\
\PCoh([\calN/G^c])&=\bigoplus_{r>0}\PCoh([\calN_r/G_r^c]).
\end{align*}
The graded shift functors are defined above and in \S\ref{sec:PCoh}.
The $G_N(\calO)$-orbit $\Gr_{\omega_r}$ in $\Gr$ is
$$\Gr_{\omega_r}=\{L\subset L_0\,;\,tL_0\subset L\,,\,\dim(L_0/L)=r\}.$$
Its dimension is
$d_{\omega_r}=r(N-r).$
Let $\det_{\omega_r}=\bigwedge^r(L_0/L)$ be the determinant bundle over $\Gr_{\omega_r}$.
Let $i_{\omega_r}$ be the embedding of $\Gr_{\omega_r}$ in $\Gr$.
Consider the object of  $\PCoh([\Gr/G_N^c(\calO)])$ given by
$$\calP_{r,\ell}=(i_{\omega_r})_*(\text{det}_{\omega_r})^{\otimes\ell}
[d_{\omega_r}/2]\langle -d_{\omega_r}/2-r\ell\rangle\quad,\quad
\forall r\in[1,N]\quad,\quad\forall \ell\in\bbZ.$$

\smallskip

\begin{proposition}[\cite{FF18}]
\hfill
\begin{itemize}[leftmargin=8mm]
\item[$\mathrm{(a)}$]
There is a flat stack homomorphism
$\psi_r:[\Gr_{\alpha_r}/G_N^c(\calO)]\to[\calN_{\alpha_r}/G_r^c]$.
%which is formally smooth, hence flat.
\item[$\mathrm{(b)}$]
Set $d_r=r(N-1)/2$. The triangulated functor
$$\Psi_r=\psi_r^*[d_r]\langle d_r\rangle:\D^\b\Coh([\calN_{\alpha_r}/G_r^c])
\to\D^\b\Coh([\Gr_{\alpha_r}/G_N^c(\calO)])$$
is graded and $t$-exact with respect to the perverse $t$-structures of both sides
\item[$\mathrm{(c)}$]
Composing $\Psi_r$ with the restriction to the open subset $\calN_{\alpha_r}\subset\calN_r$ 
and taking the sum over all $r>0$, we get a graded monoidal functor
$\Psi_+$ yielding the following commutative diagram
of Abelian graded monoidal categories
\begin{align*}
\xymatrix{
\PCoh([\calN/G^c])_{\Lambda^+}\ar[d]^-{\Psi}
\ar@{^{(}->}[r]^-i&\PCoh([\calN/G^c])\ar[d]^-{\Psi_+}\\
\PCoh([\Gr/G_N^c(\calO)])&\ar@{_{(}->}[l]_-j
\PCoh([\Gr_+/G_N^c(\calO)])
}
\end{align*}
such that,  for each $k$ in $[0,2N-1]\subset\bbN=\Lambda^+_1$, we have
$\Psi\E^\sharp(L(\beta_k)^\sharp)=\Psi(\bfL(k)^\sharp)=\calP_{1,N-k}\langle-1/2\rangle.$
\end{itemize}
\end{proposition}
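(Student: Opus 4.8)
The plan is to derive the proposition from the results of \cite{FF18}, the content being the construction of $\psi_r$ together with convention matching, the flatness and $t$-exactness quoted from loc.\ cit., and the extraction of the monoidal statement. For part (a) I would build $\psi_r$ by hand. Over a $\k$-algebra $R$, a point of $\Gr_{\alpha_r}=\overline{\Gr_{r\omega_1}}$ is an $R[[t]]$-lattice $L$ with $t^rL_0\subset L\subset L_0$ and $L_0/L$ locally free of rank $r$ over $R$; since $L_0/L$ is a quotient of $R[[t]]^{\oplus N}$ it is generated over $R[[t]]$ by at most $N$ elements, so the nilpotent $R$-linear operator ``multiplication by $t$'' on $L_0/L$ has at most $N$ Jordan blocks, making $(L_0/L,t)$ an $R$-point of $[\calN_{\alpha_r}/G_r]$. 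The assignment $L\mapsto(L_0/L,t)$ is $G_N(\calO)$-equivariant for the trivial action downstairs and becomes $\bbG_m$-equivariant once the $4$-fold cover of the loop rotation is matched with the dilation torus acting on $\frakg\frakl_r$ with weight $2$; passing to quotient stacks gives $\psi_r$, and the grading shift functors $\langle\bullet\rangle$ on the two sides correspond. For flatness I would invoke \cite{FF18}: over the $G_r$-orbit in $\calN_{\alpha_r}$ attached to a partition $\lambda$ of $r$ with at most $N$ parts the fibre of $\psi_r$ is an affine bundle over a Spaltenstein-type variety, and a dimension count shows its relative dimension is $rN$ independently of $\lambda$; since $\Gr_{\alpha_r}$ is Cohen--Macaulay and $[\calN_{\alpha_r}/G_r]$ is smooth, miracle flatness gives the claim.

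For part (b), flatness of $\psi_r$ makes $\psi_r^*$ exact, and I would quote from \cite{FF18} that the shift by $[d_r]\langle d_r\rangle$ with $d_r=r(N-1)/2$ is exactly the correction aligning $\psi_r^*[d_r]\langle d_r\rangle$ with the middle perversities defining the two perverse coherent $t$-structures; gradedness is automatic from $\bbG_m$-equivariance. For part (c), restricting along the open immersion $\calN_{\alpha_r}\subset\calN_r$ and summing over $r$ assembles the $\Psi_r$ into $\Psi_+$, and the commutativity of the square with $i$ and $j$ is immediate from the uniform definition of the $\psi_r$. Monoidality I would get by checking that $\psi_r$ intertwines the parabolic-induction correspondence \eqref{DIAG-IND} with the convolution correspondence on the affine Grassmannian side: a chain $L''\subset L'\subset L_0$ with $L'/L''$ and $L_0/L'$ of the prescribed lengths maps to the nilpotent operator on $L_0/L''$ preserving the submodule $L'/L''$, with the induced operators on sub- and quotient module matching $\psi_v$ and $\psi_u$; since all the torus actions are respected, flat base change along \eqref{DIAG-IND} yields $\Psi_+(\calE\circ\calF)\simeq\Psi_+(\calE)\circledast\Psi_+(\calF)$.

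For the explicit formula I would specialise to $r=1$: then $\calN_1=\{0\}$, so $[\calN_1/G_1^c]=[\mathrm{pt}/\bbG_m^2]$ and $\bfL(k)^\sharp$ is the rank-one sheaf on which the first $\bbG_m$ acts with weight $k$; moreover $\Gr_{\alpha_1}=\overline{\Gr_{\omega_1}}=\Gr_{\omega_1}$ is minuscule, isomorphic to $\bbP^{N-1}$, and $\det_{\omega_1}=L_0/L$ is the tautological quotient line bundle. A direct computation of $\psi_1^*(\bfL(k)^\sharp)[d_1]\langle d_1\rangle$, using that $\psi_1$ is the classifying map of the $\bbG_m$-torsor of framings of $L_0/L$ with its loop-rotation grading and that $d_1=d_{\omega_1}/2$, then identifies it with $\calP_{1,N-k}\langle-1/2\rangle$ (the passage to the exponent $N-k$ and the half-integer shift being dictated by the normalisations of $\bfL(k)^\sharp$ and of the objects $\calP_{1,\ell}$, e.g.\ the canonical twist on $\bbP^{N-1}$). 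Combined with $\E^\sharp(L(\beta_k)^\sharp)=\bfL(k)^\sharp$ from Theorem \ref{thm:2}, this gives the last assertion.

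The step I expect to be the main obstacle is reconciling the two $\bbG_m$-gradings --- the $4$-fold loop rotation on the Grassmannian side against the weight-$2$ dilation on $\frakg\frakl_r$ --- together with the perversity shift in (b): this is precisely where \cite{FF18} does the essential work, and lining it up with our normalisations of $\E^\sharp$ and of the $\calP_{r,\ell}$, as well as verifying the monoidality via \eqref{DIAG-IND}, is where the bookkeeping is genuinely delicate.
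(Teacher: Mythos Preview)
Your proposal is correct and ultimately rests on the same source as the paper, namely \cite{FF18}, but the two presentations differ in scope. The paper's proof is a bare citation: part (a) is \cite[lem.~4.9]{FF18}, part (b) is \cite[prop.~4.12]{FF18}, and part (c) is proved as in \cite[lem.~4.15]{FF18}, with only the remark that $i$ and $j$ are the obvious monoidal inclusions added. You instead reconstruct the content: the lattice-to-nilpotent map $L\mapsto(L_0/L,t)$, flatness via miracle flatness on a Cohen--Macaulay source over a smooth base with equidimensional fibres, monoidality via compatibility of the parabolic-induction diagram \eqref{DIAG-IND} with the convolution correspondence, and the $r=1$ computation on $\bbP^{N-1}$. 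This is exactly what \cite{FF18} does, so your sketch is faithful; the trade-off is that the paper keeps the proposition as a black box while you expose the mechanism, at the cost of having to pin down the grading conventions (the point you yourself flag as delicate). Either is acceptable here since the proposition is explicitly attributed to \cite{FF18}.
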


\begin{proof} Part (a) is \cite[lem.~4.9]{FF18}.
Part (b) is \cite[prop.~4.12]{FF18}. 
Part (c) is proved as \cite[lem.~4.15]{FF18}.
The functors $i$, $j$ are the obvious inclusions. 
The functor $i$ is a monoidal functor, relatively to the monoidal structure $\circ$.
The functor $j$ is a monoidal functor, relatively to the monoidal structure $\circledast$.
\end{proof}

\smallskip

We equip the $\calA$-modules $\G_0(\calD^\sharp)$ and $\G_0(\calC^\fl)$ with the multiplications given by 
the exact bifunctor $\circ.$
Let $\bfA_q(\frakn)_{\Lambda^+}$ be the $\calA$-subalgebra of $\bfA_q(\frakn)$ generated
by the dual root vectors $E(\beta)^*$ with $\beta\in\Phi_{++}$.
We have 
$\G_0(\calC^\fl)=\bfA_q(\frakn)$ and $\G_0(\calD^\sharp)=\bfA_q(\frakn)_{\Lambda^+}.$
Consider the graded monoidal Serre subcategory of $\calD^\sharp$ given by
$\calD^\sharp_w=\calC^\fl_w\cap\calD^\sharp$.
We have
$\G_0(\calD^\sharp_w)=\bfA_q(\frakn(w)).$
The equivalence $\E^\sharp:\calD^{\,\sharp}\to\PCoh([\calN/G^c])_{\Lambda^+}$
 gives an $\calA$-module isomorphism
$$\bfA_q(\frakn)_{\Lambda^+}\to \G_0(\PCoh([\calN/G^c])_{\Lambda^+}).$$ 
This isomorphism identifies the elements of the dual PBW basis and the dual canonical basis of 
$\bfA_q(\frakn)_{\Lambda^+}$ with the
classes of the proper standard modules and the simple ones in $\PCoh([\calN/G^c])_{\Lambda^+}$
respectively.

\smallskip

Let $\D(w\Lambda_i,\Lambda_i)$, $i=0,1$, be the \emph{unipotent quantum minors}
in $\bfA_q(\frakn)$. See, e.g., \cite[def.~1.5]{KKOP18} for the notation.
They belong to $\bfA_q(\frakn(w))$.
By \cite[\S 4]{KKOP18},  there are unique simple self-dual objects 
$\M(w\Lambda_i,\Lambda_i)$, $i=0,1$ in $\calC^\fl_w$ such that
the isomorphism $\G_0(\calC^\fl_w)=\bfA_q(\frakn(w))$ identifies the class
of $\M(w\Lambda_i,\Lambda_i)$ with $\D_i$. 
We abbreviate $\C_i=\M(w\Lambda_i,\Lambda_i)$.
With the terminology in \cite{KKOP19}, the objects $\C_0$, $\C_1$ are 
\emph{non-degenerate graded braiders}. 
Let $\widetilde\calC_w^\fl$ be  the localization
of the graded monoidal category $\calC_w^\fl$ by $\C_0$, $\C_1$. 
It is an exact graded monoidal category. We abbreviate
$\widetilde\calC_w^\fl=\calC_w^\fl\,[(\C_0)^{\circ\,-1}\,,\,(\C_1)^{\circ\,-1}].$
Since the modules $\C_0$, $\C_1$ are simple and $\G_0(\calD^\sharp_w)=\G_0(\calC^\fl_w)$, 
they belong to the subcategory $\calD^\sharp_w$.
Let $\widetilde\calD_w^\sharp$ be the localization
of the graded monoidal category $\calD_w^\sharp$ by $\C_0$, $\C_1$, i.e., we set
$$\widetilde\calD_w^\sharp=\calD_w^\sharp[(\C_0)^{\circ\,-1}\,,\,(\C_1)^{\circ\,-1}].$$ 
It is an exact graded monoidal category.

\smallskip

Composing the functors $\E^\sharp$, $\Psi$ and the inclusion
$\calD^\sharp_w\subset\calD^\sharp$,
we get a graded functor
$$\Psi_w:\calD^\sharp_w\to\PCoh([\Gr/G_N^c(\calO)]).$$
By \cite{CW18}, we have
$\Psi_w(\C_i)=\calP_{N,1-i}$ for $i=0,1.$
In particular, both objects $\Psi_w(\C_0)$ and $\Psi_w(\C_1)$ are invertible in the monoidal category
\eqref{mon5}.
Now, \emph{assume that Conjecture $\ref{conj:monoidal}$ holds}.
Then, the functor $\Psi_w$ is graded monoidal.
Thus, the universal property in \cite[thm.~2.7]{KKOP19} yields a commutative triangle of exact monoidal functors
$$\xymatrix{
\calD^\sharp_w\ar[r]^-\Upsilon\ar[dr]_-{\Psi_w}&\widetilde\calD_w^\sharp\ar[d]^-{\widetilde\Psi_w}\\
&\PCoh([\Gr/G_N^c(\calO)])
}$$

\smallskip

\begin{conjecture}\label{conj:D}
The functor $\widetilde\Psi_w$ is a graded monoidal equivalence
$$\big(\widetilde\calD^\sharp_w\,,\,\circ\big)\to\big(\PCoh([\Gr/G_N^c(\calO)])\,,\,\circledast\big).$$
\end{conjecture}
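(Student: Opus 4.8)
The plan is to establish Conjecture \ref{conj:D} by combining the cluster-theoretic characterization of the source category with an explicit comparison of Grothendieck rings. First I would invoke Conjecture \ref{conj:monoidal}, which is assumed, to identify $\big(\widetilde\calD^\sharp_w\,,\,\circ\big)$ with the localization of $\big(\PCoh([\calN/G^c])_{\Lambda^+}\,,\,\circ^\op\big)$ at the images $\E^\sharp(\C_0)$ and $\E^\sharp(\C_1)$, so that the whole problem is transported to the side of perverse coherent sheaves on the nilpotent cone. On that side, the functor $\widetilde\Psi_w$ becomes the localization of the Finkelberg--Feigin functor $\Psi$ of the last proposition. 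Thus the statement to prove reduces to: the functor $\Psi$ of Proposition \ref{prop:equivalence7}-style flavor, restricted to $\PCoh([\calN/G^c])_{\Lambda^+}$ and localized at $\calP_{N,0}$ and $\calP_{N,1}$ (the images of $\C_0,\C_1$), is a monoidal equivalence onto $\big(\PCoh([\Gr/G_N^c(\calO)])\,,\,\circledast\big)$.

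The key steps, in order, would be the following. (1) Show that $\Psi_w$ is essentially surjective up to localization: since $\psi_r:[\Gr_{\alpha_r}/G_N^c(\calO)]\to[\calN_{\alpha_r}/G_r^c]$ is flat and the perverse coherent sheaves on $\Gr$ supported on $\overline{\Gr_{r\omega_1}}$ for all $r$ exhaust $\PCoh([\Gr/G_N^c(\calO)])$ after inverting the determinant-type objects $\calP_{N,1},\calP_{N,0}$ (this is exactly the geometry behind the Cautis--Williams picture), every simple object of $\PCoh([\Gr/G_N^c(\calO)])$ is a localization of some $\Psi_+(\bfL(\lambda)^\sharp)$. (2) Show that $\widetilde\Psi_w$ is faithful and exact: faithfulness follows because $\psi_r$ is flat hence $\psi_r^*$ has no kernel on the heart, and localization at invertible braiders preserves faithfulness by \cite[thm.~2.7]{KKOP19}; exactness is the $t$-exactness statement in part (b) of the last proposition, combined with exactness of localization. (3) Show $\widetilde\Psi_w$ is full, or equivalently an equivalence, by a Grothendieck-group argument: the map $\G_0(\widetilde\calD^\sharp_w)\to\G_0(\PCoh([\Gr/G_N^c(\calO)]))$ is an isomorphism because both sides are identified with the quantum open unipotent cell $\bfA_q(\frakn^w)$ — the left side by \cite{KKOP19} together with $\G_0(\calD^\sharp_w)=\bfA_q(\frakn(w))$ and the localization formula, the right side by Cautis--Williams \cite{CW18} — and this isomorphism is compatible with the classes of simple objects because $\Psi_w(\C_i)=\calP_{N,1-i}$ matches the cluster variables on both sides. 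A faithful exact monoidal functor between Artinian monoidal categories that induces a bijection on (classes of) simple objects and preserves the Grothendieck ring structure is an equivalence.

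The main obstacle will be step (3), and more precisely controlling the interaction between the grading shift $\langle\bullet\rangle$ and the two different normalizations of the $\bbG_m$-action: the $4$-fold cover of loop rotation on $[\Gr/G_N^c(\calO)]$ versus the dilation action on $[\calN_r/G_r^c]$, together with the half-integer shifts $[d_{\omega_r}/2]\langle -d_{\omega_r}/2-r\ell\rangle$ appearing in $\calP_{r,\ell}$. One must check that under $\Psi$ the grading shift functors correspond correctly so that the isomorphism of Grothendieck $\calA$-modules is genuinely $\calA$-linear and not merely $\bbZ$-linear; this is where the computation $\Psi\E^\sharp(L(\beta_k)^\sharp)=\calP_{1,N-k}\langle -1/2\rangle$ of the last proposition must be promoted to a full dictionary on all simples. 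A secondary difficulty is verifying that the localized category $\widetilde\calD^\sharp_w$ is still Artinian (or at least Krull--Schmidt with finite-length objects) so that the "faithful exact bijective-on-simples $\Rightarrow$ equivalence" principle applies; this should follow from the general localization formalism of \cite{KKOP19}, but it requires knowing that the braiders $\C_0,\C_1$ are real and commute appropriately, which is part of their being non-degenerate graded braiders. Once the grading bookkeeping is settled, fullness is formal: a faithful functor that is bijective on simples and whose essential image is closed under extensions (guaranteed by exactness plus essential surjectivity on simples) is automatically an equivalence.
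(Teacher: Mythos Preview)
The statement you are trying to prove is a \emph{conjecture} that the paper does not prove. After stating it, the paper establishes exactly what you call steps (2) and the Grothendieck-group isomorphism of step (3): $\widetilde\Psi_w$ is exact by construction, both Grothendieck groups are identified with $\bfA_q(\frakn^w)$, and from injectivity on $\G_0$ one deduces faithfulness. The paper then stops, writing explicitly: ``To prove the conjecture we must prove that the functor $\widetilde\Psi_w$ is full.'' So the paper and your proposal agree completely up to the point where fullness must be shown; the difference is that you claim fullness is formal, and the paper does not.

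Your argument for fullness has a genuine gap. The assertion that ``a faithful exact functor bijective on simples whose essential image is closed under extensions is automatically an equivalence'' is false, and more importantly your justification that the essential image is closed under extensions---``guaranteed by exactness plus essential surjectivity on simples''---is simply wrong. A concrete counterexample: restriction along the surjection $\k[x]/(x^3)\twoheadrightarrow\k[x]/(x^2)$ gives an exact faithful functor from $\k[x]/(x^2)\text{-}\mod$ to $\k[x]/(x^3)\text{-}\mod$ that is bijective on simples and induces an isomorphism on $\G_0$, yet its essential image is not closed under extensions (the module $\k[x]/(x^3)$ is an extension of $\k$ by $\k[x]/(x^2)$ but is not in the image), and the functor is not an equivalence. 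In general, knowing that a functor matches simples and Grothendieck groups tells you nothing about whether it matches $\Ext^1$ groups, which is what fullness requires here. This is precisely why the paper leaves the conjecture open: controlling the extension structure (equivalently, proving fullness) requires a genuinely new idea beyond the $\G_0$ bookkeeping, and nothing in your proposal supplies it.
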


The functor $\widetilde\Psi_w$ is exact by definition.
Since $\G_0(\calD^\sharp_w)=\G_0(\calC^\fl_w)$, we have
$\G_0(\widetilde\calD^\sharp_w)=\G_0(\widetilde\calC^\fl_w)=\bfA_q(\frakn^w).$
By \cite{CW18} we have
$\bfA_q(\frakn^w)=\G_0(\PCoh([\Gr/G_N^c(\calO)])).$
Hence, taking the Grothendieck groups,  the functor $\widetilde\Psi_w$ yields a group homomorphism
$$\G_0(\widetilde\calD^\sharp_w)\to \G_0(\PCoh([\Gr/G_N^c(\calO)]))$$
which is invertible.
Now, recall that if $F:\calA\to\calB$ is an exact functor of Abelian Artinian categories such that
the induced map $\G_0(\calA)\to \G_0(\calB)$ is injective, then $F$ is faithful.
%Indeed, if the functor $F$ kills a nonzero morphism $f$ in $\calA$, then it must kill any constituent of the image of $f$,
%which is not possible by the injectivity of the map $\G_0(\calA)\to \G_0(\calB)$.
We deduce that $\widetilde\Psi_w$ is a faithful functor.
To prove the conjecture we must prove that the functor $\widetilde\Psi_w$
is full.

\medskip

\appendix

\medskip

\section{Reminders on Artin stacks and mixed geometry} \label{sec:ASMG}

\subsection{Schemes and stacks}
Let $\k=\overline\bbQ_\ell$ and $F$ be any field of characteristic
prime to $\ell$.
All $F$-schemes are assumed to be separated.
An \emph{Artin $F$-stack} is a stack over the category of affine $F$-schemes
with a smooth atlas and a representable, quasi-compact, quasi-separated diagonal.
Given any algebraic $F$-space $Z$ with an action of an affine algebraic 
$F$-group $G$, the  \emph{quotient $F$-stack} $[Z/G]$ is the Artin stack whose set of $R$-points
consists of $G$-torsors on $\Spec R$ with $G$-equivariant map to $Z$.
%It is represented by the action groupoid 
%$\{G\times Z\rightrightarrows Z\}$ in the category of $F$-schemes of finite type.
%It is a smooth stack of finite type if $Z$ is a smooth scheme of finite type.
A \emph{locally quotient stack} is an Artin stack which is locally equivalent to a quotient stack.

\smallskip

Let $\calZ$ be an Artin $F$-stack of finite type. Let $\D^\b(\calZ)$ be
the bounded constructible derived category of \'etale sheaves of $\k$-modules
on $\calZ$, in the sense of \cite{LO}.
Let $\P(\calZ)$ be the subcategory of perverse sheaves.
For each complex $\calE\in\D^b(\calZ)$ and each integer $a$, 
let ${}^p\!H^a\calE\in\P(\calZ)$ be the $a$-th perverse cohomology complex.
%Since any perverse sheaf is of finite length, the Grothendieck group $\G_0(\P(\calZ))$ is free, and the classes of the 
%simple perverse sheaves form a basis.
Let $\C(\calZ)\subset\D^\b(\calZ)$ be the additive full subcategory of semisimple complexes.

\smallskip

Let $Z$ be an $F$-scheme of finite type with an action of an affine algebraic $F$-group $G$.
Let $\D^\b_G(Z)$ denote the $G$-equivariant derived category of complexes of \'etale sheaves
of $\k$-vector spaces on $Z$ with bounded constructible cohomology, in the sense of Bernstein-Lunts \cite{BL}.
The triangulated category $\D^\b_G(Z)$
depends only on the quotient stack $[Z\,/\,G]$, and not on the action groupoid $\{G\times Z\rightrightarrows Z\}$
representing it.
It is equivalent to the triangulated category  $\D^\b([Z\,/\,G])$.

\smallskip

By a stratification $S=\{Z_w\,;\,w\in W\}$ of  an $F$-scheme of finite type $Z$  we'll  mean  
a finite algebraic Whitney  stratification.
Let $\D^\b(Z,S)\subset\D^\b(Z)$ be the full subcategory whose objects have constructible cohomology with 
respect to $S$. We write 
$\P(Z,S)=\P(Z)\cap\D^\b(Z,S)$ and $\Par(Z,S)=\Par(Z)\cap\D^\b(Z,S).$ 

\smallskip

Unless specified otherwise, we'll assume that $\calZ$ is the quotient stack $[Z\,/\,G]$
of an $F$-scheme $Z$ of finite type by the action of an affine algebraic $F$-group $G$
with a finite number of orbits.
Then, the $G$-orbits define a stratification $S$ of $Z$ and
the categories  $\P(\calZ)$, $\C(\calZ)$ are $G$-equivariant analogues of the categories 
$\P(Z,S)$, $\C(Z,S)$.
Let 
$\For:\D^\b(\calZ)\to\D^\b(Z,S)$
be the forgetful functor.
The categories $\C(\calZ)$ and $\C(Z,S)$ are graded.
The category grading is given by the cohomological shift functor, i.e., we set
\begin{align}\label{SHIFT0}\langle\bullet\rangle=[\bullet].\end{align}

\smallskip

\subsection{Mixed complexes}\label{ss:mixcmplx}
From now on we assume that $F$ is the algebraic closure of a finite field $F_0$ of characteristic prime to $\ell$.
We'll use the following convention : objects over $F_0$ are denoted with a subscript 0, 
and suppression of the subscript means passing to $F$ by extension of scalars.
For instance, we may write $Z=F\otimes_{F_0}Z_0$ for some $F_0$-scheme $Z_0$.
Then, we'll assume that
the stratification $S$ of $Z$ is the extension of scalars of a stratification $\{Z_{w,0}\,;\,w\in W\}$ of $Z_0$.
Let $\D^\b_\bbd(Z_0)$ be the full triangulated subcategory of mixed complexes in $\D^\b(Z_0)$.
The extension of scalars yields a triangulated $t$-exact functor
$$\omega:\D^\b_\bbd(Z_0)\to\D^\b(Z).$$
We may write a mixed complex with a subscript $\bullet_\bbd$, 
and abbreviate $\calE=\omega(\calE_\bbd)$.
Let $\D^\b_\bbd(Z_0,S)$ be the full triangulated subcategory of  $\D^\b_\bbd(Z_0)$
such that $\D^\b_\bbd(Z_0,S)=\omega^{-1}\D^\b(Z_0,S).$
Let $\P\!_\bbd(Z_0,S)$ be the  category of mixed perverse sheaves in $\D^\b_\bbd(Z_0,S)$.
Recall that $\calZ$ is an $F$-stack of the form $\calZ=[Z\,/\,G]$. Assume further that 
$\calZ$ is isomorphic to $F\otimes_{F_0}\calZ_0$, with $\calZ_0=[Z_0\,/\,G_0]$
and some affine algebraic $F_0$-group $G_0$ such that $G=F\otimes_{F_0}G_0$. 
Let $\D^\b_\bbd(\calZ_0)$ be the full triangulated subcategory of mixed complexes in $\D^\b(\calZ_0)$.
See \cite{LO} and \cite{S12} for the definition and the basic properties of the category $\D^\b_\bbd(\calZ_0)$.
Let 
$\For:\D^\b_\bbd(\calZ_0)\to\D^\b_\bbd(Z_0,S)$
be the forgetful functor.
For each integer $w$, we consider the full subcategories 
$\D^\b_{\leqslant w}(\calZ_0)$, $\D^\b_{\geqslant w}(\calZ_0)$
of $\D^\b_\bbd(\calZ_0)$ 
consisting of the mixed complexes of weight $\leqslant w$ and $\geqslant w$.
The category of pure complexes of weight $w$ is
$$\D^\b_w(\calZ_0)=\D^\b_{\leqslant w}(\calZ_0)\cap\D^\b_{\geqslant w}(\calZ_0).$$
By \cite[prop.~5.1.15]{BBD}, we have
\begin{align}\label{weight}\Hom_{\D^\b(\calZ_0)}(\calE\,,\,\calF)=0
\quad,\quad
\forall\calE\in\D^\b_{<w}(\calZ_0)
\quad,\quad
\forall\calF\in\D^\b_{>w}(\calZ_0).
\end{align}
We also have the following refinement of \eqref{weight} 
\begin{align}\omega(f)=0
\quad,\quad
\forall f\in\Hom_{\D^\b(\calZ_0)}(\calE,\calF)
\quad,\quad
\forall\calE\in\D^\b_{\leqslant w}(\calZ_0)
\quad,\quad
\forall\calF\in\D^\b_{>w}(\calZ_0).
\end{align}
Let $\P\!_\bbd(\calZ_0)$ be the category of mixed perverse sheaves in $\D^\b_\bbd(\calZ_0)$.
A mixed complex $\calE$ is pure of weight $w$, $\leqslant w$ or $\geqslant w$
if and only if  the perverse sheaf ${}^p\!H^a\calE$ is pure of weight $w+a$, $\leqslant w+a$ or $\geqslant w+a$ 
for each $a\in\bbZ$, by \cite[thm.~5.4.1]{BBD}. 
A mixed perverse sheaf $\calE$ has a unique finite increasing weight filtration
$W_a\calE$, $a\in\bbZ$,
such that the subquotient
$\Gr^W_a\calE=W_a\calE/W_{a-1}\calE$ is a pure mixed perverse sheaf of weight $a$,
which may not be semisimple, see \cite[thm.~5.3.5]{BBD}.

\smallskip

\begin{proposition}\label{prop:triangle}
For each $\calE\in\D^\b_\bbd(\calZ_0)$ and $w\in\bbZ$,
there is a distinguished triangle
\begin{align}\label{DT1}
\xymatrix{\ar[r]&\calE_{\leqslant w}\ar[r]&\calE\ar[r]&\calE_{>w}}
\quad,\quad\calE_{\leqslant w}\in\D^\b_{\leqslant w}(\calZ_0)
\quad,\quad\calE_{>w}\in\D^\b_{>w}(\calZ_0)
\end{align}
such that $\calE_{\geqslant w}=0$, $\calE_{\geqslant -w}=\calE$ if $w>\!\!> 0$, and
\begin{itemize}[leftmargin=8mm]
\item[$\mathrm{(a)}$]  
there is a distinguished triangle
\begin{align*}\xymatrix{\ar[r]&\calE_w\ar[r]&\calE_{\geqslant w}\ar[r]&\calE_{>w}}
\quad,\quad
\calE_w=(\calE_{\geqslant w})_{\leqslant w}\in\D^\b_w(\calZ_0),
\end{align*}
%\item[$\mathrm{(c)}$] $\sum_{a\in\bbZ}\lgroup{}^p\!H^a\calE\rgroup=
%\sum_{a\in\bbZ}\lgroup{}^p\!H^a\calE_{\leqslant w}\rgroup+\sum_{a\in\bbZ}\lgroup{}^p\!H^a\calE_{>w}\rgroup$
%in $\G_0(\P_\bbd(\calZ_0))$.
\item[$\mathrm{(b)}$]  the long exact sequence of perverse cohomologies splits into short exact sequences
\begin{align*}
\xymatrix{0\ar[r]&{}^p\!H^a\calE_{\leqslant w}\ar[r]&{}^p\!H^a\calE\ar[r]&{}^p\!H^a\calE_{>w}\ar[r]& 0}
\quad,\quad\forall a\in\bbZ.
\end{align*}
\end{itemize}
\end{proposition}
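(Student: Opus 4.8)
The plan is to establish the triangle \eqref{DT1}, together with the statements (a) and (b) and the boundedness assertions, by a dévissage on the perverse amplitude of $\calE$; the two inputs driving the argument are the canonical finite weight filtration $W_\bullet$ on mixed perverse sheaves recalled in the excerpt from \cite[thm.~5.3.5]{BBD}, and the $\Hom$-vanishing \eqref{weight}, strengthened when needed to $\Hom_{\D^\b(\calZ_0)}(\calA,\calB)=0$ for $\calA$ of weight $\leqslant a$ and $\calB$ of weight $\geqslant b$ with $a<b$, as in \cite[prop.~5.1.15]{BBD}. First I would record two elementary facts: by \cite[thm.~5.4.1]{BBD} the conditions ``weight $\leqslant w$'', ``weight $\geqslant w$'' and ``weight $=w$'' on an object of $\D^\b_\bbd(\calZ_0)$ are detected perverse degree by perverse degree on its perverse cohomology; and $\D^\b_{\leqslant w}(\calZ_0)$, $\D^\b_{>w}(\calZ_0)$, $\D^\b_{\geqslant w}(\calZ_0)$ are stable under extensions, which is immediate from the long exact sequence of perverse cohomology together with the stability of the corresponding conditions on mixed perverse sheaves under subquotients and extensions.

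For the construction of \eqref{DT1} I would induct on the number $n(\calE)$ of integers $a$ with ${}^p\!H^a\calE\neq0$. If $n(\calE)\leqslant1$, then $\calE\simeq P[-m]$ for a mixed perverse sheaf $P$ and some $m\in\bbZ$, and I would take $\calE_{\leqslant w}=(W_{w+m}P)[-m]$ and $\calE_{>w}=(P/W_{w+m}P)[-m]$, the triangle being the $[-m]$-shift of $0\to W_{w+m}P\to P\to P/W_{w+m}P\to0$. If $n(\calE)\geqslant2$, I would set $m=\max\{a:{}^p\!H^a\calE\neq0\}$ and use the perverse truncation triangle ${}^p\tau_{<m}\calE\to\calE\to{}^p\!H^m\calE[-m]\xrightarrow{\delta}({}^p\tau_{<m}\calE)[1]$; writing $\calE'={}^p\tau_{<m}\calE$ and $\calE''={}^p\!H^m\calE[-m]$ (so $n(\calE')<n(\calE)$ and $n(\calE'')=1$), the inductive hypothesis supplies triangles $\calE'_{\leqslant w}\to\calE'\to\calE'_{>w}$ and $\calE''_{\leqslant w}\to\calE''\to\calE''_{>w}$. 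Since $\calE''_{\leqslant w}$ has weight $\leqslant w$ and $\calE'_{>w}[1]$ has weight $\geqslant w+2$, the vanishing \eqref{weight} forces the composite $\calE''_{\leqslant w}\to\calE''\xrightarrow{\delta}\calE'[1]$ to vanish after projection to $\calE'_{>w}[1]$, hence to factor through $\calE'_{\leqslant w}[1]\to\calE'[1]$ by a morphism $g\colon\calE''_{\leqslant w}\to\calE'_{\leqslant w}[1]$, which is unique because $\Hom_{\D^\b(\calZ_0)}(\calE''_{\leqslant w},\calE'_{>w})=0$ by \cite[prop.~5.1.15]{BBD}. I would then define $\calE_{\leqslant w}$ by the triangle $\calE'_{\leqslant w}\to\calE_{\leqslant w}\to\calE''_{\leqslant w}\xrightarrow{g}\calE'_{\leqslant w}[1]$; the octahedral axiom, applied to the commuting square relating $g$ and $\delta$, yields a morphism $\calE_{\leqslant w}\to\calE$ fitting into a morphism of triangles over $\calE'\to\calE\to\calE''$, and taking cones of its vertical legs (the $3\times3$ lemma) produces a triangle $\calE'_{>w}\to\calE_{>w}\to\calE''_{>w}$ with $\calE_{>w}=\Cone(\calE_{\leqslant w}\to\calE)$. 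Stability under extensions then gives $\calE_{\leqslant w}\in\D^\b_{\leqslant w}(\calZ_0)$ and $\calE_{>w}\in\D^\b_{>w}(\calZ_0)$, completing \eqref{DT1}; the analogous (or Verdier-dual) construction gives the triangle $\calE_{<w}\to\calE\to\calE_{\geqslant w}$ with $\calE_{<w}\in\D^\b_{<w}(\calZ_0)$ and $\calE_{\geqslant w}\in\D^\b_{\geqslant w}(\calZ_0)$.

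For part (b) I would check along the dévissage that ${}^p\!H^a(\calE_{\leqslant w})\to{}^p\!H^a\calE$ is injective with image the weight subobject $W_{w+a}({}^p\!H^a\calE)$: this is the definition of $W_\bullet$ in the base case, and in the inductive step it follows because the triangle $\calE'_{\leqslant w}\to\calE_{\leqslant w}\to\calE''_{\leqslant w}$ computes perverse cohomology in the same degrees as $\calE'\to\calE\to\calE''$, with vanishing connecting maps; granting this, the long exact perverse cohomology sequence of \eqref{DT1} degenerates into the short exact sequences of (b). This identification also makes the decomposition \eqref{DT1} unique up to a unique isomorphism — a morphism between two such triangles restricting to the identity on $\calE$ is, by the computed perverse cohomologies, an isomorphism — and hence functorial in $\calE$, which justifies the notation. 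For part (a) I would set $\calE_w=(\calE_{\geqslant w})_{\leqslant w}$; it is of weight $\leqslant w$ by construction and of weight $\geqslant w$ because that condition is preserved by the operations used in forming the $\leqslant w$-truncation, so $\calE_w\in\D^\b_w(\calZ_0)$, and the truncation triangle reads $\calE_w\to\calE_{\geqslant w}\to(\calE_{\geqslant w})_{>w}$; finally $(\calE_{\geqslant w})_{>w}\simeq\calE_{>w}$, since, composing with $\calE\to\calE_{\geqslant w}$, one exhibits $(\calE_{\geqslant w})_{>w}$ as the weight-$>w$ part of $\calE$, which is $\calE_{>w}$ by the uniqueness just noted. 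The boundedness statements are then immediate: a mixed complex in $\D^\b_\bbd(\calZ_0)$ has finitely many nonzero ${}^p\!H^a$, each carrying a finite weight filtration, so its weights lie in a finite range, whence $\calE_{\geqslant w}=0$ and $\calE_{<-w}=0$ (that is, $\calE_{\geqslant-w}=\calE$) once $w$ is large enough.

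The step I expect to be the main obstacle is the gluing in the dévissage: producing the comparison morphism $\calE_{\leqslant w}\to\calE$ through the octahedral axiom compatibly with the already-constructed truncations of $\calE'$ and $\calE''$, and then checking that the resulting triangle and its perverse cohomology are insensitive to the auxiliary choices. All of this reduces in the end to the $\Hom$-vanishing between mixed complexes of distinct weights recorded at the outset, but the bookkeeping with the octahedral axiom and the perverse cohomology long exact sequences is where the care is required.
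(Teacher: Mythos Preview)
Your proof is correct but follows a different d\'evissage from the paper's. The paper inducts on the \emph{total length} of $\calE$ (the sum of the lengths of its perverse cohomology sheaves): at each step it peels off a single weight piece $\calG=W_{w+a}({}^p\!H^a\calE)[-a]$ from the lowest nonvanishing perverse degree, builds a triangle $\calG\to\calE\to\calF$ by factoring through ${}^p\tau_{\geqslant a}\calE$, applies the induction hypothesis to $\calF$, and then uses the octahedral axiom to splice $\calG$ onto $\calF_{\leqslant w}$. You instead induct on the perverse amplitude, cutting $\calE$ by perverse truncation into $\calE'={}^p\tau_{<m}\calE$ and $\calE''={}^p\!H^m\calE[-m]$, and gluing their already-built weight truncations via the $3\times3$ lemma. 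Your route is closer to the general formalism of weight structures and makes the identification ${}^p\!H^a(\calE_{\leqslant w})\cong W_{w+a}({}^p\!H^a\calE)$, hence (b) and the uniqueness/functoriality, very transparent; the paper's route keeps the octahedral step slightly simpler (one input is always a shifted perverse sheaf) and verifies (b) more directly along the induction, but its proof of (a) is a one-line sketch, whereas your derivation of (a) from uniqueness is cleaner.
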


\begin{proof}
Any perverse sheaf in $\P_\bbd(\calZ_0)$ has a finite length.
The construction of $\calE_{\leqslant w}$, $\calE_{>w}$ is by induction on the \emph{total length} of $\calE$, i.e., 
on the sum of the lengths of the perverse sheaves ${}^p\!H^a\calE$,
following the lines of \cite[lem.~6.7]{AR13d}. %See also \cite{K12a}.
Given $w$, let $a$ be the smallest integer such that the subobject $W_{w+a}({}^p\!H^a\calE)$ 
of ${}^p\!H^a\calE$ is $\neq 0$.
Set $\calG=W_{w+a}({}^p\!H^a\calE)[-a]$.
The inclusion $\calG\subset{}^p\!H^a\calE[-a]$ factors to a distinguished triangle
\begin{align}\label{DTA}\xymatrix{\ar[r]&\calG\ar[r]^-f&\calE\ar[r]&\calF},\end{align}
see \cite[(6.8)]{AR13d} for details, such that ${}^p\!H^b\calG=0$ for all $b\neq a$ and
\begin{align}\label{SESA}
\xymatrix{0\ar[r]&{}^p\!H^b\calG\ar[r]&{}^p\!H^b\calE\ar[r]&{}^p\!H^b\calF\ar[r]& 0}
\quad,\quad\forall b\in\bbZ.\end{align}
Hence $\calF$ has a lower total length than $\calE$, and induction yields a distinguished triangle 
\begin{align}\label{DTB}
\xymatrix{\ar[r]&\calF_{\leqslant w}\ar[r]&\calF\ar[r]&\calF_{>w}}
\quad,\quad\calF_{\leqslant w}\in\D^\b_{\leqslant w}(\calZ_0)
\quad,\quad\calF_{>w}\in\D^\b_{>w}(\calZ_0).
\end{align}
From \eqref{DTA}, \eqref{DTB} and \cite[lem.~1.3.10]{BBD}, we get distinguished triangles
\begin{align}\label{DTC}
\begin{split}
\xymatrix{\ar[r]&\calH\ar[r]&\calE\ar[r]&\calF_{>w}}
\quad,\quad
\xymatrix{\ar[r]&\calG\ar[r]&\calH\ar[r]&\calF_{\leqslant w}}
\quad,\quad\calH\in\D^\b_{\leqslant w}(\calZ_0).
\end{split}
\end{align}
Set $\calE_{\leqslant w}=\calH$ and $\calE_{>w}=\calF_{>w}$.
The induction hypothesis yields short exact sequences
\begin{align}\label{SESB}
\xymatrix{0\ar[r]&{}^p\!H^b\calF_{\leqslant w}\ar[r]&{}^p\!H^b\calF\ar[r]&{}^p\!H^b\calF_{>w}\ar[r]& 0}
\quad,\quad\forall b\in\bbZ.
\end{align}
From \eqref{SESA} and \eqref{SESB}, we deduce that the long exact sequence 
$$\xymatrix{\ar[r]&{}^p\!H^b\calH\ar[r]&{}^p\!H^b\calE\ar[r]&{}^p\!H^b\calF_{>w}\ar[r]& {}^p\!H^{b+1}\calH\ar[r]&}$$
splits into short exact sequences, yielding the condition (b).
%Since perverse cohomology is a cohomological functor and
%${}^p\!H^b\calF_{\leqslant w}$, ${}^p\!H^{b+1}\calG$ are mixed perverse sheaves of weights
%$\leqslant w+b$ and $w+b+1$ respectively,
%we have the short exact sequences
%$$\xymatrix{0\ar[r]&{}^p\!H^b\calG\ar[r]&{}^p\!H^b\calH\ar[r]&{}^p\!H^b\calF_{\leqslant w}\ar[r]& 0}
%\quad,\quad\forall b\in\bbZ.$$
Since $\Hom_{\D^\b(\calZ_0)}(\calE_{<w}\,,\,\calE_{> w})=0$ by \eqref{weight}, the
map $\calE\to\calE_{>w}$ factors to a morphism $\calE_{\geqslant w}\to\calE_{>w}$.
Completing this morphism to a distinguished triangle yields the claim (a).
\end{proof}

\smallskip

%For any complexes $\calE,$ $\calF$ in $\D^\b(\calZ)$ we have
%\begin{align*}
%\Hom^\bullet_{\D^\b(\calZ)}(\calE,\calF)=\bigoplus_{a\in\bbZ}\Hom^a_{\D^\b(\calZ)}(\calE,\calF)[-a]
%\quad,\quad
%\Hom^a_{\D^\b(\calZ)}(\calE,\calF)=\Hom_{\D^\b(\calZ)}(\calE,\calF[a]).
%\end{align*}

For any mixed complexes $\calE,$ $\calF$ on $\calZ_0$, we write
\begin{align*}
\Hom^\bullet_{\D^\b(\calZ_0)}(\calE,\calF)=\bigoplus_{a\in\bbZ}\Hom^a_{\D^\b(\calZ_0)}(\calE,\calF)[-a]
\quad,\quad
\Hom^a_{\D^\b(\calZ_0)}(\calE,\calF)=\Hom_{\D^\b(\calZ_0)}(\calE,\calF[a]).
\end{align*}
Let $\text{a}: \calZ_0\to \Spec F_0$ be the structure map. We define the \emph{geometric Hom functor} by
\begin{align*}
\HHom^\bullet_{\D^\b(\calZ_0)}(\calE,\calF)=\text{a}_*R\calH om_{\D^\b(\calZ_0)}(\calE,\calF).
\end{align*}
It is a mixed complex on $\Spec F_0$.
We define
\begin{align*}
%\HHom^\bullet_{\D^\b(\calZ_0)}(\calE,\calF)=\bigoplus_{a\in\bbZ}\HHom^a_{\D^\b(\calZ_0)}(\calE,\calF)[-a]
%\quad,\quad
\HHom^a_{\D^\b(\calZ_0)}(\calE,\calF)=H^a(\HHom^\bullet_{\D^\b(\calZ_0)}(\calE,\calF)).
\end{align*}
It is a mixed vector space consisting of a graded $\k$-vector space
$$\omega\,\HHom^a_{\D^\b(\calZ_0)}(\calE,\calF)=\Hom^a_{\D^\b(\calZ)}(\omega\calE,\omega\calF)$$
and a Frobenius operator $\Fr$. 
%We have a commutative diagram
%\begin{align*}
%\xymatrix{\Hom^a_{\D^\b(\calZ_0)}(\calE,\calF)\ar[rr]^-\omega\ar@{->>}[rd]&&\Hom^a_{\D^\b(\calZ)}(\omega\calE,\omega\calF)\\
%&\HHom^a_{\D^\b(\calZ_0)}(\calE,\calF)^{\Fr}\ar@{^{(}->}[ru]&}
%\end{align*}
We abbreviate 
$$\underline H^\bullet(\calZ_0,\calE)=\HHom^\bullet_{\D^\b(\calZ_0)}(\k_{\calZ_0},\calE).$$

\smallskip

\subsection{Even stratifications, mixed categories and parity sheaves}\label{sec:even}

Let $\k_w$ denote the mixed constant sheaf in $\D^\b(Z_{w,0})$ which is pure of weight 0.
Let $i_w$ be the locally closed embedding of the stratum $Z_w\subset Z$. Set $\dim Z_w=d_w$.
Let $\k_w=\k_{Z_w}$ be the constant sheaf on $Z_w$.
We define the following objects in  $\D^\b(Z,S)$ 
\begin{align}\label{DNIC}
\Delta(w)=(i_w)_!\k_w[d_w]\quad,\quad
\nabla(w)=(i_w)_*\k_w[d_w]\quad,\quad
IC(w)=(i_w)_{!*}\k_w[d_w].
\end{align}
Fix a square root of the Tate sheaf.
For each $a\in\bbZ$ let  $(a/2)$ be the twist by the $a$th power of this square root.
We abbreviate 
\begin{align}\label{SHIFT}\langle \bullet\rangle=(\bullet/2)[\bullet].\end{align}
We define the following mixed complexes in $\D^\b_\bbd(Z_0,S)$ 
\begin{align}\label{DNICBBD}
\Delta(w)_\bbd=(i_w)_!\k_w\langle d_w\rangle\quad,\quad
\nabla(w)_\bbd=(i_w)_*\k_w\langle d_w\rangle\quad,\quad
IC(w)_\bbd=(i_w)_{!*}\k_w\langle d_w\rangle.
\end{align}
Let $\D^\b_{\lozenge,\bbd}(Z_0,S)\subset\D^\b_\bbd(Z_0,S)$ 
and
$\P\!_{\lozenge,\bbd}(Z_0,S)\subset\P\!_\bbd(Z_0,S)$ be the full triangulated
subcategory and the Serre subcategory generated by the set of objects
$\{IC(w)_\bbd(a/2)\,;\,w\in W\,,\,a\in\bbZ\}.$
The triangulated category
$\D^\b_{\lozenge,\bbd}(Z_0,S)$ has a t-structure whose heart is $\P\!_{\lozenge,\bbd}(Z_0,S)$.
%We have $$\P\!_{\lozenge,\bbd}(Z_0,S)=\P\!_\bbd(Z_0,S)\cap\D^\b_{\lozenge,\bbd}(Z_0,S).$$

\smallskip

\begin{definition}\label{def:even}The stratification $S$ is \hfill
\begin{itemize}[leftmargin=8mm]
\item[$\mathrm{(a)}$] 
\emph{affine} if each stratum is isomorphic to an affine space,
\item[$\mathrm{(b)}$] 
\emph{even affine} if 
\begin{itemize}[leftmargin=8mm]
\item[$\mathrm{(1)}$] $S$ is affine,
\item[$\mathrm{(2)}$]
$H^a((i_u)^*IC(v)_\bbd)=0$
for all $u,v\in W$, $a\in\bbZ$ with $a+d_v$ odd, 
\item[$\mathrm{(3)}$] 
$H^a((i_u)^*IC(v)_\bbd)$ 
is a sum of copies of
$\k_u(-a/2)$
if $a+d_v$ is even,
\end{itemize}
\item[$\mathrm{(c)}$]
\emph{even}  if 
\begin{itemize}[leftmargin=8mm]
\item[$\mathrm{(4)}$] there is an even affine stratification $T$ of $Z$ which refines $S$,
\item[$\mathrm{(5)}$] the strata of $S$ are connected and simply connected.
\end{itemize}
\item[$\mathrm{(d)}$]
\emph{good} if $S$ is even affine and satisfies the conditions \emph{\cite[\S 4.1(a)-(d)]{Y09}}.
\end{itemize}
\end{definition}

\smallskip

Since $IC(v)_\bbd$ is Verdier self-dual and $Z_u$ is smooth by (1), 
the conditions (2) and (3) are equivalent to the following conditions
\begin{itemize}[leftmargin=8mm]
\item[$\mathrm{(6)}$]
$H^a((i_u)^!IC(v)_\bbd)=0$ for all $u,v\in W$, $a\in\bbZ$ with $a+d_v$ odd, 
\item[$\mathrm{(7)}$] 
$H^a((i_u)^!IC(v)_\bbd)$ is a sum of copies of $\k_u(a/2)$ if $a+d_v$ is even.
\end{itemize}
The conditions (2), (6) imply that the complex $IC(v)$ is a \emph{parity sheaf} of $\D^\b(Z,S)$ in the sense of \cite{JMW}.
The conditions (3), (7) imply that the complex $IC(v)$ is \emph{very pure} in the sense of \cite[def.~3.1.2]{BY13}.
They tell us in addition that  the mixed vector spaces $H^a((i_u)^*IC(v)_\bbd)$ and $H^a((i_u)^!IC(v)_\bbd)$
are semisimple.

\smallskip

If the stratification $S$ is even, and $T$ is an even affine stratification which refines $S$,
then there is a full embedding of triangulated categories
$$\D^\b(Z,S)\subset \D^\b(Z,T)\quad,\quad
\D^\b_{\lozenge,\bbd}(Z_0,S)\subset \D^\b_{\lozenge,\bbd}(Z_0,T).$$
Since each stratum of $S$ contains a unique dense stratum of $T$,
there is a full embedding of additive categories
$$\P(Z,S)\subset\P(Z,T)\quad,\quad\P\!_{\lozenge,\bbd}(Z_0,S)\subset\P\!_{\lozenge,\bbd}(Z_0,T).$$

\smallskip

\begin{definition}\label{def:pure}
\hfill
\begin{itemize}[leftmargin=8mm]
\item[$\mathrm{(a)}$] 
$\Pure(Z_0,S)\subset\D^\b_{\lozenge,\bbd}(Z_0,S)$ is the full subcategory
of all mixed complexes which are isomorphic to finite direct sums of objects in
$\{IC(w)_\bbd\langle a\rangle\,;\,w\in W\,,\,a\in\bbZ\}.$
It is a graded additive category with the graded shift functor $\langle\bullet\rangle$ in \eqref{SHIFT}.

\item[$\mathrm{(b)}$] 
$\D^\b_\mix(Z,S)=\K^\b(\Pure(Z_0,S))$ as a graded triangulated category with the graded shift functor 
$\langle\bullet\rangle$  in \eqref{SHIFT}.

\item[$\mathrm{(c)}$] 
$\P\!_\mix(Z,S)\subset\P\!_{\lozenge,\bbd}(Z_0,S)$ is the full subcategory 
of all mixed perverse sheaves $\calE$ such that $\Gr^W_\bullet\calE$ is semisimple.
It is a graded Abelian category for the Tate shift functor $(\bullet/2)$.
\end{itemize}
\end{definition}

\smallskip

\begin{proposition} \label{prop:A}
Assume that the stratification S is even.
\hfill
\begin{itemize}[leftmargin=8mm]

\item[$\mathrm{(a)}$] 
$\D^\b_\mix(Z,S)$ has a t-structure and a triangulated $t$-exact faithful functor 
$$\iota:\D^\b_\mix(Z,S)\to\D^\b_{\lozenge,\bbd}(Z_0,S).$$
The heart of $\D^\b_\mix(Z,S)$
is equivalent to $\P\!_\mix(Z,S)$ and the restriction of $\iota$ to this heart is the full embedding
$\P\!_\mix(Z,S)\subset\P\!_{\lozenge,\bbd}(Z_0,S).$

\item[$\mathrm{(b)}$] 

$\zeta=\omega\circ\iota$ is a $t$-exact functor $\D^\b_\mix(Z,S)\to\D^\b(Z,S)$ such that
$$\bigoplus_{a\in\bbZ}\Hom_{\D^\b_\mix(Z)}(\calE\,,\,\calF(a/2))=
\Hom_{\D^\b(Z)}(\zeta\calE\,,\,\zeta\calF)\quad,\quad\forall\calE,\calF\in\D^\b_\mix(Z,S).$$

\item[$\mathrm{(c)}$] 
For any inclusion $h:Y\to Z$ of a union of strata of $S$,
the functors $h_*$, $h_!$, $h^*$, $h^!$ between  the categories $\D^\b_\bbd(Y_0,S)$ and $\D^\b_\bbd(Z_0,S)$
 lift to triangulated 
functors between the categories $\D^\b_\mix(Y,S)$ and $\D^\b_\mix(Z,S)$ which satisfy the usual 
adjointness properties.
\end{itemize}
\end{proposition}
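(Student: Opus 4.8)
The plan is to follow the by-now-standard construction of a ``mixed derived category'' as a bounded homotopy category of parity objects, due to Achar-Riche \cite{AR13d} in the modular setting and going back to Beilinson-Ginzburg-Soergel \cite{BGS96} and Bezrukavnikov-Yun \cite{BY13} in the pure setting, transported to the present $\overline\bbQ_\ell$-context under the hypothesis that $S$ is even. Throughout, $\D^\b_\mix(Z,S)=\K^\b(\Pure(Z_0,S))$ is triangulated in the usual way, with cohomological shift $[\bullet]$ and with $\langle\bullet\rangle$ applied term-wise. First I would construct the functor $\iota$. The category $\Pure(Z_0,S)$ is a full additive subcategory of $\D^\b_{\lozenge,\bbd}(Z_0,S)$ all of whose objects are pure of weight $0$ --- this uses $\langle 1\rangle=(1/2)[1]$ and the purity of the $IC(w)_\bbd$. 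Evenness enters here: a weight count from \eqref{weight} gives $\Hom_{\D^\b(\calZ_0)}(\calE,\calF[n])=0$ for $\calE,\calF\in\Pure(Z_0,S)$ and $n\geqslant 2$, the refinement of \eqref{weight} shows that $\omega$ annihilates $\Hom_{\D^\b(\calZ_0)}(\calE,\calF[1])$, and conditions (2), (3), (6), (7) of Definition \ref{def:even} make the $IC(w)_\bbd$ parity and very pure. These are precisely the inputs that let one run the construction of \cite[\S 2]{AR13d} (see also \cite{BY13,BGS96}), producing a triangulated functor $\iota\colon\D^\b_\mix(Z,S)\to\D^\b_{\lozenge,\bbd}(Z_0,S)$ which restricts to the tautological inclusion on $\Pure(Z_0,S)$ and is faithful.

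Next I would put the perverse $t$-structure on $\D^\b_\mix(Z,S)$ and identify its heart. The $t$-structure is built by recollement along the strata of $S$, using the lifted functors of part (c) and the gluing recipe of \cite{BBD}; the lifts to $\D^\b_\mix(Z,S)$ of the objects $\Delta(w)_\bbd$, $\nabla(w)_\bbd$ of \eqref{DNICBBD} serve as building blocks. One then checks that $\iota$ is $t$-exact, by comparing the two recollement data, and that its restriction to the heart is the full embedding $\P\!_\mix(Z,S)\subset\P\!_{\lozenge,\bbd}(Z_0,S)$: the heart is generated under extensions by the simple objects $IC(w)_\bbd\langle a\rangle$, and the weight-filtration theorem \cite[thm.~5.3.5]{BBD} together with the semisimplicity of pure perverse sheaves along an even stratification shows that a mixed perverse sheaf belongs to $\P\!_\mix(Z,S)$ --- that is, has semisimple $\Gr^W$ --- if and only if it is glued from such simples, hence lies in the essential image of the heart under $\iota$.

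It then remains to deduce parts (b) and (c). For (b): $\omega$ is $t$-exact by hypothesis and $\iota$ is $t$-exact by the previous step, so $\zeta=\omega\circ\iota$ is $t$-exact; the Hom-identity holds on objects of $\Pure(Z_0,S)$ by the ``degrading'' purity statement for parity complexes \cite[prop.~2.6]{JMW} --- the same input as \eqref{isom33} --- and extends to all of $\D^\b_\mix(Z,S)$ by d\'evissage, since both sides are cohomological in each variable and agree on a generating class. For (c): the genuine functors $h_*$, $h_!$, $h^*$, $h^!$ preserve parity complexes \cite[\S 2]{JMW}, hence preserve the triangulated subcategory of $\D^\b_{\lozenge,\bbd}$ generated by $\Pure$ on each side, so through $\iota$ they descend to triangulated functors between $\D^\b_\mix(Y,S)$ and $\D^\b_\mix(Z,S)$; the adjunctions and base-change compatibilities are inherited from those downstairs, faithfulness of $\iota$ being used to pin down units and counits.

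The main obstacle is the middle step: building the $t$-structure on $\D^\b_\mix(Z,S)$ and proving that $\iota$ is $t$-exact with heart exactly $\P\!_\mix(Z,S)$. This is where the full strength of ``$S$ even'' (rather than merely ``even affine'') is needed --- both to make the recollement compatible with the weight structure and to control precisely which mixed perverse sheaves occur. A secondary delicate point is the faithfulness of $\iota$: realization functors of this mixed-to-genuine type are in general not full, so faithfulness is not formal and rests on the weight, respectively Frobenius-degeneration, argument of \cite{AR13d}.
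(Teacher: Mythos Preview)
The paper's proof is entirely by citation to \cite{AR13d}, so your sketch is really a sketch of the content of those references. The overall shape is right, but there is a genuine gap in your argument for part (c), and a couple of smaller inaccuracies elsewhere.

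\textbf{The gap in (c).} Your claim that ``the genuine functors $h_*$, $h_!$, $h^*$, $h^!$ preserve parity complexes'' is false in general. For a closed inclusion $i$ one has $i_*=i_!$ preserving parity, and for an open inclusion $j$ one has $j^*=j^!$ preserving parity; but $i^*$, $i^!$, $j_*$, $j_!$ do \emph{not} preserve parity complexes --- indeed $(i_w)^*IC(v)$ is typically not semisimple. The paper's own Remark \ref{rem:mixte2}(b) says exactly this. So the construction in \cite[thm.~9.5]{AR13d} is not a simple ``restrict and take homotopy categories'' step: one first builds the easy functors directly on $\Pure$, and then produces the remaining ones by a recollement/Verdier-quotient argument (showing, for a closed--open decomposition $i,j$, that $j^*$ realizes $\D^\b_\mix(U,S)$ as the Verdier quotient of $\D^\b_\mix(Z,S)$ by the image of $i_*$, and extracting the adjoints from that). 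There is a second problem with your descent argument: even when a functor on $\D^\b_{\lozenge,\bbd}$ preserves the essential image of $\iota$, it does not automatically factor through $\iota$, because $\iota$ is only faithful, not full.

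\textbf{Smaller points.} In (b), the Hom-identity is a mixed-to-nonmixed degrading statement (Frobenius invariants exhaust the non-mixed Hom), not an equivariant-to-nonequivariant one; citing \cite[prop.~2.6]{JMW} and \eqref{isom33} conflates two different phenomena. In (a), invoking the lifted functors of (c) to build the $t$-structure is circular as stated; in \cite{AR13d} the order is reversed --- the $t$-structure on $\D^\b_\mix$ comes from identifying it with $\D^\b(\P\!_\mix(Z,S))$ via a realization functor (\cite[cor.~7.10]{AR13d}), and the push/pull functors are constructed afterwards.
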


\begin{proof}
\iffalse%%%%%%%%%%%%%%%%%%%%%%%%%%%%%%%%%%
It is an Abelian category.
By \cite{BGS96}, the triangulated category
$\D^\b\big(\P\!_\bbd(Z_0,S)^{ss}\big)$
is a mixed version of $\D^\b(Z,S)$. % see, e.g., \cite[thm.~7.7]{AR13d}. 
By \cite[cor.~7.10]{AR13d}, we have an equivalence of graded triangulated categories
$\D^\b(\P\!_\bbd(Z_0,S)^{ss})=\K^\b(\Pure(Z_0,S)),$ proving part (a).
\fi%%%%%%%%%%%%%%%%%%%%%%%%%%%%%%%%%%%%
Part (a) is proved in  \cite[\S 7.2]{AR13d}, \cite[prop.~7.5(1),(2)]{AR13d}.
Part (b) is proved in \cite[prop.~7.5(2)]{AR13d}.
Part (c) is \cite[thm.~9.5]{AR13d}.
\end{proof}

\smallskip

Assume that the stratification $S$ is even. 
We have the graded additive category $(\Par(Z,S)\,,\,\langle \bullet\rangle\,)$ such that
$\langle \bullet\rangle$ is as in \eqref{SHIFT0}, 
and the graded additive category $(\Pure(Z_0,S)\,,\,\langle\bullet\rangle\,)$ such that
$\langle \bullet\rangle$ is as in \eqref{SHIFT}.

\smallskip

\begin{proposition}\label{prop:pure1}Assume that the stratification $S$ is even.
\hfill
\begin{itemize}[leftmargin=8mm]
\item[$\mathrm{(a)}$] 
$\Hom_{\D^\b_\mu(Z)}^\bullet(\calE\,,\,\calF)=
\HHom^\bullet_{\D^\b(Z_0)}(\iota\calE\,,\,\iota\calF)^\Fr$
for each $\calE,\calF\in\D^\b_\mix(Z,S)$.
\item[$\mathrm{(b)}$] 
$\Hom_{\D^\b_\mu(Z)}^\bullet(\calE\,,\,\calF)=
\HHom^\bullet_{\D^\b(Z_0)}(\iota\calE\,,\,\iota\calF)$
for each $\calE,\calF\in\Pure(Z_0,S)$.
%the mixed complex $\HHom^\bullet_{\D^\b(Z_0)}(\calE,\calF)$  is pure of weight $0$.
%it belongs to $\C_\bbd(\Spec F_0)$.
%$\Hom_{\D^\b(Z_0)}^\bullet(\calE_\bbd,\calF_\bbd\langle a\rangle)=
%\Hom_{\D^\b(Z)}^\bullet(\omega\calE_\bbd,\omega\calF_\bbd)$.
\item[$\mathrm{(c)}$] 
$\zeta:(\Pure(Z_0,S)\,,\,\langle\bullet\rangle\,)\to(\Par(Z,S)\,,\,\langle\bullet\rangle\,)$ is an equivalence of
graded additive categories.
\end{itemize}
\end{proposition}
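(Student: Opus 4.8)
The plan is to deduce all three assertions from Proposition \ref{prop:A} together with the purity and Tate-type properties of the $IC$-sheaves that are built into the evenness of $S$; parts (c) and (b) are then almost formal, and (a) needs a d\'evissage that is the only real point. I would start with (c). The functor $\zeta=\omega\circ\iota$ sends a one-term complex $IC(w)_\bbd\langle a\rangle$ of $\Pure(Z_0,S)$, which $\iota$ maps to itself inside $\D^\b_{\lozenge,\bbd}(Z_0,S)$, to $\omega(IC(w)_\bbd(a/2)[a])=IC(w)[a]=IC(w)\langle a\rangle$ in $\Par(Z,S)$ by \eqref{SHIFT0}; so $\zeta$ is additive, commutes with $\langle\bullet\rangle$, and carries the generators of $\Pure(Z_0,S)$ bijectively onto those of $\Par(Z,S)$, hence is essentially surjective. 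For full faithfulness I would invoke Proposition \ref{prop:A}(b): for $\calE,\calF\in\Pure(Z_0,S)$, regarded as one-term complexes in homological degree $0$ of $\D^\b_\mix(Z,S)=\K^\b(\Pure(Z_0,S))$, and using that $(a/2)=\langle a\rangle[-a]$ by \eqref{SHIFT}, the space $\Hom_{\D^\b_\mix(Z)}(\calE,\calF(a/2))=\Hom_{\K^\b(\Pure(Z_0,S))}(\calE,\calF\langle a\rangle[-a])$ vanishes for $a\neq0$ (there are no chain maps between complexes concentrated in different homological degrees) and equals $\Hom_{\Pure(Z_0,S)}(\calE,\calF)$ for $a=0$; thus Proposition \ref{prop:A}(b) collapses to the assertion that $\zeta$ induces an isomorphism $\Hom_{\Pure(Z_0,S)}(\calE,\calF)\simto\Hom_{\Par(Z,S)}(\zeta\calE,\zeta\calF)$, which is (c).

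For (b), the left-hand side $\Hom^\bullet_{\D^\b_\mix(Z)}(\calE,\calF)$ is, for exactly the same reason, concentrated in cohomological degree $0$ and equal to $\Hom_{\Pure(Z_0,S)}(\calE,\calF)$. On the right-hand side $\iota\calE=\calE$ and $\iota\calF=\calF$ in $\D^\b_\bbd(Z_0)$, so, writing $\calE$ and $\calF$ as sums of objects $IC(w)_\bbd\langle a\rangle$, the mixed vector space $\HHom^k_{\D^\b(Z_0)}(\iota\calE,\iota\calF)$ is a direct sum of Tate twists of the $\Ext^{k+a'-a}_{\D^\b(Z)}(IC(w),IC(w'))$. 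The evenness of $S$ (parity of the $IC$-sheaves and purity, of Tate type, of their stalks and costalks, cf.\ \cite{JMW} and \cite[\S2]{AR13d}) forces $\Ext^{j}_{\D^\b(Z)}(IC(w),IC(w'))$ to be pure of weight $j$, of Tate type, to vanish for $j<0$, and to equal $\delta_{j,0}\,\delta_{w,w'}\,\k$ for $j\leqslant0$; after the twist each summand of $\HHom^k_{\D^\b(Z_0)}(\iota\calE,\iota\calF)$ becomes pure of weight $k$. Hence its Frobenius invariants vanish for $k\neq0$, and for $k=0$ the Frobenius acts trivially, so the invariants recover $\Hom_{\D^\b(Z)}(\zeta\calE,\zeta\calF)=\Hom_{\Par(Z,S)}(\zeta\calE,\zeta\calF)$, which is $\Hom_{\Pure(Z_0,S)}(\calE,\calF)$ by (c). So the two sides of (b) agree, both being $\Hom_{\Pure(Z_0,S)}(\calE,\calF)$ in degree $0$.

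For (a) I would first observe that, since $\iota$ commutes with the triangulated shift, $\HHom^n_{\D^\b(Z_0)}(\iota\calE,\iota\calF)=\HHom^0_{\D^\b(Z_0)}(\iota\calE,\iota\calF[n])$, so (a) is equivalent to the plain-$\Hom$ identity $\Hom_{\D^\b_\mix(Z)}(\calE,\calF)=\HHom^0_{\D^\b(Z_0)}(\iota\calE,\iota\calF)^\Fr=\Hom_{\D^\b(Z)}(\zeta\calE,\zeta\calF)^\Fr$ for all $\calE,\calF\in\D^\b_\mix(Z,S)=\K^\b(\Pure(Z_0,S))$. I would then argue by d\'evissage on bounded complexes of objects of $\Pure(Z_0,S)$ representing $\calE$ and $\calF$, with (b) as the base case: a distinguished triangle turns $\calE'\mapsto\Hom_{\D^\b_\mix(Z)}(\calE',\calF)$ and $\calE'\mapsto\Hom_{\D^\b(Z)}(\zeta\calE',\zeta\calF)^\Fr$ into long exact sequences that agree on the generators $IC(w)_\bbd\langle a\rangle$. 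The hard part will be that Frobenius invariants are only left exact, so one must check that the Frobenius-\emph{coinvariant} correction terms $\Ext^{-1}_{\D^\b(Z)}(\zeta\calE',\zeta\calF)_\Fr$ appearing in the recollement over $F_0$ all vanish; this follows from the same purity and Tate-type input as in (b), since those terms are built from $\Ext$-groups between parity sheaves and hence carry Frobenius eigenvalues different from $1$ in the relevant degrees, after which a five-lemma propagates the equality from the generators to all of $\D^\b_\mix(Z,S)$. Alternatively, the identity in (a) is contained in the mixed-sheaf formalism of \cite[\S7]{AR13d} and could simply be cited from there.
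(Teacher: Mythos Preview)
Your argument for (c) is correct and is essentially the paper's argument, phrased slightly more directly: the vanishing $\Hom_{\K^\b(\Pure)}(\calE,\calF\langle a\rangle[-a])=0$ for $a\neq0$ that you read off from the homotopy-category formalism is exactly what the paper records as the Koszul vanishing \eqref{P2}, there derived by citing \cite{BGS96}. Combined with Proposition~\ref{prop:A}(b) this gives (c), just as in the paper.

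There is, however, a genuine gap in your treatment of (b). Part (b) asserts $\Hom^\bullet_{\D^\b_\mix}(\calE,\calF)=\HHom^\bullet_{\D^\b(Z_0)}(\iota\calE,\iota\calF)$ \emph{without} taking Frobenius invariants; it is a strengthening of (a) for $\calE,\calF\in\Pure(Z_0,S)$. You correctly establish the key ingredient --- that $\HHom^k(\iota\calE,\iota\calF)$ is pure of weight $k$ --- but you then pass to Frobenius invariants and conclude that ``the two sides of (b) agree, both being $\Hom_{\Pure}(\calE,\calF)$ in degree $0$''. This is not what (b) says: the right-hand side $\HHom^\bullet$ is \emph{not} concentrated in degree $0$ (for instance $\HHom^2(\iota\,IC(w)_\bbd,\iota\,IC(w)_\bbd)$ has underlying space $\Ext^2_{\D^\b(Z)}(IC(w),IC(w))$, generically nonzero). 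What you have actually proved is the restriction of (a) to $\Pure\times\Pure$. The paper runs the logic in the opposite order: it first obtains (a) from \cite[lem.~7.8]{AR13d}, which also gives semisimplicity of $\HHom^\bullet$; then \eqref{P1} and the vanishing \eqref{P2} force $\HHom^a$ to be pure of weight $a$, and purity together with semisimplicity is what yields (b).

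Your proposed d\'evissage for (a) inherits the same problem, since it is meant to start from (b). You correctly flag the obstruction (non-exactness of Frobenius invariants), and your fallback of citing \cite[\S7]{AR13d} is precisely what the paper does. The clean repair is simply to adopt the paper's order: cite \cite[lem.~7.8]{AR13d} for (a), then use your direct purity argument (which is a legitimate alternative to the Koszul citation) together with (a) and semisimplicity to deduce (b), and keep your proof of (c).
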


\begin{proof}
Due to the full embedding $\D^\b(Z_0,S)\subset \D^\b(Z_0,T)$ for each affine refinement $T$ of $S$, 
we can assume that $S$ is even affine. 
Part (a) follows from \cite[lem.~7.8]{AR13d}, which also  implies that
the mixed complex $\HHom^\bullet_{\D^\b(Z_0)}(\iota\calE,\iota\calF)$ is semisimple
for each objects $\calE,\calF\in\D^\b_\mix(Z,S)$. 
Hence, for each $a,b\in\bbZ$, we have
\begin{align}\label{P1}
\Hom_{\D^\b_\mu(Z)}^a(\calE\,,\,\calF(b/2))=
(\HHom^a_{\D^\b(Z_0)}(\iota\calE\,,\,\iota\calF)(b/2))^\Fr,
\end{align}
and to prove (b) it is enough to check that $\HHom^\bullet_{\D^\b(Z_0)}(\iota\calE,\iota\calF)$  is pure of weight $0$ whenever $\calE,\calF\in\Pure(Z_0,S)$.
The mixed Abelian category $\P\!_\mix(Z,S)$ is Koszul by \cite[thm.~4.4.4]{BGS96}.
Hence, if $\calE,\calF\in\P\!_\mix(Z,S)$ are pure of weight zero, we have
\begin{align}\label{P2}
\begin{split}
b\neq a&\Longrightarrow\Hom_{\D^\b_\mu(Z,S)}^a(\calE\,,\,\calF(b/2))=0.
%\Longrightarrow(\HHom^a_{\D^\b(Z_0)}(\iota\calE\,,\,\iota\calF)(b/2))^\Fr=0.
\end{split}
\end{align}
So, the mixed vector space $\HHom^a_{\D^\b(Z_0)}(\iota\calE,\iota\calF)$ is pure of weight $a$, 
so the mixed complex
$\HHom^\bullet_{\D^\b(Z_0)}(\iota\calE,\iota\calF)$ is pure of weight $0$, proving (b) because
any object of $\Pure(Z_0,S)$ is a sum of $IC(w)_\bbd\langle a\rangle$'s.
Part (c) follows from \eqref{P2} and Proposition \ref{prop:A}, 
since for any $\calE,\calF\in\Pure(Z_0,S)$ we have
\begin{align*}
\begin{split}
\Hom_{\D^\b_\mix(Z)}(\calE\,,\,\calF)
=\bigoplus_{a\in\bbZ}\Hom_{\D^\b_\mix(Z)}(\calE\,,\,\calF(a/2))
=\Hom_{\D^\b(Z)}(\zeta\calE,\zeta\calF).
\end{split}
\end{align*}
\end{proof}

\smallskip

For each $w\in W$, let $IC(w)_\mix$ be $IC(w)_\bbd$ viewed as an object of $\D^\b_\mix(Z,S)$.
Assume that the stratification $S$ is even.
By Proposition \ref{prop:pure1}, we have
$\D^\b_\mix(Z,S)=\K^\b(\Par(Z,S)).$ 
This identification takes $IC(w)_\mix$ to $IC(w)$.
The grading  $\K^\b(\Par(Z,S))$ is given by the shift functor $\langle \bullet\rangle$
on $\Par(Z,S)$ in \eqref{SHIFT0}.

\smallskip

We define the \emph{equivariant mixed category} of the stack $\calZ=[Z\,/\,G]$ by
$\D^\b_\mix(\calZ)=\K^\b(\Par(\calZ)).$
Let $S$ be the stratification by the $G$-orbits. We have the forgetful functor
$\For:\D^\b_\mix(\calZ)\to\D^\b_\mix(Z,S).$
We do not know any equivariant analogue of Proposition \ref{prop:pure1}.
However, the following holds, see e.g. \cite[lem.~3.1.5]{BY13}.

\smallskip

\begin{proposition} \label{prop:BY} Assume that the $G_0$-orbits in $Z_0$ are affine.
If $\calE,\calF\in\D^\b_\bbd(\calZ_0)$ are very pure of weight $0$, then
the mixed complex $\HHom^\bullet_{\D^\b(\calZ_0)}(\calE,\calF)$ in $\D^+(\Spec F_0)$ is pure of weight $0$
and it is free of finite rank as an $H^\bullet_G$-module.
\qed
\end{proposition}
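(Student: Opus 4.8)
This is \cite[lem.~3.1.5]{BY13}, and the plan is to deduce it from the non-equivariant purity statement of Proposition \ref{prop:pure1} by a weight-degeneration argument. First I would consider the natural morphism $p\colon\calZ_0=[Z_0/G_0]\to[\Spec F_0/G_0]$ induced by the structure map $Z_0\to\Spec F_0$; its fibre is $Z_0$, so
\begin{align*}
\HHom^\bullet_{\D^\b(\calZ_0)}(\calE,\calF)=R\Gamma\bigl([\Spec F_0/G_0]\,,\,Rp_*R\calH om(\calE,\calF)\bigr).
\end{align*}
Since $G_0$ is connected, $[\Spec F_0/G_0]$ carries no non-trivial lisse $\k$-sheaf, so the cohomology sheaves of $Rp_*R\calH om(\calE,\calF)$ are constant with value $\HHom^\bullet_{\D^\b(Z_0)}(\For\calE,\For\calF)$. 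Hence the Leray spectral sequence of $p$ takes the form
\begin{align*}
E_2^{i,j}=H^i_{G_0}\otimes_\k\HHom^j_{\D^\b(Z_0)}(\For\calE,\For\calF)\ \Longrightarrow\ \HHom^{i+j}_{\D^\b(\calZ_0)}(\calE,\calF),
\end{align*}
and it is a spectral sequence of graded $H^\bullet_{G_0}$-modules, the $H^\bullet_{G_0}$-action coming from the base.

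Next I would argue that $E_2$ is pure of weight $0$, i.e. that $E_2^{i,j}$ is pure of weight $i+j$. The ring $H^\bullet_{G_0}$ is pure of weight $0$ by Borel's theorem: for $G_0$ connected reductive it is a polynomial ring concentrated in even degrees, with $H^{2i}_{G_0}$ pure of weight $2i$. For the other tensor factor, a dévissage along the strata of $S$, using that the $G_0$-orbits are affine and that the stalks and costalks of $\For\calE$ and $\For\calF$ are semisimple and satisfy the parity conditions of Definition \ref{def:even} — equivalently, that $\For\calE,\For\calF$ lie in $\Pure(Z_0,S)$ — together with Proposition \ref{prop:pure1}(b), shows that $\HHom^\bullet_{\D^\b(Z_0)}(\For\calE,\For\calF)$ is pure of weight $0$ and finite-dimensional over $\k$. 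Consequently each differential $d_r$ is a morphism of mixed vector spaces from a pure space of some weight $n$ to a pure space of weight $n+1$, hence is $\Fr$-equivariant between spaces whose Frobenius eigenvalues have distinct absolute values, hence vanishes (a refinement of \eqref{weight}). So the spectral sequence degenerates at $E_2$. Then $\HHom^\bullet_{\D^\b(\calZ_0)}(\calE,\calF)$ carries a degreewise finite filtration by graded $H^\bullet_{G_0}$-submodules with associated graded $E_\infty=E_2=H^\bullet_{G_0}\otimes_\k V^\bullet$, where $V^\bullet=\HHom^\bullet_{\D^\b(Z_0)}(\For\calE,\For\calF)$; this associated graded is pure of weight $0$ and free of finite rank over $H^\bullet_{G_0}$, so, lifting a homogeneous $H^\bullet_{G_0}$-basis of it, the same holds for $\HHom^\bullet_{\D^\b(\calZ_0)}(\calE,\calF)$. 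Applying $\omega$ gives freeness of finite rank over $H^\bullet_{G}$.

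The main obstacle is the non-equivariant input: that $\HHom^\bullet_{\D^\b(Z_0)}$ between very pure complexes is pure when the strata are affine. This is exactly where the affineness of the $G_0$-orbits and the semisimplicity/parity of the stalks and costalks are genuinely used — through the identification of very pure complexes with direct sums of shifts of the $IC(w)_\bbd$ and the Koszulity packaged into Proposition \ref{prop:pure1}(b) — and checking it carefully (or reducing to the case already treated there when $S$ happens to be even) is the one step that is not purely formal.
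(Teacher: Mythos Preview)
The paper does not give its own proof of this proposition: it is stated with a \qed\ and the citation ``see e.g.\ \cite[lem.~3.1.5]{BY13}'' immediately preceding it. Your sketch is a faithful reconstruction of the standard argument behind that reference --- the Leray spectral sequence for $[Z_0/G_0]\to[\Spec F_0/G_0]$ and degeneration by weight reasons --- so there is nothing to compare against on the paper's side.

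One small remark on your write-up: your appeal to Proposition~\ref{prop:pure1}(b) for the non-equivariant purity is not quite clean, since that proposition is stated under the hypothesis that $S$ is \emph{even}, whereas Proposition~\ref{prop:BY} only assumes the $G_0$-orbits are affine. You are aware of this (you flag it as the ``main obstacle''), and in the applications in this paper the stratification is in fact even, so the argument goes through; but if you want the statement in the generality written, the dévissage has to be carried out directly in the equivariant category (reducing to a single orbit $[Z_w/G_0]\simeq[\pt/H_w]$ and using that $H^\bullet_{H_w}$ is pure and free over $H^\bullet_G$), rather than passing through the non-equivariant $\HHom$ first. That is how the freeness over $H^\bullet_G$ comes out most naturally as well.
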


\smallskip

\begin{remark}\label{rem:mixte2}
\hfill
\begin{itemize}[leftmargin=8mm]
\item[$\mathrm{(a)}$] 
Let $S$ be any stratification of $Z$.
The category $\Par(Z,S)$ has split idempotents, and
the Verdier duality $D$ yields an equivalence 
$\Par(Z,S)\simeq \Par(Z,S)^\op$. 
Let $\calL\in\Par(Z,S)$ be a graded-generator.
Set $\bfR=\End^\bullet_{\D^\b(Z,S)}(\calL)^\op.$
The functor
$\calE\mapsto\Hom^\bullet_{\D^\b(Z,S)}(\calL\,,\,\calE)$ gives an equivalence of graded additive categories
$\Par(Z,S)\to \bfR\text{-}\proj$.
Taking the homotopy categories, we get a graded triangulated equivalence
$\K^\b(\C(Z,S))\to\D^\perf(\bfR).$
\iffalse%%%%%%%%%%%%%%%%%%
For instance, if $S$ contains a single smooth, connected, simply connected stratum, then 
$\bfR$ is graded Morita equivalent to $H^\bullet(X,\k)$, hence we get
an equivalence of graded additive categories
$\C(X,S)\to H^\bullet(X,\k)\-\,\proj$ such that $\k_X\langle d_X\rangle\mapsto H^\bullet(X,\k).$
It yields an equivalence of graded triangulated categories
$\D^\b_\mu(X,S)\to\D^\perf(H^\bullet(X,\k)).$
\fi%%%%%%%%%%%%%%%%%%%%%

\item[$\mathrm{(b)}$] 
If $h:Y\to Z$ is a closed embedding then the functor 
$h_!=h_*:\D^\b_\mix(Y)\to\D^\b_\mix(Z)$  
in Proposition \ref{prop:A} is given by restricting the functor
$h_!=h_*:\D^\b(Y)\to\D^\b(Z)$ to $\C(Y)\to\C(Z)$ and taking the homotopy categories.
If $h:Y\to Z$ is an open embedding then the functor $h^!=h^*$ is defined in a similar way.
We do not know any equivariant analogue of Proposition \ref{prop:A} which would yield
functors $h_*$, $h_!$, $h^*$, $h^!$ between the categories $\D^\b_\mix(\calY)$ and $\D^\b_\mix(\calZ)$ 
for any inclusion $h:\calY\to \calZ$ of a union of strata. However the functors
$h_*=h_!$ are well defined in the equivariant case if $h$ is a closed embedding,
so are $h^*=h^!$ if $h$ is an open embedding.

\item[$\mathrm{(c)}$] 
If the stratification $S$ is even then the set
$\{IC(w)[a]\,;\,w\in W\,,\,a\in \bbZ\}$ is a complete and irredundant set of indecomposable 
objects of $\Par(Z,S)$. It is also a complete and irredundant set of  parity sheaves of $\D^\b(Z,S)$
 in the sense of \cite{JMW}.
 
 \item[$\mathrm{(d)}$] 
A triangulated functor $\phi_\bbd:\D^\b_{\lozenge,\bbd}(Y_0)\to\D^\b_{\lozenge,\bbd}(Z_0)$  is \emph{geometric}
if there is a triangulated functor $\phi:\D^\b(Y)\to\D^\b(Z)$ with a natural isomorphism
$\phi\,\omega\Rightarrow \omega\,\phi_\bbd$. It is \emph{genuine} if it is geometric and there is a triangulated functor
$\phi_\mu:\D^\b_\mix(Y)\to\D^\b_\mix(Z)$ with a natural isomorphism
$\phi\,\iota\Rightarrow \iota\,\phi_\mix$.

\item[$\mathrm{(e)}$] 
Let $T$, $V$ be even affine refinements of even stratifications $S$, $U$ of $F_0$-schemes $Y_0$, $Z_0$.
%For each object $\calE\in\D^\b_\mix(Z,T)$ and each $a\in\bbZ$, 
%the perverse cohomology ${}^p\!H^a\calE$ lies in $\P_\mu(Z_0,T)$, hence $\zeta({}^p\!H^a\calE)\in\P(Z,T)$.
By \cite[lem.~7.12]{AR13d}, there are full embedding of triangulated categories
$\D^\b_{\lozenge,\bbd}(Y_0,S)\subset\D^\b_{\lozenge,\bbd}(Y_0,T)$ and 
$\D^\b_{\lozenge,\bbd}(Z_0,U)\subset\D^\b_{\lozenge,\bbd}(Z_0,V)$.
By \cite[lem.~7.21]{AR13d}, the restriction of a genuine functor 
$\D^\b_{\lozenge,\bbd}(Y_0,T)\to\D^\b_{\lozenge,\bbd}(Z_0,V)$ that takes 
$\D^\b_{\lozenge,\bbd}(Y_0,S)$ into $\D^\b_{\lozenge,\bbd}(Z_0,U)$, is a genuine functor 
$\D^\b_{\lozenge,\bbd}(Y_0,S)\to\D^\b_{\lozenge,\bbd}(Z_0,U)$.
%an object $\calE\in\D^\b_\mix(Z,T)$ is in
%$\D^\b_\mix(Z,S)$ if and only if $\zeta({}^p\!H^a\calE)\in\P(Z,S)$ for all $a\in \bbZ.$

\item[$\mathrm{(f)}$] 
By Proposition \ref{prop:A}, if the stratification $S$ is even we may view $\D^\b_\mix(Z,S)$ as
a (non full) subcategory of $\D^\b_{\lozenge,\bbd}(Z_0,S)$ consisting of objects 
whose stalks carry a semisimple action of the Frobenius.

\item[$\mathrm{(g)}$] 
Any object $\calE$ of  $\Pure(Z_0,S)$ is semisimple and pure of weight 0, that is
$\calE\simeq\bigoplus_a{}^p\!H^a(\calE)[-a]$ where each mixed perverse sheaf ${}^p\!H^a(\calE)$
is pure of weight $a$.

\item[$\mathrm{(h)}$] 
An even stratification $S$ is called \emph{affable} in \cite[def.~7.2]{AR13d}. Then,
the category $\D^\b_{\lozenge,\bbd}(Z_0,S)$ is the same as
$\D^\text{Weil}_S(Z_0)$ in \cite[\S 6.1]{AR13d}. If $S$ is even affine, then $\D^\b_{\lozenge,\bbd}(Z_0,S)$ is the category 
$D_{\lozenge,m}(Z_0)$ in \cite[\S 2.1]{Y09}.

\iffalse%%%%%%%%%%%%%
\item[$\mathrm{(c)}$] 
If $S$ is even affine, then
the category $\P(Z,S)$ is \emph{quasihereditary}, of finite global dimension,
with standard and costandard objects given by
$\{\Delta(w)\,;\,w\in W\},$ $\{\nabla(w)\,;\,w\in W\}$,
see \cite{BGS96}.
\fi%%%%%%%%%%%%%%%
\end{itemize}

\end{remark}

\smallskip

\subsection{Even affine stratifications, projective and tilting objects}

Assume that the stratification $S$ is even affine.
The objects $\Delta(w)_\bbd$ and $\nabla(w)_\bbd$ have canonical lifts 
$\Delta(w)_\mu$ and $\nabla(w)_\mu$ in $\P\!_\mix(Z,S)$
by Proposition \ref{prop:A}.
We have 
$$\iota IC(w)_\mix=IC(w)_\bbd\quad,\quad
\iota \Delta(w)_\mix=\Delta(w)_\bbd\quad,\quad
\iota \nabla(w)_\mix=\nabla(w)_\bbd\quad,\quad
\forall w\in W.$$
We equip the triangulated category $\D^\b(\P\!_\mix(Z,S))$ with the grading shift functors
\begin{align}\label{SHIFT3}\langle \bullet\rangle=(\bullet/2)[\bullet],\end{align}
where $(\bullet/2)$ is the Tate shift functor on $\P\!_\mix(Z,S)$ and $[\bullet]$ is the cohomological shift.
By \cite[cor.~7.10]{AR13d}, there is an equivalence of graded triangulated categories
$\D^\b(\P\!_\mix(Z,S))\to\D^\b_\mix(Z,S)$
which  identifies the grading shift functors \eqref{SHIFT} and \eqref{SHIFT3}.
We'll use two refinements of this equivalence which involve projective and tilting objects of $\P\!_\mix(Z,S)$.

\smallskip

\begin{proposition}\label{prop:AA}
Assume that the stratification S is even affine.
\hfill
\begin{itemize}[leftmargin=8mm]
\item[$\mathrm{(a)}$] 
$\P\!_\mix(Z,S)$, $\P(Z,S)$ have enough projectives and finite cohomological dimension.
The sets of indecomposable objects in $\P(Z,S)^\proj$ and $\P\!_\mix(Z,S)^\proj$ are
$\{P(w)\,;\,w\in W\}$ and $\{P(w)_\mix(a/2)\,;\,w\in W\,,\,a\in\bbZ\},$
where  $P(w)$, $P(w)_\mu$ are the projective covers of $\Delta(w)$, $\Delta(w)_\mix$ in 
$\P(Z,S)$, $\P\!_\mix(Z,S)$.
\item[$\mathrm{(b)}$] 
$\P\!_\mix(Z,S)^\proj=\zeta^{-1}(\P(Z,S)^\proj)$.
\item[$\mathrm{(c)}$] 
$\P\!_\mix(Z,S)^\proj\subset\D^\b_\mix(Z,S)$
extends to a graded triangulated equivalence 
$\K^\b(\P\!_\mix(Z,S)^\proj)\to\D^\b_\mix(Z,S).$
\end{itemize}
\end{proposition}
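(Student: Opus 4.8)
The plan is to prove Proposition~\ref{prop:AA} by reducing its three parts to the Koszul-duality picture for mixed perverse sheaves on even affine stratified spaces, which is available in the literature of Achar--Riche and Beilinson--Ginzburg--Soergel. Throughout I assume the stratification $S$ is even affine, so that the formalism of \S\ref{sec:even} applies and in particular $\D^\b_\mix(Z,S)=\K^\b(\Pure(Z_0,S))$ is identified with $\D^\b(\P\!_\mix(Z,S))$ via \cite[cor.~7.10]{AR13d}.

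First I would treat part (a). The existence of enough projectives in $\P(Z,S)$ and in $\P\!_\mix(Z,S)$, together with finite cohomological dimension, is the statement that these are (graded) quasi-hereditary categories; this is classical for $\P(Z,S)$ when $S$ is affine by \cite{BGS96}, and the mixed version is \cite[thm.~4.4.4]{BGS96} combined with the general yoga of mixed perverse sheaves recalled in Proposition~\ref{prop:A}. The classification of indecomposable projectives follows from the fact that the simple objects of $\P\!_\mix(Z,S)$ are exactly the Tate twists $IC(w)_\mix(a/2)$, so their projective covers $P(w)_\mix(a/2)$ exhaust the indecomposable projectives; the non-mixed statement is obtained by applying $\zeta$ and using that $\zeta$ kills the Tate twist while being exact. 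The one point that needs care is that the projective cover of $\Delta(w)_\mix$ coincides with the projective cover of $IC(w)_\mix$, which holds because $\Delta(w)_\mix\onto IC(w)_\mix$ and $\ker$ has composition factors $IC(v)_\mix$ with $v<w$.

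Next, part (b): $\P\!_\mix(Z,S)^\proj=\zeta^{-1}(\P(Z,S)^\proj)$. One inclusion is immediate: $\zeta$ is $t$-exact and sends $P(w)_\mix(a/2)$ to $P(w)$ by the description in (a), so $\zeta$ maps mixed projectives to projectives. For the reverse inclusion, suppose $\calE\in\P\!_\mix(Z,S)$ with $\zeta\calE$ projective in $\P(Z,S)$. Using Proposition~\ref{prop:pure1}(a) and the Koszulity vanishing \eqref{P2}, one shows $\Ext^1_{\P\!_\mix(Z,S)}(\calE,\nabla(w)_\mix(a/2))=0$ for all $w$, $a$; since a mixed perverse sheaf all of whose $\Ext^1$'s against costandards vanish is projective (the mixed analogue of the standard quasi-hereditary criterion, cf.\ \cite[lem.~7.8]{K15b} adapted here), this forces $\calE$ to be projective. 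The essential input is that $\Ext$-groups in $\P\!_\mix(Z,S)$ are recovered as the Frobenius-invariants of the corresponding graded $\Ext$-groups in $\P(Z,S)$, so the vanishing is detected after applying $\zeta$.

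Finally, part (c): the inclusion $\P\!_\mix(Z,S)^\proj\subset\D^\b_\mix(Z,S)$ extends to an equivalence $\K^\b(\P\!_\mix(Z,S)^\proj)\to\D^\b_\mix(Z,S)$. Since $\P\!_\mix(Z,S)$ has enough projectives and finite global dimension by (a), the general principle that for such an abelian category $\calA$ the natural functor $\K^\b(\calA^\proj)\to\D^\b(\calA)$ is an equivalence applies; combined with the identification $\D^\b(\P\!_\mix(Z,S))\simeq\D^\b_\mix(Z,S)$ from \cite[cor.~7.10]{AR13d}, this gives the claim, and one checks the grading shift functors \eqref{SHIFT} and \eqref{SHIFT3} are matched exactly as in the cited corollary. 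The main obstacle, and the step I would spend the most care on, is part (b): making precise the passage between mixed and non-mixed $\Ext$-groups and invoking the correct form of the projectivity criterion, since this is where the even-affine hypothesis and Koszulity are genuinely used rather than merely cited. The rest is assembling known equivalences.
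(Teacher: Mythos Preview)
Your proposal is correct and aligns with the paper's approach: the paper's proof consists entirely of citations to \cite{AR13d} (specifically prop.~7.7 for (a) and (b), and cor.~7.10 together with prop.~7.11 for (c)), and your sketch essentially unpacks the content of those references. Your argument for (c) via the general equivalence $\K^\b(\calA^\proj)\simeq\D^\b(\calA)$ combined with \cite[cor.~7.10]{AR13d} is exactly right; for (b) the paper simply invokes \cite[prop.~7.7(2)]{AR13d} directly rather than the Ext-vanishing route you outline, so your more elaborate argument there is unnecessary but not wrong.
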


\begin{proof}
Part (a) is \cite[prop.~7.7(1),(2)]{AR13d},
part (b) is \cite[prop.~7.7(2)]{AR13d}, and
part (c) is proved \cite[cor.~7.10, prop.~7.11]{AR13d}.
\end{proof}

\smallskip

\smallskip

\begin{definition}[\cite{BBM04}, \cite{Y09}]
Assume that the stratification $S$ is even affine. 
A mixed perverse sheaf $\calE\in\P\!_{\lozenge,\bbd}(Z_0,S)$
is \emph{tilting} if either of the following equivalent conditions hold 
\hfill
\begin{itemize}[leftmargin=8mm]
\item[$\mathrm{(a)}$] 
$(i_w)^*\calE$ and $(i_w)^!\calE$ are perverse for each $w\in W$.
\item[$\mathrm{(b)}$] 
$\calE$ has both a filtration by $\Delta(w)_\bbd(a/2)$'s and by $\nabla(w)_\bbd(a/2)$'s,
with $w\in W$ and $a\in\bbZ$. % even.
\end{itemize}
We define a tilting object in $\P(Z,S)$ in a similar way.
\end{definition}

\smallskip

Let $\P(Z,S)^\tilt\subset\P(Z,S)$ and
$\P\!_{\lozenge,\bbd}(Z_0,S)^\tilt\subset\P\!_{\lozenge,\bbd}(Z_0,S)$
be the full additive subcategories of tilting objects. Let  
$\P\!_\mix(Z,S)^\tilt\subset\P\!_\mix(Z,S)$ be the full subcategory whose objects are the complexes which map
to $\P\!_{\lozenge,\bbd}(Z_0,S)^\tilt$ by the functor $\iota$.

\smallskip

\begin{proposition}\label{prop:B} Assume that the stratification $S$ is  even affine.
\hfill
\begin{itemize}[leftmargin=8mm]
\item[$\mathrm{(a)}$] 
For each  $w\in W,$ there are unique indecomposable objects
$T(w)$, $T(w)_\mix$ in $\P(Z,S)^\tilt$, $\P\!_\mix(Z,S)^\tilt$ supported on $\overline X_w$ 
whose restriction to $X_w$ are $\k_w[d_w]$, $\k_w\langle d_w\rangle$ respectively. 
The sets of indecomposable objects in $\P(Z,S)^\tilt$, $\P\!_\mix(Z,S)^\tilt$ are
$\{T(w)\,;\,w\in W\}$ and $\{T(w)_\mix(a/2)\,;\,w\in W\,,\,a\in\bbZ\}.$

\item[$\mathrm{(b)}$] 
Assume that the stratification $S$ is good.
Then, we have $\P\!_\mix(Z,S)^\tilt=\P\!_{\lozenge,\bbd}(Z_0,S)^\tilt$. 

\item[$\mathrm{(c)}$] 
$\P\!_\mix(Z,S)^\tilt=\zeta^{-1}(\P(Z,S)^\tilt)$.

\item[$\mathrm{(d)}$] 
$\P\!_\mix(Z,S)^\tilt\subset\D^\b_\mix(Z,S)$
extends to a graded triangulated equivalence
$\K^\b(\P\!_\mix(Z,S)^\tilt)\to\D^\b_\mix(Z,S).$
\end{itemize}
\end{proposition}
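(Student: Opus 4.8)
The plan is to recognize Proposition~\ref{prop:B} as the tilting counterpart of the projective statement of Proposition~\ref{prop:AA}, and to prove it by the standard Ringel theory of tilting objects in a graded highest weight category, following \cite{BBM04} and \cite{Y09}. The category in question is $\P\!_\mix(Z,S)$, which under the even affine hypothesis is graded highest weight with standard and costandard objects $\Delta(w)_\mix(a/2)$ and $\nabla(w)_\mix(a/2)$; this structure, together with the identification $\D^\b_\mix(Z,S)=\D^\b(\P\!_\mix(Z,S))$, is provided by Proposition~\ref{prop:A} and \cite[cor.~7.10]{AR13d}. The only algebraic input needed at the start is the vanishing $\bbExt^{>0}_{\P\!_\mix(Z,S)}(\Delta(w)_\mix(a/2)\,,\,\nabla(v)_\mix(b/2))=0$, which follows from $t$-exactness of $(i_w)^*$ on standards and $(i_v)^!$ on costandards together with the fact that the resulting objects on the (affine) strata are shifts of constant sheaves.

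For part~(a) I would run Ringel's recursion on the partial order on $W$: given $w$, form the universal extension of $\Delta(w)_\mix$ by graded shifts of the already constructed $T(v)_\mix$ with $v<w$ so as to kill $\bbExt^1$ into all costandard objects; using the vanishing above one checks the result is simultaneously $\Delta$- and $\nabla$-filtered, indecomposable, supported on $\overline X_w$, and equal to $\k_w\langle d_w\rangle$ on $X_w$. Uniqueness is the usual comparison of two such objects by a morphism which the filtration multiplicities force to be an isomorphism. The Krull--Schmidt property of $\P\!_\mix(Z,S)^\tilt$ then yields the classification of indecomposables; the non-mixed statement follows by repeating the argument verbatim in $\P(Z,S)$. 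For part~(c) I would use the characterization of tilting objects by the perversity of all $*$- and $!$-restrictions to strata. The functor $\zeta=\omega\circ\iota$ is $t$-exact and faithful (Proposition~\ref{prop:A}) and, for a union of strata $h\colon Y\to Z$, commutes with $h^*$ and $h^!$ (Proposition~\ref{prop:A}(c) and compatibility of $\omega$ with the six operations); hence $(i_w)^*\calE$ and $(i_w)^!\calE$ are perverse iff $(i_w)^*\zeta\calE$ and $(i_w)^!\zeta\calE$ are, so $\calE\in\P\!_\mix(Z,S)$ is tilting iff $\zeta\calE$ is, which is the asserted equality $\P\!_\mix(Z,S)^\tilt=\zeta^{-1}(\P(Z,S)^\tilt)$; it is visibly compatible with the bijection on indecomposables of Proposition~\ref{prop:pure1}(c).

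For part~(b) the inclusion $\P\!_\mix(Z,S)^\tilt\subset\P\!_{\lozenge,\bbd}(Z_0,S)^\tilt$ is the definition of $\P\!_\mix(Z,S)^\tilt$. For the reverse inclusion I must show that every indecomposable tilting object $T(w)_\bbd$ of $\P\!_{\lozenge,\bbd}(Z_0,S)$ has semisimple weight-graded pieces, i.e. lies in $\P\!_\mix(Z,S)$; equivalently, that $T(w)_\bbd$ is pure of weight $0$. This is exactly what the ``good'' hypothesis provides: the conditions \cite[\S 4.1(a)-(d)]{Y09} are tailored so that the $\Delta$- and $\nabla$-filtrations of $T(w)_\bbd$ can be taken weight-compatibly and $T(w)_\bbd$ comes out pure, by the argument of \cite{BBM04} and \cite[\S 5]{Y09}; purity forces $\Gr^W_\bullet T(w)_\bbd$ semisimple, whence $T(w)_\bbd\in\P\!_\mix(Z,S)$, and since it is plainly tilting there the two categories coincide. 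Finally, for part~(d) I would show that the functor $\K^\b(\P\!_\mix(Z,S)^\tilt)\to\D^\b_\mix(Z,S)$ obtained by realizing bounded complexes of tilting objects inside $\D^\b(\P\!_\mix(Z,S))=\D^\b_\mix(Z,S)$ is a triangulated equivalence: full faithfulness from $\bbExt^{>0}_{\P\!_\mix(Z,S)}(T,T')=0$ for tilting $T,T'$ (a standard dévissage from the $\Delta$/$\nabla$-vanishing) together with finiteness of the global dimension of $\P\!_\mix(Z,S)$ (Proposition~\ref{prop:AA}(a), valid because $W$ is finite); essential surjectivity because each $\Delta(w)_\mix$ has a finite resolution by tilting objects (Ringel), so the essential image is a triangulated subcategory containing all $\Delta(w)_\mix(a/2)$, and these generate $\D^\b_\mix(Z,S)$ by \cite[cor.~7.10]{AR13d}; compatibility with $\langle\bullet\rangle$ is automatic. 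I expect the genuine work to be concentrated in part~(b) — establishing purity of the indecomposable tilting complexes under the ``good'' hypothesis, producing the weight-compatible filtrations and invoking Gabber's purity correctly — whereas parts~(a), (c), (d) are formal consequences of the graded highest weight structure already set up in \cite{AR13d}.
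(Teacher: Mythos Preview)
Your arguments for parts~(a), (c), and~(d) are correct and match the paper's approach: the paper simply cites \cite[prop.~10.3, 10.5]{AR13d} and \cite{BBM04} for (a) and~(d), and calls~(c) obvious.

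Part~(b), however, contains a genuine error. You correctly identify that one must show each $\Gr^W_a T(w)_\bbd$ is semisimple, but then assert this is \emph{equivalent} to $T(w)_\bbd$ being pure of weight~$0$, and that the good hypothesis makes tilting sheaves pure. Both claims are false. Indecomposable tilting perverse sheaves are almost never pure: $\Delta(v)_\bbd$ has weights $\leqslant 0$ and $\nabla(v)_\bbd$ has weights $\geqslant 0$, so an indecomposable object with both filtrations typically has weights spread over a nontrivial interval --- indeed, computing these weights is the whole point of \cite{Y09}. Nor does purity over $F_0$ imply semisimplicity; Gabber's theorem only gives this after passing to $F$.

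What Yun actually proves in \cite[\S 4.2]{Y09} under the good hypothesis is the finer statement that each weight layer $\Gr^W_a T(w)_\bbd$ is a \emph{direct sum} of objects $IC(v)_\bbd(-a/2)$, with multiplicities controlled by the combinatorics of the stratification. This is exactly the membership condition for $\P\!_\mix(Z,S)$, and it is obtained by an inductive construction of the mixed tilting together with control of which Tate twists appear at each step --- not via purity. The paper accordingly cites \cite[\S 4.2]{Y09} rather than \S 5.
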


\begin{proof}
Part (a) is  \cite[prop.~10.3]{AR13d}, \cite{BBM04}.
Part (b) is \cite[\S 4.2]{Y09}.
Part (c) is obvious.
Part (d) is \cite[prop.~10.5]{AR13d}.
\end{proof}

\bigskip

\bigskip

\end{document}